\documentclass[11pt,a4paper,reqno]{amsart}
\usepackage{amsmath,amsfonts,amssymb,amsthm,amscd,color,pdfsync,stmaryrd,mathrsfs}
\usepackage{mathrsfs}
\usepackage[english]{babel}
\usepackage{graphicx}
\usepackage{hyperref,pdfsync}


\setlength{\textwidth}{17cm} \setlength{\textheight}{24cm}
\setlength{\topmargin}{-0.5cm} \setlength{\oddsidemargin}{-0.5cm}
\setlength{\evensidemargin}{-0.5cm}


\newcommand{\C}{{\mathbb C}}
\newcommand{\N}{{\mathbb N}}
\newcommand{\R}{{\mathbb R}}

\newcommand{\ds}{\displaystyle}
\newcommand{\norm}[1]{\left\Vert#1\right\Vert}
\newcommand{\mc}[1]{\mathcal{#1}}
\newcommand{\mb}[1]{\mathbb{#1}}
\newcommand{\mf}[1]{\mathfrak{#1}}


\newcommand{\curl}{{\rm curl}\,}

\renewcommand{\div}{{\rm div}\,}

\renewcommand{\phi}{{\varphi}}


\newcommand{\pd}{\partial}


\newtheorem{theorem}{Theorem}[section]
\newtheorem{proposition}[theorem]{Proposition}
\newtheorem{lemma}[theorem]{Lemma}
\newtheorem{corollary}[theorem]{Corollary}

\theoremstyle{remark}
\newtheorem{remark}[theorem]{Remark}

\numberwithin{equation}{section}


\definecolor{green}{rgb}{0.,0.6,0}

\begin{document}

\title[Long-time behavior of fluid-solid interaction]{Long-time behavior for the two-dimensional motion of a disk in a viscous fluid}
\author{S. Ervedoza, M. Hillairet \& C. Lacave}

\address[S. Ervedoza]
{CNRS ; Institut de Math\'ematiques de Toulouse UMR 5219 ; F-31062 Toulouse, France, \\
 Universit\'e de Toulouse ; UPS, INSA, INP, ISAE, UT1, UTM ; IMT ; F-31062 Toulouse, France.
}
\email{ervedoza@math.univ-toulouse.fr}

\address[M. Hillairet]{Universit\'e Paris-Dauphine \\
CEREMADE \\
UMR 7534 - CNRS \\
Place du Mar\'echal De Lattre De Tassigny
75775 Paris Cedex 16.
} \email{hillairet@ceremade.dauphine.fr}

\address[C. Lacave]{Universit\'e Paris-Diderot (Paris 7)\\
Institut de Math\'ematiques de Jussieu - Paris Rive Gauche\\
UMR 7586 - CNRS\\
B\^atiment Sophie Germain \\
Case 7012\\
75205 PARIS Cedex 13\\
France.} \email{lacave@math.jussieu.fr}

\date{\today}

\begin{abstract}
In this article, we study the long-time behavior of solutions of the two-dimensional fluid-rigid disk problem. The motion of the fluid is modeled by the two-dimensional Navier-Stokes equations, and the disk moves under the influence of the forces exerted by the viscous fluid. We first derive $L^p$-$L^q$ decay estimates for the linearized equations and compute the first term in the asymptotic expansion of the solutions of the linearized equations. We then apply these computations to derive time-decay estimates for the  solutions to the full Navier-Stokes fluid-rigid disk system.
\end{abstract}

\maketitle

\tableofcontents

\section{Introduction}

We consider the system formed by a rigid disk and a viscous fluid filling the whole plane $\mathbb R^2.$
We assume that the body initially occupies the disk $B_0$ and rigidly moves so that at  time $t$  it occupies a domain denoted by $B(t)$ that is isometric to $B_0.$
We denote $\mathcal{F} (t) := \R^2  \setminus B(t) $ the domain occupied by the fluid  at  time $t$ starting from the initial domain $\mathcal{F}_{0}  := \R^2 \setminus B_{0} $.

When the fluid has constant viscosity $\nu>0,$ the equations modeling the dynamics of the system fluid-rigid disk read
\begin{eqnarray}
	 \frac{\partial u }{\partial t}+(u  \cdot \nabla)u -\nu \Delta u  + \nabla p =0 && \text{for} \ t \in (0,\infty), \, x \in  \mathcal{F} (t), \label{NS1}\\
	\div u   = 0 && \text{for} \ t \in (0,\infty), \, x \in \mathcal{F}(t) , \label{NS2} \\
	u(t,x)  =   h' (t)+ \omega (t) (x-  h (t))^\perp && \text{for}  \  t\in  (0,\infty), \, x \in \partial B  (t),   \label{NS3} \\
	\lim_{|x|\to \infty} |u(t,x)| =0& & \text{for}  \  t \in  (0,\infty), \\
	m h'' (t) =  -\int_{\partial  B (t)} \Sigma n \, {\rm d}s & & \text{for}  \  t \in  (0,\infty), \label{Solide1} \\ 
	\mathcal{J} \omega' (t) =   - \int_{\partial B(t)} (x-  h (t) )^\perp \cdot \Sigma n   \, {\rm d}s & & \text{for}  \  t \in  (0,\infty), \label{Solide2} \\
	u |_{t= 0} = u_0 & &  \text{for}  \  x\in  \mathcal{F}_0 ,  \label{NSci2} \\
	h (0)= h_0 , \ h' (0)=  \ell_0 ,\    \omega (0)=  \omega_0.&&  \label{Solideci}
\end{eqnarray}
Here, $u=(u_1,u_2)$ denotes the velocity-field, $p$ the pressure and $\Sigma$ is the Cauchy stress tensor of the fluid:
\begin{equation}
	\label{Tensor} 
	\Sigma  = -p {\rm Id} + 2 \nu D(u),
\end{equation}
where ${\rm Id}$ is the $2\times 2$ identity matrix and:
\[ 
	D(u)_{k,\ell} = \frac12 \Bigl( \frac{\pd u_k}{\pd x_\ell} + \frac{\pd u_\ell}{\pd x_k}\Bigl) \qquad 1 \leq k,\ell \leq 2\,.
\]
The constants  $m$ and $ \mathcal{J}$ denote respectively the mass and the inertia of the body  while the fluid  is supposed to be  homogeneous, of density $1$ to simplify notations. In this work, we assume that the solid is homogeneous of density $m/\pi$, implying in particular $\mathcal{J}=m/2$ (we discuss in Section \ref{sect comments} about a generalization).
When $x=(x_1,x_2) \in \mathbb R^2,$ the vector $x^\perp $ stands for $x^\perp =( -x_2 , x_1 )$, 
$n$ denotes  the unit normal vector to $\partial B(t)$ pointing outside the fluid domain,  $h'(t)$
is the velocity of the center of mass  $h (t)$ of the body and $\omega(t)$ denotes the angular velocity of the rigid body.
Indeed, since  $B (t)$ is isometric to $B_0$ there exists a rotation matrix 
\begin{equation*}
S_{\theta(t)} :=  
	\begin{bmatrix}
		\cos  \theta (t) & - \sin \theta (t)
	\\  
		\sin  \theta (t) & \cos  \theta (t)
	\end{bmatrix},
\end{equation*}
such that  the lagrangian coordinates $\eta (t,x)$ associated to the body read:
\begin{equation*}
	\eta (t,x) := h (t) + S_{\theta(t)} (x- h_0) .
\end{equation*}
Furthermore, the angle $\theta$ satisfies
$	\theta'(t) = \omega (t), $
and is chosen such that $\theta (0) =  0$. Without loss of generality, we assume that $B_0$ is the unit disk centered at the origin: $h(0) = 0$.

\bigskip

Given $(u_0,\ell_0,\omega_0) \in H^1(\mathcal{F}_0) \times \mathbb R^2 \times \mathbb R,$ satisfying the compatibility condition:
\begin{equation*}
	 \div u_0=0 \text{ in }\mathcal{F}_0,\quad u_0  =   \ell_0+ \omega_0 x^\perp \text{ on }  \partial B_0,
\end{equation*}
T. Takahashi and M. Tucsnak prove in \cite{Takahashi&Tucsnak04} that there exists a  unique global strong solution $(u,p,h,\omega)$ of \eqref{NS1}-\eqref{Solideci}. 
The construction is based on the change of variable:
\begin{equation}\label{translation}
	v (t,x) = u(t,x-h(t))\,, \qquad \tilde p(t,x) = p(t,x-h(t))\,, \qquad \ell(t) = h'(t)\, .
\end{equation}
The new unknowns $(v,\tilde p)$ are then defined in the fixed domain $[0,\infty)\times (\mathbb R^2 \setminus B_0)$ and system \eqref{NS1}-\eqref{Solideci} reads,
in terms of $(v,\tilde p,\ell,\omega)$:
\begin{eqnarray}
	 \frac{\partial v }{\partial t}+\Bigl((v-\ell)  \cdot \nabla\Bigl)v -\nu \Delta v  + \nabla \tilde{p} =0 && \text{for} \ (t,x)\in (0,\infty) \times \mathcal{F}_0, \label{NS11}\\
	\div v   = 0 && \text{for} \ (t,x)\in (0,\infty) \times \mathcal{F}_0 , \label{NS12} \\
	v(t,x)  =   \ell(t)+ \omega (t) x^\perp && \text{for} \ (t,x)\in (0,\infty) \times \pd {B}_0,   \label{NS13} \\
	\lim_{|x|\to \infty} |v(t,x)| =0,& & \text{for} \ t\in (0,\infty), \\
	m \ell' (t) =  -\int_{\partial  B_{0}} \Sigma n \, {\rm d}s , & & \text{for} \ t\in (0,\infty), \label{Solide11} \\ 
	\mathcal{J} \omega' (t) =  - \int_{\partial B_{0}} x ^\perp \cdot \Sigma n   \, {\rm d}s ,& & \text{for} \ t\in (0,\infty),\label{Solide12} \\
	v |_{t= 0} = v_0 & &  \text{for}  \  x\in  \mathcal{F}_0 ,  \label{NSci12} \\
	\ell(0)=  \ell_0\, ,\,   \omega (0)=  \omega_0,  & &  \label{Solideci1}
\end{eqnarray}
with
\[ 
	\Sigma  = -\tilde{p} \, {\rm Id} + 2 \nu D(v).
\]
These solutions verify the following energy decay estimate: 
\begin{multline} \label{formalenergy}
\dfrac{1}{2} \left[ \int_{\mathcal {F}_0} |v(t,x)|^2 \, {\rm d}x + \left( m|\ell(t)|^2 + \mathcal{J} |\omega(t)|^2 \right) \right] + 2\nu \int_{0}^t \int_{\mathcal{F}_0} |D(v)(\tau,x)|^2 \, {\rm d}\tau {\rm d}x \\ \leq 
 \dfrac{1}{2} \left[ \int_{\mathcal {F}_0} |v_0(x)|^2 \, {\rm d}x + \left( m|\ell_0|^2 + \mathcal{J} |\omega_0|^2 \right) \right] \,, \quad \forall \, t >0\,.
\end{multline}
Relying on this estimate,  T. Takahashi and M. Tucsnak prove the existence and uniqueness of a global weak solution to \eqref{NS11}--\eqref{Solideci1} for initial data $(v_0,\ell_0,\omega_0)$ such that $v_0 \in L^2(\mathcal F_0)$ and 
\begin{equation} \label{eq_assumv_0}
	 \div v_0=0, \text{ in }\mathcal{F}_0, \qquad v_0 \cdot n  = (  \ell_0+ \omega_0 x^\perp ) \cdot n,  \text{ on }  \partial B_0.
\end{equation} 
{\em In this article, we aim at studying the long-time behavior of  these weak solutions.}

\bigskip

The long-time behavior of solutions to fluid-structure interaction systems has already been tackled in different ways. In a series of papers, several authors study the asymptotics of systems without pressure, \emph{i.e.} where the Navier Stokes equations are replaced by a heat equation   \cite{MunnierZuazua2,MunnierZuazua,VazquezZuazua,Vazquez&Zuazua04}. In this simplified case, the force applied by the fluid on a solid is modeled by the circulation of the normal derivative of the velocity-field $u$ on the solid boundaries. In the one-dimensional case in \cite{VazquezZuazua,Vazquez&Zuazua04}, and then in several dimensions in \cite{MunnierZuazua,MunnierZuazua2}, the authors show that the multiplier method introduced in \cite{EscobedoZuazua} to study the asymptotic behavior of solutions to convection-diffusion equations (also applied to the porous medium equation in \cite{Vazquez}) enables to compute sharp decay estimates and asymptotic expansions of solutions up to the second order. Even if the divergence-free condition \eqref{NS12} significantly modifies the equations, we will strongly use the results in \cite{MunnierZuazua,MunnierZuazua2}.

The long-time behavior of solutions for the full Navier Stokes equations in the whole space is also a long-standing question that has motivated numerous studies.
Applying a  Fourier decomposition, M.~E. Schonbek and M. Wiegner show in \cite{Schonbek,Wiegner} that the $L^2$ norm of the Navier-Stokes solution decreases to zero, which was a question raised  by J. Leray \cite{Leray}. In \cite{Carpio}, A. Carpio obtains a sharp description of  the pressure, which is given  by $p= \Delta^{-1}(\div(u\cdot \nabla u)).$  Representing then the velocity-field by a Duhamel formula and using a scaling argument, she computes the development of the solution for long times up to the second order.

Another approach consists in removing the pressure by taking the curl of the momentum equation:
\begin{equation} \label{eq_vorticite}
	\pd_{t}\curl u + u\cdot \nabla \curl u - \nu \Delta \curl u =0,
\end{equation}
where $\curl u$ is the vorticity of the fluid.
Without boundaries, such an equation yields the decay of the $L^p$ norms of the vorticity $\curl u$. 
For $\curl u_0 \in L^1(\mathbb R^2),$ such that $\int \curl u_0 \neq 0,$ T. Gallay and C.E. Wayne prove in \cite{GallayWayne}  that the vorticity 
behaves as $t \to \infty$ like the heat kernel 
\[
	\left(\int \curl u_{0}\right)\frac{e^{-\frac{|x|^2}{4t}}}{4\pi t} .
\] 
Note that if $\curl u_{0}$ is compactly supported and integrable, then thanks to the Biot-Savart law, we have, for large $x$,
\[
	u_{0}(x) =\frac{\int \curl u_{0}}{2\pi} \frac{x^\perp}{|x|^2}  + \mathcal{O}_{|x| \to \infty} \Bigl( \frac1{|x|^2} \Bigl).
\]
Of course, this implies that $u_{0}$ does not belong to $L^1(\mathbb R^2)\cup L^2(\mathbb R^2)$ if $\int \curl u_{0}\neq 0$. Consequently, this theory corresponds
to solutions with infinite energy. For instance, in \cite{GallayWayne}, T. Gallay and C.E. Wayne deduce that the velocity behaves asymptotically as $t \to \infty$ 
like the Lamb-Oseen vector field:
\[
\frac{\int \curl u_{0}}{2\pi}\frac{x^\perp}{|x|^2}\Bigl(1-e^{-\frac{|x|^2}{4t}}\Bigr)
\]
which has infinite energy.

In a domain with boundaries, system \eqref{eq_vorticite} has to be completed with boundary conditions. When Dirichlet boundary conditions are imposed for the velocity-field, one might compute Robin boundary conditions for the vorticity but with non-dissipative coefficients. Therefore, working on the vorticity seems difficult.
In the case of one obstacle surrounded by a viscous fluid (\emph{i.e.}, when $B_0$ is fixed and the system reduces to the Navier-Stokes equations in the exterior domain $\mathcal F_0$ completed with homogeneous Dirichlet boundary condition on $\partial \mathcal{F}_0$), the recent works \cite{GallayMaekawa,Iftimieetcie} prove that the first term in the long-time behavior of the velocity-field is given by the Lamb-Oseen vector field. Their proofs consist in a perturbative argument showing that the decay estimates for the solutions of the Stokes problem, which were established in \cite{DS99a,DS99,MS97}, implies that the nonlinear terms tend faster to zero than the Stokes solution. To our knowledge, such decay estimates on the Stokes semigroup are only known for fixed domains with homogeneous Dirichlet boundary conditions for the velocity-field. 

The only result considering the long-time behavior of a moving particle inside a Navier Stokes fluid is due to E. Feireisl and S. Ne\v{c}asov\`a \cite{FeireislNecasova}. However, they assume the whole system to be confined in a bounded container and they take into account the influence of gravity. Hence, they obtain completely different results with completely different methods. 
Broadly speaking, they prove that, if the container has no vertical wall and contains only one particle, the particle reaches the bottom of the container asymptotically in time. 

\bigskip

One of the main steps in \cite{GallayMaekawa,Iftimieetcie} is to establish $L^p-L^q$ decay estimates for solutions to the linear Stokes equations underlying the Navier Stokes system. Such results are known for fixed domains with homogeneous Dirichlet boundary condition (see \cite{DS99a,MS97}),  but in our case, the linearized Stokes fluid-solid system reads:
\begin{eqnarray}
	 \frac{\partial v }{\partial t} -\nu \Delta v  + \nabla p = 0 && \text{for} \ (t,x)\in (0,\infty) \times \mathcal{F}_0, \label{S1}\\
	\div v   = 0 &&\text{for} \ (t,x)\in (0,\infty) \times \mathcal{F}_0 , \label{S2} \\
	v  =   \ell (t)+ \omega (t) x^\perp &&  \text{for}  \  (t,x)\in  (0,\infty) \times \partial B _0,   \label{S3} \\
	\lim_{|x|\to \infty} |v(t,x)| =0& & \text{for}  \  t \in  (0,\infty), \\
	m \ell' (t) =  -\int_{\partial  B_0} \Sigma n \, {\rm d}s & & \text{for}  \  t \in  (0,\infty),  \label{S-Solide1} \\ 
	\mathcal{J} \omega' (t) =   - \int_{\partial B_0} x ^\perp \cdot \Sigma n   \, {\rm d}s& & \text{for}  \  t \in  (0,\infty),\label{S-Solide2} \\
	v |_{t= 0} = v_0 & &  \text{for}  \  x\in  \mathcal{F}_0 ,  \label{Sci2} \\
	 \ell (0)=  \ell_0 , \   \omega (0)=  \omega_0.&&  \label{S-Solideci}\, 
\end{eqnarray}
To our knowledge, $L^p-L^q$ ($p,\,q\neq 2$) estimates are not available in the literature for solutions of \eqref{S1}--\eqref{S-Solideci}. 
To be more precise, in \cite{Takahashi&Tucsnak04} , T. Takahashi and M. Tucsnak construct solutions of \eqref{S1}--\eqref{S-Solideci} {\em via} a semigroup approach. They show that the semigroup is analytic on $\mc{L}^2$ (to be defined in \eqref{Def-Lp} below) in dimension 2. In \cite{Yun&Wang}, the semigroup is also proved to be analytic in the counterpart of the spaces $\mc{L}^{6/5}\cap\mc{L}^p$ in 3D. However, in both papers the subsequent decay estimates are not sufficient for our purpose. In the first case, the $L^2$ framework only is considered. In the second case, the authors do not obtain sharp decay estimates.

\bigskip

To state our results precisely we introduce shortly some notations.
From a triplet $(v_0,\ell_0,\omega_0)$ verifying \eqref{eq_assumv_0}, we define a divergence-free vector field denoted $V_0$ on $\R^2$ obtained by extending $v_0$ by $\ell_0 + \omega_0  x^\perp$ in $B_{0}$. Adapted to such $V_0$, we introduce the functional spaces $\mathcal L^p$ defined as follows: 
\begin{equation}
	\label{Def-Lp}
	\mc{L}^p= \{ V\in L^p(\R^2),\ \div V=0 \text{ in }\R^2,\ D(V)=0 \text{ in }B_{0} \}, \quad (p \in [1,\infty]).
\end{equation}
When $p \in [1,\infty),$ we endow these spaces with the norms 
\begin{equation*}
	\| V \|_{\mc{L}^p}^p =  \int_{\mathcal{F}_0} |V|^p + \frac{m}{\pi} \int_{B_0} |V|^p\,.
\end{equation*}
It is easy to check that, if $V\in \mc{L}^p,$ then $V=\ell_{V}+\omega_{V} x^\perp$ on $B_0,$ where
\begin{equation}\label{l-omega}
	\ell_{V}= \frac{1}{\pi}\int_{B_{0}}V(x) \, {\rm d}x,\quad \omega_{V} = \frac{2}{\pi}\int_{B_{0}} V(x)\cdot x^\perp \, {\rm d}x,
\end{equation}
and the normal component of $V$ is continuous across $\partial B_0$ (as in \eqref{eq_assumv_0}). In particular, we remark that, setting $v = V|_{\mc{F}_0}$, there holds:
\begin{equation*}
	\| V \|_{\mc{L}^p}^p  \sim \|v\|_{L^p(\mc{F}_{0})}^p + |\ell_{V}|^p +  |\omega_{V}|^p.
\end{equation*}
Such a space is obviously a Banach space as a closed subspace of $L^p(\mathbb R^2).$ A straightforward extension
of \cite[Theorem III.2.3]{Galdi} yields that $\mathcal{L}^p \cap \mathcal{C}^{\infty}_c(\mathbb R^2)$ is dense in $\mathcal L^p$
for arbitrary $p \in (1,\infty).$ For $p =2,$ the space  $\mathcal{L}^p$ is a Hilbert space as the norm is associated with the scalar product:
\begin{equation} \label{eq_ps}
\langle  V ,  W \rangle_{\mc{L}^2} = \int_{\mc{F}_0}   V \cdot   W + \frac{m}{\pi} \int_{B_0}  V\cdot  W\,.
\end{equation}
For $p \in (1,\infty)\setminus \{2\},$  the same bilinear form enables to identify the dual of $\mathcal{L}^p$ with $\mathcal{L}^{{p'}}$ where ${p'}$ is the conjugate exponent of $p.$

Naturally, we endow $\mathcal L^{\infty}$ with the norm:
\begin{equation*}
	\| V \|_{\mc{L}^{\infty}} = \|V\|_{L^{\infty}(\mathbb R^2)}\,.
\end{equation*}
For any $V \in \mathcal L^{\infty},$ we still have $V(x) = \ell_V + \omega_V x^{\bot}$ in $B_0$ with $\ell_V$ and $\omega_V$ defined by \eqref{l-omega}.
Hence, there holds again: 
\begin{equation*}
	\| V \|_{\mc{L}^{\infty}}  \sim  \max \left\{ \|v\|_{L^{\infty}(\mc{F}_{0})}, |\ell_{V}| ,   |\omega_{V}|\right\}\,.
\end{equation*}

\bigskip

Our first results concern the Cauchy problem for \eqref{S1}-\eqref{S-Solideci} in $\mc{L}^p$ and the decay rates of the constructed solutions. As in \cite{Takahashi&Tucsnak04, Yun&Wang}, we use a semigroup approach:  
\begin{theorem} \label{theo Stokes} 
	For each $q\in (1,\infty)$, the Stokes operator of the linear problem \eqref{S1}-\eqref{S-Solideci} generates a semigroup $S(t)$ on $\mc{L}^q$ which satisfies the following decay estimates:

	$\bullet$ For $p \in [q, \infty]$, there exists $K_1 = K_1(p,q)>0$ such that for every $V_0 \in \mc{L}^{q}$:
		\begin{equation}\label{Lp-Lq}
			\|S(t)V_0\|_{\mc{L}^p} \leq K_1 (\nu t)^{\frac{1}{p} - \frac{1}{q}}\|V_0\|_{\mc{L}^q}
			\qquad \text{for all}\quad  t>0.
		\end{equation} 

	$\bullet$ If $q \leq 2$, for  $p \in [q,2]$, there exists $K_2 = K_2( p,q)>0$ such that for every $V_0 \in \mc{L}^{q}$,
		\begin{equation}\label{est grad 1}
			 \|\nabla S(t)V_0\|_{L^p(\mc{F}_0)} \leq K_2 (\nu t)^{-\frac{1}{2} + \frac{1}{p} - \frac{1}{q}}\|V_0\|_{\mc{L}^q}
			\qquad \text{for all}\quad  t>0.
		\end{equation}

	$\bullet$ For $p \in [\max\{2,q\},\infty)$, there exists $K_3 = K_3(p,q)>0$ such that for every $V_0 \in \mc{L}^{q}$,
		\begin{equation}\label{est grad 2}
			 \|\nabla S(t)V_0\|_{L^p(\mc{F}_0)} \leq 
			\left\{ \begin{array}{ll} 
				K_3 (\nu t)^{-\frac{1}{2} + \frac{1}{p} - \frac{1}{q}}\|V_0\|_{\mc{L}^q} &\qquad \text{for all}\quad  0< t< \frac1{\nu}, \\
				 K_3 (\nu t)^{ - \frac{1}{q}}\|V_0\|_{\mc{L}^q} &\qquad \text{for all}\quad   t \geq \frac1{\nu}.
			 \end{array}\right.
		\end{equation} 
\end{theorem}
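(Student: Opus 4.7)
My plan is to first establish that the Stokes--solid operator $A$ associated with \eqref{S1}--\eqref{S-Solideci} generates a bounded analytic semigroup on $\mc{L}^q$ for every $q \in (1,\infty)$, and then to derive the $L^p$--$L^q$ and gradient decay estimates by combining short-time parabolic regularisation with a long-time analysis that compares $S(t)$ with the heat semigroup on $\R^2$. For $q=2$ the semigroup generation is already known from \cite{Takahashi&Tucsnak04}; for general $q$, I would adapt the 3D argument of Yun--Wang \cite{Yun&Wang}, namely rewriting the resolvent equation $(\lambda-A)V=F$ as an exterior Stokes problem in $\mc{F}_0$ with a rigid boundary datum $\ell+\omega x^\perp$ to be determined through the Newton equations \eqref{S-Solide1}--\eqref{S-Solide2}. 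Classical $L^q$ resolvent theory for the exterior Stokes problem (Borchers--Miyakawa, Farwig--Sohr), together with a fixed-point argument closing the fluid--solid coupling, should yield sectoriality of $A$ on $\mc{L}^q$.

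\textbf{Short-time estimates.} Analyticity gives $\|AS(t)V_0\|_{\mc{L}^q}\leq C(\nu t)^{-1}\|V_0\|_{\mc{L}^q}$ for $0<\nu t\leq 1$. Combined with $L^q$ elliptic regularity for the stationary Stokes--rigid problem (again reducible to exterior Stokes estimates), this produces $\|\nabla S(t)V_0\|_{L^q(\mc{F}_0)}\leq C(\nu t)^{-1/2}\|V_0\|_{\mc{L}^q}$ and, by Sobolev embedding, the short-time part of \eqref{Lp-Lq}. The short-time gradient bounds \eqref{est grad 1} and the $\nu t \leq 1$ part of \eqref{est grad 2} then follow from a bootstrap: apply the $L^p$--$L^q$ bound on $[0,t/2]$ and the $L^p$--$L^p$ gradient bound on $[t/2,t]$ using the semigroup property $S(t)=S(t/2)\circ S(t/2)$.

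\textbf{Long-time decay.} For $\nu t \geq 1$, the point is to recover the heat-kernel rate $t^{1/p-1/q}$. I would extend $V(t)=S(t)V_0$ to a divergence-free field $\widetilde V$ on $\R^2$ and write a Duhamel representation of the form $\widetilde V(t)=e^{\nu t\Delta}\widetilde V_0+\int_0^t e^{\nu(t-s)\Delta}\Phi(s)\,{\rm d}s$, where $\Phi$ gathers the pressure gradient and boundary commutator terms and is supported in a neighbourhood of $\partial B_0$. The classical heat bounds $\|e^{t\Delta}f\|_{L^p(\R^2)}\leq Ct^{1/p-1/q}\|f\|_{L^q}$, combined with the $\mc{L}^2$ energy estimate \eqref{formalenergy} and an interpolation/bootstrap that progressively upgrades integrability of $\Phi$, should yield \eqref{Lp-Lq} for all $t \geq 1/\nu$. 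The large-time gradient estimates \eqref{est grad 1}--\eqref{est grad 2} then follow by composing $S(t)=S(t/2)\circ S(t/2)$, with \eqref{Lp-Lq} on the first half and the already-established short-time gradient bound on the second; the qualitative change at $p\geq 2$ and $t\geq 1/\nu$ reflects the well-known loss in $\R^2$ of further $t^{-1/2}$-decay past the $L^2$ threshold.

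\textbf{Main obstacle.} The principal difficulty is the low-frequency/long-time analysis of the coupled resolvent $(\lambda-A)^{-1}$ on $\mc{L}^q$. Unlike the Dirichlet exterior Stokes problem treated in \cite{DS99a,DS99,MS97}, one must invert the Newton equations simultaneously with the fluid equations, and the uniformity of the estimates as $\lambda\to 0$ is not automatic: the solid degrees of freedom $(\ell,\omega)$ introduce additional zero modes that must be identified and handled separately. Closing this coupling with $L^q$ bounds independent of $\lambda$ is what I expect to be the most demanding step; it governs both the semigroup generation for $q\neq 2$ and the sharp long-time rates in \eqref{Lp-Lq}--\eqref{est grad 2}.
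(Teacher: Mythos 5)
Your proposal correctly identifies where the difficulty lies, but it does not resolve it, and the step you lean on for the long-time rates would fail as written. The Duhamel comparison $\widetilde V(t)=e^{\nu t\Delta}\widetilde V_0+\int_0^t e^{\nu(t-s)\Delta}\Phi(s)\,{\rm d}s$ has two problems. First, the pressure gradient in an exterior Stokes flow is \emph{not} supported near $\partial B_0$ (it decays only algebraically in $x$), so $\Phi$ cannot be treated as a localized boundary commutator. Second, and more seriously, to extract the rate $t^{1/p-1/q}$ from the Duhamel integral you need quantitative time-decay of $\Phi(s)$ in a low Lebesgue norm, and the only a priori input you have is the energy estimate \eqref{formalenergy}, which gives boundedness in $\mc{L}^2$ but no decay at all; the "bootstrap that progressively upgrades integrability of $\Phi$" is exactly the statement you are trying to prove, so the argument is circular. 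The same obstruction appears in your resolvent route: you correctly flag that uniformity of $(\lambda-A)^{-1}$ as $\lambda\to 0$ on $\mc{L}^q$ is the crux (in 2D the low-frequency expansion has logarithmic singularities even for the Dirichlet exterior problem of \cite{DS99a,MS97}, and the Newton equations add extra zero modes), but you give no mechanism for closing it. Note that the paper explicitly records that the semigroup approach of \cite{Yun&Wang} yields analyticity but \emph{not} sharp decay, which is precisely the part your plan leaves open. A minor but telling point: the loss in \eqref{est grad 2} for $p>2$, $t\geq 1/\nu$ is not a feature of $\R^2$ (the whole-plane heat semigroup has no such loss for gradients); it is produced by the boundary, so the heat-comparison heuristic cannot explain it.

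What the paper actually does is entirely different and exploits the disk geometry. The velocity is decomposed in spherical harmonics (Proposition \ref{prop_decompLp}): the $0$-mode $W$ solves a scalar radial heat equation with a dynamic boundary condition; the first modes $\Psi,\Phi$ a priori carry the pressure, but the new unknown $Z=\partial_r\Psi+\Psi/r$ satisfies a pressure-free heat equation with dynamic boundary condition (Proposition \ref{prop_calculz}) — this algebraic cancellation is the key idea your proposal is missing; and the remainder $V_R$ solves the exterior Stokes problem with homogeneous Dirichlet data, for which the $L^p$--$L^q$ estimates of Dan--Shibata are quoted. The scalar problems are then treated by the Escobedo--Zuazua/Munnier--Zuazua multiplier method ($L^p$ entropy functionals plus a Nash-type iteration, Theorem \ref{ThmMunnierZuazuaExt}, and a sectoriality estimate for $\partial_t Y$, Theorem \ref{Thm-L-p-Anal}), and the elliptic lemmas of Section 2 convert decay of $Z$ back into decay of $\Psi,\Phi$ and their gradients. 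If you want to salvage your strategy, you would essentially have to reprove the Dan--Shibata low-frequency resolvent analysis with the rigid-body coupling included, which is a substantially harder and unwritten piece of analysis.
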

Our approach is based on the decomposition of the velocity in spherical harmonics. We show that the 0-spherical harmonic verifies a heat equation (without pressure) with dynamical boundary conditions.  This  enables us to compute decay estimates using the multiplier method of Escobedo-Zuazua \cite{EscobedoZuazua} in the same way as in \cite{MunnierZuazua2,MunnierZuazua}. 
The 1-spherical harmonic is the hardest part. \emph{A priori}, it verifies an equation with pressure and non-standard boundary conditions.
However, we show that there exists an underlying algebra which enables to reduce this equation to a heat equation (without pressure) with dynamical boundary 
conditions. So, we can again reproduce the method of  \cite{EscobedoZuazua} in the spirit of \cite{MunnierZuazua2,MunnierZuazua}. 
We do not expand the remainder ({\em i.e.}, the $k$-spherical harmonic for $k\geq 2$) in this part, as we show it satisfies the Stokes equations with Dirichlet boundary condition on $B_{0}$ which has been studied formerly in several papers \cite{DS99a,MS97}.  

Going further in the spherical-harmonic decomposition, we are also able to compute an asymptotic expansion of the solution to the Stokes system \eqref{S1}-\eqref{S-Solideci}  for well-localized initial data:

\begin{theorem} \label{thm_AsymptoticStokes}
For all $p \in [2,\infty]$, and for any $V_{0}\in \mc{L}^1\cap  L^2(\R^2,\exp(|x|^2/4){\rm d}x)$, setting $\ell_0 = \ell_{V_0}$ and 
\begin{equation*}
	\vec{\mathcal{M}} =( m - \pi) \ell_{0},
\end{equation*}
we have
\begin{align}
	& \lim_{t \to \infty} t^{1-1/p} \norm{S(t)V_{0} - U_{\vec{\mathcal{M}}}(t,\cdot)}_{L^p(\mc{F}_{0})}  =  0 ,  \label{eq_asymStokes1}
	\\
	& \lim_{t \to \infty} t \left|\ell_{S(t)V_{0}} - \frac{\vec{\mathcal{M}} }{8 \pi\nu t} \right| =  0, \label{eq_asymStokes2}
	\\
	& \limsup_{t \to \infty} t^2 \left| \omega_{S(t)V_{0}}    \right|  <   +\infty, \label{eq_asymStokes4}
\end{align}
where
\begin{equation*}
	U_{\vec{\mathcal{M}}}(t,x)  = \nabla^{\perp}\left[ \frac{1-e^{-\frac{|x|^2}{4\nu t}}}{2\pi |x|^2} \vec{\mathcal{M}} \cdot x^{\perp}\right].
\end{equation*}
\end{theorem}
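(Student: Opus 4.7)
\textbf{Proof plan for Theorem~\ref{thm_AsymptoticStokes}.}
The plan is to refine the spherical-harmonic decomposition that underlies Theorem~\ref{theo Stokes}. Writing $V_{0}=\sum_{k\geq 0} V_{0}^{(k)}$ in angular Fourier modes about the center of $B_{0}$, the rotational invariance of the disc makes the Stokes-solid semigroup $S(t)$ diagonal with respect to this decomposition, so each mode can be treated independently. The weighted assumption $V_{0}\in L^{2}(\R^{2},e^{|x|^{2}/4}dx)$ provides polynomial moment control that will feed the multiplier arguments below.

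For $k\geq 2$ the boundary datum in \eqref{S3} cannot match a $k$-th angular profile, so $\ell_{V_{0}^{(k)}}=0$, $\omega_{V_{0}^{(k)}}=0$, and $S(t)V_{0}^{(k)}$ solves a genuine exterior Stokes problem with homogeneous Dirichlet data on $\partial B_{0}$. I would invoke the classical exterior Stokes decay estimates of \cite{DS99a,MS97}, combined with the extra moments furnished by the Gaussian weight, to obtain $\|S(t)V_{0}^{(k)}\|_{L^{p}(\mc{F}_{0})}=o(t^{-(1-1/p)})$; these modes are therefore absorbed in the error. For $k=0$, incompressibility forces the velocity to be purely azimuthal, the pressure drops out by angular symmetry, and the system reduces to a one-dimensional radial heat equation coupled at $r=1$ to the scalar ODE for $\omega(t)$. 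I would apply the Escobedo-Zuazua multiplier method to this reduced problem in the spirit of \cite{MunnierZuazua,MunnierZuazua2} to obtain $|\omega_{S(t)V_{0}}|=O(t^{-2})$, which is exactly \eqref{eq_asymStokes4}, together with a matching $o(t^{-(1-1/p)})$ bound in $L^{p}$.

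The main case is $k=1$, which carries the translational momentum. I would write the $1$-mode fluid velocity in $\mc{F}_{0}$ via a scalar radial profile $g(t,r)$ with angular structure fixed by a direction in $\R^{2}$, coupled to a translational velocity $\ell(t)\in\R^{2}$ in $B_{0}$. Incompressibility together with the matching condition \eqref{S3} should reduce the linearized system to a scalar one-dimensional heat-type equation for $g(t,r)$ on $r\in[1,\infty)$ with a dynamical Robin-type boundary condition at $r=1$ encoding Newton's law \eqref{S-Solide1}. The key algebraic observation is that the total linear momentum $m\ell(t)+\int_{\mc{F}_{0}}v(t,\cdot)\,dx$ is conserved by the linearized flow; since divergence-freeness combined with $V_{0}\in L^{1}$ forces $\int_{\R^{2}}V_{0}=0$ (via Biot-Savart, no mass in the vorticity is compatible with $L^{1}$ decay), this conserved quantity evaluates at $t=0$ to $m\ell_{0}-\int_{B_{0}}V_{0}=(m-\pi)\ell_{0}=\vec{\mathcal{M}}$. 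Applying the Gaussian-weighted multiplier method to the reduced scalar equation, I would match $g$ asymptotically to the self-similar profile whose $\nabla^{\perp}$ is precisely $U_{\vec{\mathcal{M}}}$; the boundary value $g(t,1)$ then produces $\ell_{S(t)V_{0}}\sim \vec{\mathcal{M}}/(8\pi\nu t)$, i.e.\ \eqref{eq_asymStokes2}, while the field itself on $\mc{F}_{0}$ gives \eqref{eq_asymStokes1}.

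The hardest step will be the algebraic reduction of the $1$-mode to a scalar heat-type equation together with the precise identification of the conserved combination $(m-\pi)\ell_{0}$. Unlike the $0$-mode, the $1$-mode pressure is nontrivial and couples back to Newton's law through the boundary traction, and the conserved momentum mixes fluid and solid degrees of freedom through the natural $\mc{L}^{2}$ pairing rather than through either part alone. Once this reduction and the identification of the conserved quantity are in place, the Escobedo-Zuazua multipliers yield the asymptotic profile in a by-now-standard way, and the passage from the reduced radial problem back to the $L^{p}$-statements of the theorem is routine.
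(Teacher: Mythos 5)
Your plan follows essentially the same route as the paper: decompose into angular Fourier modes, show that the modes $k=0$ and $k\geq 2$ decay strictly faster than $t^{-(1-1/p)}$, and extract the leading profile from the $k=1$ mode by reducing it to a scalar heat equation with a dynamic boundary condition and identifying the conserved quantity $(m-\pi)\ell_0$. You have correctly located the crux: the paper performs exactly the reduction you describe by introducing $Z=\partial_r\Psi+\Psi/r=\frac1r\partial_r(r\Psi)$, which kills the pressure (the first-mode pressure is forced to be $\beta(t)/r$) and turns Newton's law into the dynamic boundary condition $\ell_Z'=\alpha_0\nu\,\partial_r z(t,1)$ with $\alpha_0=4\pi/(\pi+m)$; the conserved Munnier--Zuazua mass of that scalar problem is then computed by integration by parts to be $(m-\pi)\ell_0$, and inverting $Z\mapsto\Psi$ (Proposition \ref{prop_z2psi}) produces $U_{\vec{\mathcal{M}}}$. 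Your identification of this invariant as the total momentum, and your observation that $\int_{\R^2}V_0=0$ for divergence-free $L^1$ data, match the paper's computation.

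There is, however, one concrete gap: your treatment of the modes $k\geq 2$. The classical exterior Stokes estimates of \cite{DS99a,MS97} give $\|S_R(t)v_{R,0}\|_{L^p}\lesssim t^{1/p-1/q}\|v_{R,0}\|_{L^q}$ only for $q\in(1,\infty)$, so even formally pushing $q\to1$ yields the borderline rate $t^{-(1-1/p)}$, not $o(t^{-(1-1/p)})$; and Gaussian moments of the data do not by themselves improve the decay of that semigroup without a cancellation condition. The cancellation you actually need is that each mode $k\geq2$ carries zero momentum: the paper exploits this by reducing each such mode, via the same change of unknown $z_k=r^{-k}\partial_r(r^k\psi_k)$, to a heat equation with homogeneous Dirichlet data at $r=1$ and \emph{vanishing} conserved mass $M=0$, to which the second-order Munnier--Zuazua expansion applies and yields the extra factor $|\log t|\,t^{-1/2}$. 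Without this (or an equivalent vanishing-moment argument), the $k\geq2$ remainder cannot be absorbed into the error in \eqref{eq_asymStokes1}. A secondary caveat of the same flavor: the $O(t^{-2})$ bound for $\omega_{S(t)V_0}$ in \eqref{eq_asymStokes4} does not follow from a two-dimensional multiplier estimate on $w$ alone (that would only give $t^{-1}$); one must use the change of variables $\mathrm{v}=w/r$ that recasts the $0$-mode as a radial heat problem in effective dimension $n=4$, whose $\mf{L}^1\to\mf{L}^\infty$ rate $t^{-n/2}=t^{-2}$ is what delivers the claim.
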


Before going further, let us emphasize the following important remark, which can be easily deduced from explicit computations: provided $\vec{\mathcal{M}} \neq 0$, for all $p >1$, 
$$
	0 <  \liminf_{t \to \infty } t^{1 - 1/p} \norm{U_{\vec{\mathcal{M}}}(t,\cdot)}_{L^p(\mc{F}_{0})}= \limsup_{t \to \infty } t^{1 - 1/p} \norm{U_{\vec{\mathcal{M}}}(t,\cdot)}_{L^p(\mc{F}_{0})}=  \norm{U_{\vec{\mathcal{M}}}(1,\cdot)}_{L^p(\R^2)} < \infty.
$$

The quantity $\vec{\mathcal{M}}$ represents the total momentum of the system. Indeed, since any $V_0 \in L^1(\mathbb{R}^2)$ satisfying the divergence free condition has $0$ mean value,
$$
	\int_{\mc{F}_0} V_0 \, {\rm d}x + m \ell_0 = - \int_{B_0} V_0 \, {\rm d}x + m \ell_0 = ( m - \pi) \ell_0 = \vec{\mathcal{M}}.
$$ 

Therefore, if $\vec{\mathcal{M}}\neq 0,$ \eqref{eq_asymStokes1} shows that $ U_{\vec{\mathcal{M}}}$ is the first term in the asymptotic expansion of $S(t)V_0$. 
We deduce also from \eqref{eq_asymStokes2}--\eqref{eq_asymStokes4} that, provided $\vec{\mathcal{M}} \neq 0$, the solid, whose center of mass corresponds to $h_{S(t)V_0} = \int_0^t \ell_{S(s)V_0} \, {\rm d}s$,  goes logarithmically to infinity and stops turning. 

If $\vec{\mathcal{M}}=0$, then a careful reading of the proof of Theorem \ref{thm_AsymptoticStokes} yields
\begin{equation}\label{eq_asymStokes5}
\limsup_{t \to \infty} \left(\frac{t^{5/4}}{|\log(t)|^{1/2}} \left| \ell_{S(t)V_{0}}  \right| \right) < +\infty 
\end{equation}
which implies that the disk converges to a fixed state when considering the linearized equations \eqref{S1}--\eqref{S-Solideci}.
Note that the condition $\vec{\mathcal{M}} = 0 $ is satisfied in the following two cases:
\begin{itemize}
	\item $m = \pi$, that is the case of a solid having exactly the same density as the fluid.
	\item $\ell_0 = 0$, that is the case of a solid whose center of mass has zero initial velocity.
\end{itemize}
Thus, when $\vec{\mathcal{M}} = 0$, we expect a different behavior as $t \to \infty$ of the solutions of the Stokes system \eqref{S1}--\eqref{S-Solideci}. 

\medskip

With the decay estimates of Theorem \ref{theo Stokes} at hand, we study the long-time behavior of solutions to the Navier Stokes system \eqref{NS11}--\eqref{Solideci1}.
We prove that, for small initial data, such solutions satisfy decay estimates similar to the one of the solutions to the Stokes equations:

\begin{theorem}\label{Thm-q-dans(1,2)}
	Let $q \in (1, 2]$. Then there exists $\lambda_0(q)>0$ such that, for all initial data $V_0 \in \mc{L}^q \cap \mc{L}^2$ satisfying the smallness assumption
	\begin{equation}
		\label{Smallness-Intro}
		\|  V_{0} \|_{\mc{L}^2} \leq \lambda_{{0}}(q),
	\end{equation}
	the unique weak solution $V$ of \eqref{NS11}--\eqref{Solideci1} with initial data $V_0$ satisfies the following decay estimates:
	\begin{itemize}
		\item for all $p \in [2, \infty)$, there exists $H(p,q, V_0)$ such that
		\begin{equation}
			\label{H(p,q)}
			\sup_{t >0} \{ t^{\frac{1}{q} - \frac{ 1}{p}} \| V(t) \|_{\mc{L}^p}\} \leq H(p,q,V_0).
		\end{equation}
		\item there exists $H_\ell(q,V_0)$ such that 
		\begin{equation}
			\label{C-ell-q}
			\sup_{t >0} \{t^{\frac{1}{q}} |\ell_V(t)| \} \leq H_\ell(q, V_0).
		\end{equation}
	\end{itemize}
	Besides, the function $q \mapsto \lambda_0(q)$ can be chosen as an increasing function of $q \in (1,2]$ which goes to zero as $q \to 1$.
\end{theorem}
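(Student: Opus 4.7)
My approach would be a Kato--Fujita fixed-point bootstrap on the Duhamel formulation, propagating the Stokes decay of Theorem~\ref{theo Stokes} to the full nonlinear solution by using the smallness \eqref{Smallness-Intro} to absorb the quadratic nonlinearity. I would first rewrite \eqref{NS11}--\eqref{Solideci1} in mild form
\[
V(t) = S(t) V_{0} - \int_{0}^{t} S(t-s)\, \mathbb{P}_{q}\, \mathrm{div}\bigl((V(s)-\ell(s))\otimes V(s)\bigr)\, ds,
\]
where $\mathbb{P}_{q}$ is the Leray-type projection onto $\mc{L}^{q}$; the divergence form is legitimate because $\mathrm{div}\, V = 0$ and $\ell(t)$ is spatially constant, and the matching $V = \ell + \omega x^{\perp}$ on $\partial B_{0}$ ensures no boundary contribution. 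I would then introduce the bootstrap norm
\[
\vert\!\vert\!\vert V \vert\!\vert\!\vert_{T} = \max_{p \in \Pi}\; \sup_{0 < t \leq T} t^{1/q - 1/p}\, \|V(t)\|_{\mc{L}^{p}},
\]
where $\Pi \subset [q,\infty)$ is a finite set containing the target exponents plus auxiliary values $p_{2}$ needed for the closure. The linear contribution $S(t)V_{0}$ already has the right decay by \eqref{Lp-Lq}, and the energy inequality \eqref{formalenergy} furnishes $\|V(t)\|_{\mc{L}^{2}} \leq \|V_{0}\|_{\mc{L}^{2}}$.

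For the nonlinear Duhamel integral, the dual form of \eqref{est grad 1}--\eqref{est grad 2} yields
\[
\bigl\| S(t-s)\, \mathbb{P}_{q}\, \mathrm{div}\, F \bigr\|_{\mc{L}^{p}} \lesssim (\nu(t-s))^{-1/2 + 1/p - 1/r}\, \|F\|_{L^{r}(\mc{F}_{0})}
\]
for suitable $r$. Splitting $(V-\ell) \otimes V = V \otimes V - \ell \otimes V$ and applying H\"older with exponents $(2, p_{2})$, where $1/2 + 1/p_{2} = 1/r$, together with $|\ell(s)| \lesssim \|V(s)\|_{\mc{L}^{p_{2}}}$ (which follows from $\ell_{V} = \pi^{-1}\int_{B_{0}} V$), one obtains
\[
\|F(s)\|_{L^{r}} \lesssim \|V(s)\|_{\mc{L}^{2}}\, \|V(s)\|_{\mc{L}^{p_{2}}} \leq \|V_{0}\|_{\mc{L}^{2}}\, \vert\!\vert\!\vert V \vert\!\vert\!\vert_{T}\, s^{-(1/q - 1/p_{2})}.
\]
A Beta-function computation, made clean by the identity $1/r - 1/p_{2} = 1/2$, then shows that the time convolution is precisely of order $t^{-(1/q - 1/p)}$, producing the self-bounding inequality
\[
\vert\!\vert\!\vert V \vert\!\vert\!\vert_{T} \leq C_{0}(p,q)\,\|V_{0}\|_{\mc{L}^{q}} + C_{1}(p,q)\, \|V_{0}\|_{\mc{L}^{2}}\, \vert\!\vert\!\vert V \vert\!\vert\!\vert_{T}.
\]
Choosing $\lambda_{0}(q)$ so that $C_{1}(p,q)\,\lambda_{0}(q) < 1/2$ for every $p \in \Pi$ closes the bootstrap uniformly in $T$ and gives \eqref{H(p,q)}. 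The estimate \eqref{C-ell-q} then follows from $|\ell_{V}(t)| \lesssim \|V(t)\|_{\mc{L}^{p}}$ by taking $p$ arbitrarily large, refined if necessary by a further iteration of the Duhamel formula isolating the solid component of $S(t)$ in the spirit of Theorem~\ref{thm_AsymptoticStokes}, so as to reach the sharp $t^{-1/q}$ rate.

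The main obstacle is the presence of the drift $\ell(t)$ inside the convective term: because $\ell(t)$ is spatially constant, the piece $\ell \otimes V$ has no finite $L^{r}(\mc{F}_{0})$ norm when estimated directly, and only through the divergence-form rewriting combined with H\"older against the energy-bounded $\mc{L}^{2}$ factor of $V$ can it be absorbed. The degeneration $\lambda_{0}(q) \to 0$ as $q \to 1^{+}$ reflects the fact that the admissible ranges of $r$ and $p_{2}$ in this closure narrow as $q$ approaches $1$, causing the constant $C_{1}(p,q)$ to deteriorate.
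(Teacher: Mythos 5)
Your overall strategy (Duhamel formulation, dual Stokes estimates of Corollary \ref{cor Stokes}, smallness to absorb the quadratic term) is the same as the paper's, which runs a Kato iteration $Y_{n+1}=S(t)V_0+\mathcal K Y_n$ first for $q=2$ (controlling $t^{3/8}\|Y_n\|_{\mc L^8}$, $\|Y_n\|_{\mc L^2}$, $t^{1/2}|\ell_n|$) and then propagates the $\mc L^q$ rates. However, there are two genuine gaps. First, your exponent bookkeeping does not close. With the H\"older splitting $(2,p_2)$, $1/r=1/2+1/p_2$, the kernel exponent is $-1/2+1/p-1/r=-1+1/p-1/p_2$, which is integrable near $s=t$ only if $p_2>p$ \emph{strictly}; so every exponent in your finite set $\Pi$ demands a strictly larger auxiliary exponent, and the chain never terminates. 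The paper avoids this by pairing a factor with itself at the top exponent ($\|Y_n\|_{\mc L^8}^2\in L^4$, kernel $(t-s)^{-5/8}$). Moreover, for the drift piece your claimed bound $\|\ell(s)\otimes V(s)\|_{L^r}\lesssim\|V(s)\|_{\mc L^2}\|V(s)\|_{\mc L^{p_2}}$ is false: $\|\ell\otimes V\|_{L^r(\mc F_0)}=|\ell|\,\|V\|_{L^r(\mc F_0)}$ with $r=2p_2/(p_2+2)<2$, and on the unbounded domain $\mc F_0$ the $L^r$-norm with $r<2$ is not dominated by $L^2\cap L^{p_2}$. The two pieces of the nonlinearity must be placed in different Lebesgue spaces and hit with different dual estimates, as the paper does (e.g.\ $|\ell_n|\,\|Y_n\|_{\mc L^2}$ paired with $K_4(2,2)$).

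Second, and more importantly, your route to \eqref{C-ell-q} does not reach the rate $t^{-1/q}$. The bound $|\ell_V(t)|\lesssim\|V(t)\|_{\mc L^p}$ only yields $t^{1/p-1/q}$ for each finite $p$, and $p=\infty$ is not available for the nonlinear solution (the theorem itself excludes it). The sharp decay of the rigid velocity is obtained in the paper from a separate added-mass duality estimate (Corollary \ref{est_lv}): for $V(t)=S(t)\mathbb P\div F$ with $F=0$ on $B_0$ one has the identity $-(\pi+m)\ell_{V,1}(t)=\int_{\mc F_0}F:\nabla S(t)\Xi$ with $\Xi$ an explicit Kirchhoff potential, whence $|\ell_{V(t)}|\le K_\ell(q)\,t^{-(1/2+1/q)}\|F\|_{L^q}$; plugging this into Duhamel gives \eqref{C-ell-q}. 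Your phrase ``refined if necessary by a further iteration \dots isolating the solid component'' gestures at this but supplies no mechanism; without Corollary \ref{est_lv} (or an equivalent mode-1 analysis) the estimate \eqref{C-ell-q} is not proved. Two smaller omissions: you apply the self-bounding inequality directly to the weak solution without establishing a priori finiteness of the bootstrap norm (the paper circumvents this by working with the iterates $Y_n$), and you do not identify the mild solution with the unique Takahashi--Tucsnak weak solution, which requires showing $V\in L^2(0,T;H^1(\mathbb R^2))$.
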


The proof of Theorem \ref{Thm-q-dans(1,2)} consists of two steps. First, we consider the case $q = 2$. Following the idea developed by Kato in \cite{Kato84}, we construct successive approximations $Y_{n}$ which verify the decay estimates \eqref{H(p,q)} uniformly in $n$ for $p=2$, $p = 8$ and \eqref{C-ell-q}. Next, we pass to the limit to get a solution to the Navier-Stokes equations with such a time-decay. To reach $p \in [2,\infty)$ we use a bootstrap argument based on the Duhamel formula as in \cite{Carpio}. We then develop the case $q \in (1,2)$ by showing that estimates \eqref{H(p,q)}  are satisfied uniformly by the sequence $Y_n$.

Using then a bootstrap argument allows us to quantify the proximity of the solution of the non-linear system \eqref{NS11}--\eqref{Solideci1} and of the linear system \eqref{S1}--\eqref{S-Solideci}:

\begin{theorem}\label{theo kato} 
	Let $q \in (1,2]$. Taking $\lambda_0(q) >0$ as in Theorem \ref{Thm-q-dans(1,2)}, for any $V_0 \in \mc{L}^q \cap \mc{L}^2$ verifying \eqref{Smallness-Intro},
	the unique global solution $V$ of \eqref{NS11}-\eqref{Solideci1} with initial data $V_0$ verifies:
	 for all  $p \in [2,\infty)$ there exist constants $C(p,q, V_0)>0$ for which:
	\begin{eqnarray}
			&\ds \sup_{t >2}  \{ t^{1-\frac1p}  \| V(t) -S(t) V_0  \|_{\mc{L}^p}\}  \leq C(p,q,V_0),
			\quad & \text{if $q \in (1, 4/3)$},
			\label{eq_firstorder-a}
			\\
			&\ds \sup_{t >2}  \Big\{ \frac{t^{1-\frac1p}}{\log(t)}   \| V(t) -S(t) V_0 \|_{\mc{L}^p} \Big\}  \leq C(p,q,V_0),
			\quad & \text{if $q =4/3$},
			\label{eq_firstorder-b}
			\\
			& \ds \sup_{t >2}  \{ t^{\frac2q - \frac12-\frac1p}  \| V(t)  - S(t) V_0 \|_{\mc{L}^p}\}  \leq C(p,q,V_0)
			\quad & \text{if $q\in ( 4/3,2]$.}\quad
			\label{eq_firstorder-c}
	\end{eqnarray}
	Similarly, there exist constants $C_\ell(q,V_0)>0$ such that
	\begin{eqnarray} 
			&\sup_{t >2}  \{ t |\ell_{V}(t)- \ell_{S(t)V_0}|\}  \leq C_{\ell}(q,V_0),
			\quad & \text{if $q \in (1, 4/3)$},
			\label{eq_firstorder2-a}
			\\
			&\ds \sup_{t >2}  \Big\{ \frac{t}{\log(t)}   | \ell_V(t)-\ell_{S(t)V_0}  | \Big\}  \leq C_\ell(q,V_0),
			\quad & \text{if $q = 4/3$},
			\label{eq_firstorder2-b}
			\\
			& \ds \sup_{t >2}  \{ t^{\frac{2}{q} - \frac{1}{2}}  | \ell_V(t)- \ell_{S(t)V_0}  |\}  \leq C_\ell(q,V_0).
			\quad & \text{if $q \in ( 4/3,2]$}.
			\label{eq_firstorder2-c}
	\end{eqnarray}
\end{theorem}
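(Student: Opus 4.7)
The starting point is the Duhamel formula applied to the difference $V(t) - S(t)V_0$. Writing the nonlinearity in divergence form via $\div(V-\ell)=0$, with $G(s) := (V(s)-\ell(s))\otimes V(s)$, one has
\[
V(t) - S(t)V_0 = -\int_0^t S(t-s)\,\mb{P}\,\div\, G(s)\,ds,
\]
where $\mb{P}$ denotes the projection onto $\mc{L}^2$. To control the operator $\mb{P}\div$ I would argue by duality: for $\phi\in\mc{L}^{p'}$,
\[
\langle S(t-s)\,\mb{P}\div G(s),\phi\rangle_{\mc{L}^2} = -\int_{\mc{F}_0} \sum_{i,j} G_{ij}(s)\,\partial_i (S^*(t-s)\phi)_j\,dx,
\]
the boundary contributions on $\partial B_0$ dropping out because $S^*(t-s)\phi$ is rigid on $B_0$ while $V-\ell = \omega x^{\perp}$ has zero normal component on $\partial B_0$. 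The generator of the system being self-adjoint for the weighted pairing \eqref{eq_ps}, $S^*$ inherits the bounds of Theorem~\ref{theo Stokes}; choosing the dual exponent $r=2$ and applying the long-time estimate in \eqref{est grad 2} gives, for $t-s\geq 1/\nu$,
\[
\|S(t-s)\,\mb{P}\div G(s)\|_{\mc{L}^p} \leq C(t-s)^{-(1-1/p)}\,\|G(s)\|_{L^2(\mc{F}_0)}.
\]
The key feature is that this decay exponent $1-1/p$ does not depend on $r$.

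Next, I would bound $G$. For $s\geq 1$, Theorem~\ref{Thm-q-dans(1,2)} applied at $p=4$, combined with H\"older's inequality and the decay of $\ell_V$, yields $\|G(s)\|_{L^2}\leq C s^{-(2/q-1/2)}$; for $s\in(0,1]$ the $\mc{L}^2$-smallness plus parabolic smoothing furnish the cruder but integrable bound $\|G(s)\|_{L^2}\leq C s^{-1/2}$. Inserting these into the Duhamel formula and splitting
\[
\int_0^t (t-s)^{-(1-1/p)}\|G(s)\|_{L^2}\,ds = \int_0^1 + \int_1^{t/2} + \int_{t/2}^t,
\]
the $\int_0^1$ and $\int_{t/2}^t$ pieces always give $\mathcal{O}(t^{-(1-1/p)})$, so the trichotomy of the theorem comes entirely from the middle piece, which behaves like $t^{-(1-1/p)}\int_1^{t/2} s^{-(2/q-1/2)}\,ds$. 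For $q<4/3$ the exponent $2/q-1/2$ exceeds $1$, this integral is bounded uniformly in $t$, and one gets \eqref{eq_firstorder-a}; at the borderline $q=4/3$ a $\log t$ factor appears, yielding \eqref{eq_firstorder-b}; for $q>4/3$ the integral grows like $t^{3/2-2/q}$, producing the slower rate $t^{-(2/q-1/2-1/p)}$ of \eqref{eq_firstorder-c}.

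The solid-velocity bounds \eqref{eq_firstorder2-a}--\eqref{eq_firstorder2-c} follow from $|\ell_W|\leq C\|W\|_{\mc{L}^p}$, valid for any finite $p$ since $B_0$ is bounded, applied to $W = V(t)-S(t)V_0$; passing to $p\to\infty$ in the estimates just obtained (with some care to keep the constants uniform) gives the announced rates $t^{-1}$, $t^{-1}\log t$ and $t^{-(2/q-1/2)}$, and alternatively one can reach the $p=\infty$ endpoint directly by redoing the duality argument with test functions concentrated on $B_0$. The main obstacle I expect is not the Beta-integral bookkeeping---which mechanically produces the threshold $q=4/3$---but rather the justification of the two analytic inputs above: that $S^*$ satisfies \eqref{est grad 2} in the present exterior-domain-with-disk setting, and that the integration by parts against the nonlinear source $G$ is licit for the weak solutions constructed in Theorem~\ref{Thm-q-dans(1,2)}. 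Both points require carefully handling the coupled rigid-motion structure on $B_0$, but once they are secured the remaining computation is routine.
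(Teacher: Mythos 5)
Your treatment of the velocity estimates \eqref{eq_firstorder-a}--\eqref{eq_firstorder-c} is essentially the paper's proof: Duhamel, the duality construction of $S(t)\mb{P}\div$ with source measured in $L^2$ (this is Corollary \ref{cor Stokes} with $q=2$, giving the rate $(t-s)^{-1+1/p}$ for \emph{all} $t-s>0$, so no short-time case distinction is needed), the bound $\|V(s)\|_{\mc{L}^4}^2+|\ell_V(s)|\,\|V(s)\|_{\mc{L}^2}\leq C s^{1/2-2/\tilde q}$ for every $\tilde q\in[q,2]$ from Theorem \ref{Thm-q-dans(1,2)}, and the splitting of the time integral. (One small inaccuracy: for $q\in(4/3,2]$ the piece $\int_{t/2}^t$ is $\mathcal{O}(t^{-(2/q-1/2-1/p)})$, not $\mathcal{O}(t^{-(1-1/p)})$; since that is the announced rate anyway, nothing breaks.)

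The genuine gap is in your plan for \eqref{eq_firstorder2-a}--\eqref{eq_firstorder2-c}. Bounding $|\ell_{V(t)}-\ell_{S(t)V_0}|$ by $C\|V(t)-S(t)V_0\|_{\mc{L}^p}$ and ``passing to $p\to\infty$'' cannot work: for each finite $p$ you only get the rate $t^{-(1-1/p)}$, and the constants $C(p,q,V_0)$ you have just produced blow up as $p\to\infty$, because they contain $\int_0^1(1-\tau)^{-1+1/p}\,{\rm d}\tau$-type factors whose limit $\int_0^1(1-\tau)^{-1}\,{\rm d}\tau$ diverges. This divergence is not an artifact of the bookkeeping; it reflects the fact that the formal $p=\infty$ endpoint of \eqref{est div 1} would give a kernel $(t-s)^{-1}$, non-integrable at $s=t$. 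The paper circumvents this with a separate estimate (Corollary \ref{est_lv}), which exploits the added-mass structure: pairing the equation with the Kirchhoff potential $\Xi$ expresses $(\pi+m)\ell_{V,1}(t)$ as $-\int_{\mc{F}_0}F:\nabla S(t)\Xi$, and since the spherical-harmonic decomposition of $\Xi$ reduces to a first mode whose datum $Z_{\Phi,0}=-2\cdot 1_{B_0}$ lies in $\mf{L}^1$, one gets $|\ell_{S(\tau)\mb{P}\div F}|\leq K_\ell(q)\,\tau^{-1/2-1/q}\|F\|_{L^q}$ for every $q\in[2,\infty)$, see \eqref{eq_lv}. The point is that on $(t/2,t)$ one applies this with $q=p$ large, so the singularity $(t-s)^{-1/2-1/p}$ is integrable, while the source is measured in $L^p$ through $\|V\|_{\mc{L}^{2p}}^2+|\ell_V|\,\|V\|_{\mc{L}^p}\lesssim s^{1/p-2/\tilde q}$, and the net rate $t^{1/2-2/\tilde q}$ recovers $t^{-1}$ (choosing $\tilde q=4/3$). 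Your fallback suggestion of ``redoing the duality argument with test functions concentrated on $B_0$'' points in the right direction, but without the quantitative gain $\tau^{-1/2-1/q}$ with large $q$ --- i.e., without trading integrability of the source against the singularity of the kernel --- the rates $t^{-1}$, $t^{-1}\log t$ and $t^{-(2/q-1/2)}$ in \eqref{eq_firstorder2-a}--\eqref{eq_firstorder2-c} are out of reach.
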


Let us comment the fact that if the initial data $V_0$ belongs to $\mc{L}^q \cap \mc{L}^2$ for some $q \in (1,2)$ and satisfies the smallness condition \eqref{Smallness-Intro}, the $\mc{L}^p$-norm of the difference between the solution of the complete non-linear system \eqref{NS11}--\eqref{Solideci1} and the linear one, given by $S(t) V_0$, decays faster than the a priori decay estimates predicted by Theorem \ref{theo Stokes}. Indeed, we check easily that $1-\frac1p > \frac1q-\frac1p$ for any $q >1$ and that $\frac2q-\frac12-\frac1p>\frac1q-\frac1p$ for any $q<2$.

 Combining Theorem \ref{theo kato} and Theorem \ref{theo Stokes} and taking $\lambda_0 = \lambda_0(5/4)$, we can guarantee that, for all $q\in (1,2]$ and all $V_0 \in \mc{L}^q \cap \mc{L}^2$ satisfying 
$\| V_0 \|_{\mc{L}^2} \leq \lambda_0,$
we have 
$$\sup_{t >2}  \{ t^{\frac1q-\frac1p}  \| V(t)\|_{\mc{L}^p}\} < \infty.$$
Indeed, for $q >5/4$, it relies on Theorem \ref{Thm-q-dans(1,2)} and the fact that $q \mapsto \lambda_0(q)$ is increasing. For $q \in (1,5/4]$, it is a simple combination of Theorem \ref{theo kato} (for $\| V_0 \|_{\mc{L}^2} \leq \lambda_0(5/4)$ because $V_0\in \mc{L}^{5/4}$) with the decay estimates of Theorem \ref{theo Stokes} (because  $V_0\in \mc{L}^{q}$). If $V_0$ satisfies the further assumption $V_0 \in \mc L^1 \cap L^2(\R^2,\exp(|x|^2/4)\, {\rm d}x)$ with $\| V_0 \|_{\mc{L}^2} \leq \lambda_0(5/4)$, we can also combine Theorem \ref{theo kato} with Theorem \ref{thm_AsymptoticStokes} yielding that, for all $p \geq 2$:  
$$
\sup_{t>2} t^{1-\frac 1p} \|V(t)\|_{\mc{L}^p} < \infty.
$$
In all these cases, we obtain that the solution to the Navier Stokes system thus decays with time (at least) as fast as the solution to the Stokes system. 

\bigskip

The paper is organized as follows. In next section, we collect some preliminary results. We explain the decomposition of the velocity-field in spherical harmonics.
We then compute the different equations satisfied by the different modes of the velocity-field and we end up the section by several elliptic lemmas that 
will be used further. Section \ref{sect2} is devoted to the proof of {Theorem \ref{theo Stokes}} and {Theorem \ref{thm_AsymptoticStokes}}. Section \ref{sect3} contains
the proof of Theorem \ref{Thm-q-dans(1,2)} and Theorem \ref{theo kato}. The article ends by some comments and open problems, in particular the lack of the first asymptotic term of the Navier-Stokes solution.

\bigskip

{\bf Notations.}	In the whole article, we use classical notations for function spaces. The symbol $L^p(\Omega, {\rm d}\mu)$ stands for the Lebesgue space with respect to measure ${\rm d}\mu$ defined on an open set $\Omega \subset \mathbb R^n$.  
If ${\rm d}\mu$ is the Lebesgue measure, we drop ${\rm d}\mu$. Sobolev spaces are denoted by $H^m(\Omega),$ $m\in\mathbb Z.$  
Further notations for function spaces are introduced along the paper. We shall use extensively symbol $L$ in different fonts (such as $\mathcal L^p, \ \mathfrak{L}^p$). This will correspond to variants of Lebesgue spaces. The only exception concerns $\mathscr{L}_c(X)$ (resp. $\mathscr{L}_c(X \to Y)$), which represent the Banach space of continuous linear operators from a Banach space $X$ to itself (resp. a Banach space $X$ to another Banach space $Y$).

In what follows, we will use capital letters to denote functions defined on $\R^2$, as we did for the velocity $V$ above, and denote by the corresponding small characters the restriction on $\mc{F}_0$. To be more precise, for $V,\,W,\, Z$ ($\cdots$) defined on $\R^2$, functions $v, \, w, \, z$ denote the corresponding restrictions of $V,\, W, \, Z$ on $\mc{F}_0$ and $\ell_V,\, \ell_W, \ell_Z$ the mean value of $V, \, W, \, Z$ on $B_0$. In the sequel, when considering functions $W,\, Z$ ($\cdots$) which are constant on $B_0$, we will identify them with the couple $(w,\ell_W),\ (z, \ell_Z)$ ($\cdots$) and write $W \doteq (w, \ell_W)$, $Z \doteq (z, \ell_Z)$. In the case of the velocity $V$ in $\mc{L}^p$, the restriction of $V$ is $\ell_V+ \omega_V x^\perp$ and thus we also identify $V$ with the triplet $(v, \ell_V, \omega_V)$ and note $V \doteq  (v, \ell_V, \omega_V)$.

\section{Preliminary results}\label{sectpas2maispresk}

We first recall how the Cauchy problem for \eqref{S1}--\eqref{S-Solideci} has been tackled in \cite{Takahashi&Tucsnak04}.
Formal energy estimates imply that, for a sufficiently smooth and localized initial data, $V(t) \in \mathcal L^2$ for all $t.$
In this framework, system \eqref{S1}--\eqref{S-Solideci} reduces to the abstract ODE: $\partial_t V+ A V = 0,$
where $A$ is the unbounded operator with domain:
\begin{equation} \label{domainStokes}
	 \mathcal{D}(A) = \{ V \in H^1(\R^2), \, V_{|\mc{F}_0} \in H^2(\mc{F}_{0}),\ \div V=0 \text{ in }\R^2,\ D(V)=0\text{ in } B_{0} \} 
\end{equation}
such that for $V \in \mathcal{D}(A)$, $A V:=\mb{P}_{2}\mc{A} V$  with 
\begin{equation*}
\mc{A} V =
\left\{\begin{array}{cc} - \nu \Delta V &\text{ in } \mc{F}_{0}\,, \\[8pt] \dfrac{2 \nu}m \displaystyle{\int_{\partial B_{0}}} D(V) n\, {\rm d}s +\frac{2 \nu}{\mc{J}}x^\perp \displaystyle{\int_{\partial B_{0}}} y^\perp\cdot D(V) n\, {\rm d}s(y) &\text{ in } B_{0}\,,\end{array}\right.
\end{equation*}
and where $\mathbb{P}_2$ is the orthogonal projector from $L^2(\R^2)$ onto $\mc{L}^2$.

For arbitrary $p \in [1,\infty),$  $\mc{L}^p$ is a closed subspace of $L^p(\R^2).$ Hence we can define $\mathbb{P}_{p}$ the projector operators from $L^p(\R^2)$ onto $\mc{L}^p$ which coincide with $\mathbb{P}_2$ on $L^p(\R^2) \cap L^2(\R^2)$ (see e.g. \cite{Yun&Wang}). These projectors are  obviously continuous and satisfy $\mathbb{P}_{p}V=\mathbb{P}_{q}V$ for all $V\in L^p\cap L^q.$ In what follows, we omit the index $p$. 
We emphasize that the pressure does not appear in the abstract ODE. But, once a solution $V$  
is constructed, one shows the existence of a pressure such that \eqref{S1}--\eqref{S-Solideci} holds true (see the proof of Corollary 4.3 in \cite{Takahashi&Tucsnak04}). According to this, for sake of simplicity we omit to mention the pressure when considering solutions of \eqref{S1}--\eqref{S-Solideci}.

\medskip

Proposition 4.2 in \cite{Takahashi&Tucsnak04} shows that $A$ is a self-adjoint maximal monotone operator. Therefore, applying Hille-Yosida theorem (see e.g. \cite[Theorem 7.7]{Brezis}) yields global solutions to \eqref{S1}--\eqref{S-Solideci} for arbitrary initial
data $V_0 \in \mathcal L^2.$
Furthermore, there holds: 
\begin{equation*}
\|V(t)\|_{\mathcal L^2} \leq \|V_0\|_{\mathcal L^2}\,, \qquad \forall \, t \in \mathbb R^+ \,,
\end{equation*}
and there exists a constant $C$ such that 
\begin{equation*}
	\norm{\partial_t V(t)}_{\mathcal{L}^2}=\norm{A V(t)}_{\mathcal{L}^2} \leq \frac{C}{t} \norm{V_{0}}_{\mathcal{L}^2}.
\end{equation*}
Using the identity
\[
	\nu \norm{D(V)}_{L^2(\mc{F}_0)}^2 = \langle A V, V\rangle_{\mc{L}^2}
\]
for $V \in \mathcal{D}(A)$, see \cite[p.61]{Takahashi&Tucsnak04}, Lemma 4.1 in \cite{Takahashi&Tucsnak04} implies
\begin{equation*}
	\norm{\nabla V(t)}_{L^2(\mc{F}_0)} \leq \frac{C}{\sqrt{t}} \norm{V_{0}}_{\mathcal{L}^2}.
\end{equation*}
Hence, previous results in \cite{Takahashi&Tucsnak04} imply Theorem \ref{theo Stokes} when $q = p= 2$.

To generalize this result to arbitrary values for $p$ and $q$, we provide here an original decomposition of $V(t)$.

\bigskip

\subsection{Spherical-harmonic decomposition of $\mathcal L^p$ spaces}
To motivate the spherical-harmonic decomposition of $\mathcal L^p$, assume for instance that 
$V \in \mathcal L^p$  is smooth and denote $(\ell_V,\omega_V) \in \mathbb R^2 \times \mathbb R$
the only pair such that $V(x) = \ell_V + \omega_V x^{\perp}$ in $B_0.$   
As $V$ is divergence-free, there exists $\tilde \Psi \in \mathcal C^{\infty}(\mathbb R^2)$ such that  $V =\nabla^{\perp} \tilde \Psi.$
Fixing $\tilde \Psi(0) =0$ yields:
$$
\tilde \Psi(x) = \dfrac{1}{2}\omega_V |x|^2 + \ell_V \cdot x^{\perp}\,, \quad \text{in $\overline{B_0}\,.$}
$$
Consequently, introducing radial coordinates $(r,\theta)$ and expanding $\tilde\Psi$ in Fourier series:
$$
	\tilde \Psi(r,\theta) = \sum_{k =0}^{\infty} \Psi_k(r) \cos(k \theta) + \sum_{k=1}^{\infty} \Phi_k(r)\sin(k\theta), \quad \forall \, (r,\theta) \in (0,\infty) \times (-\pi,\pi)\,, 
$$
we observe that, setting $\ell_{V,1} = \ell_V\cdot e_1$, $\ell_{V,2} = \ell_V \cdot e_2$: where $e_1$ and $e_2$ is the canonical orthonormal basis of $\mathbb{R}^2$:
\begin{eqnarray*}
&&\Psi_0(r) = \dfrac{\omega_V}{2} r^2\,, \quad \Psi_1(r) = \ell_{V,2} r\,,  \quad  \Phi_1(r) = -\ell_{V,1} r \,,  \qquad \forall \, r \in (0,1)\,, \\
&&\Psi_k(r) = 0\,, \quad \Phi_k (r) = 0\,, \quad \forall \, k \geq 2\,, \qquad \forall \, r \in (0,1)\,.
\end{eqnarray*}
Then, informations on $\omega_V$ and $\ell_V$ are contained in the zero and first modes
of $\Psi$ respectively, so that these modes are handled separately from the others. 
In particular, we  focus on $\partial_r \Psi_0,\Phi_1,\Psi_1,$ that we denote by $W,\Phi,\Psi$ respectively
and regroup the other terms into a remainder.  In what follows, we still denote
$(r,\theta)$ radial coordinates and introduce $(e_r,e_{\theta})$ the associated local basis. 
Accordingly, we denote by $V_r$ and $V_{\theta}$ the radial and tangential components of a vector $V.$
To state our result precisely, we also introduce, for $p \in [1, \infty]$, the set  
$$
	L^p_{\sigma}(\mc{F}_0) = \{V \in \mc{L}^p,  \, V = 0 \text{ on } B_0 \}.
$$
Though this space contains functions defined on $\R^2$, we will often identify the elements of $L^p_{\sigma}(\mc{F}_0)$ with their restrictions on $\mc{F}_0$.

\begin{proposition} \label{prop_decompLp}
Let $p \in [1,\infty]$ and $V \in \mathcal L^p,$ then there exists a unique 4-uplet $(W, \Psi,\Phi,V_R)$
such that:
\begin{itemize}
\item[(i)] $V(r,\theta) = W(r) \min(r,1) e_{\theta}(\theta) + \nabla^{\perp} [\Psi(r) \cos(\theta)]+ \nabla^{\perp} [\Phi(r) \sin(\theta) ] + V_R(r,\theta),$
\item[(ii)] $W = W(r) \in L^p((0,\infty),r{\rm d}r)\,,$ and $W$ is constant on $(0,1)$:  $W(r) = \ell_W=\omega_V$ for $r \in (0,1)$.
\item[(iii)] $(\Psi,\Phi) = (\Psi(r), \Phi(r)) \in W^{1,p}_{\text{loc}}(0,\infty)$ with
$$
\int_{0}^{\infty} \left[ \left| \dfrac{\Psi(r)}{r} \right|^p  + \left| \dfrac{\Phi(r)}{r} \right|^p + |\partial_r \Psi(r)|^2 + |\partial_r \Phi(r)|^p \right] r{\rm d}r < \infty\,,
$$ 
and the functions $\Psi/r,\Phi/r,  \partial_r \Psi,\partial_r \Phi$ are constant on $(0,1)$: $\Psi(r)/r = \partial_r \Psi(r) = \ell_2 = \ell_{V,2}$ and $\Phi(r)/r = \partial_r \Phi(r) =- \ell_1= - \ell_{V,1}$ for $r \in (0,1)$.
\item[(iv)] $V_R  = V_R(x) \in L^{p}_{\sigma}(\mathcal F_0)$ and the following identities hold true: 
\begin{equation} \label{eq_vR}
\int_{0}^{2\pi} V_R(r,\theta) \cdot e_r \cos(\theta)\, \text{d$\theta$} = \int_{0}^{2\pi} V_R(r,\theta) \cdot e_r \sin(\theta)\, \text{d$\theta$} = 
\int_{0}^{2\pi} V_R(r,\theta) \cdot e_{\theta}\, \text{d$\theta$} = 0\,, \quad \forall \, r \in (1,\infty)\,.
\end{equation}
\end{itemize}
Furthermore, there exists a constant $C(p)$ depending only on $p$ such that 
\begin{eqnarray}
 \|W\|_{L^p((0,\infty),r{\rm d}r)} &+& \|V_R\|_{L^p(\mathbb R^2)}  \notag \\ 
&+& \|\partial_r \Psi\|_{L^p((0,\infty),r{\rm d}r)} + \|\Psi/r\|_{L^p((0,\infty),r{\rm d}r)}  \notag \\
&+&  \|\partial_r \Phi\|_{L^p((0,\infty),r{\rm d}r)}  +  \|\Phi/r\|_{L^p((0,\infty),r{\rm d}r)}
\leq C(p) \|V\|_{\mathcal{L}^p}\,. \label{eq_decompLp}
\end{eqnarray}
There exists also a constant $C(p)$ depending only on $p$ so that conversely:  
\begin{eqnarray}
\|V\|_{\mathcal{L}^p} &\leq&  C(p) \Big( \|W\|_{L^p((0,\infty),r{\rm d}r)} + \|V_R\|_{L^p(\mathbb R^2)} + \|\partial_r \Psi\|_{L^p((0,\infty),r{\rm d}r)} + \|\Psi/r\|_{L^p((0,\infty),r{\rm d}r)}  \notag \\
&&+  \|\partial_r \Phi\|_{L^p((0,\infty),r{\rm d}r)}  +  \|\Phi/r\|_{L^p((0,\infty),r{\rm d}r)}\Big)\,, \label{eq_reciproque}
\end{eqnarray}
and
\begin{eqnarray}
\|\nabla V\|_{L^p(\mathcal F_0)} &\leq&  C(p) \Big( \|\partial_r W\|_{L^p((1,\infty),r{\rm d}r)} +  \|W/r\|_{L^p((1,\infty),r{\rm d}r)}  
+ \|\nabla V_R\|_{L^p(\mathcal F_0)}   \notag\\ 
&&+ \|\partial_{rr} \Psi\|_{L^p((1,\infty),r{\rm d}r)} + \|\partial_r \Psi/r\|_{L^p((1,\infty),r{\rm d}r)} +  \| \Psi/r^2\|_{L^p((1,\infty),r{\rm d}r)}  \notag \\
&&+  \|\partial_{rr} \Phi\|_{L^p((1,\infty),r{\rm d}r)} + \|\partial_r \Phi/r\|_{L^p((1,\infty),r{\rm d}r)} +  \| \Phi/r^2\|_{L^p((1,\infty),r{\rm d}r)}\Big)\,.  \label{eq_reciproquegrad}
\end{eqnarray}
\end{proposition}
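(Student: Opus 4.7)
The plan is to identify $W$, $\Psi$, $\Phi$ as specific angular projections of $V$ on the exterior $\mc{F}_0$, read off the interior values from the rigid-body structure on $B_0$, and define $V_R$ as the remainder. Throughout I would work in polar coordinates, using $\nabla^{\perp} = -r^{-1}\partial_{\theta} e_r + \partial_r e_{\theta}$ and the resulting identity $\nabla^{\perp}[f(r)\cos\theta] = (f(r)/r)\sin\theta\, e_r + f'(r)\cos\theta\, e_{\theta}$ (together with its $\sin\theta$ counterpart).

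\emph{Construction.} Since $\div V = 0$ in $\R^2$, there exists a stream function $\tilde\Psi$ (normalized by $\tilde\Psi(0)=0$) with $V = \nabla^{\perp}\tilde\Psi$. On $B_0$ one computes explicitly
\begin{equation*}
\tilde\Psi(r,\theta) = \tfrac{1}{2}\omega_V r^2 - \ell_{V,1}\, r \sin\theta + \ell_{V,2}\, r \cos\theta,
\end{equation*}
which reads off the values of $W$, $\Psi/r$, $\Phi/r$ claimed in items (ii) and (iii) on $(0,1)$. On $\mc{F}_0$, I would define, for $r>1$,
\begin{equation*}
W(r) := \tfrac{1}{2\pi}\int_0^{2\pi} V(r,\theta)\cdot e_\theta\, {\rm d}\theta,\quad \Psi(r) := \tfrac{r}{\pi}\int_0^{2\pi} V\cdot e_r \sin\theta\, {\rm d}\theta,\quad \Phi(r) := -\tfrac{r}{\pi}\int_0^{2\pi} V\cdot e_r \cos\theta\, {\rm d}\theta,
\end{equation*}
and check, using the polar formulas above and $\div V = 0$, that these indeed capture the zero and first Fourier modes of $\tilde\Psi$ in $\theta$ (so that $\partial_r \Psi(r) = \tfrac{1}{\pi}\int_0^{2\pi} V\cdot e_\theta\cos\theta\, {\rm d}\theta$ and analogously for $\partial_r \Phi$). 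Matching with the interior values at $r=1$ is automatic from the direct computation of $V|_{\partial B_0} = \ell_V + \omega_V e_\theta$. Setting
\begin{equation*}
V_R := V - W(r)\min(r,1)\, e_\theta - \nabla^{\perp}[\Psi(r)\cos\theta] - \nabla^{\perp}[\Phi(r)\sin\theta],
\end{equation*}
one checks that $V_R$ vanishes on $B_0$ (the three subtracted pieces reproduce modes $0$ and $1$ of $\tilde\Psi$ there) and satisfies the orthogonality relations \eqref{eq_vR} by direct computation against $e_r \cos\theta$, $e_r\sin\theta$, $e_\theta$ and recognition of the defining projections.

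\emph{$L^p$ bounds.} The main technical obstacle is \eqref{eq_decompLp}, since for $p\neq 2$ one cannot invoke Parseval. Here I would apply Minkowski's integral inequality (equivalently, H\"older on each circle of radius $r$) to each of the projection formulas above: for instance, from $\partial_r\Psi(r) = \tfrac{1}{\pi}\int_0^{2\pi} V(r,\theta)\cdot e_\theta\cos\theta\, {\rm d}\theta$ on $(1,\infty)$, H\"older gives $|\partial_r\Psi(r)|^p \leq C\int_0^{2\pi} |V(r,\theta)|^p\, {\rm d}\theta$, which after integration against $r\,{\rm d}r$ yields $\|\partial_r\Psi\|_{L^p((1,\infty),r{\rm d}r)} \leq C\|V\|_{L^p(\mc{F}_0)}$. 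The same treatment controls $W$, $\Psi/r$, $\Phi/r$, $\partial_r\Phi$ on $(1,\infty)$. On $(0,1)$, these quantities reduce to $\omega_V$ or components of $\ell_V$, which are bounded by $\|V\|_{L^p(B_0)}$ via \eqref{l-omega} and H\"older. The $L^p$ bound on $V_R$ then follows by triangle inequality from the bound on $V$ and on the other three pieces. The case $p=\infty$ is handled analogously, replacing H\"older by the pointwise bound $|W(r)| \leq \|V(r,\cdot)\|_{L^\infty}$.

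\emph{Reverse and gradient bounds; uniqueness.} The bound \eqref{eq_reciproque} is just the triangle inequality applied to the decomposition of item (i). The gradient bound \eqref{eq_reciproquegrad} follows from direct computation in polar coordinates: $\nabla[W(r)e_\theta]$ on $\mc{F}_0$ involves $\partial_r W$ and $W/r$, the full gradient of $\nabla^{\perp}[\Psi(r)\cos\theta]$ involves $\Psi''$, $\Psi'/r$, and $\Psi/r^2$, and similarly for $\Phi$. Uniqueness is immediate: the projection formulas determine $W, \Psi, \Phi$ uniquely on $(1,\infty)$, the interior constants $\omega_V, \ell_{V,1}, \ell_{V,2}$ extend them to $(0,1)$, and $V_R$ is then forced by subtraction. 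The only non-routine input of the whole argument is the $L^p$-continuity of the angular projection operators for $p \neq 2$, which is precisely where Minkowski's integral inequality plays its role.
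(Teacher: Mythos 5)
Your proposal is correct and follows essentially the same route as the paper: the same angular-projection formulas for $W$, $\Psi$, $\Phi$ (with $\partial_r\Psi$, $\partial_r\Phi$ recovered via the divergence-free condition), $V_R$ defined by subtraction, Jensen/H\"older on each circle for \eqref{eq_decompLp}, and the triangle inequality plus explicit polar-coordinate computation for \eqref{eq_reciproque}--\eqref{eq_reciproquegrad}. The only cosmetic difference is that you route the motivation through the global stream function, which the paper does as well in the discussion preceding the proposition.
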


\begin{proof}
Let $p \in [1,\infty].$ We first note that, given $V_R \in L^{p}_{\sigma}(\mathcal F_0),$ it is possible to define by duality the functions: 
$$
r \mapsto \int_{0}^{2\pi} V_R(r,\theta) \cdot e_r \cos(\theta) \, \text{d$\theta$}\,, \quad 
r \mapsto \int_{0}^{2\pi} V_R(r,\theta) \cdot e_r \sin(\theta)\, \text{d$\theta$}\,, \quad 
r \mapsto \int_{0}^{2\pi} V_R(r,\theta) \cdot e_{\theta}\, \text{d$\theta$}  \,,
$$
on $(0,\infty)$. This yields $L^1_{\text{loc}}(1,\infty)$  functions which might satisfy \eqref{eq_vR}.
Also, once $\Phi,\Psi$ and $W,V_R$ are constructed with the regularity of (ii)--(iv), then (i) yields:
\begin{eqnarray}
V_r(r,\theta) &=&   \dfrac{\Psi(r)}{r}\sin(\theta) - \dfrac{\Phi(r)}{r} \cos(\theta) + V_R(r,\theta) \cdot e_{r}\,,  \label{eq_vr}\\
V_{\theta}(r,\theta) &=& W(r) \min(1,r) + \partial_r \Psi(r) \cos(\theta) + \partial_r \Phi(r) \sin(\theta) + V_R(r,\theta) \cdot e_{\theta}\,. \label{eq_vtheta}
\end{eqnarray}
This implies \eqref{eq_reciproque} and \eqref{eq_reciproquegrad}.
 
\medskip
 
To prove existence and uniqueness of $W,\Phi,\Psi,$ we assume $V \in \mathcal L^p $. 
With this further assumption, identities \eqref{eq_vr} and \eqref{eq_vtheta} together with \eqref{eq_vR} imply that the only possible candidates $W,\Phi,\Psi$ are the
following functions: 
\begin{eqnarray}
W(r)& := &\dfrac{1}{2\pi \min(1,r)} \int_{0}^{2\pi} V_{\theta}(r,\theta) \, \text{d$\theta$}\,,   \label{eq_w0}\\
\Phi(r)  &:= & - \dfrac{r}{\pi} \int_0^{2\pi} V_{r}(r,\theta) \cos(\theta) \, \text{d$\theta$}\,, \label{eq_phi1}\\
\Psi(r)  &:=& \dfrac{r}{\pi} \int_0^{2\pi} V_{r}(r,\theta) \sin(\theta) \, \text{d$\theta$}\,. \label{eq_psi1} 
\end{eqnarray}
Differentiating the formulas \eqref{eq_phi1}--\eqref{eq_psi1} and recalling that $V$ is divergence-free then yields:
$$
\partial_r \Phi(r) =   \dfrac{1}{\pi} \int_0^{2\pi} V_{\theta}(r,\theta) \sin(\theta) \, \text{d$\theta$}, 
\quad 
\partial_r \Psi(r) =   \dfrac{1}{\pi} \int_0^{2\pi} V_{\theta}(r,\theta) \cos(\theta) \, \text{d$\theta$}\,,
$$
(where these identities have to be understood in the sense of $\mathcal{D}'(0,\infty)$) and we have then 
\begin{equation}\label{eq_VR}
V_R := V -  ( W\min(1,r) e_{\theta} + \nabla^{\perp} [\Psi \cos(\theta)]+ \nabla^{\perp} [\Phi \sin(\theta)] ).
\end{equation}

\medskip

In the ball, we deduce from these definitions and from $V=\ell_V + \omega_V x^{\perp}$ that for all $r\in (0,1)$:
\[
W(r)=\omega_V, \quad \Psi(r)/r = \partial_r \Psi(r) = \ell_{V,2}, \quad \Phi(r)/r = \partial_r \Phi(r) = - \ell_{V,1}, \quad V_R=0.
\]

From the definition \eqref{eq_w0} of $W,$ Jensen inequality implies that:
$$
|W(r)|^p \leq \frac{1}{2 \pi}  \int_0^{2\pi} |V_{\theta}|^p(r,\theta) \, \text{d$\theta$} \qquad \forall \, r >1.
$$
Combining with the remark that $W(r) = \omega_V$ for $r<1,$
we obtain there exists a constant $C$ for which $\|W\|_{L^p((0,\infty),r{\rm d}r)} \leq C(p) \|V\|_{\mathcal{L}^p}.$
Similarly, we prove that 
\begin{align*}
	&\|\partial_r \Psi\|_{L^p((0,\infty),r{\rm d}r)} + \|\Psi/r\|_{L^p((0,\infty),r{\rm d}r)}  \leq C(p)  \|V\|_{\mathcal{L}^p}\,,
	\\
	& \|\partial_r \Phi\|_{L^p((0,\infty),r{\rm d}r)} + \|\Phi/r\|_{L^p((0,\infty),r{\rm d}r)}  \leq C(p)  \|V\|_{\mathcal{L}^p}\,.
\end{align*}
Finally, straightforward computations yield that $V_R$ is divergence-free, vanishes in $B_0$ and satisfies \eqref{eq_vR}. 
As, combining previous estimates and  \eqref{eq_VR} also yields  that $V_R \in \mathcal L^p,$ we conclude that $V_R \in L^{p}_{\sigma}(\mathcal F_0)$ and that \eqref{eq_decompLp} holds true. This ends the proof of Proposition \ref{prop_decompLp}.
\end{proof}
 
 Of course, Proposition \ref{prop_decompLp} and Theorem \ref{theo Stokes} hold true if we replace the norms $\| \cdot \|_{\mathcal{L}^p}$ by $\| \cdot \|_{L^p(\R^2)}$. We have chosen to keep the notations of Takahashi and Tucsnak \cite{Takahashi&Tucsnak04}, where they prove that $A$ is a self-adjoint maximal monotone operator for the scalar product \eqref{eq_ps}.

Let us also emphasize that in Proposition \ref{prop_decompLp}, all the functions $(W, \partial_r \Psi, \Psi/r, \partial_r \Phi, \Phi/r, v_R)$ defined on $\R^2$ are constant on $B_0$, so that we can identify these extensions with the pairs given by their restriction to $\mc{F}_0$, denoted will small caps, and their mean value on the ball $B_0$, denoted $\ell$. For instance, we will write $W \doteq (w, \ell_W)$. Moreover, in all the text, we will identify $\ell_{W}(t)=\ell_{W(t)}$.

\subsection{Decomposition in spherical harmonics of the Stokes semigroup}

In the rest of this section and in Section \ref{sect2}, we only consider smooth initial data, namely $V_0 \in \mathcal{L}^2 \cap \mathcal{C}^{\infty}_c(\mathbb R^2)$. Indeed, it is sufficient to show Theorem \ref{theo Stokes} for smooth initial data, because $\mathcal{L}^2 \cap \mathcal{C}^{\infty}_c(\mathbb R^2)$ is dense in $\mathcal{L}^q$, for the $\mathcal{L}^q$ norm with $q\in (1,\infty)$. So the estimates \eqref{Lp-Lq}-\eqref{est grad 2} could be extended, thanks to the linearity of the Stokes system.

In this paragraph, we prove that the spherical-harmonic decomposition of $\mathcal L^p$ is well-adapted to compute
solutions of \eqref{S1}--\eqref{S-Solideci}. We prove:

\begin{proposition} \label{prop_calculsysteme}
Given $V_0 \in \mathcal{L}^2 \cap \mathcal{C}^{\infty}_c(\mathbb R^2)$, the spherical-harmonic decomposition provided by Proposition \ref{prop_decompLp} of the unique solution 
 $V \in C([0,\infty);\mathcal{L}^2)$ of \eqref{S1}--\eqref{S-Solideci} satisfies:
\begin{itemize}
	\item $W  \doteq ( w, \ell_W)$, where $w \in \mathcal{C}([0,\infty),  L^2((1,\infty),r{\rm d}r)) \cap \mathcal{C}^{\infty}((0,\infty) \times [1,\infty))$ verifies: 
	\begin{eqnarray}
		\partial_t w + \nu \left( - \frac{1}{r} \partial_r (r \partial_r w)  + \frac{1}{r^2} w \right)=0 
			&& 
		\text{ for } (t,r) \in (0, \infty) \times (1, \infty); 
		\label{eq_w0first}
		\\
		w(t,1)=\ell_W(t)
			&&
			\text{ for } t \in (0, \infty);
		\label{eq_w0-2}
		\\
		\ell_W'(t) = \dfrac{2 \nu \pi }{\mathcal J} (\partial_r w(t,1)-w(t,1))
			&&
		\text{ for } t \in (0, \infty) ; \label{eq_w0last}
	\end{eqnarray}
	
	\item  $\partial_r \Psi \doteq (\partial_r \psi, \ell_2)$ and $\Psi/r \doteq (\psi/r, \ell_2)$, where $\partial_r\psi,\psi/r \in \mathcal{C}([0,\infty) ; L^2((1,\infty),r{\rm d}r)),$ $\psi \in  \mathcal{C}^{\infty}((0,\infty) \times [1,\infty))$ 
	and there exists a pressure $q_1 \in \mathcal{C}^{\infty}((0,\infty) \times [1,\infty))$ satisfying $\partial_r q_1 \in \mathcal{C}((0,\infty); L^2((1,\infty),r{\rm d}r))$ such that:
	\begin{eqnarray}
		\partial_t \psi + \nu \left( - \frac{1}{r} \partial_r (r \partial_r \psi)  + \frac{1}{r^2}  \psi\right) = -r \partial_r q_1 
			&& 
		\text{ for } (t,r) \in (0, \infty) \times (1, \infty);
		\label{eq_1:psi}
		\\
		\partial_t \partial_r \psi + \nu  \partial_r\left(  - \frac{1}{r} \partial_r (r \partial_r \psi)  + \frac{1}{r^2}  \psi\right)=-\frac{q_1}{r} 
			&&
		\text{ for } (t,r) \in (0, \infty) \times (1, \infty)\,;
		 \label{eq_2:psi}
		 \\
		\psi(t,1)=\partial_r \psi(t,1)=\ell_2(t)
			&&
		\text{ for } t \in (0, \infty);
		\label{eq_3:psi}
		\\
		\frac{m}{\pi} \ell'_2(t)= -q_1(t,1) -   \nu \left(- \frac{1}{r} \partial_r (r \partial_r \psi)  + \frac{1}{r^2}  \psi\right)(t,1)
			&& 
		\text{ for } t \in (0, \infty);\,
		\label{eq_4:psi}
	\end{eqnarray}
	
	 \item  $\partial_r \Phi \doteq (\partial_r \phi, -\ell_1)$ and $\Phi/r \doteq (\phi/r, -\ell_1)$, where $(\partial_r\phi,\phi/r) \in \mathcal{C}([0,\infty) ; L^2((1,\infty),r{\rm d}r)),$ $\phi \in  \mathcal{C}^{\infty}((0,\infty) \times [1,\infty))$ 
	and there exists a pressure $p_1 \in \mathcal{C}^{\infty}((0,\infty) \times [1,\infty))$ satisfying $\partial_r p_1 \in \mathcal{C}((0,\infty); L^2((1,\infty),r{\rm d}r))$ such that:
	\begin{eqnarray}
		\partial_t \phi + \nu \left( -  \frac{1}{r} \partial_r (r \partial_r \phi)  + \frac{1}{r^2}  \phi\right) = r \partial_r p_1 
			&& 
		\text{ for } (t,r) \in (0, \infty) \times (1, \infty);
		\label{eq_1:phi}
		\\
		\partial_t \partial_r \phi +\nu  \partial_r\left(- \frac{1}{r} \partial_r (r \partial_r \phi)  + \frac{1}{r^2}  \phi\right)=\frac{p_1}{r} 
			&&
			\text{ for } (t,r) \in (0, \infty) \times (1, \infty);
		\label{eq_2:phi}
		\\
		\phi(t,1)=\partial_r \phi(t,1)=-\ell_1(t)
			&&
			\text{ for } t \in (0, \infty);
		\label{eq_3:phi}
		\\
		\frac{m}{\pi} \ell'_1(t)= -p_1(t,1)+  \nu \left(- \frac{1}{r} \partial_r (r \partial_r \phi)  + \frac{1}{r^2}  \phi\right)(t,1)
		&& 
			\text{ for } t \in (0, \infty);
		\label{eq_4:phi}
	\end{eqnarray} 

	\item $V_R \doteq ( v_R, 0)$, where $v_R \in \mathcal{C}([0,\infty), L^2(\mathcal F_0)) \cap \mathcal{C}^{\infty}((0,\infty) \times \bar{\mathcal F}_0)$ and
	there exists $p_R \in \mathcal{C}^{\infty}((0,\infty)\times \mathcal{F}_0) $ such that:
	\begin{eqnarray}
		\partial_t v_R -  \nu \Delta v_R + \nabla p_R =0 && \text{ for } (t,x) \in (0, \infty)\times \mc{F}_0\,; 
		\label{eq_vR1}
		\\
		\div \, v_R=0 
			&&
		\text{ for } (t,x) \in (0, \infty)\times \mc{F}_0\,;
		 \\
		v_R(t,x)=0
		&&
		\text{ for } (t,x) \in (0, \infty)\times \partial B_0. 
		 \label{eq_vR3}
	\end{eqnarray}
\end{itemize}
\end{proposition}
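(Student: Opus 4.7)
My plan is to start from the smooth solution $V$ of \eqref{S1}--\eqref{S-Solideci} provided by Takahashi--Tucsnak, apply Proposition \ref{prop_decompLp} pointwise in time to get $(W,\Psi,\Phi,V_R)$, and then extract the announced PDEs by projecting the momentum equation onto the angular Fourier modes $1$, $\sin\theta$, $\cos\theta$. The structural reason this succeeds is that the Stokes operator $A$ commutes with rotations about the origin (the disk $B_0$ is centered there), so the spherical--harmonic decomposition is preserved by the semigroup and each angular mode evolves independently.

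\textbf{Smoothness and pressure.} Since $V_0 \in \mc{L}^2 \cap \mc{C}^\infty_c(\R^2)$ belongs to $\mc{D}(A^k)$ for every $k$, and since $A$ generates an analytic semigroup on $\mc{L}^2$ by Proposition 4.2 of \cite{Takahashi&Tucsnak04}, we have $V \in \mc{C}^\infty([0,\infty);\mc{L}^2)$. Classical interior and boundary elliptic regularity for the Stokes system on $\mc{F}_0$ upgrades this to $v \in \mc{C}^\infty((0,\infty)\times \overline{\mc{F}_0})$, and a smooth pressure $p$ is recovered from $\nabla p = -\partial_t v + \nu\Delta v$ by de Rham's lemma, uniqueness being fixed by requiring $p(t,\cdot)\to 0$ at infinity. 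In particular every Fourier mode in $\theta$ of $v$ and $p$ is smooth in $(t,r)$ for $t>0,\ r>1$.

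\textbf{Derivation of the PDEs.} I would then expand $p(t,r,\theta) = p_0(t,r) + \sum_{k\geq 1}[p_k(t,r)\cos(k\theta) + q_k(t,r)\sin(k\theta)]$, apply Proposition \ref{prop_decompLp} at each $t$ to extract $(W,\Psi,\Phi,V_R)$, and project the Stokes equation in polar coordinates, using
\[
(\Delta v)_r = \Delta v_r - \frac{v_r}{r^2} - \frac{2}{r^2}\partial_\theta v_\theta,\qquad
(\Delta v)_\theta = \Delta v_\theta - \frac{v_\theta}{r^2} + \frac{2}{r^2}\partial_\theta v_r.
\]
The angular average of the $e_\theta$-component yields \eqref{eq_w0first}, the pressure dropping out because $(1/r)\partial_\theta p$ has zero angular mean; the $\sin\theta$-projection of the $e_r$-component (times $r$) gives \eqref{eq_1:psi} with pressure coefficient $q_1$, and the $\cos\theta$-projection of the $e_\theta$-component gives \eqref{eq_2:psi} -- these two equations are in fact the same scalar equation written on $\psi/r$ and on $\partial_r\psi$, consistent via one radial derivative. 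The symmetric projections onto $\cos\theta$ and $\sin\theta$ produce \eqref{eq_1:phi}--\eqref{eq_2:phi} with pressure coefficient $p_1$. For the boundary values, the Dirichlet trace $v|_{\partial B_0} = \ell + \omega x^\perp$ reads $v_r = \ell_1\cos\theta + \ell_2\sin\theta$ and $v_\theta = \omega - \ell_1\sin\theta + \ell_2\cos\theta$, and matching modes at $r=1$ gives \eqref{eq_w0-2}, \eqref{eq_3:psi}, \eqref{eq_3:phi}. For the solid ODEs I would use $n = -e_r$ on $\partial B_0$ together with the polar expressions $\Sigma_{rr} = -p + 2\nu\partial_r v_r$ and $\Sigma_{\theta r} = \nu(\partial_r v_\theta - v_\theta/r + (1/r)\partial_\theta v_r)$; integrating \eqref{S-Solide1}--\eqref{S-Solide2} against $\sin\theta$, $\cos\theta$ and $1$ respectively selects exactly the Fourier coefficient associated to each solid unknown and produces \eqref{eq_w0last}, \eqref{eq_4:psi}, \eqref{eq_4:phi} after routine simplification.

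\textbf{The remainder.} By construction $V_R$ carries the $\theta$-modes of $V$ of order $\geq 2$ together with the zero mode of $v_r$, which incompressibility ($(1/r)\partial_r(r\bar v_r)=0$) and decay at infinity force to vanish. Since $\ell + \omega x^\perp$ only excites the $0$- and $1$-modes in $\theta$, this yields $v_R = 0$ on $\partial B_0$, and $v_R$ solves \eqref{eq_vR1}--\eqref{eq_vR3} with pressure $p_R = p - q_1\sin\theta - p_1\cos\theta$. The main technical point deserving care will be the treatment of the pressure, which does not lie in any $L^p(\mc{F}_0)$ globally; however, only the modal coefficients $p_0,p_1,q_1$ and their radial derivatives appear in the target system, so the issue is purely local and is settled by the smoothness established in the first step. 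Finally, the continuity in $t$ with values in $L^2((1,\infty), r\,{\rm d}r)$ claimed for $w, \partial_r\psi, \psi/r, \partial_r\phi, \phi/r$ follows from the continuity $V\in \mc{C}([0,\infty);\mc{L}^2)$ together with the boundedness of the spherical-harmonic projectors in Proposition \ref{prop_decompLp}.
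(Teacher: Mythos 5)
Your proposal is correct and follows essentially the same route as the paper's Appendix~\ref{App_calculsysteme}: establish smoothness of $(v,p)$ for $t>0$ via the semigroup, project the momentum equation, the Dirichlet trace and the stress integrals onto the angular modes $1$, $\cos\theta$, $\sin\theta$, and collect the remainder (the paper phrases the interior computation through the stream function $\nabla^{\perp}(\chi+\psi\cos\theta+\phi\sin\theta)$ rather than through the polar vector Laplacian, but the identifications are the same). The only slight overstatement is the claim that $V_0\in\mathcal{D}(A^k)$ for every $k$ — higher powers of $A$ require compatibility conditions at $\partial B_0$ that $\mathcal{C}^\infty_c$ data need not satisfy — but this is harmless since the smoothness asserted in the proposition is only for $t>0$, where it follows from the regularizing effect of the self-adjoint semigroup.
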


We postpone the proof of Proposition \ref{prop_calculsysteme} to Appendix \ref{App_calculsysteme}. This mainly consists of tedious computations. 

\medskip
The main interest of Proposition \ref{prop_calculsysteme} is that it reduces the study of the Stokes semigroup to the study of scalar equations for the modes involving non-trivial boundary conditions and one Stokes equation with homogeneous boundary conditions on the obstacle. Indeed:
\begin{itemize}
	\item System \eqref{eq_w0first}--\eqref{eq_w0last} is a scalar heat equation with dynamic boundary condition.
	\item  System \eqref{eq_vR1}--\eqref{eq_vR3} is a Stokes equation with a fixed obstacle and Dirichlet boundary condition.
	\item  Systems \eqref{eq_1:psi}--\eqref{eq_4:psi} and \eqref{eq_1:phi}--\eqref{eq_4:phi} are similar one to each other. Actually, $(\phi, p_1,\ell_1)$ solves \eqref{eq_1:phi}--\eqref{eq_4:phi} if and only if $(-\phi, p_1,\ell_1)$ solves \eqref{eq_1:psi}--\eqref{eq_4:psi}. System  \eqref{eq_1:psi}--\eqref{eq_4:psi} involves two scalar heat equations \eqref{eq_1:psi}--\eqref{eq_2:psi} which contain the term $q_1$ reminiscent from the pressure. It also involves intricate boundary conditions \eqref{eq_3:psi}--\eqref{eq_4:psi} which couples Dirichlet ($\psi(t,1)$), Neumann ($\partial_r \psi(t,1)$) and dynamic (see \eqref{eq_4:psi}) boundary conditions.
\end{itemize}
Whereas systems \eqref{eq_w0first}-\eqref{eq_w0last} and \eqref{eq_vR1}-\eqref{eq_vR3} are classical and widely studied in the literature, systems \eqref{eq_1:psi}-\eqref{eq_4:psi} and \eqref{eq_1:phi}-\eqref{eq_4:phi} do not seem known and are the main challenge of our study.
Actually, we show that systems \eqref{eq_1:psi}-\eqref{eq_4:psi} and \eqref{eq_1:phi}-\eqref{eq_4:phi} reduce to a heat equation with dynamic boundary conditions. Concerning $\psi$ for instance, our strategy consists of removing the pressure term and reduce \eqref{eq_1:psi}-\eqref{eq_4:psi} to a scalar equation for the new unknown:
\begin{equation}
	\label{Def-z1}
	Z(r) := \partial_{r}\Psi(r) + \dfrac{ \Psi(r)}{r} =  \dfrac{1}{r} \partial_r[r\Psi(r)]\,, \quad \forall \, r \in (0,\infty),
\end{equation}
which, in particular, is a constant function on the ball $B_0$, denoted by $\ell_Z$, and for which we have
\begin{equation}
	\label{z-on-Ball}
	 Z(r) = \ell_Z \quad \forall \, r \in (0,1), \qquad \ell_Z = 2 \ell_2.
\end{equation}
Note that, using the definition \eqref{Def-z1} of $Z$ and the fact the $\Psi(r)/r= \ell_2$ on the unit ball, identity \eqref{z-on-Ball} immediately implies $\partial_r \Psi = \ell_2$ on the unit ball, thus being completely compatible with the boundary conditions \eqref{eq_3:psi}.

Indeed, using this new unknown, we get: 

\begin{proposition} \label{prop_calculz}
Given $V_0 \in \mathcal{L}^2 \cap \mathcal{C}^{\infty}_c(\mathbb R^2),$ let $(W,\Psi,\Phi,V_R)$ be the spherical-harmonic decomposition given by Proposition \ref{prop_decompLp}
of the solution $V \in \mathcal{C}([0,\infty);\mathcal{L}^2)$ of  \eqref{S1}--\eqref{S-Solideci}. Then, setting $Z \doteq ( z, \ell_Z)$ as in \eqref{Def-z1}, 
(or $Z= -  \partial_r ( r \Phi)/r$), 
\begin{itemize}
\item $z \in \mathcal{C}([0,\infty); {L}^2((1,\infty),r{\rm d}r)) \cap \mathcal{C}^{\infty}((0,\infty) \times [1,\infty))$ 
\item $(z, \ell_Z)$ is a solution to:
\begin{eqnarray}
	\label{eq_1:z}
		\partial_t z - \nu \Bigl(\partial_{rr}+\frac{1}{r}\partial_r \Bigl) z =0 
		&& 
		\text{ for } (t,r) \in (0, \infty) \times (1, \infty);
		\\
		z(t,1) = \ell_Z(t)
		&& 
		\text{ for } t \in (0, \infty);
		\\
		\ell_Z'(t)= \alpha_0 \nu \partial_r z(t,1)
		&& 
		\text{ for } t \in (0, \infty); 
	\label{eq_2:z}
\end{eqnarray}
with 
\begin{equation}
	\label{Alpha-0}
	\alpha_0=\frac{4 \pi}{\pi + m }.
\end{equation} 
\end{itemize}
\end{proposition}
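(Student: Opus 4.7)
The plan is to derive the scalar heat equation for $z$ directly from the system \eqref{eq_1:psi}--\eqref{eq_4:psi}, eliminating the pressure $q_1$ along the way. The starting observation is the algebraic identity
\[
\pd_r z = \pd_r\Bigl(\pd_r \psi + \frac{\psi}{r}\Bigr) = \pd_{rr}\psi + \frac{\pd_r \psi}{r} - \frac{\psi}{r^2}= -\Bigl(-\frac{1}{r}\pd_r(r\pd_r \psi) + \frac{\psi}{r^2}\Bigr),
\]
so that the elliptic operator appearing in both \eqref{eq_1:psi} and \eqref{eq_2:psi} is exactly $-\pd_r z$. Consequently \eqref{eq_1:psi} and \eqref{eq_2:psi} rewrite as $\pd_t \psi - \nu \pd_r z = -r\pd_r q_1$ and $\pd_t \pd_r \psi - \nu \pd_{rr} z = -q_1/r$. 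Dividing the first identity by $r$ and adding it to the second, I obtain for $r > 1$ the identity
\[
\pd_t z = \nu \Bigl(\pd_{rr} + \frac{1}{r}\pd_r\Bigr)z - \frac{1}{r}\pd_r(r q_1).
\]
Hence, establishing \eqref{eq_1:z} reduces to showing that $r q_1$ is a function of $t$ alone.

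The main obstacle, and the key step, is precisely this elimination of the pressure. Differentiating \eqref{eq_1:psi} in $r$ and subtracting \eqref{eq_2:psi} annihilates the time derivative and the $\psi$-contributions, leaving the ODE $r^2 \pd_{rr} q_1 + r\pd_r q_1 - q_1 = 0$, an Euler equation in $r$ whose general solution reads $q_1(t,r) = A(t)\, r + B(t)/r$. The branch $A(t)\, r$ is incompatible with the regularity $\pd_r q_1 \in \mathcal{C}((0,\infty); L^2((1,\infty),r\,\mathrm{d}r))$ provided by Proposition \ref{prop_calculsysteme}, since $\int_1^\infty A(t)^2\, r\,\mathrm{d}r = \infty$ unless $A\equiv 0$. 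Therefore $q_1(t,r) = B(t)/r$, so $r q_1(t,r) = B(t)$ is independent of $r$ and the pressure term in the equation for $z$ vanishes, yielding \eqref{eq_1:z}.

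For the boundary conditions, \eqref{eq_3:psi} immediately gives $z(t,1) = \pd_r \psi(t,1) + \psi(t,1) = 2\ell_2(t) = \ell_Z(t)$ using \eqref{z-on-Ball}. To establish the dynamic condition \eqref{eq_2:z}, I would evaluate \eqref{eq_1:psi} at $r=1$, which reads $\ell_2'(t) - \nu \pd_r z(t,1) = -\pd_r q_1(t,1) = q_1(t,1)$, the last equality following from $q_1 = B/r$. Substituting $q_1(t,1) = \ell_2'(t) - \nu \pd_r z(t,1)$ into \eqref{eq_4:psi} gives $(m/\pi + 1)\ell_2'(t) = 2\nu \pd_r z(t,1)$, i.e., in terms of $\ell_Z = 2\ell_2$,
\[
\ell_Z'(t) = \frac{4\pi \nu}{m+\pi}\pd_r z(t,1),
\]
which is exactly \eqref{eq_2:z} with the constant $\alpha_0$ from \eqref{Alpha-0}.

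The regularity $z \in \mathcal{C}([0,\infty); L^2((1,\infty),r\,\mathrm{d}r))\cap \mathcal{C}^{\infty}((0,\infty)\times[1,\infty))$ transfers directly from the corresponding regularity of $\pd_r \psi$ and $\psi/r$ stated in Proposition \ref{prop_calculsysteme}. Finally, the alternative representation $Z = -\pd_r(r \Phi)/r$ follows by running the identical argument on the twin system \eqref{eq_1:phi}--\eqref{eq_4:phi}, using the symmetry between the two subsystems noted immediately after Proposition \ref{prop_calculsysteme}.
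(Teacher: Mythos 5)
Your proposal is correct and follows essentially the same route as the paper's proof: the pressure is identified as $q_1(t,r)=\beta_1(t)/r$ via the Euler equation obtained by cross-differentiating \eqref{eq_1:psi}--\eqref{eq_2:psi} and the integrability of $\partial_r q_1$, the combination $\tfrac1r\cdot$\eqref{eq_1:psi}$+$\eqref{eq_2:psi} then yields \eqref{eq_1:z}, and the dynamic boundary condition follows from evaluating \eqref{eq_1:psi} at $r=1$ and substituting into \eqref{eq_4:psi}. All constants and intermediate identities check out, so nothing further is needed.
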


\begin{proof}
Up to a change of sign, we focus on $Z =  \partial_r ( r \Psi)/r.$  Thanks to the regularity proved in Proposition \ref{prop_calculsysteme}, we have $(\partial_r \psi,\psi/r) \in\mathcal{C}([0,\infty) ; L^2((1,\infty),r{\rm d}r)).$ Consequently $z = \partial_r \psi + \psi/r$ enjoys the same regularity. The smoothness of $z$ is straightforward.

\smallskip

Differentiating \eqref{eq_1:psi} with respect to $r$ and subtracting \eqref{eq_2:psi}, the pressure $q_1(t)$ satisfies, for each time $t >0$:
\[
	-\frac{1}{r}\partial_r (r \partial_r q_1) + \frac{1}{r^2} q_1 =0, \quad \text{ for } r \in (1, \infty).
\]
Hence $q_1(t,r)=\alpha_1(t)r+\frac{\beta_1(t)}{r}$. Of course, the condition $\partial_r  q_1 \in L^2((1, \infty), r \, {\rm d}r) $  implies that:
\begin{equation}
	\label{Pressure-q-1}
	q_1(t,r)=\frac{\beta_1(t)}{r},
\end{equation}
 and therefore, for all $t >0$ and $r \geq 1$,
\[
	- \partial_r q_1= \frac{q_1}r.
\]
With this identity, the pressure can be removed simply by adding  \eqref{eq_2:psi} to $1/r$ times \eqref{eq_1:psi}:
\[
	\partial_t\left[ \left(\partial_r + \frac{Id}r\right)\psi \right] + \nu \left(\partial_r + \frac{Id}r\right)\left(- \frac{1}{r} \partial_r (r \partial_r \psi) + \frac{1}{r^2} \psi\right)=0.
\]
Using \eqref{Def-z1},
 \begin{equation}
 	\label{Identity-z-psi}
 	\partial_r z = - \left(- \frac{1}{r} \partial_r (r \partial_r \psi) + \frac{1}{r^2} \psi\right),
\end{equation}
and the new variable $z$ in \eqref{Def-z1} solves \eqref{eq_1:z}.

Concerning the boundary conditions, \eqref{eq_3:psi} reads as
\[
	z(t,1)=2\ell_2(t) = \ell_Z(t)
\]
and, using \eqref{Identity-z-psi} and \eqref{Pressure-q-1}, \eqref{eq_4:psi} yields
\[
	\frac{m}{\pi} \ell'_2(t)= -\beta_1(t)+ \nu \partial_r z(t,1).
\]

Moreover, still using \eqref{Identity-z-psi} and \eqref{Pressure-q-1}, \eqref{eq_1:psi} for $r=1$ and \eqref{eq_3:psi} gives 
\[
	\ell'_2(t) - \nu \partial_r z(t,1)=\beta_1(t).
\]

Combining the previous equations, $(z, \ell_Z)$ solves \eqref{eq_1:z}-\eqref{eq_2:z}.
\end{proof}

\begin{remark} \label{rem_defz}
In what follows, given $V \doteq ( v,\ell,\omega)$ a solution to \eqref{S1}--\eqref{S-Solideci} on $(0,\infty),$ we keep the convention:
\begin{equation*}
Z_{\Psi}(t,r) :=  \partial_{r}\Psi(t,r) + \dfrac{ \Psi(t,r)}{r}\,, \ Z_{\Phi}(t,r) := - \left(  \partial_{r}\Phi(t,r) + \dfrac{ \Phi(t,r)}{r}\right) \,, \ \forall (t,r) \in [0,\infty) \times (0,\infty)\,. 
\end{equation*}
We emphasize that, for $t >0$, $V(t,\cdot)$ has continuous normal and tangential traces through $\partial B_0$, and thus then $Z_{\Phi}(t,\cdot)$ and $Z_{\Psi}(t,\cdot)$
have continuous traces through the interface $r=1.$
\end{remark}

\begin{remark}
	As we recalled in the introduction, the classical approach would rather consist in the elimination of the pressure in the Navier Stokes system by taking
the curl of the Navier Stokes equation,  yielding that way an equation for the vorticity of the velocity-field. {\em But this is not the method we choose here}. 
Indeed, in an exterior domain, one should complete the vorticity equation, and this would yield non-dissipative boundary conditions of Robin type.
\end{remark}

\subsection{Some elliptic problems}
To conclude this section, we prove some technical lemmas that will be useful later on. Indeed, in order to compute the decay of the Stokes semigroup, we study the decay
 of solutions to the heat equation \eqref{eq_1:z}-\eqref{eq_2:z}. This gives the decay of the new
 unknown $z$ whether it is computed with respect to $\phi$ or $\psi.$ However, to our 
 purpose, we need then to invert the definition of $z$ in order to get also the decay of $\phi$
 and $\psi$ in suitable spaces. This is the content of the following proposition:
 
 \begin{proposition} \label{prop_z2psi}
 Given $p \in (1,\infty]$ and $(z,\ell) \in L^p((1,\infty),r{\rm d}r) \times \mathbb R,$ there exists a unique $\psi \in W^{1,p}_{loc}(1,\infty)$ solution 
 to the following boundary value problem:
 \begin{eqnarray}
	 \partial_r \psi(r) + \dfrac{\psi(r)}{r} &= & z(r)\,, \qquad \text{ for } r \in  (1, \infty)\,, \label{eq_z2psi1} \\
	\psi(1) &= & \ell\,,   \label{eq_z2psi2}
\end{eqnarray}
 and there exists a constant $C(p)$ depending only on $p$ for which:
 \begin{equation}
 \|\partial_r \psi\|_{L^p((1,\infty),r{\rm d}r)} + \left\| \dfrac{\psi}{r}\right\|_{L^p((1,\infty),r{\rm d}r)} 
  \leq  C(p)\left(   \|z\|_{L^p((1,\infty),r{\rm d}r)}  + |\ell |\right) \,.  \label{est_z2psi}
 \end{equation}
 \end{proposition}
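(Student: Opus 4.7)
The key observation is that the left-hand side of \eqref{eq_z2psi1} can be rewritten as
\[
	\partial_r \psi + \frac{\psi}{r} = \frac{1}{r}\partial_r (r\psi),
\]
so the equation becomes $\partial_r(r\psi) = rz$. Integrating from $1$ to $r$ and using the boundary condition \eqref{eq_z2psi2}, one gets the explicit formula
\[
	\psi(r) = \frac{\ell}{r} + \frac{1}{r}\int_{1}^{r} s\, z(s)\,{\rm d}s, \qquad r \in (1,\infty).
\]
This already yields existence and uniqueness of $\psi \in W^{1,p}_{\mathrm{loc}}(1,\infty)$, since the right-hand side is well-defined for $z \in L^p((1,\infty),r{\rm d}r)$ with $p \in (1,\infty]$, and any solution to the ODE must coincide with this one.

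For the quantitative estimate \eqref{est_z2psi}, I would first bound $\psi/r$. Write $\psi(r)/r = \ell/r^2 + F(r)$ where $F(r) := r^{-2}\int_1^r s z(s)\,{\rm d}s$. The contribution of the first piece is handled by a direct computation: $\|\ell/r^2\|_{L^p((1,\infty),r{\rm d}r)}^p = |\ell|^p \int_1^\infty r^{1-2p}\,{\rm d}r$, which is finite precisely because $p>1$ (with the obvious modification when $p=\infty$). For the convolution term $F$, the most transparent way is to perform the change of variables $t = r^2/2$, $\tau = s^2/2$, under which $r\,{\rm d}r = {\rm d}t$ and $s\,{\rm d}s = {\rm d}\tau$. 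Setting $\tilde z(t) := z(\sqrt{2t})$, one gets
\[
	F(r) = \frac{1}{2t}\int_{1/2}^{t} \tilde z(\tau)\,{\rm d}\tau, \qquad \|F\|_{L^p((1,\infty),r{\rm d}r)} = \|(H\tilde z)/2\|_{L^p(1/2,\infty)},
\]
where $H$ is the Hardy averaging operator $H f(t) = t^{-1}\int_{1/2}^{t} f(\tau)\,{\rm d}\tau$. The classical Hardy inequality then gives the bound $\|F\|_{L^p((1,\infty),r{\rm d}r)} \leq C(p)\|z\|_{L^p((1,\infty),r{\rm d}r)}$ for every $p \in (1,\infty)$. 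For $p = \infty$, no inequality is needed: one estimates directly
\[
	|F(r)| \leq \frac{\|z\|_{L^\infty}}{r^2} \int_1^r s\,{\rm d}s \leq \frac{\|z\|_{L^\infty}}{2}.
\]

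Once the estimate on $\|\psi/r\|_{L^p((1,\infty),r{\rm d}r)}$ is in hand, the estimate on $\partial_r \psi$ follows trivially from the equation itself: $\partial_r \psi = z - \psi/r$, hence
\[
	\|\partial_r \psi\|_{L^p((1,\infty),r{\rm d}r)} \leq \|z\|_{L^p((1,\infty),r{\rm d}r)} + \|\psi/r\|_{L^p((1,\infty),r{\rm d}r)}.
\]
Combining these bounds yields \eqref{est_z2psi}. The only mildly delicate point is the application of the Hardy inequality, which fails at the endpoint $p=1$ and is the reason for the restriction $p > 1$ in the statement; the case $p=\infty$ is easier and handled by the elementary pointwise bound above.
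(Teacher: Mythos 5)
Your proof is correct, and the treatment of the main estimate is genuinely different from the paper's. Both arguments start from the same explicit formula $\psi(r)=\ell/r+r^{-1}\int_1^r sz(s)\,{\rm d}s$ and both handle $p=\infty$ by a direct pointwise bound, but for $p\in(1,\infty)$ the paper does not decompose $\psi$: it multiplies \eqref{eq_z2psi1} by $|\psi|^{p-2}\psi/r^{p-1}$ on $[1,R]$, integrates by parts to produce the coercive term $2(1-\tfrac1p)\int_1^R|\psi|^p r^{1-p}\,{\rm d}r$ plus a boundary contribution controlled by $|\ell|^p$, and closes with H\"older before letting $R\to\infty$. This multiplier computation is the same device the paper reuses in its other elliptic lemmas (e.g.\ for $z/r$ and for the $0$-mode), so it keeps the section stylistically uniform and yields the constant explicitly without invoking any external inequality. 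Your route instead splits off the $\ell/r^2$ piece, whose $L^p(r\,{\rm d}r)$ norm is finite exactly for $p>1$, and recognizes the remaining term, after the substitution $t=r^2/2$, as one half of the Hardy averaging operator applied to $\tilde z(t)=z(\sqrt{2t})$ extended by zero below $1/2$; the classical Hardy inequality then gives the bound with constant $p/(2(p-1))$. This is arguably more transparent — it names the operator responsible for the estimate and makes the failure at $p=1$ visible as the blow-up of the Hardy constant (the paper's constant $(2(1-1/p))^{-1}$ degenerates in exactly the same way) — at the cost of importing Hardy's inequality rather than staying self-contained. Both arguments are complete; your uniqueness remark (the homogeneous solution $c/r$ is killed by \eqref{eq_z2psi2}) and the final step $\partial_r\psi=z-\psi/r$ match the logic needed to conclude \eqref{est_z2psi}.
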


 \begin{proof}
	 Let $p \in (1, \infty]$ and $(z,\ell)$ satisfy the assumptions of Proposition \ref{prop_z2psi}.  It is straightforward that the unique solution to \eqref{eq_z2psi1}-\eqref{eq_z2psi2} reads:
 $$
 \psi(r) = \dfrac{\ell}{r} + \dfrac{1}{r} \int_{1}^{r} s z(s) \, {\rm d}s\,, \quad \forall \, r \geq 1\,.
 $$

 If $p=\infty$, we establish easily \eqref{est_z2psi} from this formula.
 If $p \in (1, \infty)$, up to a regularizing argument we skip for conciseness, we multiply \eqref{eq_z2psi1} by $|\psi|^{p-2}\psi/r^{p-1}$ on $[1,R],$
for arbitrary $R > 1$:
\begin{eqnarray*}
	\int_{1}^R z |\psi|^{p-2} \frac{\psi}{r^{p-1}}  r \, dr 
	&=&  \left[  \dfrac{|\psi(r)|^p}{r^{p-2}}  \right]_1^R - (p-1) \left( 
	 \int_{1}^R   \frac{\partial_{r} |\psi|^{p}}{pr^{p-2}} \, dr -   \int_{1}^R  \frac{|\psi|^{p}}{r^{p}} r\, dr \right)
	\\
	&=&
	\frac{1}{p} \left[  \frac{ |\psi(r)|^{p}}{r^{p-2}}  \right]_1^R + 2 \left( 1 - \dfrac{1}{p}\right)   \int_{1}^R   \frac{ |\psi|^{p}}{r^{p-1}} \, dr\,, \\
	& \geq &  - \frac{|\ell |^p}{p} +   2 \left( 1 - \dfrac{1}{p}\right)   \int_{1}^R   \frac{ |\psi|^{p}}{r^{p-1}} \, dr\,.
\end{eqnarray*}
Hence, for all $p \in (1, \infty)$,
\begin{eqnarray*}
	 \left\| \dfrac{\psi}{r}\right\|_{L^p((1,R),r{\rm d}r)}^p 
	 & \leq &
	 C(p)  \left\| z\right\|_{L^p((1,\infty),r{\rm d}r)}	 \left\| \left(\dfrac{\psi}{r}\right)^{p-1}\right\|_{L^{p'}((1,R),r{\rm d}r)}+C(p)  |\ell |^p
	 \\
	 & \leq &
	 C(p) 	 \left\| z\right\|_{L^p((1,\infty),r{\rm d}r)}	 \left\| \dfrac{\psi}{r}\right\|_{L^{p}((1,R),r{\rm d}r)}^{p-1} + C(p) |\ell |^p.
\end{eqnarray*}
This yields
$$
	\left\| \dfrac{\psi}{r}\right\|_{L^p((1,R),r{\rm d}r)} 
	  \leq  C(p)\left(   \|z\|_{L^p((1,\infty),r{\rm d}r)}  + |\ell |\right).
$$
Letting then $R \to \infty$ we obtain \eqref{est_z2psi}.
 \end{proof}

Let us now state another elliptic estimate that will be useful in the following:
 \begin{proposition} \label{prop_drz2z}
 Let $p \in (1,\infty)\setminus \{2\}$ and assume that  $z \in L^p((1,\infty),r{\rm d}r)$ and $\partial_r z \in  L^p((1,\infty),r{\rm d}r).$ 
 There exists a constant $C(p)$ depending only on $p$ such that:
 \begin{equation} \label{est_drz2z}
 \left \|  \dfrac{z}{r} \right\|_{L^p((1,\infty),r{\rm d}r)} \leq C(p) \left( \|\partial_r z\|_{L^p((1,\infty),r{\rm d}r)} + \varepsilon_p |z(1)| \right)
 \end{equation}
 where $\varepsilon_p = 1$ if $p> 2$ and $\varepsilon_p  = 0$ if $p < 2.$
 \end{proposition}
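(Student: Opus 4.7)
The approach is a weighted Hardy-type inequality: I would integrate $\frac{1}{p}\partial_r(|z|^p)$ against $r^{2-p}$ on a truncated interval $(1,R)$ by parts, then estimate the interior term by H\"older and Young, and finally let $R\to\infty$, treating the cases $p>2$ and $p<2$ separately because the boundary contribution at $R$ has opposite signs in these two regimes. Since $\partial_r z \in L^p((1,\infty), r{\rm d}r) \subset L^1_{\rm loc}(1,\infty)$, the function $z$ is absolutely continuous on $[1,R]$, its trace $z(1)$ is well-defined, and the integration by parts is legitimate (up to a routine density argument replacing $z$ by a smooth approximation).

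The key identity produced by the integration by parts is
\begin{equation*}
	\frac{p-2}{p}\int_1^R \frac{|z(r)|^p}{r^{p-1}}\, {\rm d}r + \frac{|z(R)|^p R^{2-p}}{p} = \frac{|z(1)|^p}{p} + \int_1^R |z|^{p-2} z\, \partial_r z\, r^{2-p}\, {\rm d}r.
\end{equation*}
Rewriting the integrand on the right as $(|z|/r)^{p-1}\cdot |\partial_r z|\cdot r$ and applying H\"older's inequality with conjugate exponents $p/(p-1)$ and $p$ against the measure $r\, {\rm d}r$ yields
\begin{equation*}
	\Bigl|\int_1^R |z|^{p-2} z\, \partial_r z\, r^{2-p}\, {\rm d}r\Bigr| \leq \Bigl\| \frac{z}{r}\Bigr\|_{L^p((1,R),r{\rm d}r)}^{p-1}\,\|\partial_r z\|_{L^p((1,R),r{\rm d}r)}.
\end{equation*}

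If $p>2$, both terms on the left of the key identity are nonnegative, so the boundary term at $R$ can simply be discarded; Young's inequality $ab \leq \eta a^{p/(p-1)} + C_\eta b^p$ (with $\eta$ small enough) then absorbs $\|z/r\|^{p-1}\,\|\partial_r z\|$ into the left-hand side, and letting $R\to\infty$ yields \eqref{est_drz2z} with $\varepsilon_p = 1$. If $p<2$, the coefficient $(p-2)/p$ is negative and the identity rearranges to
\begin{equation*}
	\frac{2-p}{p}\Bigl\| \frac{z}{r}\Bigr\|_{L^p((1,R),r{\rm d}r)}^p + \frac{|z(1)|^p}{p} = \frac{|z(R)|^p R^{2-p}}{p} - \int_1^R |z|^{p-2} z\, \partial_r z\, r^{2-p}\, {\rm d}r,
\end{equation*}
with both terms on the left nonnegative. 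The main obstacle is now to prove that the boundary term $|z(R)|^p R^{2-p}$ vanishes as $R\to\infty$. For this, I would use that $s\mapsto s^{-1/(p-1)}$ is integrable at infinity precisely when $p\in(1,2)$; H\"older applied to $\int_{r_1}^{r_2} \partial_s z\, {\rm d}s$ then yields $|z(r_2)-z(r_1)|^p \leq C(p)\, r_1^{p-2}\,\|\partial_r z\|_{L^p((r_1,\infty),s{\rm d}s)}^p$ for $r_2>r_1$, so $z$ admits a limit at $+\infty$, which must be $0$ since $z\in L^p((1,\infty),r{\rm d}r)$. The same estimate with $r_2=\infty$ gives $|z(R)|^p R^{2-p}\to 0$. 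Dropping the nonnegative $|z(1)|^p/p$ on the left and concluding with the same H\"older/Young chain as in the case $p>2$ yields \eqref{est_drz2z} with $\varepsilon_p = 0$.
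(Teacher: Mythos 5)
Your proof is correct and follows essentially the same route as the paper: the same integration by parts of $\tfrac1p\partial_r(|z|^p)$ against $r^{2-p}$ on $(1,R)$, the same H\"older estimate rewriting the cross term as $\|z/r\|_{L^p(r\,{\rm d}r)}^{p-1}\|\partial_r z\|_{L^p(r\,{\rm d}r)}$, and the same sign-based case split at $p=2$. The only (immaterial) difference is how the boundary term $|z(R)|^pR^{2-p}$ is discarded when $p<2$: you prove the full limit $z(R)\to 0$ with a quantitative rate via H\"older applied to $\int_{r_1}^{r_2}\partial_s z\,{\rm d}s$, whereas the paper merely extracts a sequence $R_n\to\infty$ with $R_n|z(R_n)|^p\to 0$ from the integrability of $r\mapsto r|z(r)|^p$; both arguments are valid.
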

 
 \begin{proof} 
 As $z$ belongs to $W^{1,p}_{loc}(1,\infty)$, we infer that it is continuous and we integrate by parts on $[1,R]$:
 \begin{eqnarray*}
	\Bigl\| \frac{z}{r} \Bigl\|_{L^p((1,R),r{\rm d}r)}^p 
	&=& 
	- \frac1{p-2} \int_1^R |z|^p \partial_r \Bigl(\frac{1}{r^{p-2}}\Bigl) \, {\rm d}r= -\frac1{p-2} \Bigl[ \frac{|z|^p}{r^{p-2}} \Bigl]_1^R +\frac{p}{p-2}  \int_1^R  \frac{  |z|^{p-2} z \partial_r z}{r^{p-1}} \, r{\rm d}r\\
&\leq& \frac1{p-2} \Bigl( |z(1)|^p -\frac{|z(R)|^p}{R^{p-2}} \Bigl) + \frac{p}{|p-2|}   \|\partial_r z\|_{L^p((1,\infty),r{\rm d}r)} \Bigl\| \frac{z}{r} \Bigl\|_{L^p((1,R),r{\rm d}r)}^{p-1} .
\end{eqnarray*}
Then, the following depends on the sign of $p-2$:
\begin{itemize}
 \item if $p>2$, we directly have that
 \[
 \Bigl\| \frac{z}{r} \Bigl\|_{L^p((1,R),r{\rm d}r)}^p \leq  \frac1{p-2} |z(1)|^p + \frac{p}{p-2}   \|\partial_r z\|_{L^p((1,\infty),r{\rm d}r)} \Bigl\| \frac{z}{r} \Bigl\|_{L^p((1,R),r{\rm d}r)}^{p-1} ,\]
 which gives \eqref{est_drz2z} with $\varepsilon_p =1$.
 \item if $p<2$, we get 
 \[
 \Bigl\| \frac{z}{r} \Bigl\|_{L^p((1,R),r{\rm d}r)}^p \leq  \frac1{2-p} \frac{|z(R)|^p}{R^{p-2}} + \frac{p}{2-p}   \|\partial_r z\|_{L^p((1,\infty),r{\rm d}r)} \Bigl\| \frac{z}{r} \Bigl\|_{L^p((1,R),r{\rm d}r)}^{p-1} .
 \]
  To establish \eqref{est_drz2z} with $\varepsilon_p =0$, it is sufficient to find a sequence $R_n \to \infty$ such that  $(|z(R_n)|^p R_n)$ tends to zero. This can obviously be done since $r \mapsto r |z(r)|^p$ is assumed to belong to $L^1(1, \infty)$.
\end{itemize}
\end{proof}

We finally provide elliptic estimates that will be useful when getting estimates on the $0$-mode:

 \begin{proposition} \label{prop_ellipticw0}
Let $p \in (1,\infty)$ and assume that $w \in  L^p((1,\infty),r{\rm d}r)$ satisfies
 \begin{align*}
	& \partial_{rr} w(r) + \dfrac{\partial_r w(r)}{r}-  \dfrac{w(r)}{r^2} =  f(r)\,, \qquad \text{ for } r\in  (1, \infty)\, ;  \\
	& \partial_r w (1) - w(1) = a,  \qquad   w(1) = b,    \label{eq_w2}
\end{align*}
for some $f \in L^{p}((1,\infty),r{\rm d}r)$, $a,\,b$ in $\mathbb R$.
Then, there  exists a constant $C(p)$ depending only on $p$ for which:
\begin{equation} \label{est_wnabla2}
 \|\partial_{rr} w \|_{L^p((1,\infty),r{\rm d}r)}   +\Big\|\frac{\partial_r w}{r}  -\frac{w(r)}{r^2} \Big\|_{L^p((1,\infty),r{\rm d}r)} \leq  C(p)\left(   \|f\|_{L^p((1,\infty),r{\rm d}r)}  +  |a|  \right). \,
\end{equation}
Furthermore, if $p \neq 2$, 
\begin{equation} \label{est_w-autres2}
	\Big\|\frac{\partial_r w}{r} \Big\|_{L^p((1,\infty),r{\rm d}r)}+ \Big\|\frac{w}{r^2}\Big\|_{L^p((1,\infty),r{\rm d}r)}   \leq  C(p)\left(   \|f\|_{L^p((1,\infty),r{\rm d}r)}  +  |a|  + \varepsilon_p |b | \right),
\end{equation}
with $\varepsilon_p =1$ if $p >2$ and $\varepsilon_p = 0$ if $p <2$.
\end{proposition}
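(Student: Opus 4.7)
My plan is to reduce the second-order equation to a first-order one via the substitution $\zeta(r):=\partial_r w(r)-w(r)/r$. A direct computation gives
\[
\partial_r\zeta + \frac{2\zeta}{r} = \partial_{rr}w + \frac{\partial_r w}{r} - \frac{w}{r^2} = f,
\]
equivalently $\partial_r(r^2\zeta)=r^2 f$, with the boundary condition $\zeta(1)=\partial_r w(1)-w(1)=a$ (note that the datum $b$ does not enter this reduced problem). Integration yields the explicit representation
\[
\zeta(r)=\frac{a}{r^2}+\frac{1}{r^2}\int_1^r s^2 f(s)\,{\rm d}s.
\]
Since $\zeta/r=\partial_r w/r-w/r^2$ is exactly the quantity appearing in \eqref{est_wnabla2} and since $\partial_{rr}w=f-\zeta/r$ by rearranging the equation, \eqref{est_wnabla2} reduces to bounding $\|\zeta/r\|_{L^p((1,\infty),r\,{\rm d}r)}$. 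The contribution $a/r^3$ integrates to $|a|$ for every $p>1$, while the Duhamel-type term is controlled by a weighted Hardy inequality with weight $r^{1-3p}$, which returns exactly $\|f\|_{L^p((1,\infty),r\,{\rm d}r)}$ after accounting for the factor $s^2$.

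For \eqref{est_w-autres2}, I would introduce $u:=w/r$, which satisfies $\partial_r u=\zeta/r$, $u(1)=b$, and note the identity $\|w/r^2\|_{L^p((1,\infty),r\,{\rm d}r)}=\|u\|_{L^p((1,\infty),r^{1-p}{\rm d}r)}$. The weighted Hardy framework then splits along $p=2$. When $p>2$, constants belong to $L^p((1,\infty),r^{1-p}{\rm d}r)$, so writing $u(r)=b+\int_1^r \zeta(s)/s\,{\rm d}s$ and applying a forward weighted Hardy inequality bounds $\|u\|_{L^p(r^{1-p}{\rm d}r)}$ by $C(|b|+\|\zeta/r\|_{L^p(r\,{\rm d}r)})$. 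When $p<2$, constants no longer lie in $L^p(r^{1-p}{\rm d}r)$; instead, a H\"older estimate exploiting that $\int^\infty s^{-1/(p-1)}\,{\rm d}s$ converges precisely for $p<2$ shows that $u(r)$ has a finite limit at infinity, and the assumption $w\in L^p(r\,{\rm d}r)$ forces this limit to be zero (otherwise $w$ would grow linearly, contradicting integrability). One then writes $u(r)=-\int_r^\infty \zeta(s)/s\,{\rm d}s$ and invokes the dual weighted Hardy inequality to get the desired bound with no $|b|$ term. The remaining estimate on $\|\partial_r w/r\|_{L^p(r\,{\rm d}r)}$ follows from $\partial_r w/r=\zeta/r+w/r^2$.

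The hardest part will be the $p<2$ case of \eqref{est_w-autres2}: since $\varepsilon_p=0$ there, the bound cannot come from a boundary datum and must instead be extracted from the integrability assumption $w\in L^p((1,\infty),r\,{\rm d}r)$ through the asymptotic vanishing of $u=w/r$. This is precisely where the condition $p<2$ becomes essential, being tied to the convergence of $\int^\infty s^{-1/(p-1)}\,{\rm d}s$. The exclusion of $p=2$ is consistent with this analysis, since the borderline weight $r^{-1}$ is critical for Hardy's inequality on $(1,\infty)$ and would only produce a bound with logarithmic losses.
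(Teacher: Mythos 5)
Your argument is correct, and it follows the same structural reduction as the paper: the paper sets $\tilde w = w/r$, so that your $\zeta$ is exactly $r\,\partial_r\tilde w$ and your $u$ is $\tilde w$; in both cases the second-order problem collapses to a first-order one for $\partial_r(w/r)$ in which only the datum $a$ survives, and $w/r^2$ is then recovered by integrating back, with the dichotomy at $p=2$ governing whether the boundary value $b$ must appear. The difference is purely in the estimation technique. The paper never writes the explicit solution: it multiplies $r\partial_{rr}\tilde w + 3\partial_r\tilde w = f$ by $|\partial_r\tilde w|^{p-2}\partial_r\tilde w$ and integrates by parts to get $\|\partial_r\tilde w\|_{L^p(r\,{\rm d}r)}\leq C(\|f\|+|a|)$, and then invokes its Proposition 2.7 (itself proved by the same multiplier device, with the vanishing of $|z(R_n)|^pR_n$ along a sequence playing the role of your ``limit of $u$ at infinity is zero'') to pass from $\partial_r\tilde w$ to $\tilde w/r=w/r^2$. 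You instead solve $\partial_r(r^2\zeta)=r^2f$ explicitly and control both steps by weighted Hardy inequalities; the Muckenhoupt-type conditions you would need do check out (the supremum in each case is scale-invariant and equals a constant), including the criticality of the weight $r^{-1}$ at $p=2$ and the convergence of $\int^\infty s^{-1/(p-1)}\,{\rm d}s$ precisely for $p<2$. Your route is slightly more computational but makes the sharpness of the $p=2$ exclusion and the role of $\varepsilon_p$ more transparent; the paper's multiplier route avoids explicit kernels and reuses a lemma already needed elsewhere. Both are complete proofs of the stated estimates.
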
 

\begin{proof}
We define $\tilde{w} = w(r)/r$ for $r \geq 1.$
Then, $\tilde{w}$ satisfies   
 \begin{eqnarray}
r\partial_{rr} \tilde{w}(r) + 3 \partial_r \tilde{w}(r)  &= & f(r)\,, \qquad \text{ for } r\in  (1, \infty)\, ;    \label{eq_wtilde1}\\
\partial_r \tilde{w} (1) &= & a \,,  \label{eq_wtilde2}
\end{eqnarray}
Following the method of the proof of Proposition \ref{prop_z2psi}, we multiply \eqref{eq_wtilde1} by $|\partial_r \tilde{w}|^{p-2} \partial_r \tilde{w}$ on $[1,R].$
After integration by parts, this yields:
\begin{eqnarray*}
\int_{1}^R f|\partial_r \tilde{w}|^{p-2} \partial_r \tilde{w} \,  r {\rm d}r &=&   \frac{1}{p}\left[ r^2 |\partial_r \tilde{w}|^p \right]_1^R + \left(3- \dfrac{2}{p} \right) \int_{1}^{R} |\partial_r \tilde{w}|^p \, r{\rm d}r\,, \\
& \geq & - \frac{|a|^p}{p} +  \left(3- \dfrac{2}{p} \right) \int_{1}^{R} |\partial_r \tilde{w}|^p \, r{\rm d}r\,.
\end{eqnarray*}
We conclude that:
\begin{equation} \label{est_wtilde}
\|\partial_r \tilde{w}\|_{L^p((1,\infty),r{\rm d}r)} \leq C(p) \left(   \|f\|_{L^p((1,\infty),r{\rm d}r)}  +  |a|  \right). 
\end{equation}
Expanding $\partial_r \tilde{w},$ we remark that  $\partial_{rr} w = f - \partial_r \tilde{w}$ so that \eqref{est_wtilde} implies
\eqref{est_wnabla2}. 

If $p \neq 2$, we then apply Proposition \ref{prop_drz2z} to $\partial_r \tilde w$. This yields 
\begin{equation}
\label{estimate_wtilde-r2}
		  \Big\|\frac{\tilde w}{r} \Big\|_{L^p((1,\infty),r{\rm d}r)} =  \Big\|\frac{w}{r^2} \Big\|_{L^p((1,\infty),r{\rm d}r)}   \leq  C(p)\left(   \|f\|_{L^p((1,\infty),r{\rm d}r)}  +  |a|  + \varepsilon_p |b| \right).
\end{equation}
Since $\partial_r \tilde w = \partial_r w /r - w /r^2$, estimates \eqref{est_wtilde} and \eqref{estimate_wtilde-r2} immediately yield \eqref{est_w-autres2}.
\end{proof}

\section{Study of solutions to (\ref{S1})--(\ref{S-Solideci}) }\label{sect2}

The ultimate goal of this section is to prove Theorem \ref{theo Stokes} and Theorem \ref{thm_AsymptoticStokes}. In all this section, we assume that $\nu = 1$ for simplicity. 
This can be done without loss of generality by setting $(V_\nu(t,x),P_\nu(t,x)):=(V(t/\nu ,x),P(t/\nu,x)/\nu)$.
Because of the computations we presented in the previous section, we first analyze separately the decay of solutions
to the Stokes equation with a fixed obstacle and then, we compute the long-time behavior of solutions to both heat equations with dynamic boundary conditions.
We conclude by combining all these computations.

\subsection{Decay of solutions to (\ref{eq_vR1})--(\ref{eq_vR3})} \label{sec_decayvR}

System \eqref{eq_vR1}--\eqref{eq_vR3} has already been studied in the frame of $L^{p}_{\sigma}(\mathcal F_0)$ spaces \cite[Theorem 1.2]{DS99a}, \cite{DS99}:
\begin{theorem} \label{Thm-Dan-Shibata} 
For each $q\in (1,\infty)$, the Stokes operator of the linear problem \eqref{eq_vR1}-\eqref{eq_vR3} generates a semigroup $S_{R}(t)$ on $L^q_{\sigma}(\mc{F}_{0})$. Moreover, this semigroup satisfies the following decay estimates for $v_R(t,\cdot)=S_{R}(t)v_{R}(0,\cdot)$:

$\bullet$ For  $p \in [ q , \infty]$, there exists $K_{1,R} = K_{1,R}(p,q) >0$ such that for every $v_R(0,\cdot) \in L^q_{\sigma}(\mc{F}_{0})$,
	\begin{equation}\label{Lp-LqR}
		\|v_R(t,\cdot)\|_{L^p_\sigma( \mathcal{F}_0)} \leq K_{1,R}\, t^{\frac{1}{p} - \frac{1}{q}}\|v_R(0,\cdot)\|_{L_\sigma^q( \mathcal{F}_0)}\,,
		\qquad \text{for all}\quad  t>0.
	\end{equation} 

$\bullet$ If $q \leq 2$, for $p \in [q, 2]$, there exists $K_{2,R} = K_{2,R}(p,q)>0$ such that for every $v_R(0,\cdot) \in L^q_{\sigma}(\mc{F}_{0})$,
	\begin{equation}\label{est grad 1R}
		 \|\nabla v_R(t,\cdot)\|_{L^p( \mathcal{F}_0)} \leq K_{2,R}\, t^{-\frac{1}{2} + \frac{1}{p} - \frac{1}{q}}\|v_R(0,\cdot)\|_{L^q_\sigma( \mathcal{F}_0)}\,,
	\qquad \text{for all}\quad  t>0.
	\end{equation}

$\bullet$ For  $p \in [\max\{2, q\}, \infty)$, there exists $K_{3,R} =K_{3,R}(p,q) >0$ such that for every $v_R(0,\cdot) \in L^q_{\sigma}(\mc{F}_{0})$,
	\begin{equation}\label{est grad 2R}
		 \|\nabla v_R(t,\cdot)\|_{L^p( \mathcal{F}_0)} \leq 
			\left\{ \begin{array}{ll} 
				 K_{3,R} \, t^{-\frac{1}{2} + \frac{1}{p} - \frac{1}{q}}\|v_R(0,\cdot)\|_{L^q_\sigma( \mathcal{F}_0)}\,, &\qquad \text{for all}\quad  0<t<1, \\
				 K_{3,R} \, t^{ - \frac{1}{q}}\|v_R(0,\cdot)\|_{L^q_\sigma( \mathcal{F}_0)}\,, &\qquad \text{for all}\quad  t \geq 1.
			 \end{array}\right.
	\end{equation} 
\end{theorem}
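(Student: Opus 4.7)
The plan is to follow the strategy of Dan and Shibata. The scheme is: first, establish generation of an analytic semigroup on $L^q_\sigma(\mc{F}_0)$ for $q \in (1,\infty)$; second, deduce short-time bounds from analyticity and standard semigroup smoothing; third, extract the long-time rates from a careful low-frequency analysis of the Stokes resolvent. Concretely, the Stokes operator $A_R = -\mb{P}\Delta$ with domain $D(A_R) = W^{2,q}(\mc{F}_0) \cap W^{1,q}_0(\mc{F}_0) \cap L^q_\sigma(\mc{F}_0)$ generates a bounded analytic semigroup on $L^q_\sigma(\mc{F}_0)$, because of the sectorial resolvent estimate
\[
\norm{(\lambda + A_R)^{-1} f}_{L^q_\sigma(\mc{F}_0)} \leq \frac{C}{|\lambda|} \norm{f}_{L^q_\sigma(\mc{F}_0)}
\]
in a sector around the positive real axis, classically available in 2D exterior domains (via the works of Borchers--Miyakawa, Giga--Sohr, Farwig--Sohr). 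Combined with the equivalence $\norm{\nabla u}_{L^p(\mc{F}_0)} \sim \norm{A_R^{1/2}u}_{L^p(\mc{F}_0)}$ on $D(A_R^{1/2})$, this immediately yields all the stated bounds for $0 < t \leq 1$.

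For the long-time regime, I would represent the semigroup through the Dunford integral
\[
S_R(t)f = \frac{1}{2\pi i}\int_\Gamma e^{\lambda t}(\lambda + A_R)^{-1}f\, d\lambda
\]
and deform $\Gamma$ close to the origin so as to convert the behavior of the resolvent near $\lambda=0$ into a time-decay of $S_R(t)$. The main technical difficulty, specific to the 2D exterior setting, is the \emph{Stokes paradox}: the stationary exterior Stokes problem admits no $L^q(\mc{F}_0)$-solution, so the resolvent is genuinely singular as $\lambda \to 0$ and one must track $L^q$-$L^p$ mapping properties with $p \neq q$. Following Dan--Shibata, I would decompose
\[
(\lambda + A_R)^{-1} = R_{\mathrm{sing}}(\lambda) + R_{\mathrm{reg}}(\lambda),
\]
where $R_{\mathrm{sing}}$ is an explicit low-rank part isolating the leading logarithmic singularity at $\lambda=0$, while $R_{\mathrm{reg}}$ is $L^q$-$L^p$ bounded and Hölder-continuous up to $\lambda = 0$. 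Integration by parts in $\lambda$ against $e^{\lambda t}$ then produces the decay rates $t^{1/p - 1/q}$ and $t^{-1/q}$ in \eqref{Lp-LqR} and \eqref{est grad 2R}.

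For the gradient estimates, the short-time bound \eqref{est grad 1R} and the first alternative of \eqref{est grad 2R} follow directly from analyticity and $\norm{\nabla u}_{L^p} \sim \norm{A_R^{1/2}u}_{L^p}$. The long-time version in the second alternative of \eqref{est grad 2R} is obtained by the composition $\nabla S_R(t) = \nabla S_R(1)\circ S_R(t-1)$: bounding $\norm{\nabla S_R(1)}_{\mathscr{L}_c(L^p_\sigma(\mc{F}_0) \to L^p(\mc{F}_0))}$ by the short-time gradient bound and then applying \eqref{Lp-LqR} to $S_R(t-1)$ gives the saturated rate $t^{-1/q}$; the loss of the $t^{-1/2}$ factor for $p > 2$ and large $t$ is precisely the imprint of the Stokes paradox on the gradient level. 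The hard part is undoubtedly the refined low-frequency expansion of $(\lambda + A_R)^{-1}$; everything else is standard once that expansion is in place.
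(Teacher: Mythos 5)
The first thing to note is that the paper does not prove this theorem: it is stated purely as a citation of Dan and Shibata \cite[Theorem 1.2]{DS99a}, \cite{DS99}, so there is no internal proof to compare against. Your outline --- analyticity of the Stokes semigroup on $L^q_\sigma(\mc{F}_0)$, short-time bounds from sectoriality, and long-time rates extracted from a low-frequency expansion of the resolvent isolating the logarithmic singularity at $\lambda=0$ forced by the Stokes paradox --- is indeed the strategy of the cited references, and you correctly identify the resolvent expansion near $\lambda = 0$ as the real content.

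That said, two of your steps do not work as written. The decisive one is the composition argument for the second alternative of \eqref{est grad 2R}: writing $\nabla S_R(t) = \nabla S_R(1)\circ S_R(t-1)$, bounding $\nabla S_R(1)$ in $\mathscr{L}_c(L^p_\sigma(\mc{F}_0) \to L^p(\mc{F}_0))$ and applying \eqref{Lp-LqR} to $S_R(t-1)$ yields the rate $(t-1)^{\frac1p-\frac1q}$, not $t^{-\frac1q}$; since $\frac1p-\frac1q > -\frac1q$ for $p<\infty$, this is strictly weaker than the claimed estimate. Nor can one repair it by pushing the intermediate exponent to $\infty$, because $\nabla S_R(1)$ does not map $L^\infty_\sigma$ into $L^p(\mc{F}_0)$ for $p<\infty$ on an unbounded domain. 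The $t^{-\frac1q}$ rate is precisely the part of the theorem that requires the full low-frequency resolvent analysis (local energy decay), which your sketch names but does not carry out. Secondly, the asserted equivalence $\norm{\nabla u}_{L^p(\mc{F}_0)} \sim \norm{A_R^{1/2}u}_{L^p(\mc{F}_0)}$ is only available for $p \leq 2$ in a two-dimensional exterior domain; for $p>2$ the short-time gradient bound must instead be obtained from elliptic regularity for the Stokes operator combined with Gagliardo--Nirenberg interpolation, or from the cut-off arguments of \cite{DS99a}.
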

For localized initial data it is possible to obtain a much sharper description of the long-time behavior of $v_R$ 
by following the spirit of our spherical-harmonic decomposition. To this end, we need a general result on the decay of solutions
to heat equations with dynamic boundary conditions. This result is detailed in the following subsection. 
So, we postpone the more precise computation of the long-time behavior of $v_R$ to the end of this section (see Theorem  \ref{thm_vRfirstorder}).

\subsection{Semigroup estimates.} \label{sec_sgest}

We proceed with the computation of the long-time behavior of solutions to \eqref{eq_w0first}--\eqref{eq_w0last} and \eqref{eq_1:z}--\eqref{eq_2:z}.
We note that both equations are examples of the family of systems:
\begin{eqnarray}
		\partial_t y + \left( - \frac{1}{r} \partial_r (r \partial_r y)  + \frac{k^2}{r^2} y \right) = 0 \,,
		 & & \text{ for } (t,r) \in (0,\infty) \times (1,r)\,; \label{eq_1:y}
		\\
		y(t,1) = \ell_Y(t) \,,
		 & & \text{ for } (t,r) \in (0,\infty) \times (1,r)\,; \label{eq_1-b:y}
		 \\
		\ell_Y'(t) = \tilde{\alpha} (\partial_r y(t,1)- k y(t,1))\,,
		& &
		\text{ for } t \in (0, \infty) \, ; \label{eq_3:y}
\end{eqnarray}
with parameters $\tilde{\alpha} >0$ and $k \in \mathbb N \cup \{0\}.$ Indeed $(z, \ell_Z)$ solution of  \eqref{eq_1:z}--\eqref{eq_2:z} is a solution to \eqref{eq_1:y}--\eqref{eq_3:y} in the 
case 
$$
k = 0 \qquad \tilde{\alpha} = \dfrac{4\pi}{\pi+m}
$$
whereas $(w, \ell_W)$ solution of \eqref{eq_w0first}--\eqref{eq_w0last} is a solution to \eqref{eq_1:y}--\eqref{eq_3:y} in the case 
$$
k=1 \qquad \tilde{\alpha} = \dfrac{2\pi}{\mathcal J}\,.
$$

\medskip

To compute the decay of solutions to \eqref{eq_1:y}--\eqref{eq_3:y}, we use classical methods for parabolic equations (see \cite{EscobedoZuazua,Vazquez,VazquezZuazua,MunnierZuazua}).  In our context, due to the presence of the solid, we shall refer extensively to the works \cite{MunnierZuazua,MunnierZuazua2}
of A. Munnier and E. Zuazua  which study thoroughly the equation
\begin{equation}
	\label{Eq-on-V}
	\left\{
		\begin{array}{ll}
			\partial_t \text{v} - \Delta_{\R^n} \text{v} = 0, \quad & \text{ for } (t,x) \in  (0,\infty) \times \R^n \backslash B(0,1), 
			\\[8pt]
			\text{v}(t,x)= \ell_{\text{v}}(t) , & \text{ for } (t,x) \in (0, \infty) \times \mathcal S^{n-1},
			\\[4pt]
			 \ell_{\text{v}}' (t) = \alpha \displaystyle{\int_{\mathcal S^{n-1}}} \partial_r \text{v}(t,x)  \, d\sigma,  & \text{ for } t \in (0, \infty),
		\end{array}
	\right.
\end{equation}
where $\alpha>0$ is a fixed real number. 
Formally, for arbitrary $k \in \mathbb N,$ $(y, \ell_Y)$ is a solution to \eqref{eq_1:y}--\eqref{eq_3:y} if and only if the pair
$(\text{v},\ell_{\text{v}})$ defined by 
\begin{equation} 	\label{Correspondance-Z-V-k}
	\ell_{\text{v}}(t) = \ell_Y(t)\,,
	\qquad
	\text{v}(t,r, \omega) := \dfrac{y (t,r)}{r^k}, 
	\qquad 
	\forall \, r > 1 \,,  \quad  \forall \, \omega \in \mathcal S^{n-1},\,
\end{equation}
is a solution of equation \eqref{Eq-on-V} for 
\begin{equation}
	\label{Alpha}
	n = 2k+2\,, \quad \alpha = \dfrac{\tilde{\alpha}}{|\mathcal S^{n-1}|}\,.
\end{equation}

In this subsection, we fix  $k \in \mathbb N \cup \{0\}$ and $\tilde{\alpha}>0$ and study the long-time behavior of the solution of system \eqref{eq_1:y}--\eqref{eq_3:y}.
By \eqref{Alpha}, this fixes also values for $n$ and $\alpha.$
\medskip

In order to study system \eqref{Eq-on-V}, A. Munnier and E. Zuazua introduce the functional spaces
\begin{equation*}
	 \mf{L}^p(\mathbb R^n) = \{ Y \in L^p(\R^n),\ \nabla Y=0 \text{ in }B(0,1) \}, \qquad (p \in [1, \infty]),
\end{equation*}
endowed with the norm:
\begin{align*}
	  \| Y \|_{\mf{L}^p(\mathbb R^n)}^p&=\| y \|_{L^p(\mathbb R^n \setminus B(0,1) )}^p + \frac{1}{\alpha}|\ell_{Y}|^p , & \text{ when $p<\infty$}\,,\\
	  \| Y \|_{\mf{L}^\infty(\mathbb R^n)}    &= \max ( \| y \|_{L^{\infty}(\mathbb R^n \setminus B(0,1) )}, |\ell_Y| ) , & \text{corresponding to $p = \infty$}\,, 
\end{align*}
where $\ell_{Y}$ is the mean value of $Y$ in the ball:
\begin{equation*}
	\ell_{Y}= \frac{1}{|B(0,1)|}\int_{B(0,1)} Y(x) \, {\rm d}x.
\end{equation*}
As before, in what follows, we identify $(\text{v},\ell_\text{v}) \in L^p(\mathbb R^n \setminus B(0,1)) \times \R$ with the extension $\text{V} \in \mf{L}^p(\mathbb R^n)$ given by 
$\text{V} = \mathbf{1}_{B(0,1)} \ell_{\text{v}} + \mathbf{1}_{\mathbb R^n \setminus B(0,1)} \text{v}$, and we shall write $\text{V} \doteq ( \text{v},\ell_\text{v})$ to denote this extension. 

We also introduce a radial variant of $\mf{L}^p(\mathbb R^2)$-spaces:
\begin{equation*}
	 \mf{L}^p := \{ Y \doteq ( y,\ell_Y) \text{ radial function, such that  } Y \in {\mf L}^p(\R^2) \}\,.
\end{equation*}
This space is endowed with the norm:
\begin{align*}
	  \| Y \|_{\mf{L}^p}^p&=\| y \|_{L^p(\mc{F}_0)}^p +  \frac{2\pi}{\tilde{\alpha}}|\ell_{Y}|^p , & \text{ when $p<\infty$}\,,\\
	  \| Y \|_{\mf{L}^\infty}    &= \max ( \| y \|_{L^{\infty}(\mc{F}_0)}, |\ell_Y| ) , & \text{corresponding to $p = \infty$}\,.
\end{align*}
In the case $p=2,$ this space is a Hilbert space associated with the scalar product:
$$
(Y,\tilde{Y}) = \int_{\mathcal F_0} y \tilde{y} + \dfrac{2\pi}{\tilde{\alpha}} \ell_{Y} \ell_{\tilde{Y}}.
 $$
 For $p \neq 2,$  extending this scalar product by a density argument enables to identify the dual of $\mf{L}^{p}$ with $\mf{L}^{{p'}}$ where
 ${p'}$ is the conjugate exponent of $p.$

\medskip

 With these notations,  A. Munnier and E. Zuazua prove in \cite{MunnierZuazua,MunnierZuazua2}:
\begin{theorem}[Decay estimates for \eqref{Eq-on-V}, \cite{MunnierZuazua,MunnierZuazua2}]\label{thm_munnierzuazua_Cauchy}
	Given $(\text{\em v}_0,\ell_{\text{\em v}_0}) \in \mf{L}^2(\mathbb R^n),$  there exists a unique solution 
	$(\text{\em v},\ell_{\text{\em v}}) \in \mathcal{C}([0,\infty); \mf{L}^2(\mathbb R^n))$
	of \eqref{Eq-on-V} such that $(\text{\em v}(0,\cdot), \ell_{\text{\em v}}(0))  = (  \text{\em v}_0, \ell_{\text{\em v}_0}).$ This solution satisfies:
	\begin{equation} \label{eq_semigroupdimn}
		\|(\text{\em v}(t,\cdot),\ell_{\text{\em v}}(t))\|_{\mf L^2(\mathbb R^n)} \leq \| (\text{\em v}_0,\ell_{\text{\em v}_0})\|_{\mf{L}^2(\mathbb R^n)}\,, \quad \forall \, t \geq 0\,.
	\end{equation}
	Moreover, if $(\text{\em v}_0,\ell_{\text{\em v}_0}) \in \mf{L}^q(\mathbb R^n),$ for some $q \in [1, \infty]$, for all $p \in [q, \infty]$, there exists a constant $C(p,q)$ such that
	\begin{equation} \label{eq_semigroup-pq-mn}
		t^{\frac{n}{2} (1/q - 1/p)} \|(\text{\em v}(t,\cdot),\ell_{\text{\em v}}(t))\|_{\mf L^p(\mathbb R^n)} \leq  \| (\text{\em v}_0,\ell_{\text{\em v}_0})\|_{\mf{L}^q(\mathbb R^n)}\,, \quad \forall \, t \geq 1\,.
	\end{equation}
\end{theorem}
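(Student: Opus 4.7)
The plan is to follow the strategy of Munnier--Zuazua \cite{MunnierZuazua,MunnierZuazua2}, which combines a Hille--Yosida construction with a Nash-type inequality adapted to the functional spaces $\mf L^p(\mathbb R^n)$.

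\textbf{Step 1: Cauchy problem in $\mf L^2(\mathbb R^n)$.} On the Hilbert space $\mf L^2(\mathbb R^n)$ I would introduce the unbounded operator $\mathcal A$ with domain
\[
\mc D(\mathcal A) = \{ \text{V}\doteq(\text{v},\ell_{\text{v}}) \in \mf L^2(\mathbb R^n),\ \text{v}|_{\mathbb R^n\setminus B(0,1)} \in H^2,\ \text{v}|_{\partial B(0,1)} = \ell_{\text{v}}\}
\]
acting as $-\Delta$ on the exterior part and as $-\alpha \int_{\mathcal S^{n-1}} \pd_r \text{v}\,d\sigma$ on the solid part. Writing the scalar product as $\langle\text{V},\text{W}\rangle = \int \text{v}\,\text{w} + \frac1\alpha \ell_{\text{v}}\ell_{\text{w}}$ and integrating by parts,
\[
\langle \mathcal A \text{V}, \text{V}\rangle_{\mf L^2(\mathbb R^n)} = \int_{\mathbb R^n\setminus B(0,1)}|\nabla \text{v}|^2\,dx,
\]
because the boundary flux cancels exactly against the dynamical equation for $\ell_{\text{v}}$. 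This identifies $\mathcal A$ as a symmetric nonnegative operator; its maximality (hence self-adjointness) follows from solving the resolvent equation $(I+\mathcal A)\text{V} = \text{F}$ by Lax--Milgram on the energy space. Hille--Yosida then yields a strongly continuous contraction semigroup on $\mf L^2(\mathbb R^n)$, and the energy identity $\frac{d}{dt}\|\text{V}(t)\|_{\mf L^2}^2 = -2\|\nabla \text{v}\|_{L^2}^2$ provides \eqref{eq_semigroupdimn}.

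\textbf{Step 2: Nash inequality and $\mf L^1\to\mf L^2$ bound.} The key analytic estimate is a Nash-type inequality in the weighted spaces: for every $\text{V}\in \mf L^1(\mathbb R^n)\cap \mc D(\mathcal A^{1/2})$,
\[
\|\text{V}\|_{\mf L^2(\mathbb R^n)}^{2+4/n} \leq C\,\|\text{V}\|_{\mf L^1(\mathbb R^n)}^{4/n}\,\|\nabla \text{v}\|_{L^2(\mathbb R^n\setminus B(0,1))}^{2}.
\]
Since $\text{v}$ is constant inside the ball, one can extend $\text{v}$ by its boundary trace $\ell_{\text{v}}$ inside and obtain a function on $\mathbb R^n$ whose gradient agrees with $\nabla \text{v}$; the classical Nash inequality on $\mathbb R^n$ then gives the estimate above, up to the constant factors coming from comparing $\|\cdot\|_{\mf L^p}$ with $\|\cdot\|_{L^p(\mathbb R^n)}$. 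Injecting this inequality into the $\mf L^2$ energy identity and using that $\|\text{V}(t)\|_{\mf L^1}\leq \|\text{V}_0\|_{\mf L^1}$ (which follows from the order-preserving character of the semigroup, itself a consequence of $\mathcal A$ satisfying a Beurling--Deny criterion on $\mf L^2$), a standard Nash ODE argument on $E(t)=\|\text{V}(t)\|_{\mf L^2}^2$ yields
\[
\|\text{V}(t)\|_{\mf L^2(\mathbb R^n)} \leq C\,t^{-n/4}\,\|\text{V}_0\|_{\mf L^1(\mathbb R^n)},\qquad t\geq 1.
\]

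\textbf{Step 3: Interpolation to arbitrary $(p,q)$.} The operator $\mathcal A$ being self-adjoint on $\mf L^2$, duality transfers the $\mf L^1\to\mf L^2$ bound to an $\mf L^2\to\mf L^\infty$ bound with the same exponent. Composing the semigroup at time $t$ as $e^{-t\mathcal A}=e^{-t\mathcal A/2}\circ e^{-t\mathcal A/2}$ gives the endpoint $\mf L^1\to\mf L^\infty$ estimate $\|e^{-t\mathcal A}\text{V}_0\|_{\mf L^\infty}\leq Ct^{-n/2}\|\text{V}_0\|_{\mf L^1}$. Riesz--Thorin interpolation between this endpoint and the contraction estimate on $\mf L^2$ (hence, by interpolation, on every $\mf L^p$, $p\in[1,\infty]$) then produces the full family of $\mf L^q\to \mf L^p$ decays with the sharp rate $t^{-\frac n2(1/q-1/p)}$ claimed in \eqref{eq_semigroup-pq-mn}.

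\textbf{Main obstacle.} The delicate point is the Nash-type inequality in $\mf L^p(\mathbb R^n)$ and, in parallel, the $L^1$-contractivity of the semigroup: the dynamic boundary condition prevents a direct application of the Beurling--Deny conditions in their textbook form, so one must check them by hand, verifying that the projection on $[0,V]^+$ preserves the domain $\mc D(\mathcal A^{1/2})$ and reduces the Dirichlet form. Once these two ingredients are in place, the ultracontractivity machinery runs routinely and delivers the theorem.
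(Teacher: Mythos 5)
Your proposal is correct, and it is worth noting that the paper itself does not reprove this theorem: it only recalls that $A_{mz}$ is maximal monotone on $\mf{L}^2(\mathbb R^n)$ — your Step 1, including the exact cancellation of the boundary flux against the dynamic condition, matches this sketch — and refers to \cite{MunnierZuazua,MunnierZuazua2} for the decay rates. Where the paper does give a full argument, namely in the proof of the two-dimensional analogue Theorem \ref{ThmMunnierZuazuaExt}, the route differs from yours: there the $\mf{L}^q$-contraction is obtained for \emph{every} $q$ in one stroke from the multiplier identity \eqref{Lyapunov-general} with $j(y)=|y|^q$ (taking $j(y)=|y|$ for $q=1$, so no Beurling--Deny criterion is needed), the Nash-type inequality is applied to the powers $|Y|^q$ to produce the $\mf{L}^q\to\mf{L}^{2q}$ decay $t\,\|Y(t)\|_{\mf{L}^{2q}}^{2q}\le C\bigl(\|Y_0\|_{\mf{L}^q}^q\bigr)^2$, and V\'eron's iteration \cite{Veron79} then reaches $\mf{L}^\infty$, with interpolation filling in the intermediate $p$. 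You instead run the classical Nash ultracontractivity scheme: $\mf{L}^1\to\mf{L}^2$ from the energy identity plus the Nash inequality, $\mf{L}^2\to\mf{L}^\infty$ by self-adjoint duality, composition for the $\mf{L}^1\to\mf{L}^\infty$ endpoint, and Riesz--Thorin. Both arguments rest on the same two pillars — the Nash inequality for fields constant on the ball (your extension-by-the-trace argument is precisely how \cite[Lemma 2.2]{MunnierZuazua} is obtained, since the extended function lies in $H^1(\R^n)$ with no extra gradient) and an $L^1$-type contraction — so both are valid. Your duality step exploits self-adjointness, which holds here; the multiplier route is more robust (it survives the zero-order terms $k^2 y/r^2$ appearing in the paper's later application to \eqref{eq_1:y}--\eqref{eq_3:y}) and sidesteps the one genuinely delicate point you flag: to pass from the Beurling--Deny criteria to $\mf{L}^1$-contractivity one must also invoke either sub-Markovianity of the form (checked by hand on the truncations, which do preserve the constraint $\text{v}|_{\partial B(0,1)}=\ell_{\text{v}}$) together with duality, or positivity preservation combined with conservation of the total mass $\int_{\R^n\setminus B(0,1)}\text{v}+\alpha^{-1}\ell_{\text{v}}$; positivity preservation alone does not suffice.
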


\begin{theorem}[First term in the asymptotic expansion of solutions of \eqref{Eq-on-V}, \cite{MunnierZuazua,MunnierZuazua2}]\label{thm_munnierzuazua_Exp}

	Given $(\text{\em v}_0,\ell_{\text{\em v}_0}) \in \mf{L}^2(\mathbb R^n)$ such that 
	$
		\text{\em v}_0 \in L^2( \R^n \setminus B(0,1); \exp(|x|^2/4){\rm d}x), 
	$
	 setting
	\begin{equation*}
		M = \int_{\R^n \setminus B(0,1)} \text{\em v}_0(x)\, {\rm d}x + \frac{1}{\alpha} \ell_{\text{\em v}_0}\,, 
	\end{equation*}
	we get
	\begin{itemize}
		\item for all $t >0$ and $p \in [1, \infty]$, $(\text{\em v}(t,\cdot),\ell_{\text{\em v}}(t)) \in \mf{L}^p(\mathbb R^n)$
		\item for all $p\in [1, \infty]$, there exists a constant $C_p$ such that for all $t >0$,
		\begin{eqnarray*}
				t^{\frac{n}{2}(1- 1/p)} \norm{\text{\em v}(t,\cdot) - M G(t) }_{L^p(\R^n \setminus B(0,1))} & \leq  & C_p R_{1,p}(t), 
				\\
				t^{\frac{n}{2}} \left|\ell_{\text{\em v}}(t) - \frac{M}{(4 \pi t)^{\frac{n}{2}}} \right| & \leq & C R_2(t), 
		\end{eqnarray*}
		where
		\begin{equation*}
			G(t,x) = \frac{1}{(4 \pi t)^{\frac{n}{2}}} \exp\left( - \frac{|x|^2}{4t} \right), 
		\end{equation*}
		and, denoting by $\delta_{n,2}$ the Kronecker symbol:
		\begin{eqnarray*}
			R_{1,p}(t) &= &
				\left\{
				\begin{array}{ll}
					(\delta_{n,2}|\log(t)| +1 ) t^{-1/2} & \text{ if } p \in [1, 2], 
					\\[3pt]
					(\delta_{n,2}|\log(t)| + 1) t^{-1/2+\theta_{n,p}} & \text{ if } p \geq 2,
				\end{array}
				\right. 
			\quad \text{ with }
				\theta_{n,p} = \frac{n}{2} \frac{(p-1)(p-n)}{p (2p + n(p -1))}, 
			\nonumber
			\\[4pt]
			 R_2(t) & = & (\delta_{n,2} |\log(t)|^{1/2}+ 1) t^{-1/(n+2)}.
			 \nonumber
		\end{eqnarray*}
	\end{itemize}
\end{theorem}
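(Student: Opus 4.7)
The natural starting point is a conservation law. Integrating the equation $\partial_t v = \Delta v$ on $\mathbb R^n \setminus B(0,1)$ and using Green's formula together with the dynamic boundary condition $\ell_v'(t) = \alpha \int_{\mathcal S^{n-1}} \partial_r v\, d\sigma$ yields
\begin{equation*}
\frac{d}{dt}\Bigl[\int_{|x|>1} v(t,x)\, dx + \frac{1}{\alpha}\ell_v(t)\Bigr] = 0,
\end{equation*}
so that $M$ is preserved by the flow and is the only invariant consistent with the $\mf{L}^1$-scaling. This identifies $M$ as the correct coefficient in the leading order.

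To extract the first-order asymptotics, I would pass to the parabolic self-similar variables $\tau = \log(1+t)$, $\xi = x/\sqrt{1+t}$ and set $w(\tau,\xi) = (1+t)^{n/2} v(t,x)$. In the rescaled exterior domain $\{|\xi|>(1+t)^{-1/2}\}$, which exhausts $\mathbb R^n$ as $t \to \infty$, $w$ satisfies the Ornstein--Uhlenbeck-type equation
\begin{equation*}
\partial_\tau w = \Delta_\xi w + \tfrac12 \xi\cdot\nabla_\xi w + \tfrac{n}{2} w,
\end{equation*}
which is self-adjoint in the weighted Hilbert space $L^2(\mathbb R^n;\, e^{|\xi|^2/4}\, d\xi)$. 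Its lowest eigenvalue is $0$, attained by the Gaussian $G(1,\xi) = (4\pi)^{-n/2}e^{-|\xi|^2/4}$, and the spectral gap to the next eigenmode equals $1/2$.

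The core step is to decompose $w(\tau,\cdot) = M G(1,\cdot) + r(\tau,\cdot)$, where $r$ has vanishing total mass after subtraction of a small boundary corrector, and then to derive a Gronwall inequality for $\int |r|^2 e^{|\xi|^2/4}\, d\xi$. The Gaussian weight makes the drift term integrate by parts with the right sign, yielding exponential decay of $r$ of rate at least $1/2$ in $\tau$, which translates into the expected extra $t^{-1/2}$ factor in the original variables. The $L^p$ bounds for $p \neq 2$ follow by interpolating this weighted $L^2$ estimate with the uniform semigroup bounds of Theorem~\ref{thm_munnierzuazua_Cauchy}; the explicit exponent $\theta_{n,p}$ precisely tracks this interpolation loss. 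For the sharper rate $R_2(t)$ controlling $\ell_v$, I would use a moment-type multiplier in the spirit of Escobedo--Zuazua: testing the equation for $r$ against a weight that balances the energy and a first moment produces the exponent $1/(n+2)$.

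The principal technical obstacle, and the source of the $\delta_{n,2}|\log t|$ corrections, is the presence of a \emph{shrinking} obstacle in the self-similar picture. In dimension $n \geq 3$ the capacity of $B(0,(1+t)^{-1/2})$ remains harmless, so the boundary corrector needed to match $w(\tau,\cdot) = \ell_v/(1+t)^{n/2}$ on the inner boundary is uniformly small and can be absorbed into the energy error. In dimension $n = 2$, however, the capacity of a small disk decays only like $1/\log(1+t)$, so the harmonic extension from the inner boundary carries a logarithmic tail which no simple cutoff can eliminate; propagating this tail through the weighted energy estimate produces the $|\log t|$ correction in $R_{1,p}$ and, via its interplay with the moment multiplier, the $|\log t|^{1/2}$ correction in $R_2$.
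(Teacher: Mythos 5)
First, a point of reference: the paper does not prove this statement. Theorem \ref{thm_munnierzuazua_Exp} is imported from \cite{MunnierZuazua,MunnierZuazua2}, and the text explicitly declines to reproduce the argument, so your attempt can only be measured against the proof in those references. Your outline is faithful to that strategy: the conservation of $M$ (your flux computation balancing $\int_{|x|>1}\Delta \text{v}$ against $\ell_{\text{v}}'/\alpha$ is correct), the self-similar variables, the Ornstein--Uhlenbeck operator self-adjoint in $L^2(e^{|\xi|^2/4}\,d\xi)$ with spectral gap $1/2$, and the identification of the shrinking hole --- whose capacity in dimension $2$ decays only like $1/\log t$ --- as the source of the logarithmic corrections. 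That is the right skeleton.

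The gaps are in the quantitative endgame, which is precisely the content of the theorem. (i) The remainder $r=w-MG(1,\cdot)$ does not have zero mass: since $\int_{|\xi|>(1+t)^{-1/2}}w\,d\xi = M-\ell_{\text{v}}(t)/\alpha$, the mass defect of $r$ is of the order of $\ell_{\text{v}}(t)$, i.e.\ of the order of the very quantity estimated in the second display; the spectral-gap/Gronwall argument must therefore be coupled to the ODE for $\ell_{\text{v}}$, and this circularity is not addressed. (ii) The exponent $\theta_{n,p}$ is not produced by the interpolation you describe: interpolating the weighted-$L^2$ rate $t^{-n/4-1/2}$ against the bare $L^\infty$ semigroup bound $t^{-n/2}$ gives $\theta=1/2-1/p=(p-2)/(2p)$, which differs from $\frac{n}{2}\frac{(p-1)(p-n)}{p(2p+n(p-1))}$ already for $n=2$ and even has the wrong sign for $2\le p<n$; in \cite{MunnierZuazua2} this exponent comes from optimizing over an intermediate spatial splitting, not a two-point interpolation. (iii) The rate $t^{-1/(n+2)}$ for $\ell_{\text{v}}$ and the fact that the correction there is $|\log t|^{1/2}$ rather than $|\log t|$ are asserted, not derived; ``a weight that balances the energy and a first moment'' does not show where $1/(n+2)$ comes from. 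Since these exact exponents are reused downstream (e.g.\ the verification that $-\frac12+\theta_{2,p}+\frac1p-1>-\frac14-1$ in the proof of Theorem \ref{thm_AsymptoticStokes}), the sketch cannot stand in for a proof of the stated rates.
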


We do not give a comprehensive proof of Theorems \ref{thm_munnierzuazua_Cauchy}--\ref{thm_munnierzuazua_Exp} and let the reader refer to  \cite{MunnierZuazua2,MunnierZuazua}
for further details. 
Let us only recall that the proof of Theorem \ref{thm_munnierzuazua_Cauchy} is based on the remark that \eqref{Eq-on-V} reduces to the abstract ODE:
$\partial_t \text{V} + A_{mz} \text{V} = 0,$
where $A_{mz}$ is the unbounded operator with domain
\begin{equation}
	\mathcal{D}(A_{mz}) = \{   \text{V} \doteq ( \text{v}, \ell_{\text{v}}) \in H^2(\R^n \backslash B(0,1)) \times \R \text{ with } \text{v}_{| |x| = 1} = \ell_{\text{v}} \},
\end{equation}
such that:
\begin{equation}
	\label{StokesOperator}
	 A_{mz} (\text{v},\ell_{\text{v}}) =  \left( \begin{array}{cc}  -\Delta \cdot & 0 \\[4pt] - \alpha \displaystyle{\int_{\mathcal{S}^{n-1}}}\, \partial_r \cdot d \sigma & 0\end{array} \right)\begin{pmatrix}\text{v}\\ \ell_{\text{v}} \end{pmatrix}
	=  \left( \begin{array}{cc}  -\Delta \text{v}  \\[4pt] - \alpha \displaystyle{\int_{\mathcal S^{n-1}}}\, \partial_r \text{v} d \sigma \end{array} \right)\,.
\end{equation}
A. Munnier and E. Zuazua show that this operator is maximal monotone which implies the existence of a contraction semigroup on $\mf{L}^2(\mathbb R^n)$
representing the unique solution to \eqref{Eq-on-V}. Further classical smoothing properties of this semigroup also yield that, for $(\text{v}_0,\ell_{\text{v}_0}) \in \mathcal{D}(A_{mz}),$ the unique solution to  \eqref{Eq-on-V} satisfies:
\begin{equation} \label{eq_regsolv}
	{\text V} \in \mathcal{C}^1([0,\infty); {\mf L}^2(\mathbb R^n) ) \cap \mathcal{C}([0,\infty); {\mathcal D}(A_{mz}) )\,.
\end{equation}
We remark that \eqref{Eq-on-V} is rotational invariant. Hence, considering radial data and noting that transformation 
\eqref{Correspondance-Z-V-k} is a bi-continuous one-to-one and onto mapping from $\mf{L}^2$ to radial functions in $\mf{L}^2(\mathbb R^n)$
(with $n=2k+2)$, Theorem \ref{thm_munnierzuazua_Cauchy} implies:

\begin{theorem} \label{thm_wpy} 
	Given $Y_0 \in \mf{L}^2$  there exists a unique solution 
	$Y \in \mathcal{C}([0,\infty); \mf{L}^2)$
	of \eqref{eq_1:y}--\eqref{eq_3:y} such that $Y(0,\cdot)  = Y_0.$ This solution satisfies:
	\begin{equation*} \label{eq_semigroupy}
		\|Y(t,\cdot)\|_{\mf L^2} \leq \| Y_0\|_{\mf{L}^2}\,, \quad \forall \, t \geq 0\,.
	\end{equation*}
\end{theorem}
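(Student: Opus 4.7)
The plan is to reduce Theorem \ref{thm_wpy} to the already-established Theorem \ref{thm_munnierzuazua_Cauchy} via the change of unknown \eqref{Correspondance-Z-V-k}. Given $Y_0 \doteq (y_0, \ell_{Y_0}) \in \mf{L}^2$, I define the $n$-dimensional datum $\text{V}_0 \doteq (\text{v}_0,\ell_{\text{v}_0})$ with $n = 2k+2$ by setting $\text{v}_0(r,\omega) := y_0(r)/r^k$ for $r > 1$ and $\ell_{\text{v}_0} := \ell_{Y_0}$, together with $\alpha := \tilde{\alpha}/|\mathcal{S}^{n-1}|$ as in \eqref{Alpha}.

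The first step is a bookkeeping check that the map $Y \mapsto \text{V}$ is a bi-continuous isomorphism between $\mf{L}^2$ and the closed subspace of radial functions in $\mf{L}^2(\mathbb{R}^n)$. Passing to radial coordinates in $\mathbb{R}^n$,
\begin{equation*}
\|\text{v}\|_{L^2(\mathbb{R}^n \setminus B(0,1))}^2 = |\mathcal{S}^{n-1}| \int_1^{\infty} \frac{|y(r)|^2}{r^{2k}}\, r^{n-1}\, {\rm d}r = |\mathcal{S}^{n-1}| \int_1^{\infty} |y(r)|^2 \, r\, {\rm d}r,
\end{equation*}
since $n-1 - 2k = 1$. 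Combined with $\frac{1}{\alpha}|\ell_{\text{v}}|^2 = \frac{|\mathcal{S}^{n-1}|}{\tilde{\alpha}}|\ell_Y|^2$, this yields $\|\text{V}\|_{\mf{L}^2(\mathbb{R}^n)}^2 = \frac{|\mathcal{S}^{n-1}|}{2\pi}\|Y\|_{\mf{L}^2}^2$, so the norms are equivalent and any energy estimate transfers losslessly.

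The second step is to verify that $Y$ solves \eqref{eq_1:y}--\eqref{eq_3:y} if and only if $\text{V}$ solves \eqref{Eq-on-V}. Writing the radial Laplacian in $\mathbb{R}^n$ and substituting $\text{v} = y/r^k$, a direct computation gives
\begin{equation*}
\Delta \text{v} = \frac{1}{r^k}\left(\partial_{rr} y + \frac{n-1-2k}{r}\partial_r y + \frac{k(k+2-n)}{r^2} y \right),
\end{equation*}
which with $n = 2k+2$ collapses to $\frac{1}{r^k}\left(\frac{1}{r}\partial_r(r\partial_r y) - \frac{k^2}{r^2} y\right)$, precisely matching \eqref{eq_1:y}. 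The Dirichlet trace satisfies $\text{v}(t,1,\omega) = y(t,1)$, matching \eqref{eq_1-b:y}, while $\partial_r \text{v}(t,1,\omega) = \partial_r y(t,1) - k y(t,1)$, so the surface integral in \eqref{Eq-on-V} produces the factor $|\mathcal{S}^{n-1}|$ that turns $\alpha$ back into $\tilde{\alpha}$ and yields \eqref{eq_3:y}.

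The third and final step applies Theorem \ref{thm_munnierzuazua_Cauchy} to $\text{V}_0$, producing a unique $\text{V} \in \mathcal{C}([0,\infty);\mf{L}^2(\mathbb{R}^n))$ satisfying \eqref{eq_semigroupdimn}. Because \eqref{Eq-on-V} is rotationally invariant and $\text{V}_0$ is radial, composing the solution with any rotation of $\mathbb{R}^n$ yields another solution with the same initial datum; uniqueness then forces $\text{V}(t)$ to remain radial for all $t$, so $Y(t)$ is well-defined through the inverse of the transformation and lies in $\mathcal{C}([0,\infty);\mf{L}^2)$. The inequality $\|Y(t)\|_{\mf{L}^2} \leq \|Y_0\|_{\mf{L}^2}$ is then just \eqref{eq_semigroupdimn} rewritten via the norm correspondence. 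The only step with any subtlety is the preservation of radiality, which is the standard uniqueness-plus-symmetry argument and not a genuine obstacle; the rest of the proof is bookkeeping around the transformation \eqref{Correspondance-Z-V-k}.
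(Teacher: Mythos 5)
Your proposal is correct and follows exactly the paper's route: the paper also deduces Theorem \ref{thm_wpy} from Theorem \ref{thm_munnierzuazua_Cauchy} via the change of unknown \eqref{Correspondance-Z-V-k} with $n=2k+2$, using rotational invariance to preserve radiality and the fact that this map is a bi-continuous bijection between $\mf{L}^2$ and radial functions in $\mf{L}^2(\mathbb R^n)$. You merely write out the norm, Laplacian, and boundary-condition computations that the paper leaves implicit, and they all check out.
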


This theorem implies again that the solution to \eqref{eq_1:y}--\eqref{eq_3:y} is given by a contraction semigroup on $\mf{L}^2$
denoted by $S_y$ in what follows. 
The results in \cite{MunnierZuazua,MunnierZuazua2} are not sufficient for our purposes. Indeed, we also have to compute decay rates in $\mf{L}^p-\mf{L}^q$ spaces, similar to the ones in \eqref{eq_semigroup-pq-mn}. But, when $n \neq 2$ (equivalently $k \neq 0$) and $p \neq 2$, the transformation \eqref{Correspondance-Z-V-k} is not an isometry between $\mf{L}^p(\R^n)$ and $\mf{L}^p,$ so that the ``change of dimension'' argument does not yield the expected result. Besides, we will also derive estimates on the $\partial_r y, \, y/r, \, \partial_{rr}y,\ \partial_r y/r$, and $y/r^2$ in ${L}^p(\mc{F}_0)$, for which no precise estimates were given in  \cite{MunnierZuazua,MunnierZuazua2}, except in the case $p = 2$.

In the following subsection, we adapt the arguments of \cite{MunnierZuazua,MunnierZuazua2} to system \eqref{eq_1:y}--\eqref{eq_3:y} to estimate the decay of $S_y$ in $\mf{L}^p$. We then explain how to derive estimates on the derivatives of solutions of \eqref{eq_1:y}--\eqref{eq_3:y} in $\mf{L}^p$.

\subsubsection{${\mf L}^p-{\mf L}^q$ estimates on $y$}

Inspired in \cite{MunnierZuazua,MunnierZuazua2} , we prove the following ${\mf L}^p-{\mf L}^q$ decay estimates for solutions of  \eqref{eq_1:y}--\eqref{eq_3:y}:
\begin{theorem}
	\label{ThmMunnierZuazuaExt}
	For all $q \in [1, \infty)$, system \eqref{eq_1:y}--\eqref{eq_3:y} is well-posed in $\mf{L}^q$: given $Y_0 \in \mf{L}^q$ there is one unique solution $Y$ of \eqref{eq_1:y}--\eqref{eq_3:y} in $\mathcal{C}([0,\infty); \mf{L}^q)$. This solution satisfies:
	\begin{equation}
		\label{EstLq}
		\norm{Y(t)}_{\mf{L}^q} \leq \norm{Y_0}_{\mf{L}^q}.
	\end{equation}	
	We furthermore have the following $\mf{L}^p$ estimates: for all $p \in  [q, \infty]$, $Y$ belongs to $\mathcal{C}((0, \infty); \mf{L}^p)$ and there exists a constant $C$ such that
	\begin{equation}
		\label{DecayLp-Lq}
		t^{1/q - 1/p} \norm{Y(t)}_{\mf{L}^p} \leq C \norm{Y_0}_{\mf{L}^q}, \quad t > 0.
	\end{equation}
	Furthermore, if $Y_0$ also belongs to $\mf{L}^\infty$, we also have $	\norm{Y(t)}_{\mf{L}^\infty} \leq \norm{Y_0}_{\mf{L}^\infty}$.
\end{theorem}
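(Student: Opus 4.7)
The plan is to prove the theorem in three stages: first an $\mf{L}^q$-contraction by direct energy methods, then an extension of the semigroup $S_y$ from $\mf{L}^2$ to $\mf{L}^q$ by density, and finally the $\mf{L}^p$-$\mf{L}^q$ decay through a Nash-Moser scheme in the spirit of \cite{MunnierZuazua,MunnierZuazua2}. For the first stage I would work with the smooth solutions produced by Theorem \ref{thm_wpy} and test \eqref{eq_1:y} against $|y|^{q-2} y \cdot r$ on $(1,\infty)$. After one integration by parts, the boundary term at $r = 1$ reads $\partial_r y(t,1) |\ell_Y|^{q-2} \ell_Y$; substituting $\partial_r y(t,1) = \ell_Y'/\tilde{\alpha} + k \ell_Y$ from \eqref{eq_3:y} rewrites this as $(q\tilde{\alpha})^{-1} \partial_t |\ell_Y|^q + k |\ell_Y|^q$, so that multiplying the whole identity by $2\pi q$ produces
\[
\partial_t \|Y\|_{\mf{L}^q}^q + 2\pi q \Bigl[ (q-1) \int_1^\infty r (\partial_r y)^2 |y|^{q-2}\, dr + k^2 \int_1^\infty \frac{|y|^q}{r}\, dr + k |\ell_Y|^q \Bigr] = 0,
\]
whose non-negative bracket immediately delivers \eqref{EstLq}. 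Since $\mf{L}^2 \cap \mf{L}^q$ is dense in $\mf{L}^q$ for $q \in [1, \infty)$, this contraction extends $S_y$ from Theorem \ref{thm_wpy} uniquely to a contraction semigroup on $\mf{L}^q$, providing the well-posedness part.

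For the decay \eqref{DecayLp-Lq}, the key ingredient is a Nash-type inequality obtained by viewing $Y$ as a function on the whole plane. Given $Y \in \mf{L}^2 \cap \mf{L}^1$, I define $u$ on $\R^2$ by $u = \ell_Y$ on $B_0$ and $u = y$ on $\mc{F}_0$; the matching condition $y(1) = \ell_Y$ built into the $\mf{L}^p$-spaces makes $u$ a genuine $H^1(\R^2)$-extension with $|\nabla u|^2 = \mathbf{1}_{\mc{F}_0}\, |\partial_r y|^2$, so the standard two-dimensional Nash inequality yields
\[
\|Y\|_{\mf{L}^2}^4 \lesssim \|u\|_{L^2(\R^2)}^4 \lesssim \|\nabla u\|_{L^2(\R^2)}^2\, \|u\|_{L^1(\R^2)}^2 \lesssim \mc{D}(Y)\, \|Y\|_{\mf{L}^1}^2,
\]
where $\mc{D}(Y) := \int_1^\infty r (\partial_r y)^2\, dr + k^2 \int_1^\infty y^2/r\, dr + k\, \ell_Y^2$ is the dissipation already appearing at $q = 2$ in the first stage. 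Combined with $\partial_t \|Y\|_{\mf{L}^2}^2 + c\, \mc{D}(Y) \leq 0$ and the $\mf{L}^1$-contraction, a standard Gronwall-type argument gives the base estimate $\|Y(t)\|_{\mf{L}^2} \lesssim t^{-1/2} \|Y_0\|_{\mf{L}^1}$. The generator $A_y$ being self-adjoint on $\mf{L}^2$ (a short check mirroring the Munnier-Zuazua computation), duality produces the dual endpoint $\|Y(t)\|_{\mf{L}^\infty} \lesssim t^{-1/2} \|Y_0\|_{\mf{L}^2}$; composing $S_y(t) = S_y(t/2) \circ S_y(t/2)$ then gives $\|Y(t)\|_{\mf{L}^\infty} \lesssim t^{-1} \|Y_0\|_{\mf{L}^1}$, and Riesz-Thorin interpolation between these endpoints together with the $\mf{L}^q$-contraction \eqref{EstLq} covers the full range of \eqref{DecayLp-Lq}. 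The final $\mf{L}^\infty$-contraction follows by letting $q \to \infty$ in \eqref{EstLq}, using $(2\pi/\tilde{\alpha})^{1/q} \to 1$.

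The main obstacle will be to justify the Nash-type inequality in the presence of the coupled dynamic boundary condition: one must verify, for the approximating smooth solutions, that the extension $u$ truly lies in $H^1(\R^2)$ and that the bulk gradient $\|\nabla u\|_{L^2(\R^2)}^2 = 2\pi \int_1^\infty r (\partial_r y)^2\, dr$ is controlled by the intrinsic dissipation $\mc{D}(Y)$ of the coupled system --- which is in fact immediate here, since $\mc{D}(Y)$ already contains this term, the contributions from $k^2 \int y^2/r\, dr$ and $k |\ell_Y|^2$ being only beneficial. A secondary technicality is the degenerate $q = 1$ limit in the first stage, bypassed by proving the contraction for $q > 1$ with uniform constants and passing to the limit $q \downarrow 1$.
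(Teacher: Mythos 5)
Your proof is correct, and its first half --- the $\mf{L}^q$-contraction obtained by testing \eqref{eq_1:y} against $|y|^{q-2}y\,r$, the resulting Lyapunov identity with nonnegative dissipation, and the extension of $S_y$ to $\mf{L}^q$ by density --- is essentially the paper's argument, which phrases it with a general convex multiplier $j'(y)$ and explicitly invokes a regularization to reach $j(y)=|y|^q$; note that this smoothing is needed for every $q\in[1,2)$, where your multiplier is singular at $y=0$, not only at the $q=1$ endpoint you flag. Where you genuinely diverge is in propagating the decay. The paper applies the Nash-type inequality of \cite{MunnierZuazua} to $|Y|^q$ at every level, obtaining $t\,\|Y(t)\|_{\mf{L}^{2q}}^{2q}\leq C\bigl(\|Y_0\|_{\mf{L}^q}^{q}\bigr)^2$ with $C$ independent of $q$, and then runs the iteration of \cite{Veron79} to reach the $\mf{L}^q\to\mf{L}^\infty$ endpoint. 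You instead establish only the single base estimate $\|Y(t)\|_{\mf{L}^2}\lesssim t^{-1/2}\|Y_0\|_{\mf{L}^1}$ --- re-deriving the Nash inequality by extending $y$ by $\ell_Y$ across $B_0$, which is exactly the inequality (2.17) of \cite{MunnierZuazua} that the paper cites --- and then use self-adjointness of the generator on $\mf{L}^2$ to dualize to $\mf{L}^2\to\mf{L}^\infty$, the splitting $S_y(t)=S_y(t/2)\circ S_y(t/2)$ for $\mf{L}^1\to\mf{L}^\infty$, and Riesz--Thorin for the full range (legitimate here, since the $\mf{L}^p$ are honest Lebesgue spaces over $\mc{F}_0$ augmented by an atom of mass $2\pi/\tilde{\alpha}$, and the decay exponent $1/p-1/q$ is affine in $(1/q,1/p)$). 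Your route is shorter and avoids tracking $q$-uniform constants through an iteration, but it leans on the symmetry of the generator in $\mf{L}^2$ --- which the paper does verify, in the paragraph preceding Theorem \ref{Thm-L-p-Anal} --- whereas the Moser--V\'eron scheme of the paper would survive for a non-symmetric generator. Both arguments yield the stated conclusion, including the $\mf{L}^\infty$-contraction, which you recover by letting $q\to\infty$ where the paper uses the truncations $j(y)=(y-K)_+$.
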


Before going into the proof of Theorem \ref{ThmMunnierZuazuaExt}, let us emphasize that estimates \eqref{EstLq}--\eqref{DecayLp-Lq} are different from the ones in \eqref{eq_semigroup-pq-mn} when $k = 1$, \emph{ i.e. } $n=4$, that corresponds to $( w, \ell_W)$ solutions of \eqref{eq_w0first}--\eqref{eq_w0last}. To be more precise, in that case, using the transformation \eqref{Correspondance-Z-V-k} for $r>1$, \eqref{eq_semigroup-pq-mn} would then read: for all $q \in [1, \infty]$, $p \in [q, \infty]$, there exists a constant $C$ such that for all $W_0 \doteq ( w_0, \ell_{W_0})$ satisfying $w_0/r \in {L}^q(\mathbb{R}^4\setminus B(0,1))$, the solution $W \doteq ( w, \ell_W)$ of \eqref{eq_w0first}--\eqref{eq_w0last} satisfies, for all $t >0$,
\begin{equation}
	\label{Decay-W-By-Munnier-Zuazua}
	t^{2(1/q - 1/p)} \Big\| \frac{w(t)}{r} \Big\|_{{L}^p(\mathbb R^4\setminus B(0,1))} \leq  C \Big\|\Big( \frac{w_0}{r} , \ell_{W_0} \Big)\Big\|_{\mf{L}^q(\mathbb R^4)}.
\end{equation}
Hence, the solution $W$ of \eqref{eq_w0first}--\eqref{eq_w0last} will simultaneously satisfy the decay estimates \eqref{DecayLp-Lq} and \eqref{Decay-W-By-Munnier-Zuazua}. Actually, as we explain below, both results can be proved following the same strategy based on suitable multipliers, the only difference being Sobolev's embeddings.

\begin{proof}
Let $Y_0 \in \mf{L}^2 $ and  $Y \doteq ( y,\ell_Y) \in \mathcal{C}([0,\infty); \mf{L}^2) $
be the unique solution to \eqref{eq_1:y}--\eqref{eq_3:y} given by Theorem \ref{thm_wpy}. Up to assume that $Y_0$ is sufficiently smooth and vanish sufficiently rapidly at infinity we can apply the regularizing effect of the semigroup in $\mathbb R^n$ (see \eqref{eq_regsolv}) so that, going back in $\mathbb R^2$ 
we have ${Y} \in \mathcal{C}([0,\infty);H^1(\mathbb R^2))$ and $y \in \mathcal{C}([0,\infty) ; H^2(\mathcal F_0))\,.$
Then, the idea is to multiply equation \eqref{eq_1:y} by $j'(y)$ for smooth non-decreasing convex functional $j = j(y)$ with at most linear growth at infinity. After integration by parts, this yields 
\begin{equation}
\label{Lyapunov-general}
	\frac{d}{dt} \left(\int_{\mc{F}_0} j(y) + \frac{2\pi}{ \tilde{\alpha}} j(\ell_Y) \right) + \int_{\mc{F}_0} j''(y) |\nabla y|^2 + \dfrac{2k\pi}{\tilde{\alpha}} j'(\ell_Y) \ell_Y  + k^2 \int_{\mathcal F_0} \dfrac{j'(y)y}{|x|^2} = 0.
\end{equation}
After a classical regularization argument, one can show that such estimate can be extended to the convex functionals $j(y) = |y|^q,$ for $q \in [1, \infty),$ and this yields:
\begin{equation}
	\label{Contraction-p}
	\frac{d}{dt}  \left( \| Y \|_{\mf{L}^q}^q \right) \leq 0. 
\end{equation}
Similarly, using functionals of the form $j(y) = (y-K)_+$, after a suitable regularization argument, one derives
\[
	\frac{d}{dt}  \left( \| Y \|_{\mf{L}^\infty} \right) \leq 0. 
\]
Based on the contraction property \eqref{Contraction-p}, the semigroup $S_y(t)$ can be uniquely extended by density to initial data in $\mf{L}^q$ as an operator from $\mf{L}^q$ to itself. We thus have the well-posedness of \eqref{eq_1:y}--\eqref{eq_3:y} in \emph{any} $\mf{L}^q$, $ q \in [1, \infty)$ (${\mf L}^2 \cap H^2(\mathbb R^2 \setminus B(0,1))$ is not densely embedded in $\mf{L}^\infty$, thus our argument does not apply in that case).
This yields also the decay estimates \eqref{EstLq} for all $q \in [1, \infty]$ and $y_0 \in \mf{L}^q$. Note that the decay estimates \eqref{EstLq} also coincide with the decay estimates  \eqref{DecayLp-Lq} for any $p=q <\infty.$

\medskip

Actually, one can go even further. Taking $j(y) = |y|^p$ for $p \geq 2$, estimate \eqref{Lyapunov-general} implies (forgetting the two last terms which are non-negative):
\begin{equation}
\label{Lyapunov-p}
	\frac{d}{dt} \left( \| Y \|_{\mf{L}^p}^p \right) + \frac{ 4(p-1)}{ p} \int_{\mc{F}_0}  |\nabla (|y|^{p/2})|^2 \leq 0.
\end{equation}
Using then suitable Sobolev embeddings and interpolation estimate (actually, this is the only step where the dimension plays a role), one gets Lemma 2.2 in \cite{MunnierZuazua} (the proof is done in \cite{MunnierZuazua2}), and in particular \cite[(2.17)]{MunnierZuazua}: there exists a constant $C$ such that for all functions $Y \doteq (y , \ell_Y) \in \mf L^1$ with $y \in H^1(\mc{F}_0)$:
\begin{equation*}
	\| Y \|_{\mf{L}^2}^4  \leq C  \|Y \|_{\mf{L}^1}^{2} \| \nabla y \|_{L^2(\mc{F}_0)}^2.
\end{equation*}
Applying it to $|Y|^q$, we get the existence of a constant $C$ such that for all $q \geq 1$,
\[
	\left(\| Y \|_{\mf{L}^{2q}}^{2q} \right)^{2} \leq C \left(\|Y \|_{\mf{L}^q}^{q} \right)^2 \| \nabla (|y|^{q}) \|_{L^2(\mc{F}_0)}^2.
\]
Plugging this estimate in \eqref{Lyapunov-p} for $p = 2q$ and using the fact that the $\mf{L}^q$-norm of $y$ decays according to \eqref{Contraction-p}, 
\[
	\frac{d}{dt} \left(\| Y(t) \|_{\mf{L}^{2q}}^{2q} \right) + \frac{2 q -1}{ 8 C q \|Y_0 \|_{\mf{L}^q}^{2q}} \left(\|Y(t) \|_{\mf{L}^{2q}}^{2q}\right)^2 \leq 0.
\]
Of course, this implies that there exists a constant $C$ independent of $q \in [1, \infty)$ such that
\[
	\frac{d}{dt} \left(\| Y(t) \|_{\mf{L}^{2q}}^{2q} \right) + \frac{1}{ C  \|Y_0 \|_{\mf{L}^q}^{2q}} \left(\|Y(t) \|_{\mf{L}^{2q}}^{2q}\right)^2 \leq 0.
\]
This yields the following decay property: there exists a constant $C>0$ independent of $q>0$ such that for all $q \in [1, \infty)$,
\begin{equation}
	\label{Decay-2q-q}
	t \|Y(t) \|_{\mf{L}^{2q} }^{2q} \leq C \left(\|Y_0\|_{\mf{L}^q}^q \right)^2 .
\end{equation}
Then, the iteration argument of \cite{Veron79} based on \eqref{Decay-2q-q}  applies and yields
\[
	t^{1/q } \norm{Y(t)}_{\mf{L}^\infty} \leq C \norm{Y_0}_{\mf{L}^q}, \quad t > 0.
\]
Other estimates in \eqref{DecayLp-Lq} are deduced for arbitrary $p \in [q,\infty)$ by interpolating the cases $p=q$ and $p= \infty$.
\end{proof}

As we mentioned in the above proof, the semigroup $S_y$ associated with system \eqref{eq_1:y}--\eqref{eq_3:y} extends  to a semigroup on $\mf L^q$ for all $q \in [1, \infty)$ that we still denote the same for simplicity. Consequently, Corollary \cite[Corollary 2.5, p.5]{Pazy}
implies that it is associated to a closed linear operator. In this case the operator reads $A_q$ where
	\begin{equation*}
		\mathcal{D}(A_{q}) = \{ Y \doteq  (y, \ell_Y) \in \mf{L}^{p} \text{ with } A_q Y \in \mf{L}^q \}.
	\end{equation*}
and 
\begin{equation*}
		A_q Y = A_q (y, \ell_Y) =  \left( \begin{array}{cc}  -\Delta  + \dfrac{k^2}{r^2} \cdot & 0 \\[4pt] - \dfrac{\tilde{\alpha}}{2\pi} \displaystyle{\int_{\mathcal{S}^1}}\, \partial_r \cdot d \sigma & \tilde{\alpha}k \end{array} \right)\begin{pmatrix}y \\ \ell_Y\end{pmatrix}
		=  \left( \begin{array}{cc}  -\Delta y  + \dfrac{k^2 y }{r^2} \\[4pt] - \dfrac{\tilde{\alpha}}{2\pi} \displaystyle{\int_{\mathcal{S}^1}}\, \partial_r  y d \sigma + k \tilde{\alpha} \ell_Y \end{array} \right).
\end{equation*}

\medskip

\subsubsection{${\mf L}^p-{\mf L}^q$ estimates on $\partial_t Y$}
In the case $p=2,$ as $A_{2}$ is self-adjoint (see \cite[App. A]{MunnierZuazua2}), Theorem 7.7 in \cite{Brezis} states that, if $Y_0 \in \mf{L}^2$, the solution $Y$ of \eqref{eq_1:y}--\eqref{eq_3:y} belongs to $C^\infty((0, \infty); \cap_{\ell \in \N} \mathcal{D}(A^\ell_{2}))$ and 
\begin{equation*}
	\norm{\partial_t Y(t)}_{\mf{L}^2}=\norm{A_{2}Y(t)}_{\mf{L}^2} \leq \frac{C}{t} \norm{Y_0}_{\mf{L}^2}.
\end{equation*}
Extending this result to the $\mf{L}^q$ case, for $q \in (1,\infty)$ turns out to be slightly more intricate. 

\begin{theorem}
	\label{Thm-L-p-Anal}
	For all $q \in (1, \infty)$, there exists a constant $C = C(q)$ such that for all $Y_0 \in \mf{L}^q$, the solution $y$ of \eqref{eq_1:y}--\eqref{eq_3:y} satisfies, for all $t >0$,
	\begin{equation}
		\label{Est-du-Brezis-p}
		\norm{\partial_t Y(t)}_{\mf{L}^q} \leq \frac{C}{t} \norm{Y_0}_{\mf{L}^q}.
	\end{equation}
\end{theorem}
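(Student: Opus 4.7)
The bound \eqref{Est-du-Brezis-p} is the $\mf{L}^q$-analyticity of the semigroup $S_y$. My plan is to derive it from the $\mf{L}^2$-analyticity already cited from Brezis (applied to the self-adjoint operator $A_2$), combined with the smoothing estimates of Theorem \ref{ThmMunnierZuazuaExt} and a duality argument that reduces the range of $q$ to be treated.

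\textbf{Step 1 (duality reduction).} Since $A_2$ is self-adjoint on $\mf{L}^2$, and since smooth compactly supported functions are dense in every $\mf{L}^p$ with $p \in (1,\infty)$, one checks that the $\mf{L}^q$- and $\mf{L}^{q'}$-realizations of $S_y(t)$ (with $q'=q/(q-1)$) are Banach-dual to each other with respect to the $\mf{L}^2$ pairing. The same applies to their generators and to $\partial_t S_y(t) = -A S_y(t)$, so that
\[
    \|\partial_t S_y(t)\|_{\mathscr{L}_c(\mf{L}^q)} = \|\partial_t S_y(t)\|_{\mathscr{L}_c(\mf{L}^{q'})},
\]
and it suffices to establish the estimate for $q \in (1,2]$.

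\textbf{Step 2 ($\mf{L}^2$-bound on $\partial_t Y$).} For $q \in (1,2]$ and $Y_0 \in \mf{L}^q \cap \mf{L}^2$, I would factorise $\partial_t S_y(t) = A_2 S_y(t/2) \circ S_y(t/2)$. Applying the $\mf{L}^q \to \mf{L}^2$ smoothing of Theorem \ref{ThmMunnierZuazuaExt} to $S_y(t/2) Y_0$ and the $\mf{L}^2$-analyticity of $A_2 S_y(t/2)$ yields
\[
    \|\partial_t S_y(t) Y_0\|_{\mf{L}^2} \leq C\, t^{-1/2 - 1/q}\, \|Y_0\|_{\mf{L}^q}.
\]
The remaining and central task is to upgrade this $\mf{L}^2$-estimate into the announced $\mf{L}^q$-estimate. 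To this end, I would exploit the fact that $Z(t) := \partial_t Y(t)$ itself solves \eqref{eq_1:y}--\eqref{eq_3:y} for $t > 0$, so that the $\mf{L}^q$-contractivity from Theorem \ref{ThmMunnierZuazuaExt} gives $\|Z(t)\|_{\mf{L}^q} \leq \|Z(t/2)\|_{\mf{L}^q}$. For smooth and compactly supported initial data (a dense subset), I would bound $\|Z(t/2)\|_{\mf{L}^q}$ by combining the previous $\mf{L}^2$-estimate with the ultracontractive $\mf{L}^q \to \mf{L}^\infty$ smoothing applied to $Y(t/4)$, using the explicit form $AY = (-\Delta y + k^2 y/r^2,\, \text{boundary-trace term})$ together with elliptic regularity to interpolate between the bounds.

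\textbf{Main obstacle.} The hardest point is precisely this last step, since on the unbounded exterior domain $\mc{F}_0$ an $\mf{L}^2$-bound does not imply an $\mf{L}^q$-bound for $q \neq 2$. A slicker alternative, which I would also consider, is to invoke Stein's theorem on symmetric sub-Markovian semigroups: since $S_y$ is symmetric on $\mf{L}^2$, positivity preserving, and contractive on every $\mf{L}^p$, it extends automatically to a bounded analytic semigroup on $\mf{L}^p$ for every $p \in (1,\infty)$, yielding \eqref{Est-du-Brezis-p} without the delicate interpolation step above.
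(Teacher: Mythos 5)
Your primary route (Steps 1--2) does not close. The duality reduction in Step 1 is fine, and the factorization in Step 2 gives the stated $\mf{L}^2$ bound, but the upgrade to $\mf{L}^q$ is exactly the missing content of the theorem, and your sketch of it is circular: the contraction $\|Z(t)\|_{\mf{L}^q}\le\|Z(t/2)\|_{\mf{L}^q}$ bounds the quantity you want by itself at an earlier time, and interpolation between an $\mf{L}^2$ bound and an $\mf{L}^\infty$ bound can only reach $\mf{L}^q$ for $q\ge 2$ --- precisely the range you excluded by the duality reduction. On the unbounded domain there is no embedding $\mf{L}^2\cap\mf{L}^\infty\hookrightarrow\mf{L}^q$ for $q<2$, so no amount of elliptic regularity rescues this step; you correctly identify the obstacle but do not overcome it.

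Your fallback via Stein's theorem for symmetric sub-Markovian semigroups is, however, a legitimate and essentially complete alternative, provided you (i) observe that $\mf{L}^p$ is genuinely the $L^p$ scale of a single measure space (the half-line $(1,\infty)$ with $2\pi r\,{\rm d}r$ plus an atom of mass $2\pi/\tilde\alpha$ at the boundary point), and (ii) actually verify positivity preservation, which is asserted but not checked: it follows from the same truncation argument used for the $\mf{L}^\infty$ contraction (test against $j'(y)$ with $j(y)=(-y)_+$, using that the potential $k^2/r^2$ and the boundary term $-k\ell_Y$ have favorable signs). Since the semigroup is not conservative for $k\ge 1$, you need the sub-Markovian version of the theorem (Cowling, Liskevich--Perelmuter, Ouhabaz), whose conclusion --- bounded analyticity on $L^p$ in a sector of half-angle $\pi/2-\arctan\bigl(|p-2|/(2\sqrt{p-1})\bigr)$ --- yields \eqref{Est-du-Brezis-p}. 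The paper instead proves this by hand: it estimates the numerical range of $A_q$ on $\mf{L}^q$ against the test element $|Y|^{q-2}\overline{Y}$, obtains exactly the sector of half-angle $\arctan\bigl(|q-2|/(2\sqrt{q-1})\bigr)$, deduces sectorial resolvent bounds from Pazy, and recovers $\|tA_qS_y(t)\|_{\mathscr{L}_c(\mf{L}^q)}\le C$ from a contour integral that carefully avoids $0$ (which is not in the resolvent set), treating $q\in(1,2)$ by duality at the end. So your abstract route and the paper's concrete one rest on the same mechanism; the paper's version is self-contained and avoids invoking positivity and the sub-Markovian machinery, at the price of carrying out the sectoriality computation explicitly.
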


\begin{proof}
	The proof of such result is rather classical, but we did not find precise reference in our precise setting. 
	We follow the proof of Theorem 3.6 in \cite[Chapter 7]{Pazy}. First, we recall that $\mf{L}^q$ is a Banach space whose dual is identified with $\mf{L}^{q'},$ for $q' = q/(q-1),$ when taking the duality pairing 
	\[
		\langle Y_1, Y_2 \rangle_{\mf{L}^q, \mf{L}^{q'}}  =  \int_{\mc{F}_0} y_1 \overline{y_2} + \frac{2\pi}{ \tilde{\alpha}} \ell_{Y_1} \overline{\ell_{Y_2}},
	\]
	for $Y_1 \doteq ( y_1, \ell_{Y_1})$, $Y_2 \doteq ( y_2, \ell_{Y_2})$. Note that, in this proof only, we extend $\mf{L}^p$ to functions having complex values. 
	We focus on the case $q \geq 2$. 
	
\medskip
	
For $Y \in \mathcal{D}(A_q)$, $Y^* = |Y|^{q-2} \overline{Y} $ belongs to $\mf{L}^{q'}$ and satisfies
	\[
		\langle Y, Y^* \rangle_{\mf{L}^q, \mf{L}^{q'}} = \| Y \|_{\mf{L}^q}^q \text{ and } \| Y^* \|_{\mf{L}^{q'}} = \| Y \|_{\mf{L}^q}^{q-1}.
	\]
	Besides, easy computations yield 
	\[
		\langle A_q Y, Y^* \rangle_{\mf{L}^q, \mf{L}^{q'}} = \frac{q}{2} \int_{\mc{F}_0} |y|^{q-2} |\nabla y|^2 + \left(\frac{q}{2} -1 \right) \int_{\mc{F}_0} |y|^{q-4}   ( \overline{y} \nabla y)^2 + k^2 \int_{\mathcal{F}_0} \dfrac{|y|^q}{r^2} + 2 \pi  k |\ell_Y|^q. 
	\]
	In particular, both first terms can be expressed easily in terms of $|y|^{\frac{q}{2}-2} \overline{y} \nabla y.$ So, we introduce the vectors $\vec{a}= \vec{a}(x)$, and $\vec{b} =  \vec{b}(x)$ of $\R^2$ defined by $|y|^{\frac{q}{2}-2} \overline{y} \nabla y = \vec{a} + i \vec{b}.$ We get
	\[
		\langle A_q Y, Y^* \rangle_{\mf{L}^q, \mf{L}^{q'}} = (q-1) \int_{\mc{F}_0} 	|\vec{a}|^2 + \int_{\mc{F}_0} |\vec{b}|^2 + (q-2) i \int_{\mc{F}_0} \vec{a} \cdot \vec{b}  + k^2 \int_{\mathcal{F}_0} \dfrac{|\text{v}|^q}{r^2} + 2 \pi k |g|^q.
	\]
	In particular, 
	\begin{equation*}
		\Re\left(\langle A_q Y, Y^* \rangle_{\mf{L}^q, \mf{L}^{q'}} \right) \geq (q-1) \| \vec{a} \|_{L^2(\mc{F}_0)}^2 + \| \vec{b} \|_{L^2(\mc{F}_0)}^2,
	\end{equation*}
	whereas 
	\[
		|\Im\left(\langle A_q Y, Y^* \rangle_{\mf{L}^q, \mf{L}^{q'}} \right)| \leq |q-2| \| \vec{a} \|_{L^2(\mc{F}_0)} \| \vec{b} \|_{L^2(\mc{F}_0)}.
	\]
This implies 
	\begin{equation}
		\label{NumericalRange}
		\frac
		{|\Im\left(\langle A_q Y, Y^* \rangle_{\mf{L}^q, \mf{L}^{q'} }\right)|}
		{\Re\left(\langle A_q Y, Y^* \rangle_{\mf{L}^q, \mf{L}^{q'}} \right) } 
		\leq \frac{1}{2}\frac{|q-2|}{\sqrt{ q-1}}.
	\end{equation}
From Theorem \ref{ThmMunnierZuazuaExt}, $-A_q$ generates a $C_0$ semigroup of contractions on $\mf{L}^q$ hence Theorem 3.1 in \cite[Chapter 1]{Pazy} implies that for all $\lambda >0$, $\lambda$ is in the resolvent set of $-A_q$. 
	
\medskip
	
 For $q \geq 2$, from \eqref{NumericalRange}, the numerical range $S(-A_q)$ is contained in the sector $\Sigma_{\theta_0} = \{\lambda \in \C\setminus \{0\} \, : \, |\text{arg}\, \lambda| > \pi - \theta_0 \}$ where 
	\[
		\theta_0 = \arctan\left( \frac{1}{2} \frac{|q-2|}{\sqrt{ q-1}}\right) \in [0, \pi /2).
	\]
	In particular, choosing $\theta_1 \in (\theta_0, \pi/2)$, denoting by $\Sigma_{\theta_1} = \{\lambda \in \C\setminus \{0\} \, : \, |\text{arg}\, \lambda| > \pi - \theta_1 \}$  the corresponding sector of $\mathbb{C}$, and using the fact that $\R_+^*$ is in the resolvent set, Theorem 3.9 in \cite[Chapter 1]{Pazy} implies the existence of a $C_\theta = C_\theta (q)$ such that 
	\begin{equation}
		\label{ResolventEst}
		\|(\lambda I + A_q)^{-1} \|_{\mathscr{L}(\mf{L}^p)} \leq \frac{C_\theta}{|\lambda|} , \quad \forall \, \lambda \in \mathbb{C} \setminus \Sigma_{\theta_1}.
	\end{equation}
	
 Now, the regularizing properties of the semigroup generated by $-A_q$ are a consequence of Theorem 5.2 in \cite[Chapter 2]{Pazy} and the above resolvent estimate. However, here again, we need to be careful since Theorem 5.2 in \cite[Chapter 2]{Pazy} requires that $0$ belongs to the resolvent set of $A_q$, which is not the case here.
 Set $\theta_2 \in ( \pi/2, \pi - \theta_1)$. For each $\varepsilon >0$, we introduce the curve $\Gamma_\varepsilon$, defined for $\varepsilon >0$ by the path composed as follows: 
	\[
		\Gamma_\varepsilon = 
			\left\{
				\begin{array}{ll}
				-\rho \exp(- i \theta_2), \quad & \rho \in (- \infty , -\varepsilon),
				\\
				\varepsilon \exp( i \theta), \quad & \theta \in (- \theta_2, \theta_2),
				\\
				\rho \exp(i \theta_2), \quad & \rho \in (\varepsilon, \infty),
			\end{array}\right.		 
	\]
	oriented in the increasing directions of the parameters.	
	Then, for $t >0$, we use the formula
	\[
		S_y(t) = \frac{1}{ 2 \pi i} \int_{\Gamma_\varepsilon} e^{\lambda t} (\lambda I +A _q)^{-1} \, {\rm d} \lambda.
	\]
	This integral converges due to the resolvent estimates \eqref{ResolventEst} and can be differentiated with respect to time since
	\[
		\frac{1}{ 2 \pi} \int_{\Gamma_\varepsilon} e^{- \Re(\lambda) t} \left\| \lambda (\lambda I +A _q)^{-1}\right\|_{\mathscr{L}_c(\mf{L}^p)}  \, {\rm d} \lambda
		\leq 
		 \frac{C_{\theta_2}}{t} +C_t \varepsilon,
	\]
	where the constant $C_{\theta_2}$ does not depend on $t$ and $\varepsilon >0$ and the constant $C_t$ depends on $t$ but not on $\varepsilon >0$.
	Of course, letting then $\varepsilon \to 0$, this yields
	\[
		\left\| 
		\frac{1}{ 2 \pi i} \int_{\Gamma_\varepsilon} e^{\lambda t} \lambda (\lambda I +A _q)^{-1} \, {\rm d} \lambda
		\right\|_{\mathscr{L}_c(\mf{L}^p)} 
		=
		\| \partial_t S_y(t) \|_{\mathscr{L}_c(\mf{L}^q)} 
		\leq 
		 \frac{C_{\theta_2}}{t}.
	\]
	This completes the proof of \eqref{Est-du-Brezis-p} for $q \geq 2$.  The case $q \in (1,2)$ can be deduced by a simple duality argument.
\end{proof}

\begin{remark}
	Actually, following the proof of Theorem 5.2 in \cite[Chapter 2]{Pazy}, one can prove that $-A_q$ generates an analytic semigroup on $\mf{L}^q$ for all $q \in (1, \infty)$.
\end{remark}

In the two next subsections, we apply the semigroup estimates we have proved to systems \eqref{eq_1:z}-\eqref{eq_2:z} and \eqref{eq_w0first}-\eqref{eq_w0last} .
 
\subsection{Decay of solutions to (\ref{eq_1:z})--(\ref{eq_2:z})} \label{sec_decayz}
We first consider the solution $Z \doteq ( z, \ell_Z)$ of \eqref{eq_1:z}--\eqref{eq_2:z}. As we noticed previously, this corresponds to the computations
of the previous subsection in the case
$$
k=0 \,, \quad \tilde{\alpha} = \dfrac{4\pi}{\pi+m}\,.
$$
We obtain in this way the following  decay estimates on solutions: 

\begin{theorem} \label{thm_decayz}
Given $q \in (1,\infty)$ and radial $Z_0 \in \mf{L}^q,$ there exists a unique solution $Z \in \mathcal{C}([0,\infty) ; \mf{L}^q)$ to \eqref{eq_1:z}-\eqref{eq_2:z}
such that $Z(0,\cdot) = Z_0$.  This solution satisfies the further decay estimates:
\begin{itemize}
	\item for all $p \in [q,\infty]$ we have $Z \in \mathcal{C}((0,\infty) ; \mf{L}^p)$ and there exists a constant $K_{1,1} = K_{1,1}(p,q)$ such that:
\begin{equation} \label{est_zLp}
\|Z(t,\cdot)\|_{\mf{L}^p} \leq K_{1,1}\, t^{\frac{1}{p}- \frac{1}{q}} \|Z_0\|_{\mf{L}^q}\,, \quad \forall \, t >0\,,
\end{equation}
\item  if $q < 2$ for all $p \in [q,2),$ we have $(\partial_r z,z/r) \in \mathcal{C}((0,\infty) ; {L}^p(\mathcal F_0))$ and there exists $K_{2,1} = K_{2,1}(p,q)$ such that:
\begin{equation} \label{est_gradzLp1}
\|\partial_r z(t,\cdot)\|_{L^p(\mathcal F_0)}  + \left\| \dfrac{z(t,\cdot)}{r} \right\|_{L^p(\mathcal F_0)}  \leq K_{2,1}\, t^{-\frac{1}{2} + \frac{1}{p} - \frac{1}{q}}  \|Z_0\|_{\mf{L}^q}\,, \quad \forall \, t >0\,,
\end{equation}
\item  if $q \in (1, \infty)$, for all $p \in [\max\{2,q\},\infty)$ with $p >2$ we have $(\partial_r z,z/r) \in \mathcal{C}((0,\infty) ; {L}^p(\mathcal F_0))$ and there exists $K_{3,1} = K_{3,1}(p,q)$ such that:
\begin{equation} \label{est_gradzLp2}
\|\partial_r z(t,\cdot)\|_{L^p(\mathcal F_0)}  + \left\| \dfrac{z(t,\cdot)}{r} \right\|_{L^p(\mathcal F_0)}  \leq
\left\{
\begin{array}{ll}
 K_{3,1}\, t^{-\frac{1}{2} + \frac{1}{p} - \frac{1}{q}}  \|Z_0\|_{\mf{L}^q}\,, & \forall \, t <1 \,, \\
 K_{3,1}\, t^{ - \frac{1}{q}}  \|Z_0\|_{\mf{L}^q}\,, & \forall \, t >1 \,. \\
\end{array}
\right.
\end{equation}
\end{itemize}
These decay estimates are also satisfied for $q=1$ and $p \in (1, \infty)\setminus \{2 \}$.
\end{theorem}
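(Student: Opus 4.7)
The plan is to recognize \eqref{eq_1:z}--\eqref{eq_2:z} as the specialization of the abstract heat-with-dynamic-boundary system \eqref{eq_1:y}--\eqref{eq_3:y} with $(k,\tilde\alpha)=(0,\alpha_0)$, and then combine the abstract machinery of Section \ref{sec_sgest} with the one-dimensional elliptic lemmas of Section \ref{sectpas2maispresk}. The first bullet of the theorem, i.e.\ well-posedness in every $\mf L^q$ together with the $\mf L^p$--$\mf L^q$ decay \eqref{est_zLp}, follows immediately from Theorem \ref{ThmMunnierZuazuaExt} applied with these parameters, the contraction $\|Z(t)\|_{\mf L^q}\le\|Z_0\|_{\mf L^q}$ coming along for free. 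The $q=1$ statement at the end is reduced to some $q=q_0\in(1,\infty)$ by running the semigroup on $[0,1]$ and exploiting the already established $\mf L^1$--$\mf L^{q_0}$ smoothing, then restarting from $t=1$ in $\mf L^{q_0}$.

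For the gradient bounds in the large-time regime, I would use analyticity. By Theorem \ref{Thm-L-p-Anal} and the splitting $S_y(t)=S_y(t/2)S_y(t/2)$, combined with the decay \eqref{est_zLp} applied to the inner semigroup, one gets $\|\partial_t Z(t)\|_{\mf L^p}\le C\,t^{-1+1/p-1/q}\|Z_0\|_{\mf L^q}$. Rewriting \eqref{eq_1:z} as $\tfrac1r\partial_r(r\partial_r z)=\tfrac1\nu\partial_t z$ and using that $z(t,\cdot)\in L^p((1,\infty),r\,dr)$ forces $r\partial_r z\to 0$ at infinity, integration from $r$ to $\infty$ yields
\[
\partial_r z(t,r)=-\frac{1}{\nu r}\int_r^\infty s\,\partial_t z(t,s)\,ds,
\]
and a Muckenhoupt--Hardy inequality on $L^p((1,\infty),r\,dr)$ then produces $\|\partial_r z(t)\|_{L^p(r\,dr)}\le C\|\partial_t z(t)\|_{L^p(r\,dr)}$. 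Proposition \ref{prop_drz2z} subsequently recovers $\|z(t)/r\|_{L^p(r\,dr)}$ from $\|\partial_r z(t)\|_{L^p(r\,dr)}$ and the boundary trace $|\ell_Z(t)|$ controlled by \eqref{est_zLp}. Since, for $t\ge 1/\nu$ and $p\ge 1$, one has $t^{-1+1/p-1/q}\le t^{-1/q}$ and $|\ell_Z(t)|\le C t^{-1/q}\|Z_0\|_{\mf L^q}$, this delivers the second branch of \eqref{est_gradzLp2}.

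For small times (and uniformly when $p\le 2$, where the sharper rate $t^{-1/2+1/p-1/q}$ is required) the analyticity-based route loses one half-power of $t$, so I would instead rely on the direct parabolic smoothing obtained by a multiplier computation: multiplying \eqref{eq_1:z} by $|z|^{p-2}z$, integrating over $\mathcal F_0$, and using the dynamic boundary condition \eqref{eq_2:z} to absorb the boundary contribution into the time derivative of the $\ell_Z$ part of $\|Z\|_{\mf L^p}^p$, exactly as in the derivation of \eqref{Contraction-p} where the weight $2\pi/\tilde\alpha$ in the definition of the norm is precisely calibrated for this purpose, I obtain $\|\partial_r z(t)\|_{L^p(r\,dr)}\le C\,t^{-1/2}\|Z(t/2)\|_{\mf L^p}$ for $p\in[q,2]$. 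Chained with \eqref{est_zLp} at time $t/2$ and with Proposition \ref{prop_drz2z} (whose boundary term vanishes for $p<2$), this gives \eqref{est_gradzLp1} and the first branch of \eqref{est_gradzLp2}.

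The main obstacle is to propagate the dynamic-boundary coupling through the gradient estimates. In the analyticity-based step, the boundary value $|\partial_r z(t,1)|=|\ell_Z'(t)|/\alpha_0$ must be absorbed into $\|\partial_t Z(t)\|_{\mf L^p}$, which relies on seeing the pair $(z,\ell_Z)$ as an element of $\mf L^p$ in the first place; in the multiplier-based step, the boundary remainder $\ell_Z'(t)\,|\ell_Z(t)|^{p-2}\ell_Z(t)$ must combine exactly with the time derivative of the boundary part of $\|Z\|_{\mf L^p}^p$, so that no sign-indefinite contribution spoils the expected parabolic $t^{-1/2}$ smoothing.
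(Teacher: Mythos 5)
Your treatment of the first bullet (well-posedness and \eqref{est_zLp} via Theorem \ref{ThmMunnierZuazuaExt} with $k=0$, $\tilde\alpha=\alpha_0$) and your use of Theorem \ref{Thm-L-p-Anal} plus the semigroup splitting to get $\|\partial_t Z(t)\|_{\mf L^p}\le C t^{-1+1/p-1/q}\|Z_0\|_{\mf L^q}$ match the paper. The gap is in the passage from $\partial_t z$ to $\partial_r z$. The inequality $\|\partial_r z(t)\|_{L^p((1,\infty),r\,{\rm d}r)}\le C\|\partial_t z(t)\|_{L^p((1,\infty),r\,{\rm d}r)}$ that you extract from the representation $\partial_r z(t,r)=-\tfrac{1}{\nu r}\int_r^\infty s\,\partial_t z(t,s)\,{\rm d}s$ cannot hold: it bounds a first spatial derivative by the Laplacian in the same $L^p$ norm on an unbounded domain, which fails by scaling (test $z_\lambda(x)=z(x/\lambda)$, $\lambda\to\infty$), and indeed the kernel $\tfrac1r\int_r^\infty s\,f(s)\,{\rm d}s$ gains a power of $r$ and is not even convergent for general $f\in L^p(r\,{\rm d}r)$ (e.g.\ $f(s)=s^{-1}\in L^4(r\,{\rm d}r)$). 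The paper instead integrates the ODE $\partial_r(\partial_r z)+\tfrac{\partial_r z}{r}=\tfrac1\nu\partial_t z$ \emph{from the boundary} $r=1$, where $\partial_r z(t,1)=\ell_Z'(t)/(\alpha_0\nu)$ is controlled by \eqref{Est-du-Brezis-p}; Proposition \ref{prop_z2psi} applied to $\partial_r z$ then bounds only the second-order quantities $\|\partial_{rr}z\|_{L^p(r\,{\rm d}r)}+\|\partial_r z/r\|_{L^p(r\,{\rm d}r)}\le \tfrac{C}{t}\|Z_0\|_{\mf L^q}$ (note it estimates $\psi/r$, not $\psi$ — dimensionally consistent), and the first derivative is recovered by the Gagliardo--Nirenberg interpolation \eqref{eq_CM2} between this and $\|z(t)\|_{L^q}$, yielding $\|\partial_r z(t)\|_{L^q}\le Ct^{-1/2}\|Z_0\|_{\mf L^q}$ uniformly in $t>0$ and $q\in(1,\infty)$. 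That single estimate, chained with \eqref{est_zLp} on $[0,t/2]$ and with Proposition \ref{prop_drz2z} (whose boundary term $|\ell_Z(t)|\lesssim t^{-1/q}$ produces the slower $t^{-1/q}$ branch when $p>2$, $t\ge1$), gives both \eqref{est_gradzLp1} and \eqref{est_gradzLp2}; had your Hardy step been valid you would be proving a rate $t^{-1+1/p-1/q}$ for $\partial_r z$ strictly better than the (presumably sharp) statement, which is itself a warning sign.

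Your fallback for $p\in[q,2]$ is also not closed: multiplying \eqref{eq_1:z} by $|z|^{p-2}z$ controls $\int_0^t\int_{\mc F_0}|z|^{p-2}|\nabla z|^2$, i.e.\ $\nabla(|z|^{p/2})$ in $L^2_tL^2_x$, which by H\"older yields $\|\nabla z(s)\|_{L^p}\le Ct^{-1/2}\|Z_0\|_{\mf L^p}$ only at \emph{some} time $s$ in each interval $(t/2,t)$; upgrading this to every $t$ requires a monotonicity of $\|\nabla z\|_{L^p}$ that is available for $p=2$ (self-adjointness) but not for $p\ne2$. Since the paper's elliptic-plus-interpolation route already covers all $t>0$ and all $p=q$, this separate small-time argument is both unnecessary and incomplete. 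The reduction of the remaining cases ($p>q$, and $q=1$) by the semigroup property is fine.
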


\begin{proof}
Existence of solutions and $\mf{L}^p-\mf{L}^q$ decay estimates are straightforward applications of Theorem \ref{ThmMunnierZuazuaExt}
in the case $k=0$ and $\tilde{\alpha} = 4\pi/(\pi+m) >0.$ We focus on estimates \eqref{est_gradzLp1}--\eqref{est_gradzLp2}. Actually, we only need to prove the case $p=q$, as other cases are then obtained by combining the estimates  \eqref{est_gradzLp1}--\eqref{est_gradzLp2} for $p = q \neq 2$ between $t/2$ and $t$ with \eqref{est_zLp} between $0$ and $t/2$. Indeed it will follow from the semigroup property: 
\begin{equation*}
\begin{split}
 \|\partial_r z(t,\cdot)\|_{L^p(\mathcal F_0)}  + \left\| \dfrac{z(t,\cdot)}{r} \right\|_{L^p(\mathcal F_0)}  &\leq K_{2,1}(p,p)\, (t/2)^{-\frac{1}{2}}  \|Z(t/2,\cdot)\|_{\mf{L}^p}\\&\leq  K_{2,1}(p,p)K_{1,1}(p,q) \, (t/2)^{-\frac{1}{2} + \frac{1}{p} - \frac{1}{q}}  \|Z_0\|_{\mf{L}^q}.
\end{split}
\end{equation*}

\medskip

For radial $Z_0 \in \mf{L}^q$, estimate \eqref{Est-du-Brezis-p} implies 
	\[
		\norm{\partial_{rr} z(t) + \frac{\partial_r z(t)}{r} }_{L^q(\mc{F}_0)}  + | \partial_r z(t,1) | \leq \frac{C}{t} \| Z_0 \|_{\mf{L}^q}. 
	\]
We are now in position to apply Proposition \ref{prop_z2psi} to $\partial_r z$ which yields that (see \eqref{est_z2psi}) 
\begin{equation} \label{Est-Delta-p}
\| \partial_{rr} z(t)\|_{L^q(\mathcal{F}_0)} + \left\|\frac{\partial_r z(t)}{r}\right\|_{L^q(\mathcal{F}_0)} \leq \frac{C}{t} \| Z_0 \|_{\mf{L}^q}\,.  
\end{equation}
From the Gagliardo Niremberg inequality in exterior domains, see \cite{CrispoMaremonti}, we have then: for $q\in [1, \infty]$, for all $z$ such that $\partial_{xx} z,$ $\partial_{xy} z$ and $\partial_{yy} z$ belong to $L^q(\mc{F}_0)$, 
\begin{equation} \label{eq_CM2}
		\| \nabla z \|_{L^q(\mc{F}_0)} 
		\leq C \left( \|\partial_{xx} z\|_{L^q(\mc{F}_0)}  {+ \|\partial_{xy} z\|_{L^q(\mc{F}_0)}} + \|\partial_{yy} z\|_{L^q(\mc{F}_0)}\right)^{1/2} \| z\|_{L^q(\mc{F}_0)}^{1/2}.
\end{equation}
Since we are focusing on the case of radial solutions, estimates \eqref{Est-Delta-p}--\eqref{eq_CM2} and the fact that for radial functions
	\[
		 \|\partial_{xx} z\|_{L^q(\mc{F}_0)} { + \|\partial_{xy} z\|_{L^q(\mc{F}_0)}} +  \|\partial_{yy} z\|_{L^q(\mc{F}_0)} \leq C\left( \| \partial_{rr} z \|_{L^q(\mc{F}_0)} + \norm{ \frac{\partial_{r} z}{r} }_{L^q(\mc{F}_0)}\right),
	\]
imply 
\begin{equation*} 
\| \partial_{r} z (t)\|_{L^q(\mc{F}_0)}  \leq 	\frac{C}{\sqrt{t}} \| Z_0 \|_{\mf{L}^q}.
\end{equation*}
To conclude the proof of Theorem \ref{thm_decayz}, we prove the boundedness of the mapping $\partial_r z \mapsto z/r.$
As $z \in L^q(\mathcal F_0),$ this is already contained in Proposition \ref{prop_drz2z}, provided we get a suitable estimate on $z(t,1) = \ell_Z(t)$. But, using \eqref{est_zLp} for $p = \infty$, we get
$$
	|\ell_Z(t)| = |z(t,1)| \leq C_q t^{-\frac{1}{q}} \|Z_0\|_{\mf{L}^q}. 
$$
Thus, \eqref{est_drz2z} implies: 
$$
	\left\| \frac{z(t)}{r} \right\|_{L^q(\mathcal F_0)} \leq C_q \left( t^{-\frac{1}{2}}  + \varepsilon_q  t^{-\frac{1}{q}}\right) \|Z_0\|_{\mf{L}^q} 
$$
where $\varepsilon_q = 1$ if $q>2$ and $\varepsilon_q = 0$ if $q< 2.$ We obtain \eqref{est_gradzLp1} and \eqref{est_gradzLp2} comparing the size of 
the different terms on the right-hand side depending on  $q < 2 $ or $q >2$ and $t \geq 1$ or $t <1.$
\end{proof}

\subsection{Decay of solutions to (\ref{eq_w0first})--(\ref{eq_w0last})} \label{sec_decayw}
The equation  (\ref{eq_w0first})--(\ref{eq_w0last}) of $w$ is linked to the computations in Section \ref{sec_sgest} in the case $k=1$ and $\tilde{\alpha} = \mathcal{J}/{2\pi}.$ 
Thus, we compute the following time-decay of solutions:

\begin{theorem} \label{thm_decayw}
Given $q \in (1,\infty)$ and radial $W_0 \in \mf{L}^q,$ there exists a unique solution $W \in \mathcal{C}([0,\infty) ; \mf{L}^q)$ to \eqref{eq_w0first}-\eqref{eq_w0last}
such that $W(0,\cdot) = W_0$.  This solution satisfies the further decay estimates:
\begin{itemize}
\item for all $p \in [q,\infty]$ we have $W \in \mathcal{C}((0,\infty) ; \mf{L}^p)$ and there exists a constant $K_{1,0} = K_{1,0}(p,q)$ such that:
\begin{equation} \label{eq_decayw}
\|W(t,\cdot)\|_{\mf{L}^p} \leq K_{1,0}\, t^{\frac{1}{p}- \frac{1}{q}} \|W_0\|_{\mf{L}^q}\,, \quad \forall \, t >0\,,
\end{equation}
\item  if $q < 2$ for all $p \in [q,2),$ we have $(\partial_r w,w/r) \in \mathcal{C}((0,\infty) ; {L}^p(\mathcal F_0))$ and there exists $K_{2,0} = K_{2,0}(p,q)$ such that:
\begin{equation} \label{eq_decaygradw1}
\|\partial_r w(t,\cdot)\|_{L^p(\mathcal F_0)}  + \left\| \dfrac{w(t,\cdot)}{r} \right\|_{L^p(\mathcal F_0)}  \leq K_{2,0}\, t^{-\frac{1}{2} + \frac{1}{p} - \frac{1}{q}}  \|W_0\|_{\mf{L}^q}\,, \quad \forall \, t >0\,,
\end{equation}
\item   if $q \in (1, \infty)$, for all $p \in [\max\{2,q\},\infty)$ satisfying $p>2$  we have $(\partial_r w,w/r) \in \mathcal{C}((0,\infty) ; {L}^p(\mathcal F_0))$ and there exists $K_{3,0} = K_{3,0}(p,q)$ such that:
\begin{equation} \label{eq_decaygradw2}
\|\partial_r w(t,\cdot)\|_{L^p(\mathcal F_0)}   + \left\| \dfrac{w(t,\cdot)}{r} \right\|_{L^p(\mathcal F_0)}  \leq
\left\{
\begin{array}{ll}
 K_{3,0}\,  t^{-\frac{1}{2} + \frac{1}{p} - \frac{1}{q}}  \|W_0\|_{\mf{L}^q}\,, & \forall \, t <1 \,, \\
 K_{3,0}\, t^{ - \frac{1}{q}}  \|W_0\|_{\mf{L}^q}\,, & \forall \, t >1 \,. \\
\end{array}
\right.
\end{equation}
\end{itemize}
\end{theorem}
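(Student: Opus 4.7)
The plan is to mimic the proof of Theorem \ref{thm_decayz}: I apply the general framework of Subsection \ref{sec_sgest} with parameters $k = 1$ and $\tilde\alpha = 2\pi/\mathcal J$, and replace Proposition \ref{prop_z2psi} by Proposition \ref{prop_ellipticw0} in the elliptic step. Existence and uniqueness in $\mathcal C([0,\infty);\mf L^q)$ together with the decay estimate \eqref{eq_decayw} are immediate from Theorem \ref{ThmMunnierZuazuaExt}. For the gradient estimates \eqref{eq_decaygradw1}--\eqref{eq_decaygradw2} it will suffice to prove the bound at $p = q$; the general case $p \geq q$ then follows by writing $W(t) = S_y(t/2)\,W(t/2)$, applying the $p = q$ bound on $[t/2, t]$ to the datum $W(t/2)$, and using \eqref{eq_decayw} on $[0,t/2]$.

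For the $p = q \in (1,\infty) \setminus \{2\}$ short-time bound, Theorem \ref{Thm-L-p-Anal} yields $\|\partial_t W(t)\|_{\mf L^q} \leq C t^{-1}\|W_0\|_{\mf L^q}$, which controls simultaneously $\|\partial_t w(t)\|_{L^q(\mc F_0)}$ and $|\ell_W'(t)|$. Reading \eqref{eq_w0first} as the elliptic equation
\begin{equation*}
\partial_{rr} w + \frac{\partial_r w}{r} - \frac{w}{r^2} = \partial_t w,
\end{equation*}
with boundary data $a := \partial_r w(t,1) - w(t,1) = \frac{\mathcal J}{2\pi}\ell_W'(t)$ and $b := w(t,1) = \ell_W(t)$, one has $|a| \leq C t^{-1}\|W_0\|_{\mf L^q}$ and, by \eqref{eq_decayw} at $p = \infty$, $|b| \leq C t^{-1/q}\|W_0\|_{\mf L^q}$. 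Proposition \ref{prop_ellipticw0} then furnishes $L^q$-control of $\partial_{rr} w$, $\partial_r w/r$ and $w/r^2$. Since for a radial function $w$ the Hessian has eigenvalues $\partial_{rr} w$ and $\partial_r w/r$, the exterior-domain Gagliardo-Nirenberg inequality \eqref{eq_CM2} converts this into a bound on $\|\partial_r w(t)\|_{L^q(\mc F_0)}$: interpolating the Hessian norm with $\|w\|_{L^q(\mc F_0)}$ gives the $t^{-1/2}$-form needed for short times, while interpolating with $\|w\|_{L^\infty(\mc F_0)} \leq C t^{-1/q}\|W_0\|_{\mf L^q}$ captures the sharper $t^{-1/q}$-saturation in the regime $q > 2$, $t \geq 1$. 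The companion estimate on $\|w(t)/r\|_{L^q(\mc F_0)}$ is then obtained from Proposition \ref{prop_drz2z} applied with $z = w$, whose trace at $r = 1$ is again $\ell_W$.

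The main technical obstacle is the borderline $q = 2$, which is excluded by both Proposition \ref{prop_ellipticw0} and Proposition \ref{prop_drz2z}. I would handle it by exploiting the self-adjointness of $A_2$ on $\mf L^2$: the integration-by-parts identity (compare the computations at $k=1$, $q=2$ in the proof of Theorem \ref{ThmMunnierZuazuaExt})
\begin{equation*}
\langle A_2 W, W\rangle_{\mf L^2} = \|\partial_r w\|_{L^2(\mc F_0)}^2 + \left\|\frac{w}{r}\right\|_{L^2(\mc F_0)}^2 + 2\pi |\ell_W|^2,
\end{equation*}
combined with the Cauchy--Schwarz bound $\langle A_2 W, W\rangle \leq \|A_2 W\|_{\mf L^2}\|W\|_{\mf L^2} \leq C t^{-1}\|W_0\|_{\mf L^2}^2$ (from Theorem \ref{Thm-L-p-Anal} and the contraction \eqref{eq_decayw}), yields at once $\|\partial_r w(t)\|_{L^2} + \|w(t)/r\|_{L^2} \leq C t^{-1/2}\|W_0\|_{\mf L^2}$. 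Propagating the resulting $p = q$ estimates via $W(t) = S_y(t/2) W(t/2)$ as announced then produces both regimes of \eqref{eq_decaygradw1}--\eqref{eq_decaygradw2}: the $s^{-1/2}$ portion of the $p = q$ bound, paired with \eqref{eq_decayw}, gives the small-time decay $C t^{-1/2 + 1/p - 1/q}\|W_0\|_{\mf L^q}$ (which is \eqref{eq_decaygradw1} for $p \in [q,2]$ and the small-time part of \eqref{eq_decaygradw2} for $p > 2$), while the $s^{-1/q}$ portion available for $p > 2$ in the large-time regime produces the remaining $C t^{-1/q}\|W_0\|_{\mf L^q}$ bound of \eqref{eq_decaygradw2}.
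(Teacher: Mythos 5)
Your overall architecture is the same as the paper's: reduce to $p=q$ via the semigroup property, read \eqref{eq_w0first} as an elliptic equation with right-hand side $\partial_t w$ controlled by Theorem \ref{Thm-L-p-Anal}, apply Proposition \ref{prop_ellipticw0}, and interpolate via \eqref{eq_CM2}. (Your extra treatment of $q=2$ is harmless but unnecessary, since both \eqref{eq_decaygradw1} and \eqref{eq_decaygradw2} exclude $p=2$ and the propagation step only needs the base case at the exponent $p$ itself.) However, there is a genuine gap in the large-time regime $p=q>2$, $t\geq 1$. You bound the boundary datum by $|b|=|w(t,1)|\leq C t^{-1/q}\|W_0\|_{\mf{L}^q}$ using \eqref{eq_decayw} at $p=\infty$; Proposition \ref{prop_ellipticw0} then only gives $\|\partial_{rr}w\|_{L^q}+\|\partial_r w/r\|_{L^q}\leq C(t^{-1}+t^{-1/q})\simeq Ct^{-1/q}$, and the legitimate interpolation \eqref{eq_CM2} (with $\|w\|_{L^q}\leq \|W_0\|_{\mf{L}^q}$) yields only $\|\partial_r w\|_{L^q}\leq Ct^{-1/(2q)}$, which misses the claimed $t^{-1/q}$. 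The paper closes this by invoking the four-dimensional reformulation \eqref{Correspondance-Z-V-k}--\eqref{Decay-W-By-Munnier-Zuazua}: since $w/r$ solves the heat problem in $\R^4\setminus B(0,1)$ and $\|w_0/r\|_{L^q(\R^4\setminus B(0,1))}\leq C\|W_0\|_{\mf{L}^q}$ for $q\geq 2$, one gets the sharper bound $|w(t,1)|\leq Ct^{-2/q}\|W_0\|_{\mf{L}^q}$, hence a Hessian bound of order $t^{-2/q}$, and then \eqref{eq_CM2} delivers exactly $t^{-1/q}$.

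Your proposed repair --- interpolating the $L^q$ Hessian against $\|w\|_{L^\infty}\leq Ct^{-1/q}$ --- does not work: by scaling, the inequality $\|\nabla w\|_{L^q}\leq C\|D^2 w\|_{L^q}^{\theta}\|w\|_{L^\infty}^{1-\theta}$ in two dimensions forces $\theta=\frac{q-2}{2(q-1)}<\frac12$ for $q>2$, and such an inequality is false (test on $w(x)=\sin(N x_1)\chi(x)$, which can be taken radial and supported in $\mc{F}_0$: the left side grows like $N$ while the right side grows like $N^{2\theta}$). So the $t^{-1/q}$ saturation in \eqref{eq_decaygradw2} cannot be recovered this way; you need the improved decay of $\ell_W(t)$ coming from the change-of-dimension argument. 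The remaining regimes ($q<2$ at $p=q$, and $t<1$ for $p>2$) are fine as you argue them, and your use of Proposition \ref{prop_drz2z} for $\|w/r\|_{L^q}$ is a legitimate alternative to the paper's inequality $\|w/r\|_{L^q}^2\leq\|w/r^2\|_{L^q}\|W\|_{\mf{L}^q}$.
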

\begin{proof}
Again, existence of solutions and $\mf{L}^p-\mf{L}^q$ decay estimates are straightforward applications of Theorem \ref{ThmMunnierZuazuaExt}
in the case $k=1$ and $\tilde{\alpha} = \mathcal{J}/2\pi >0.$ We focus now on gradient estimates in the case $p=q\neq 2$, the estimates \eqref{eq_decaygradw1}--\eqref{eq_decaygradw2} with $p \geq q$, $p \neq 2$ being a simple consequence of the semigroup property.

Let $q \in (1,\infty).$  We note that $w \in \mathcal{C}^1((0,\infty);L^q(\mathcal F_0))$ with 
$\|\partial_t W(t)\|_{\mf{L}^q} \leq C/t \|W_0\|_{\mf{L}^q}$ yields 
\begin{eqnarray*}
&&\partial_{rr} w(t) + \dfrac{\partial_r w(t)}{r} - \dfrac{w(t)}{r^2} \in \mathcal{C}((0,\infty) ; L^q(\mathcal F_0)) \\
&& \left\|\partial_{rr} w(t) + \dfrac{\partial_r w(t)}{r} - \dfrac{w(t)}{r^2} \right\|_{L^q(\mathcal F_0)} + |\partial_r w(t,1) - w(t,1)| \leq \dfrac{C}{t}  \|W_0\|_{\mf{L}^q}\,.
\end{eqnarray*}
Recalling estimates \eqref{est_wnabla2} and \eqref{est_w-autres2}, for all $t >0$,
\begin{align*}
	& \| \partial_{rr} w(t)\|_{L^q(\mathcal F_0)} \leq \frac{C}{t} \|W_0\|_{\mf{L}^q},
	\\
	& \Big\|\frac{\partial_r w(t)}{r} \Big\|_{L^q((1,\infty),r{\rm d}r)}+ \Big\|\frac{w(t)}{r^2}\Big\|_{L^q((1,\infty),r{\rm d}r)}   \leq  \frac{C}{t}  \|W_0\|_{\mf{L}^q}  + C(q) \varepsilon_q |w(t,1)|,
\end{align*}
with $\varepsilon_q =1$ if $q >2$ and $\varepsilon_q = 0$ if $q <2$.
But, for $q >2$, estimate \eqref{Decay-W-By-Munnier-Zuazua} with $p = \infty$ yields
$$
	|\ell_W(t)| = |w(t,1)| \leq Ct^{-2/q} \Big\| \Big( \frac{w_0}{r} , \ell_{W_0}\Big) \Big\|_{\mf{L}^q(\mathbb{R}^4)} \leq C t^{-2/q} \|W_0 \|_{\mf{L}^q},
$$
where the last estimate is a consequence of $q >2$.
Hence
$$
	\Big\|\frac{\partial_r w(t)}{r} \Big\|_{L^q((1,\infty),r{\rm d}r)}+ \Big\|\frac{w(t)}{r^2}\Big\|_{L^q((1,\infty),r{\rm d}r)}   \leq  C \left( t^{-1} + \varepsilon_q t^{-2/q} \right) \|W_0\|_{\mf{L}^q}.
$$
We can then bound $\|\partial_{xx} w\|_{L^q(\mathcal F_0)},$ { $\|\partial_{xy} w\|_{L^q(\mathcal F_0)}$} and $\|\partial_{yy} w\|_{L^q(\mathcal F_0)}$ 
in the same way as in the previous proof.  Applying interpolation inequality \eqref{eq_CM2} to $W$ we then obtain that $\partial_r w \in  \mathcal{C}((0,\infty) ; L^q(\mathcal F_0))$ with:
$$
	\|\partial_r w(t)\|_{L^q(\mathcal F_0)} \leq C \left( t^{-1/2} + \varepsilon_q t^{-1/q} \right) \|W_0\|_{\mf{L}^q}.
$$
To get the decay of $w/r$, we then simply use that
$$
	\Big\|\frac{w(t)}{r}\Big\|_{L^q((1,\infty),r{\rm d}r)}^2 \leq \Big\|\frac{w(t)}{r^2}\Big\|_{L^q((1,\infty),r{\rm d}r)}\|W(t)\|_{\mf{L}^q}.		
$$
\end{proof}

\subsection{Decay estimates of solutions to the Stokes system}
It remains now to combine together the results obtained in Subsections \ref{sec_decayvR}, \ref{sec_decayz} and \ref{sec_decayw}  to prove our main results regarding the long-time behavior of Stokes solutions.

\subsubsection{Proof of Theorem \ref{theo Stokes}}

\begin{proof}
Given $q \in (1,\infty),$ as $\mathcal{C}^{\infty}_c(\mathbb R^2) \cap \mathcal{L}^2$ is dense in $\mathcal{L}^q$ we remark that it is
sufficient to prove decay estimate for initial data $V_0 \in \mathcal{L}^2 \cap \mathcal{C}^{\infty}_c(\mathbb R^2).$ We emphasize $V_0 \in \mathcal{L}^q$ for all $q \in [1,\infty]$
under these assumptions. We denote $(W_0,\Phi_0,\Psi_0,V_{R,0})$ the spherical harmonic decomposition of this initial data. 
We already know that there exists a unique solution to (\ref{S1})--(\ref{S-Solideci}) in $\mathcal{C}([0,\infty) ; \mathcal{L}^2)$ for such an initial data. 

\medskip

First, we decompose the solution $S(t) V_0$ of (\ref{S1})--(\ref{S-Solideci}) into the spherical-harmonic decomposition $(W,\Phi,\Psi,V_R)$ of Proposition \ref{prop_decompLp}. According to Proposition \ref{prop_calculsysteme},
this decomposition satisfies:
\begin{itemize}
\item $W \in \mathcal{C}([0,\infty); \mf{L}^2)$ and is a solution to \eqref{eq_w0first}--\eqref{eq_w0last};
\item $V_R \in \mathcal{C}([0,\infty) ; L^2_{\sigma}(\mathcal F_0))$ and is a solution to (\ref{eq_vR1})--(\ref{eq_vR3}).
\end{itemize}
According to {Theorem \ref{thm_decayw}} and {Theorem \ref{Thm-Dan-Shibata}}, these are the respective unique solutions to \eqref{eq_w0first}--\eqref{eq_w0last} and  (\ref{eq_vR1})--(\ref{eq_vR3}) in these spaces,  with respective initial data $W_0$ and $v_{R,0}$. We can then
apply the decay estimates of  {Theorem \ref{thm_decayw}} and {Theorem \ref{Thm-Dan-Shibata}}  to these solutions.  

\medskip

Referring moreover to {Proposition \ref{prop_calculz}} and Remark \ref{rem_defz}, we have:
\begin{itemize}
\item $Z_{\Phi}  \in \mathcal{C}([0,\infty);\mf{L}^2)$ and is a solution to \eqref{eq_1:z}--\eqref{eq_2:z};
\item $Z_{\Psi}  \in \mathcal{C}([0,\infty);\mf{L}^2)$ and is a solution to \eqref{eq_1:z}--\eqref{eq_2:z}. 
\end{itemize} 
Consequently, applying {Theorem \ref{thm_decayz}}, these are the unique solutions to \eqref{eq_1:z}--\eqref{eq_2:z}  in these spaces, with respective initial data
$Z_{\Phi,0} = -(\partial_r \Phi_0 + \Phi_0/r)$ and $Z_{\Psi,0} = \partial_r \Psi_0 + \Psi_0/r\,$ and the decay estimates of {Theorem \ref{thm_decayz}} are also
satisfied by $Z_{\Phi}$ and $Z_{\Psi}.$

\medskip

 We proceed with $L^p-L^q$ estimates. Applying \eqref{eq_decompLp} we have:
 $$
 \|W_0\|_{\mf{L}^q} + \|Z_{\Phi,0}\|_{\mf{L}^q} + \|Z_{\Psi,0}\|_{\mf{L}^q} + \|V_{R,0}\|_{L^q_{\sigma}(\mathcal F_0)} \leq C_q \|V_0\|_{\mathcal{L}^q}\,.
 $$
Then, combining decay estimates of the different components in the spherical-harmonic decomposition obtained in \eqref{Lp-LqR}, \eqref{est_zLp}, and \eqref{eq_decayw}, we have for $t>0$:
 $$
	 \|W(t)\|_{\mf{L}^p} + \|Z_{\Phi}(t) \|_{\mf{L}^p} + \|Z_{\Psi}(t)\|_{\mf{L}^p} + \|V_{R}(t)\|_{L^p_{\sigma}(\mathcal F_0)} \leq C_q t^{\frac{1}{p} - \frac{1}{q}} \|V_0\|_{\mathcal{L}^q}\,.
 $$ 
On the left hand side, we have for instance $Z_{\Psi}(t) = \partial_r \Psi(t) + \Psi(t)/r$ with $\Psi(t,1) = Z_\Psi(t,1)/2$ so that:
$$
	|\Psi(t,1)| \leq  C \|Z_{\Psi}(t)\|_{\mf{L}^p}\,.
$$
 Hence, applying {Proposition \ref{prop_z2psi}} we obtain:
 $$
	 \|\partial_r \Psi \|_{L^p((0,\infty);r{\rm d}r)} + \left\| \dfrac{\Psi}{r}\right\|_{{L^p((0,\infty);r{\rm d}r)}} \leq C \|Z_{\Psi}\|_{\mf{L}^p}\,.
 $$ 
 Applying similar argument to bound  $\Phi,$ we finally obtain that:
 \begin{eqnarray*}
 \|W(t)\|_{\mf{L}^p}+ \|V_{R}(t)\|_{L^p_{\sigma}(\mathcal F_0)} & +&  \|\partial_r \Psi (t)\|_{L^p((0,\infty);r{\rm d}r)}  + \left\| \dfrac{\Psi(t)}{r}\right\|_{{L^p((0,\infty);r{\rm d}r)}}  \\
 &+& 
  \|\partial_r \Phi(t) \|_{L^p((0,\infty);r{\rm d}r)} + \left\| \dfrac{\Phi(t)}{r}\right\|_{{L^p((0,\infty);r{\rm d}r)}}  \leq C_q t^{\frac{1}{p} - \frac{1}{q}} \|V_0\|_{\mathcal{L}^q}\,.
 \end{eqnarray*}
Noting that $\|W(t)\|_{\mf{L}^p}$ is equivalent to $\|W(t)\|_{L^p((0,\infty),r{\rm d}r)}$, we apply \eqref{eq_reciproque} and conclude immediately
$$
\|V(t)\|_{\mathcal{L}^p} \leq C_{p,q} t^{\frac{1}{p} - \frac{1}{q}} \|V_0\|_{\mathcal{L}^q}\,.
$$

We now proceed with the gradient estimates. Let us recall that the case $p= 2$ is a straightforward consequence of \cite{Takahashi&Tucsnak04}  and the previous inequality:
$$
\|\nabla v(t)\|_{L^2(\mc{F}_0)} \leq Ct^{-1/2} \|V(t/2)\|_{\mathcal{L}^2} \leq C_q t^{\frac{1}{2} - \frac{1}{q}-\frac12} \|V_0\|_{\mathcal{L}^q}\,.
$$
So we focus on the case $q \in (1,\infty)$ and $p \in [q,\infty)$ with $p \neq 2$. 

Similarly to the previous computations, the method is then an application of Proposition \ref{prop_decompLp} and the decay estimates obtained in Theorems \ref{Thm-Dan-Shibata}, \ref{thm_decayz}, and \ref{thm_decayw}. The only difference we detail now is the computation of 
$$
\|\partial_{rr} \psi \|_{L^p((1,\infty),r{\rm d}r)} + \left\| \dfrac{\partial_r \psi}{r}\right\|_{{L^p((1,\infty);r{\rm d}r)}} + \left\| \dfrac{\psi}{r^2}\right\|_{{L^p((1,\infty);r{\rm d}r)}} 
$$
from the estimates for $\|\partial_r z_{\psi}\|_{L^p(\mathcal F_0)}$ and $\|z_{\psi}/r\|_{L^p(\mathcal F_0)}$ as given in \eqref{est_gradzLp1} and \eqref{est_gradzLp2}. We focus on $\psi$, the problem being completely similar for $\phi.$
Differentiating the definition of $z_{\psi},$ we remark that $\psi$ satisfies: 
\begin{eqnarray*}
\partial_{rr} \psi + \dfrac{\partial_r \psi}{r} - \dfrac{\psi}{r^2} &=& \partial_r z_{\psi}\,, \qquad   \text{ for }  r \in (1,\infty) \,, \\
\partial_r \psi(1) - \psi(1) &=& 0 \,.   
\end{eqnarray*}
Consequently, we apply Proposition \ref{prop_ellipticw0} which yields:
$$
\|\partial_{rr} \psi \|_{L^p((1,\infty),r {\rm d}r)} + \left\| \dfrac{\partial_r \psi}{r} - \dfrac{\psi}{r^2} \right\|_{L^p((1,\infty),r {\rm d}r)}  \leq C \|\partial_r z_{\psi}\|_{L^p(\mathcal F_0)}\,.
$$
On the other hand, we have, by definition of $z_{\psi}$,
$$
\left\| \dfrac{\partial_r \psi}{r} + \dfrac{\psi}{r^2} \right\|_{L^p((1,\infty),r {\rm d}r)}  \leq  C \left\|\dfrac{z_{\psi}}{r} \right\|_{L^p(\mathcal F_0)} .
$$
So that, we finally get:
\begin{equation*}
\|\partial_{rr} \psi \|_{L^p((1,\infty),r{\rm d}r)} + \left\| \dfrac{\partial_r \psi}{r}\right\|_{{L^p((1,\infty);r{\rm d}r)}} + \left\| \dfrac{\psi}{r^2}\right\|_{{L^p((1,\infty);r{\rm d}r)}} 
\leq C \left( \|\partial_r z_{\psi}\|_{L^p(\mathcal F_0)} + \left\|\dfrac{z_{\psi}}{r} \right\|_{L^p(\mathcal F_0)} \right) \,.
\end{equation*}
This ends the proof of Theorem \ref{theo Stokes}.
\end{proof}

\subsubsection{Duality decay estimates}

For later use,  based on Theorem \ref{theo Stokes}, we derive here additional estimates on the behavior of the semigroup corresponding to \eqref{S1}-\eqref{S-Solideci}:
\begin{corollary}  \label{cor Stokes}
	Assume that  $1 < q \leq p <\infty$ and let  $F \in L^q( \mathbb{R}^2 ;M_{2\times2}(\mathbb R))$ satisfying $F=0$ on $B_0$.
The following decay estimates hold true:
\begin{itemize}
\item if $2\leq q \leq p < \infty,$ there exists $K_4 = K_4(p,q)>0$ such that: 
	\begin{equation}\label{est div 1}
		 \| S(t) \mathbb P \div \, F \|_{\mathcal{L}^p} \leq K_4 (\nu t)^{-\frac{1}{2} + \frac{1}{p} - \frac{1}{q}} \|F\|_{L^q( \R^2)} 
		\qquad \text{for all}\quad  t>0.
	\end{equation}

\item if $1 < q \leq p$ and $q\leq 2$,  there exists $K_5 = K_5(p,q)>0$ such that:  
\begin{equation}\label{est div 2}
\| S(t) \mathbb P \div \, F \|_{\mathcal L^p}  \leq 
\left\{ \begin{split} 
 K_5 (\nu t)^{-\frac{1}{2} + \frac{1}{p} - \frac{1}{q}}  \|F\|_{L^q( \R^2)} &\qquad \text{for all}\quad  0<t< \frac1{\nu}, \\
 K_5 (\nu t)^{ -1+ \frac{1}{p}}  \|F\|_{L^q( \R^2)} &\qquad \text{for all}\quad   t \geq  \frac1{\nu}.
 \end{split}\right.
\end{equation} 
\end{itemize}
\end{corollary}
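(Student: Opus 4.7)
The plan is to obtain Corollary \ref{cor Stokes} as a direct duality consequence of the gradient estimates \eqref{est grad 1}--\eqref{est grad 2} of Theorem \ref{theo Stokes}. Fix $1<q\leq p<\infty$ and a matrix field $F\in L^q(\R^2;M_{2\times 2})$ with $F=0$ on $B_0$. The strategy is to test $S(t)\mathbb{P}\div F$ against an arbitrary $\phi\in \mathcal{L}^{p'}\cap \mc{L}^2 \cap \mathcal{C}^\infty_c(\R^2)$, move both $S(t)$ and $\mathbb{P}$ to the right-hand side of the pairing, integrate $\div$ by parts, and finally apply H\"older's inequality together with the gradient bounds of Theorem~\ref{theo Stokes}. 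The density of $\mathcal{L}^{p'}\cap \mc{L}^2\cap \mathcal{C}^\infty_c$ in $\mathcal{L}^{p'}$ then yields the claim.

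\medskip

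\textbf{Step 1: Dualization.} For the chosen test function $\phi$, both $\mathbb{P}\div F$ and $\phi$ lie in $\mc{L}^2$ (interpreting $\div$ as a distribution; one may also smooth $F$ and pass to the limit). Since the Stokes operator $A$ is self-adjoint on $\mc{L}^2$ (see \cite{Takahashi&Tucsnak04}), the semigroup $S(t)$ is self-adjoint on $\mc{L}^2$, and $\mathbb{P}$ is self-adjoint on $L^2(\R^2)$ by construction. Noting that $\mathbb{P} S(t)\phi = S(t)\phi$ since $S(t)\phi \in \mc{L}^{p'}\cap \mc{L}^2$, we can write
\begin{equation*}
	\langle S(t)\mathbb{P}\div F,\phi\rangle_{\mc{L}^p,\mc{L}^{p'}}
	= \langle \mathbb{P}\div F, S(t)\phi\rangle
	= \langle \div F, S(t)\phi\rangle
	= -\int_{\mc{F}_0} F : \nabla S(t)\phi,
\end{equation*}
where the last equality uses that $F=0$ on $B_0$ (so that no interface term arises). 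Up to the equivalent choice of pairing between the $L^2(\R^2)$-scalar product and the $\mc{L}^2$-scalar product \eqref{eq_ps} (which only modifies constants), this identity is the heart of the argument.

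\medskip

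\textbf{Step 2: Applying the gradient estimates of Theorem \ref{theo Stokes}.} H\"older's inequality gives
\begin{equation*}
	\left|\langle S(t)\mathbb{P}\div F,\phi\rangle_{\mc{L}^p,\mc{L}^{p'}}\right|
	\leq \|F\|_{L^q(\R^2)}\,\|\nabla S(t)\phi\|_{L^{q'}(\mc{F}_0)}.
\end{equation*}
The exponents associated with $\phi$ are $p'$ (initial datum space) and $q'$ (space in which the gradient is measured), with $p'\leq q'$. In the case $2\leq q\leq p<\infty$ we have $p'\leq q'\leq 2$, so \eqref{est grad 1} (with $p,q$ replaced by $q',p'$) applies and yields
\begin{equation*}
	\|\nabla S(t)\phi\|_{L^{q'}(\mc{F}_0)} \leq K_2(q',p')(\nu t)^{-\frac12+\frac{1}{q'}-\frac{1}{p'}}\|\phi\|_{\mc{L}^{p'}},
\end{equation*}
and the exponent rewrites as $-\frac12+\frac1p-\frac1q$. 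This proves \eqref{est div 1}. In the case $q\leq 2$, $q\leq p$, we have $q'\geq 2$ and $q'\geq p'$, so $q'\in[\max\{2,p'\},\infty)$ and \eqref{est grad 2} (again with $p,q$ replaced by $q',p'$) gives the two-regime estimate: the short-time exponent $-\frac12+\frac{1}{q'}-\frac{1}{p'}=-\frac12+\frac1p-\frac1q$ and the long-time exponent $-\frac{1}{p'}=-1+\frac1p$. Taking the supremum over $\phi$ with $\|\phi\|_{\mc{L}^{p'}}\leq 1$ and using the density of $\mc{L}^{p'}\cap \mc{L}^2\cap \mathcal{C}^\infty_c$ in $\mc{L}^{p'}$ concludes \eqref{est div 2}.

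\medskip

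\textbf{Main obstacle.} The computations themselves are routine; the only delicate point is rigorously justifying Step 1, namely the commutation of $S(t)$ with the duality bracket and the fact that the adjoint of the extension of $S(t)$ to $\mc{L}^p$ coincides, through the $(\mc{L}^p)^*\cong \mc{L}^{p'}$ identification recalled after \eqref{eq_ps}, with $S(t)$ on $\mc{L}^{p'}$. This is handled by restricting first to smooth, compactly supported data where both sides live in $\mc{L}^2$ (so that the self-adjointness of $A$ on $\mc{L}^2$ can be directly invoked) and then extending by density, relying on the fact that the $L^2(\R^2)$ pairing and the $\mc{L}^2$ pairing differ only through the weight $m/\pi$ on $B_0$ and thus define equivalent norms.
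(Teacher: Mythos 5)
Your proposal is correct and follows essentially the same route as the paper: dualize against $\tilde V\in\mathcal{L}^{p'}\cap\mathcal{L}^2\cap\mathcal{C}^\infty_c$, use the self-adjointness of $S(t)$ on $\mathcal{L}^2$ and the vanishing of $F$ (and $Fn$) near $B_0$ to reduce to $-\int_{\mathcal{F}_0}F:\nabla S(t)\tilde V$, then apply H\"older and the gradient estimates of Theorem \ref{theo Stokes} with the exponents $(q',p')$. You even spell out the case $q\leq 2$, which the paper dispatches with ``the other cases can be done similarly,'' and you correctly invoke \eqref{est grad 1} for $p'\leq q'\leq 2$ where the paper's text cites \eqref{est grad 2} (an apparent typo, since the estimate actually used is the uniform-in-$t$ one).
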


In this corollary the divergence $\div$ is computed  along rows of the matrix $F$.

Before going into the proof, let us emphasize that, in our case $\nabla S(t)$ is not the dual operator of $S(t) \mathbb P \div.$ Indeed
if $F$ is smooth with compact support, there holds, for all $\Phi \in \mathcal{L}^2 \cap \mathcal{C}^{\infty}_c(\mathbb R^2)$: 
\begin{eqnarray*}
(\nabla S(t) \phi, F) & : =&  \dfrac{m}{\pi} \int_{B_0} \nabla S(t)\Phi : F  + \int_{\mathcal F_0} \nabla S(t) \Phi : F   \\
				&=&  \left(  1 - \dfrac{m}{\pi}\right)\int_{\partial B_0} S(t) \Phi \cdot Fn \, \text{d$\sigma$}  + (S(t)\Phi, \div F)\\
				&=&  \left( 1 - \dfrac{m}{\pi} \right)\int_{\partial B_0} S(t) \Phi  \cdot Fn \, \text{d$\sigma$}  + (\Phi , S(t) \mathbb{P} \div F)\,.
\end{eqnarray*}
Hence Corollary \ref{cor Stokes} only concerns the restriction of the dual of $\nabla S(t)$ to functions $F$ which vanish at the boundary.

\begin{proof}
The following proof contains a construction of the operator $S(t) \mathbb P \div$ on the closed
 subset  of $L^q(\mathbb R^2 ; M_{2\times2}(\mathbb R))$ of functions vanishing on $B_0.$ 
We prove our result in the case $2\leq q \leq p < \infty$ only. The other cases can be done similarly.  

Let $F \in L^q(\mathbb R^2 ; M_{2\times2}(\mathbb R^2))$ such that $F=0$ on $B_0$.
Up to a regularizing argument, we assume that $F \in \mathcal{C}^{\infty}_c(\mathcal F_0 ; M_{2\times2}(\mathbb R)).$ 
Then, $V(t) := S(t) \mathbb P \div F \in \mathcal{C}((0,\infty) ; \mathcal{L}^p)$ for all $p  \in (1,\infty)$ by a straightforward
application of {Theorem \ref{theo Stokes}}.  For all $t>0$ and $\tilde{V} \in \mathcal{L}^{2} \cap \mathcal{C}^{\infty}_c(\mathbb R^2),$ we have, as $S$ is self-adjoint with respect to the
scalar product $(\cdot,\cdot)$ we introduced on $\mathcal{L}^2$ (see \eqref{eq_ps}):
\begin{eqnarray*}
\langle V(t) , \tilde{V} \rangle_{\mathcal L^{p},\mathcal{L}^{p'}}  
&=& ( \mathbb{P} \div F , S(t)\tilde{V} )  \,, \\[4pt]
&=& \int_{\mathcal{F}_0}  \div F  \cdot S(t) \tilde{V}\,, \qquad \text{ (as $F$ vanishes on $B_0$)\,,}\\[2pt]
&=&
-\int_{\mathcal{F}_0} F   : \nabla S(t) \tilde{V}  \quad \phantom{As} \text{ (as $Fn$ vanishes on $\partial B_0$)\,.}
\end{eqnarray*}
Finally, we obtain:
$$
\left|\langle V(t) , \tilde{V} \rangle_{\mathcal L^{p},\mathcal{L}^{p'}} \right| \leq \|F\|_{L^q(\mathbb R^2)} \| \nabla S(t) \tilde{V}\|_{L^{{q'}}(\mathcal F_0)}\,,
$$
where we apply decay estimates we obtained in Theorem \ref{theo Stokes}: as ${p'} \leq  {q'} < 2$ we have from \eqref{est grad 2}  
$$
\|\nabla S(t) \tilde{V} \|_{L^{{q'}}(\mathcal F_0)} \leq C t^{-\frac{1}{2} + \frac{1}{{q'}} - \frac{1}{{p'}}} \|\tilde{V}\|_{\mf{L}^{{p'}}} \leq C t^{-\frac{1}{2} + \frac{1}{p} - \frac{1}{q}} \|\tilde{V}\|_{\mathcal{L}^{{p'}}}.
$$
So that, we obtain:
$$
\left|\langle V(t) , \tilde{V} \rangle_{\mathcal L^{p},\mathcal{L}^{p'}}\right| \leq  C \|F\|_{L^q(\mathbb R^2)}  t^{-\frac{1}{2} + \frac{1}{p} - \frac{1}{q}} \|\tilde{V}\|_{\mathcal{L}^{{p'}}}.
$$
As $\mathcal{L}^{p'} \cap \mathcal{C}^{\infty}_c(\mathbb R^2)$ is dense in $\mathcal{L}^{{p'}},$ this inequality implies by duality that $V(t) \in \mathcal{L}^p$ with norm
lower than $C t^{-\frac{1}{2} + \frac{1}{p} - \frac{1}{q}}  \|F\|_{L^q(\mathbb R^2)}\, .$
\end{proof}

In the previous corollary we restrict $p$ to finite values. In the case $p=\infty,$ we do not obtain a control of the whole solution. Nevertheless, we can obtain a result that would correspond to the case $p = \infty$ in \eqref{est div 1} for the translation speed $\ell_{V(t)}.$
This result is a new application of the added mass effect and relies on the fact that Kirchoff potentials are easily computed in our case.

\begin{corollary} \label{est_lv}
	Let $q \in [2,\infty)$ and $F \in L^q( \mathbb{R}^2 ;M_{2\times2}(\mathbb R))$ satisfying $F=0$ on $B_0$,
	The following decay estimate holds true for $V(t) := S(t)\mathbb P \div F$:
	\begin{equation} \label{eq_lv}
		|\ell_{V(t)}| \leq {K_{\ell}}(q)(\nu t)^{-\left(\frac 12 + \frac 1q \right)} \|F\|_{L^q(\mathbb R^2)}\, , \quad \forall \, t >0
	\end{equation}
	where $K_{\ell}(q)$ depends only on $q$. 
\end{corollary}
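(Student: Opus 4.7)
The plan is to use a Kirchhoff-potential / added-mass duality argument, which recovers the missing $p=\infty$ endpoint of Corollary \ref{cor Stokes} at the level of the translation speed $\ell_{V(t)}$ only. For $j\in\{1,2\}$, I would introduce the Kirchhoff potential $\Upsilon_j$ defined by $\Upsilon_j\equiv e_j$ on $B_0$ and $\Upsilon_j=\nabla\varphi_j$ on $\mc{F}_0$ with $\varphi_j(x)=-x_j/|x|^2$. Then $\varphi_j$ is harmonic on $\mc{F}_0$, satisfies $\partial_n\varphi_j=e_j\cdot n$ on $\partial B_0$ and $|\nabla\varphi_j(x)|\leq C|x|^{-2}$ for $|x|$ large, so $\Upsilon_j$ is divergence-free on $\R^2$, belongs to $\mc{L}^s$ for every $s\in(1,\infty]$, and verifies $\ell_{\Upsilon_j}=e_j$, $\omega_{\Upsilon_j}=0$.

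For any smooth $V\doteq(v,\ell_V,\omega_V)\in\mc{L}^2$ with enough decay at infinity, integration by parts using $\div v=0$, the identity $v\cdot n=\ell_V\cdot n$ on $\partial B_0$, and the elementary computation $\int_{\partial B_0}\varphi_j\,n\,{\rm d}\sigma=-\pi e_j$ yields the added-mass identity
\[
\langle V,\Upsilon_j\rangle_{\mc{L}^2}=\int_{\mc{F}_0}v\cdot\nabla\varphi_j+m\,\ell_V\cdot e_j=(\pi+m)\,\ell_V\cdot e_j.
\]
Applied to $V(t)=S(t)\mathbb{P}\div F$, self-adjointness of $S(t)$ on $\mc{L}^2$ together with the same duality manipulation as in the proof of Corollary \ref{cor Stokes} (which uses $F=0$ on $B_0$) gives
\[
(\pi+m)\,\ell_{V(t)}\cdot e_j=\langle\mathbb{P}\div F,S(t)\Upsilon_j\rangle_{\mc{L}^2}=-\int_{\mc{F}_0}F:\nabla S(t)\Upsilon_j,
\]
so by H\"older $|\ell_{V(t)}\cdot e_j|\leq C\,\|F\|_{L^q(\R^2)}\,\|\nabla S(t)\Upsilon_j\|_{L^{q'}(\mc{F}_0)}$ with $q'=q/(q-1)\in(1,2]$.

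What remains is the sharp bound $\|\nabla S(t)\Upsilon_j\|_{L^{q'}(\mc{F}_0)}\leq C(\nu t)^{-1/2-1/q}$. Since $\Upsilon_j\in\mc{L}^s$ only for $s>1$, a direct invocation of Theorem \ref{theo Stokes} produces the suboptimal rate $t^{-1/2+1/q'-1/s}$ and just misses the target as $s\to 1^+$. The improvement comes from passing to the reduced variable of Proposition \ref{prop_calculz}. An explicit spherical-harmonic computation using Proposition \ref{prop_decompLp} shows that only the first sine (respectively cosine) mode of $\Upsilon_j$ survives, with radial profile $r\mapsto -r$ on $(0,1)$ and $r\mapsto -1/r$ on $(1,\infty)$. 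The associated $Z$-variable therefore vanishes on $\mc{F}_0$ and equals $\pm 2$ on $B_0$, so that $Z_0\doteq(0,\pm 2)\in\mf{L}^1\cap\mf{L}^\infty$ even though $\Upsilon_j\notin\mc{L}^1$. This is the crucial gain.

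I would then write $Z(t)=S(t/2)Z(t/2)$ for the semigroup associated with \eqref{eq_1:z}--\eqref{eq_2:z} and chain two bounds from Theorem \ref{thm_decayz} (together with Theorem \ref{ThmMunnierZuazuaExt} for the basic $\mf{L}^{q'}$--$\mf{L}^1$ decay): the estimate $\|Z(t/2)\|_{\mf{L}^{q'}}\leq Ct^{1/q'-1}\|Z_0\|_{\mf{L}^1}$ combined with the gradient estimate at the endpoint $p=q=q'$ (\eqref{est_gradzLp1} at $p=q$ when $q'<2$, and the standard $L^2$-energy inequality when $q'=2$) gives $\|\partial_r z(t)\|_{L^{q'}(\mc{F}_0)}\leq Ct^{-1/2}\|Z(t/2)\|_{\mf{L}^{q'}}$, and composition produces $\|\partial_r z(t)\|_{L^{q'}(\mc{F}_0)}\leq Ct^{-1/2-1/q}$. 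The companion bound on $\|z(t)/r\|_{L^{q'}(\mc{F}_0)}$ follows from Proposition \ref{prop_drz2z} when $q'<2$, and from a Hardy inequality with boundary trace together with $|\ell_Z(t)|\leq Ct^{-1}\|Z_0\|_{\mf{L}^1}$ (from the $\mf{L}^1\to\mf{L}^\infty$ bound) when $q'=2$. Feeding these into the elliptic $z\to\phi$ inversion used in the proof of Theorem \ref{theo Stokes} (Propositions \ref{prop_z2psi} and \ref{prop_ellipticw0}) yields the required bound on $\|\nabla S(t)\Upsilon_j\|_{L^{q'}(\mc{F}_0)}$ and closes the proof. The main obstacle will be the endpoint $q'=2$, where Proposition \ref{prop_drz2z} is not available and has to be bypassed by the Hardy inequality together with the $\mf{L}^\infty$-decay of $\ell_Z$; once this is settled, the remaining steps are a careful bookkeeping of estimates already established in Sections \ref{sec_decayvR}--\ref{sec_decayz}.
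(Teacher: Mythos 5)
Your argument is, for the most part, the paper's own proof in different notation: your Kirchhoff potential $\Upsilon_1$ is (up to sign) the field $\Xi = 1_{\mathcal F_0}\nabla\bar\psi - 1_{B_0}e_1$ built from $\bar\psi(r,\theta)=\cos(\theta)/r$, your added-mass identity $(\pi+m)\,\ell_{V,1}(t)=\langle V(t),\Upsilon_1\rangle_{\mc{L}^2}$ is exactly \eqref{Added-Mass}, and the decisive observation --- that the spherical-harmonic decomposition of $\Upsilon_j$ reduces to the first mode with profile $\min\{r,1/r\}$, whose $Z$-datum is $\mp 2\cdot 1_{B_0}\in\mf{L}^1\cap\mf{L}^\infty$ even though $\Upsilon_j\notin\mc{L}^1$ --- is precisely how the paper beats the naive rate from Theorem \ref{theo Stokes}. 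For $q>2$ (so $q'<2$) your chaining of the $\mf{L}^1\to\mf{L}^{q'}$ decay with the gradient estimate at $p=q'$, followed by Proposition \ref{prop_drz2z} and the elliptic inversion, reproduces the paper's application of Theorem \ref{thm_decayz} with ``$p$''$=q'$, ``$q$''$=1$, and is correct.

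The genuine gap is at the endpoint $q=q'=2$, which you rightly flag as the main obstacle but then propose to close with ``a Hardy inequality with boundary trace'' for $\|z/r\|_{L^2((1,\infty),r\,{\rm d}r)}$. That inequality is false: taking $z_R=1$ on $(1,R)$, $z_R(r)=2-r/R$ on $(R,2R)$ and $z_R=0$ beyond, one has $\int_1^\infty |z_R|^2 r^{-1}\,{\rm d}r\geq \log R\to\infty$ while $\int_1^\infty|\partial_r z_R|^2 r\,{\rm d}r+|z_R(1)|^2$ stays bounded --- this borderline failure is exactly why Proposition \ref{prop_drz2z} and estimates \eqref{est_gradzLp2}, \eqref{est grad 2} exclude $p=2$, and knowing in addition that $|\ell_Z(t)|\lesssim t^{-1}$ does not repair it. The paper closes the endpoint by a different route: writing $S(t)=S(t/2)S(t/2)$, it first inverts $Z_\Phi(t/2)$ into the velocity field $\nabla^\perp\bigl(\Phi(t/2,r)\sin\theta\bigr)$ via Proposition \ref{prop_z2psi}, bounds its $\mc{L}^2$-norm by $C\|Z_\Phi(t/2)\|_{\mf{L}^2}\leq C(\nu t)^{-1/2}\|Z_{\Phi,0}\|_{\mf{L}^1}$, and then applies the $\mc{L}^2\to \dot H^1$ smoothing of the \emph{full} Stokes semigroup over the second half-interval (the $p=q=2$ gradient estimate of Theorem \ref{theo Stokes}, inherited from \cite{Takahashi&Tucsnak04}), which never requires a control of $z/r$ in $L^2$. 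Replace your Hardy step by this splitting and the proof closes.
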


\begin{proof}
Let the assumptions of the corollary be satisfied.
At first, we recall that we have $V(t) \in \mathcal L^q$ for all $t \in (0,\infty)$ as has been shown in the previous
corollary. We show how to prove that the first component $\ell_{V,1}$ of $\ell_V(t)$ satisfies 
\eqref{eq_lv}. Similar estimate for the other component $\ell_{V,2}$ is obtained applying comparable arguments.

\medskip

Let  $\bar{\psi} \in \mathcal C^{\infty}(\mathcal F_0)$ be given in polar coordinates by:
$$
	\bar{\psi}(r,\theta) = \dfrac{\cos(\theta)}{r} \,, \quad \forall \, (r,\theta) \in (1,\infty) \times (-\pi,\pi)\,.
$$
Given $t>0$ we note that $V := V(t) \doteq ( v(t), \ell_V(t))$ is divergence free on any subdomain $B(0,R) \setminus B_0$ of $\mathcal F_0.$
This yields:
$$
\int_{\partial B(0,R)} v \cdot n \, \bar{\psi} \, \text{d$\sigma$} +  \int_{\partial B_0} v \cdot n \, \bar{\psi} \, \text{d$\sigma$}  = \int_{B(0,R) \setminus B_0} v \cdot \nabla \bar{\psi}
$$
Letting $R \to \infty,$ we obtain (the exterior boundary term vanishes as $v \in \mathcal L^q$):
\begin{equation} \label{eq_propxi}
	-\int_{0}^{2\pi} v_r (1, \theta) \cos(\theta)\, \text{d$\theta$}  = \int_{\mathcal F_0} v \cdot \nabla \bar{\psi}\,.
\end{equation}
We observe then that, on the one hand, we have $\nabla \bar{\psi} = \nabla^{\bot} \bar{\phi}$ where 
$$
	\bar{\phi}(r,\theta) = \dfrac{\sin(\theta)}{r} \,, \quad \forall \, (r,\theta) \in (1,\infty) \times (-\pi,\pi)\,,
$$ 
on the other hand:
$$
	\int_{0}^{2\pi} v_r(t,1, \theta) \cos(\theta)\, \text{d$\theta$}  = \pi \ell_{V,1}(t)\,.
$$
Setting finally:
$$
	\Xi := 1_{\mathcal F_0}  \nabla \bar{\psi} - 1_{B_0} e_1
$$
we have that $\Xi \in \mathcal L^p$ for arbitrary $p>1$ and that \eqref{eq_propxi} reads:
\begin{equation*} 
	-(\pi+m) \ell_{V,1}(t)  = m \ell_V(t) \cdot \ell_{\Xi} +  \int_{\mathcal F_0} v(t) \cdot \xi  = (V(t),\Xi)\,, \quad \forall \, t >0\,.
\end{equation*}

Given this identity, we reproduce the computations done in the proof of the previous corollary.
We obtain:
\begin{equation}
	\label{Added-Mass}
	-(\pi + m) \ell_{V,1}(t) = \int_{\mathcal F_0} F : \nabla S(t) \Xi\,,
\end{equation}
which implies 
$$
|\ell_{V,1}(t)|  \leq \dfrac{1}{|\pi + m|} \|F\|_{L^q(\mathbb R)} \|\nabla S(t) \Xi\|_{L^{q'}(\mathcal F_0)}\,.
$$

\medskip

The proof now reduces to find a bound on $\| \nabla S(t) \Xi\|_{L^{q'}(\mathcal F_0)}.$ 
To this end, we remark that the spherical-harmonic decomposition of $\Xi$ reduces to the first mode 
$\bar \phi(r, \theta) = \min\{r,1/r\} \sin(\theta).$ Going back to the computation of Section 2, we note that $S(t) \Xi$ is given by its first-mode, corresponding to $Z_{\Phi} =- \partial_r \Phi - \Phi/r$ where $\Phi(r) = \min\{r, 1/r\}$.   This mode satisfies \eqref{eq_1:z}-\eqref{eq_2:z} with initial condition:
$$
	Z_{\Phi,0} = - 2\cdot 1_{B_0} \,. 
$$
Consequently, for $q >2$, $Z_{\Phi,0} \in \mathcal L^1$ and we apply Theorem \ref{thm_decayz} with ``$p$''$=q'$ and ``$q$''$=1$, which yields (see \eqref{est_gradzLp1})
$$
	\|\partial_r z_{\Phi}(t,\cdot)\|_{L^{q'}(\mathcal F_0)}  + \left\| \dfrac{z_{\Phi}(t,\cdot)}{r} \right\|_{L^{q'}(\mathcal F_0)}  \leq K\,  (\nu t)^{-\frac 12+ \frac 1{q'} - 1}=K\,  (\nu t)^{-\frac 12- \frac 1{q} } \,, \quad \forall \, t >0\,.
$$

We  go back  to $\Xi$ as in the proof of Theorem {\ref{theo Stokes}} so that:
$$
	\|\nabla S(t) \Xi\|_{L^{q'}(\mathcal F_0)} \leq K\, (\nu t)^{-\frac 12 - \frac 1q }\,, \quad \forall \, t >0\,.
$$ 
Plugging this estimate in \eqref{Added-Mass} we obtain the expected result for $q >2$.

The case $q=2$, corresponding to $q'=2$, does not immediately follows from Theorem \ref{thm_decayz}, but rather from the fact that
$$
	\| \nabla^\perp \left( \Phi\left(\frac{t}{2}, r \right) \sin( \theta) \right) \|_{\mc{L}^2} \leq C \| Z_\Phi\left( \frac{t}{2}, \cdot \right)\|_{\mf{L}^2} \leq C (\nu t)^{-1/2} \| Z_{\Phi,0} \|_{\mf{L}^1}
$$
and from the $\mc{L}^2$ decay estimates on the gradient obtained in Theorem \ref{theo Stokes}:
$$
	\|\nabla S(t/2) \left(\nabla^\perp \left( \Phi\left(\frac{t}{2}, r \right) \sin(\theta) \right) \right)\|_{L^2(\mathcal{F}_0)}  \leq C (\nu t)^{-1/2} \| \nabla^\perp \left( \Phi\left(\frac{t}{2}, r \right) \sin \theta) \right) \|_{\mc{L}^2}.
$$
\end{proof}

\subsection{Asymptotic expansion of solutions to the Stokes system} This section aims at proving Theorem \ref{thm_AsymptoticStokes}. We first show that the solutions $W$ and $V_R$ corresponding to the modes $k \neq 1$ decay faster than the modes corresponding to $k = 1$. In a second step, we derive precisely the first-order in the long-time behavior of this first mode.

\subsubsection{Faster decay on $W$}

\begin{theorem} \label{thm_w0firstorder}
Given a radial $W_0 \in \mf{L}^1 \cap  L^2(\R^2,\exp(|x|^2/4){\rm d}x),$ the unique solution to \eqref{eq_w0first}--\eqref{eq_w0last}
satisfies $W(t) \in \mf{L}^p$ for all $t >0$ and $p\in [1,\infty].$ Furthermore, for all $p \in [2,\infty],$  there exists a constant $C_p >0$ such that
$$
	\|W(t)\|_{\mf{L}^p} \leq C_p t^{\frac{1}{p} - \frac{3}{2} } \|W_0 r\|_{\mf{L}^1}\,,
$$ 
and there exists a constant $C$ such that
$$
 |\ell_{W(t)}|  \leq C t^{-2} \|W_0 r\|_{\mf{L}^1}.
$$
\end{theorem}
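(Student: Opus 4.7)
The plan is to use the correspondence \eqref{Correspondance-Z-V-k} with $k=1$, i.e.\ $\mathrm{v}(t,r):=w(t,r)/r$ for $r>1$ and $\ell_{\mathrm{v}}(t):=\ell_W(t)$, which identifies $(\mathrm{v},\ell_{\mathrm{v}})$ as a radial solution of the Munnier--Zuazua system \eqref{Eq-on-V} in ambient dimension $n=2k+2=4$ with $\alpha=1/(\pi\mathcal{J})$. For radial functions the identity
\[
\int_{\R^4\setminus B(0,1)}|\mathrm{v}_0|^2 e^{|x|^2/4}\,{\rm d}x = 2\pi^2\int_1^\infty |w_0(r)|^2 r\,e^{r^2/4}\,{\rm d}r
\]
shows that the hypothesis $W_0\in L^2(\R^2,\exp(|x|^2/4)\,{\rm d}x)$ translates exactly into the weighted-$L^2$ hypothesis of Theorem \ref{thm_munnierzuazua_Exp}; a Cauchy--Schwarz estimate against the Gaussian weight also gives $\|W_0\,r\|_{\mf{L}^1}<\infty$ together with $|M|\le C\|W_0\,r\|_{\mf{L}^1}$ for the Munnier--Zuazua total mass $M:=\int_{\R^4\setminus B(0,1)}\mathrm{v}_0+\alpha^{-1}\ell_{\mathrm{v}_0}$, whose leading contribution is $2\pi^2\int_1^\infty w_0(r)\,r^2\,{\rm d}r$.

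Applying Theorem \ref{thm_munnierzuazua_Exp} in dimension $n=4$ (where $\delta_{n,2}=0$ and $R_2(t)=t^{-1/6}$), the estimate on $\ell_{\mathrm{v}}$ reads $|\ell_{\mathrm{v}}(t)-M(4\pi t)^{-2}|\le C t^{-13/6}$; combined with $\ell_W=\ell_{\mathrm{v}}$, this directly yields $|\ell_W(t)|\le C\|W_0\,r\|_{\mf{L}^1}\,t^{-2}$. For the $\mf{L}^p$ bound on $W(t)$ I write $w(t,r)=r\,\mathrm{v}(t,r)$ and split $\mathrm{v}=MG(t)+(\mathrm{v}-MG(t))$, where $G(t,x)=(4\pi t)^{-2}\exp(-|x|^2/(4t))$. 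The leading contribution is controlled by an explicit computation: starting from
\[
\|r\,MG(t)\|_{L^p(\mc{F}_0)}^p = |M|^p(4\pi t)^{-2p}\cdot 2\pi\int_1^\infty r^{p+1}e^{-pr^2/(4t)}\,{\rm d}r
\]
and substituting $s=r/\sqrt{t}$ gives $\|r\,MG(t)\|_{L^p(\mc{F}_0)}\le C|M|\,t^{1/p-3/2}$, which is exactly the announced rate.

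The main obstacle is controlling the remainder $\|r(\mathrm{v}(t)-MG(t))\|_{L^p(\mc{F}_0)}$: Theorem \ref{thm_munnierzuazua_Exp} provides a bound on $\mathrm{v}-MG$ only in $L^p(\R^4\setminus B(0,1))$, whose natural weight $r^3\,{\rm d}r$ is lighter than the weight $r^{p+1}\,{\rm d}r$ needed here as soon as $p>2$, so one cannot simply quote the $\R^4$ estimate. My plan to bypass this weight mismatch is to adapt the Escobedo--Zuazua \cite{EscobedoZuazua} self-similar-variables analysis from \cite{MunnierZuazua,MunnierZuazua2} directly to \eqref{eq_w0first}--\eqref{eq_w0last} for $w$, working in the time-dependent Gaussian-weighted space $L^2(\mc{F}_0,\exp(|x|^2/(4(1+t)))\,{\rm d}x)$. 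The extra zeroth-order term $w/r^2$ present in \eqref{eq_w0first} contributes a non-negative quadratic term to the weighted energy identity and is therefore harmless, while the dynamic boundary condition \eqref{eq_w0last} is treated via the added-mass mechanism of \cite{MunnierZuazua2}. The resulting weighted-$L^2$ bound on $w(t)-r\,MG(t)$ is then upgraded to all $p\in[2,\infty]$ by interpolation, with the endpoint $p=\infty$ covered by the explicit pointwise estimate $\|r\,MG(t)\|_{L^\infty(\mc{F}_0)}\le Ct^{-3/2}$ (maximised at $r=\sqrt{2t}$) together with an analogous weighted pointwise bound on the remainder.
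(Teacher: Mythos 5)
Your dimensional correspondence ($k=1$, $n=4$, $\alpha=1/(\pi\mathcal{J})$), the verification that the hypotheses transfer, and the explicit computation $\|r\,MG(t)\|_{L^p(\mc{F}_0)}\le C|M|\,t^{1/p-3/2}$ are all correct, and your route to the $\ell_W$ estimate via Theorem \ref{thm_munnierzuazua_Exp} does give $|\ell_W(t)|\le C t^{-2}$ (although the constant you obtain depends on the Gaussian-weighted norm of $W_0$ through the remainder $R_2$, not only on $\|W_0 r\|_{\mf{L}^1}$ as in the statement; the paper gets the cleaner constant by applying \eqref{Decay-W-By-Munnier-Zuazua} with $p=\infty$, $q=1$ directly, without invoking the asymptotic expansion). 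The genuine gap is in the $\mf{L}^p$ bound: you correctly diagnose that the weight mismatch ($r^3\,{\rm d}r$ in the $\R^4$ estimate versus the $r^{p+1}\,{\rm d}r$ you need once $p>2$) prevents quoting Theorem \ref{thm_munnierzuazua_Exp}, but your proposed remedy --- redoing the Escobedo--Zuazua self-similar-variables analysis for \eqref{eq_w0first}--\eqref{eq_w0last} in a time-dependent Gaussian-weighted space, handling the dynamic boundary condition by an added-mass mechanism, and then interpolating up to $p=\infty$ via an unproven weighted pointwise bound on the remainder --- is an unexecuted program rather than a proof. As written, the heart of the $\mf{L}^p$ estimate is missing.

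The obstacle you ran into is avoidable, because at the exponent $p=2$ the weight mismatch disappears ($r^{p+1}=r^3$), so $\|W(t)\|_{\mf{L}^2}$ is equivalent to $\big\|w(t)/r\big\|_{L^2(\R^4\setminus B(0,1))}$ up to the $\ell_W$-term. The paper exploits this through the semigroup property: it applies the two-dimensional smoothing estimate \eqref{eq_decayw} with $q=2$ on the interval $[t/2,t]$ and the four-dimensional decay \eqref{Decay-W-By-Munnier-Zuazua} with $q=1$, $p=2$ on $[0,t/2]$, namely
$$
\|W(t)\|_{\mf{L}^p}\le C\,t^{\frac1p-\frac12}\,\|W(t/2)\|_{\mf{L}^2}\le C\,t^{\frac1p-\frac12}\,t^{-1}\,\Big\|\frac{W_0}{r}\Big\|_{\mf{L}^1(\R^4)}\le C\,t^{\frac1p-\frac32}\,\|W_0 r\|_{\mf{L}^1}\,,
$$
which yields the claimed rate with no asymptotic expansion and no weighted remainder estimate. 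If you wish to keep your decomposition $w=rMG+r(\mathrm{v}-MG)$ you would still have to supply the missing control of the remainder in $L^p((1,\infty),r^{p+1}{\rm d}r)$; the splitting argument above makes that unnecessary.
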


\begin{proof}
 Let us first remark that $W_0 \in \mf{L}^1 \cap  L^2(\R^2,\exp(|x|^2/4){\rm d}x)$ obviously implies that $W_0 r \in \mf{L}^1$.

We first focus on the decay estimate, as $W(t) \in \mf{L}^p$ for $t >0$ is obvious. Given $t>0$ and $p \geq 2$ we apply \eqref{eq_decayw} 
with $q=2$  between $t/2$ and $t$ and then, we apply \eqref{Decay-W-By-Munnier-Zuazua} between $0$ and $t/2$:
$$
\|W(t)\|_{\mf{L}^p} \leq C  t^{\frac{1}{p} - \frac{1}{2}}\|W(t/2)\|_{\mf{L}^2} \leq C t^{\frac{1}{p} - \frac{3}{2}} \Big\| \frac{W_0}{r} \Big\|_{\mf{L}^1(\mathbb{R}^4)} \leq C t^{\frac{1}{p} - \frac{3}{2}} \|W_0 r\|_{\mf{L}^1}\,.
$$
Concerning the second estimate, it suffices to use \eqref{l-omega} and \eqref{Decay-W-By-Munnier-Zuazua}:
$$
	 |\ell_{W(t)}| \leq  \Big\| \frac{W(t)}{r} \Big\|_{\mf{L}^\infty(\mathbb R^4)} \leq  Ct^{-2} \Big\| \frac{W_0}{r} \Big\|_{\mf{L}^1(\mathbb R^4)} = C t^{-2}  \|W_0 r\|_{\mf{L}^1}.
$$
This ends the proof.
\end{proof}

As a consequence, we can already note that $\omega_{S(t)V_0}=\ell_{W(t)}$ (see Proposition \ref{prop_decompLp}) verifies \eqref{eq_asymStokes4}.

\subsubsection{Faster decay on $V_R$}\label{sec_decayvR2}
\begin{theorem} \label{thm_vRfirstorder}
Given $V_{R,0} \in L^2_{\sigma}(\mathcal F_0)\cap L^2(\R^2,\exp(|x|^2/4){\rm d}x)$, for all $p \in [ 2,\infty]$,  there exists a constant $C= C(p,v_{R,0})$ such that:
$$
	\|V_R(t,\cdot)\|_{L^p_\sigma( \mathcal{F}_0)}  \leq C \dfrac{|\log(t)|}{t^{3/2-1/p}}  \,, \quad \forall \, t >1\,. 
$$ 
\end{theorem}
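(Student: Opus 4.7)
The approach rests on the observation that the orthogonality conditions \eqref{eq_vR} satisfied by $V_R$, together with the divergence-free condition and the fact that $V_R$ vanishes on $B_0$, force the zero-th moment of $V_R(t)$ to vanish at every time. Indeed, by integration by parts,
\begin{equation*}
\int_{B(0,R)} V_{R,i}(t,x)\,{\rm d}x = \int_{B(0,R)} \nabla\cdot(x_i V_R(t,x))\,{\rm d}x = \int_{\partial B(0,R)} x_i\, V_R(t,x)\cdot n\,{\rm d}\sigma,
\end{equation*}
and, for every $R>1$, the right-hand side is exactly a linear combination of the first Fourier coefficients of $V_R\cdot e_r$ on $\partial B(0,R)$, which vanish by \eqref{eq_vR}. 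Hence $\int_{\R^2} V_R(t,\cdot)=0$ for all $t>0$, provided we first verify that \eqref{eq_vR} is propagated by the flow. This propagation follows from the rotational invariance of the Dirichlet Stokes problem \eqref{eq_vR1}--\eqref{eq_vR3}: the semigroup $S_R(t)$ commutes with rotations about the origin, hence preserves the projection onto spherical-harmonic modes $\geq 2$.

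With this vanishing zero-th moment at hand, my plan is to run a comparison argument with the free heat kernel. Set $\tilde V_{R,0}$ to be the extension of $V_{R,0}$ by zero on $B_0$, let $E(t):=e^{t\Delta}\tilde V_{R,0}$ denote its heat evolution on $\R^2$, and write
\begin{equation*}
	V_R(t,\cdot) = E(t) + U(t,\cdot),
\end{equation*}
where the correction $U$ solves an exterior Stokes system on $\mc{F}_0$ with vanishing initial datum and Dirichlet boundary data $U|_{\partial B_0}=-E(t)|_{\partial B_0}$ (together with a divergence-free correction in the lift). The Gaussian-weight hypothesis $V_{R,0}\in L^2(\R^2,e^{|x|^2/4}{\rm d}x)$ combined with $\int \tilde V_{R,0}=0$ yields, by standard heat kernel estimates with vanishing first moment, the improved bound
\begin{equation*}
\|E(t)\|_{L^p(\R^2)} \leq C\, t^{-3/2+1/p}, \qquad t>1,
\end{equation*}
for all $p\in[2,\infty]$, with an analogous pointwise Gaussian bound on $\partial B_0$ that decays like $t^{-3/2}$.

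The remaining step is to bound the boundary correction $U$ by the same rate up to a logarithmic loss. The idea is to lift the boundary data via a divergence-free extension supported near $\partial B_0$ and apply Theorem \ref{Thm-Dan-Shibata} through a Duhamel formula (or, equivalently, Corollary \ref{cor Stokes} after writing the lift as the divergence of a matrix field): the $L^p$--$L^q$ bounds on $S_R(t)$ transfer the $t^{-3/2}$ time-decay of the boundary trace of $E(t)$ into an $L^p$ bound on $U(t)$ of order $t^{-3/2+1/p}$, multiplied by a $|\log t|$ arising from the time convolution between the slowly-decaying semigroup kernel (the $t^{-1/q}$ factor in \eqref{est grad 2R} for $q=1$) and the boundary source decay. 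This is the main obstacle of the proof: the Stokes paradox in 2D exterior domains prevents a fully clean $t^{-3/2+1/p}$ bound on $U$, and one has to carefully split the Duhamel integral between small and large times to capture the $|\log t|$ loss without deteriorating the polynomial rate. Once this is done, collecting the bounds on $E$ and $U$ gives the claimed estimate $\|V_R(t)\|_{L^p(\mc{F}_0)}\leq C\,|\log t|\,t^{-3/2+1/p}$ for $t>1$.
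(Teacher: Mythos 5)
Your route is entirely different from the paper's (which expands $V_R$ in angular modes $k\ge 2$, reduces each mode via $z_k=r^{-k}\partial_r(r^k\psi_k)$ to a scalar heat equation with homogeneous Dirichlet condition and zero mass, applies the Munnier--Zuazua $M=0$ asymptotics --- this is where the $|\log t|\,t^{-1/2}$ gain originates --- and then inverts the elliptic relation), and it has a genuine gap at exactly the step you flag as ``the main obstacle''. The preliminary observations are fine: \eqref{eq_vR} is propagated by rotation invariance, $\int_{\R^2}V_R(t)=0$ (in fact this already follows from the divergence-free condition and integrability, without \eqref{eq_vR}), and the free heat part $E(t)$ of a mean-zero, Gaussian-weighted datum decays like $t^{-3/2+1/p}$. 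The problem is the correction $U$. Writing its Duhamel representation with a localized lift $L$ of the boundary data, the source $h=\partial_s L-\nu\Delta L$ is supported in a fixed annulus with $\|h(s)\|_{L^q}\lesssim s^{-3/2}$ for every $q$, and the contribution of $s\in(1,t/2)$ is bounded, using \eqref{Lp-LqR} or \eqref{est div 2}, only by $C\,t^{1/p-1/q}\int_1^{t/2}s^{-3/2}\,{\rm d}s\sim t^{1/p-1/q}$ with $q>1$, i.e.\ by $t^{-1+1/p}$ at best even granting an $L^1\to L^p$ endpoint. This is the generic rate of the exterior Stokes semigroup on localized data and is a full half-power short of the claim; no splitting of the time integral recovers $t^{-3/2+1/p}$ from these estimates alone. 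The deeper reason is that the only structural inputs you use --- zero mean and spatial localization --- are not the operative ones: they are shared by data with a nontrivial mode-$1$ component, whose Stokes evolution genuinely decays no faster than $t^{-1+1/p}$ (cf.\ Theorem \ref{thm_AsymptoticStokes}, where the leading term $U_{\vec{\mathcal{M}}}$ has exactly that size). The extra $t^{-1/2}$ in Theorem \ref{thm_vRfirstorder} comes from the absence of the angular modes $0,\pm1$; your correction $U$ does inherit this property, but the $L^p$--$L^q$ bounds of Theorem \ref{Thm-Dan-Shibata} and Corollary \ref{cor Stokes} are blind to it, so invoking them cannot close the argument --- one would need the improved decay of the mode-$\ge2$ exterior Stokes semigroup on localized data, which is essentially the statement being proved.

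Two further points. First, the lifted source behaves like $s^{-3/2}$ as $s\to 0^+$ for $L^2$ initial data, which is not time-integrable, so even the weaker $t^{-1+1/p}$ bound for the correction needs an additional device (comparing from time $1$, or integrating the $\partial_s L$ term by parts in time). Second, your attribution of the logarithm to ``the $t^{-1/q}$ factor in \eqref{est grad 2R} for $q=1$'' is off: \eqref{est grad 2R} is a gradient estimate, $q=1$ is excluded from Theorem \ref{Thm-Dan-Shibata}, and in the paper the logarithm originates in the two-dimensional Munnier--Zuazua expansion (the $\delta_{n,2}|\log t|$ factor in $R_{1,p}$), not in any endpoint of the Dan--Shibata estimates.
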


\begin{proof}
	 In order to prove Theorem \ref{thm_vRfirstorder}, we expand $V_R$ solution of \eqref{eq_vR1}--\eqref{eq_vR3} on its Fourier basis:
	$$
		V_R(t,r,\theta) = \nabla^{\perp} \left[  \sum_{k \geq 2} \left( \psi_k(t,r) \cos(k\theta) +  \phi_k(t,r) \sin(k\theta) \right)\right], 
	$$
	where $\psi_k(t,1) = \partial_r \psi_k(t,1)= \phi_k(t,1) = \partial_r \phi_k(t,1) = 0$ thanks to the homogeneous Dirichlet boundary conditions satisfied by the restriction $v_R$ of $V_R$ on $\mc{F}_0$. Note that $v_R$ does not contain any $0$ or $1$ mode due to the orthogonality condition \eqref{eq_vR}.

	As in the case $k=0,1$, we can show that for all $k  \geq 2$ the new unknown $z_k = z_{\psi,k}(t,r) := 1/r^{k}  \partial_r[ r^k \psi_k(t,r)]$ or $z_k = z_{\phi,k}(t,r) =  -1/r^{k}\partial_r[ r^k \phi_k(t,r)]$ satisfies:
	\begin{eqnarray*}
			\partial_t z_k + \left( - \frac{1}{r} \partial_r (r \partial_r z_k)  + \frac{(k-1)^2}{r^2} z_k \right) = 0 & &  	\qquad \text{ for } (t,r) \in (0, \infty) \times (1, \infty) ;
			\\
			z_k(t,1) =  0 & & \qquad \text{ for } t \in (0, \infty).
	\end{eqnarray*}
	One can then use the asymptotic formula given by Theorem \ref{thm_munnierzuazua_Exp} for 
	$$
		\tilde{v}_R(t,r,\theta) = \sum_{k \geq 2} \left( z_{\psi,k}(t,r) \cos((k-1) \theta) + z _{\phi, k}(t,r) \sin((k-1)\theta)\right)  
	$$
	which is a solution of \eqref{Eq-on-V} for $n=2,$ arbitrary $\alpha>0$ and vanishing initial mass $M = 0$. This immediately yields that, provided 
	\begin{equation*}
		\tilde v_{R}(0) \in L^2(\mathcal{F}_0,\exp(|x|^2/4)\, {\rm d}x),
	\end{equation*}
	which holds true since  $V_{R,0}$ is assumed to belong to $  L^2(\R^2,\exp(|x|^2/4){\rm d}x)$,
	we have
	$$
		t^{1-1/p} \norm{\tilde{v}_R(t,\cdot)}_{L^p(\mathcal{F}_0)} \leq C R_{1, p}(t).
	$$
	In particular, for $p = 2$, this implies that
	$$
		t \sum_{k \geq 2} \int_1^\infty r (|z_{\psi,k}(t,r)|^2+|z_{\phi,k}(t,r)|^2 \, dr \leq C R_{1,2}(t)^2.
	$$
	But recall that, for $k \geq 2$, 
	$$
		z_{\psi,k} = \partial_r \psi_k + \frac{k}{r} \psi_k, 
	$$
	and $\psi_k(t,1) = 0$. Hence for all $R>1$, 
	$$
		\int_1^R |z_{\psi,k}|^2 \, r dr = \int_1^R |\partial_r \psi_k|^2 r dr + k^2 \int_1^R \frac{|\psi_k|^2}{r}\, rdr + k \int_1^R \partial_r \left(|\psi_k|^2 \right)\, dr.
	$$
	As $\psi_{k}(1)=0$, passing to the limit $R \to \infty$, we get
	$$
		\int_1^\infty |\partial_r \psi_{k}|^2 r dr +   k^2 \int_1^\infty \frac{|\psi_k|^2}{r}\, rdr \leq \int_1^\infty |z_{\psi,k}|^2 \, r dr, 
	$$	
	and thus,
	\begin{equation*}
		t^{1/2} \norm{v_R(t)}_{L^2(\mathcal{F}_0)} \leq C R_{1,2}(t).
	\end{equation*}
	Using then the semigroup estimates \eqref{Lp-LqR}, 
	 we get, for $p \geq 2$,
	\begin{equation*}
		t^{1-1/p} \norm{v_R(t)}_{L^p(\mc{F}_0)} \leq C R_{1,2}(t).
	\end{equation*}
	This concludes the proof of Theorem \ref{thm_vRfirstorder}, as $V_R$ simply vanishes in $B(0,1)$.
\end{proof}

\subsubsection{Proof of Theorem \ref{thm_AsymptoticStokes}}

\begin{proof}
Let $V_0 \in \mathcal{L}^1 \cap  L^2(\R^2, \exp(|x|^2/4){\rm d}x)$ and $V(t)$ the unique associated solution to   (\ref{S1})--(\ref{S-Solideci}).
Note that $V_0 \in \mathcal{L}^q$ for all $q \in (1,\infty)$ so that we already know that $V(t) \in \mathcal{L}^p$ for all $p \in [2,\infty)$ for all $t>0$
from Theorem \ref{theo Stokes}. Let now $p \in [2,\infty)$ and $(W,\Phi,\Psi,V_R)$ the spherical-harmonic decomposition of $V$. 

The components  $\Phi,\Psi$ and $W$ are computed as means of $V$ in $\theta$ so that they inherit the asymptotic decay in $r$ of the data $V_0.$ 
Combining this remark with Proposition \ref{prop_decompLp}, this yields that:
\begin{eqnarray*}
&& W(0,\cdot) \in \mf{L}^1 \cap L^2(\R^2,\exp(|x|^2/4)\, {\rm d}x)\,,  \quad \quad  V_R(0,\cdot) \in L^2_{\sigma}(\mc{F}_0)\cap  L^2(\R^2,\exp(|x|^2/4){\rm d}x)\,\\[4pt]
&& (Z_{\Phi}(0,\cdot), Z_{\Psi}(0,\cdot))  \in \mf{L}^1 \cap  L^2(\R^2,\exp(|x|^2/4)\, {\rm d}x) \,.
\end{eqnarray*}
Consequently, {Theorems \ref{thm_vRfirstorder}} and \ref{thm_w0firstorder} imply respectively:
$$
\|V_R(t,\cdot)\|_{L^p_{\sigma}(\mc{F}_0^2)} = \mathcal{O}\left(\dfrac{|\log(t)|}{t^{3/2-1/p}}\right)  \,,
\qquad
 \|W(t,\cdot)\|_{\mf{L}^p} =  \mathcal{O} (t^{1/p-3/2})\,.
$$
We focus now on $Z_{\Phi}$ and $Z_{\Psi}.$ Using Theorem \ref{thm_munnierzuazua_Exp} with $\alpha= 2/(\pi+m)$, we immediately get:
\begin{eqnarray}
	t^{1- 1/p} \norm{z_\Phi(t,\cdot) - M_\Phi G(t) }_{L^p(\mathcal F_0)}  \leq   C_p R_{1,p}(t),
	& &
	t \left|\ell_{Z_\Phi} (t) - \frac{M_\phi}{4 \pi t} \right| \leq  C R_2(t),\label{Asymptotic-z1}
	\\
	t^{1- 1/p} \norm{z_\Psi(t,\cdot) - M_\psi G(t) }_{L^p(\mathcal F_0)}  \leq  C_p R_{1,p}(t),
	& &
	t \left|\ell_{Z_\Psi}(t) - \frac{M_\psi}{4 \pi t} \right|  \leq  C R_2(t), 
	\label{Asymptotic-z2}
\end{eqnarray}
with $G$ and $(R_{1,p},R_2)$ as given in Theorem \ref{thm_munnierzuazua_Exp} in the case $n=2$ and 
\begin{eqnarray*}
	M_{\Phi} & :=  & 2\pi \int_{1}^{\infty} Z_{\Phi}(0,r)r {\rm d}r + (\pi + m) \int_0^1 Z_{\Phi}(0,r) r {\rm d}r \,, 
	\\
	M_{\Psi} & :=  & 2\pi \ \int_{1}^{\infty} Z_{\Psi}(0,r)r {\rm d}r +(\pi + m) \int_0^1 Z_{\Psi}(0,r) r {\rm d}r  \,. 
\end{eqnarray*}
Recalling that 
\begin{align*}
	& Z_{\Phi}(t,\cdot) = - \frac1r \partial_r (r \Phi (t,\cdot) ) \quad \hbox{ for } (t,r) \in [0,\infty) \times (0,\infty);
	\\
	& Z_\Phi(t,r ) = \ell_{Z_{\Phi}}(t) = - 2  \Phi(t,r)/r =  2\ell_1(t)\quad \hbox{ for } (t,r) \in (0,\infty) \times (0,1),\\
	& \Phi(0,1 ) = -\ell_{1}(0) , \quad \text{(by the continuity of $\Phi$)}
\end{align*}
and using $\Phi(0,\cdot)/r, \partial_r \Phi(0,\cdot) \in L^1\cap L^2((0,\infty),\exp(|r|^2/4)r {\rm d}r)$, which implies the existence of a sequence $R_n \to \infty$ such that $R_n \Phi(0,R_n)$ goes to $0$ as $n \to \infty$,
$$
	M_\Phi = (\pi-m) \Phi(0,1)= (m- \pi) \ell_{1}(0).
$$
Similarly,
$$
	M_\Psi = (m-\pi) \Psi(0,1)=(m- \pi) \ell_{2}(0).
$$
By Proposition \ref{prop_decompLp} , we recall for $r\in (0,1)$ that $\ell_{S(t)V_0,1}= - \Phi(t,r)/r =\ell_{Z_\Phi}(t )/2$. From \eqref{Asymptotic-z1}-\eqref{Asymptotic-z2} and the previous formulas, we have obtained \eqref{eq_asymStokes2} and \eqref{eq_asymStokes5}.

Solving $\Phi$ and $\Psi$ in terms of $Z_{\Phi}(t,\cdot) \doteq ( z_\Phi(t,\cdot), \ell_{Z_{\Phi}}(t))$ and  $Z_{\Psi}(t,\cdot) \doteq ( z_\Psi(t,\cdot), \ell_{Z_{\Psi}}(t))$, we are then led to define $\hat{\Psi}(t,r)$ on $t \geq 0$, $r \in (0, \infty)$ as the extension of $\hat\psi$ solution of 
$$
	\frac1r \partial_r (r \hat{\psi} (t,r) ) = G(t,r) \quad \text{ for } (t,r) \in (0,\infty) \times (1, \infty), \qquad \hat{\psi}(t,1) = \frac{1}{8 \pi t} \quad \text{ for } t \in (0,\infty).
$$
by 
$$
	\hat{\Psi}(t,r) = \frac{r}{8 \pi t} \quad \text{ for } (t,r) \in (0,\infty) \times (0,1).
$$
Note that this function can be computed explicitly: 
\begin{equation*}
	\hat{\Psi}(t,r) = 
		\left\{
			\begin{array}{ll}
				\ds \frac{1}{2\pi r} \left(\exp\left( - \frac{1}{4t} \right)- \exp\left( - \frac{r^2}{4t} \right)+ \frac{1}{4t} \right)\, \quad & \hbox{ for } (t,r) \in (0,\infty)\times (1,\infty),
				\\
				\ds \frac{r}{ 8 \pi t}\quad & \hbox{ for } (t,r) \in (0,\infty)\times (0,1),
			\end{array}
		\right.
\end{equation*}

Using Proposition \ref{prop_z2psi}, we get for all $p>1$,
\begin{eqnarray*}
 \| \partial_r \psi(t, \cdot) - M_\Psi \partial_r \hat \psi(t,\cdot) \|_{L^p(\mc{F}_0)} +\Big\| \frac{\psi(t, \cdot) - M_\Psi \hat \Psi(t,\cdot)}{r} \Big\|_{L^p(\mc{F}_0)}  & \leq & C_p\Bigl( R_{1, p}(t)t^{1/p-1} + R_{2}(t) t^{-1}\Bigl),
	\\
	 \| -\partial_r \phi(t, \cdot) - M_\Phi \partial_r \hat{\psi}(t,\cdot) \|_{L^p(\mc{F}_0)} +\Big\| \frac{-\phi(t, \cdot) - M_\Phi \hat \Psi(t,\cdot)}{r} \Big\|_{L^p(\mc{F}_0)}  & \leq & C_p\Bigl( R_{1, p}(t)t^{1/p-1} + R_{2}(t) t^{-1}\Bigl).
\end{eqnarray*}
With the expression of $R_{1,p}$ and $R_2$, we can check that $-\frac12 + \theta_{2,p}+\frac1p-1>-\frac14-1$ for all $p \in [2,\infty]$. Hence for $t>1$, we have:
\begin{eqnarray*}
	t^{1-1/p} \|\nabla^\perp \left( \psi(t, \cdot) \cos(\theta)- M_\Psi \hat{\psi}(t,\cdot)\cos(\theta)\right) \|_{L^p(\mc{F}_0)} & \leq & 2C_p R_{1, p}(t),
	\\
	t^{1-1/p} \|\nabla^\perp \left( \phi(t, \cdot)\sin(\theta)  + M_\Phi \hat{\psi}(t,\cdot)\sin(\theta) \right)\|_{L^p(\mc{F}_0)} & \leq & 2C_p R_{1, p}(t).
\end{eqnarray*}
Remark then that, denoting
	\begin{equation*}
		\tilde \psi(t,r) = 
 		\frac{1}{2\pi r} \left(1- \exp\left( - \frac{r^2}{4t} \right) \right), \, \quad  \hbox{ for } (t,r) \in (0,\infty)\times (1,\infty),
	\end{equation*}
we have for all $p \in (1, \infty]$,
$$
	\| \nabla^\perp ( (\tilde \psi(t,r) - \hat{\psi}(t,r) )\cos (\theta)) \|_{L^p(\mathcal F_0)}  \leq C \left|1 - \exp\left( - \frac{1}{4t} \right) - \frac{1}{4t}\right| \leq \frac{C_p}{t^2}.
$$

We then obtain
$$
	t^{1-1/p} \| v(t) - \nabla^{\perp} \left[ (m-\pi) \tilde{\psi}(t,\cdot) (\ell_2(0) \cos(\theta) - \ell_1(0) \sin(\theta))\right]  \|_{L^p(\mathcal F_0)} \leq  C_p R_{1,p}(t).
$$
This yields the expected result.
\end{proof}

\section{Long-time behavior of solutions to the Navier-Stokes problem} \label{sect3}

In this section, we prove Theorem \ref{Thm-q-dans(1,2)} and Theorem \ref{theo kato}. We first apply Kato's method \cite{Kato84} of successive
approximations yielding decay estimates for initial data $V_0 \in \mathcal{L}^2$. In a second subsection, we then extend these estimates to
the case of initial data $V_0 \in \mathcal{L}^q$ with $q \in (1,2]$ in order to get Theorem \ref{Thm-q-dans(1,2)}. We finally explain how a bootstrap argument yields Theorem \ref{theo kato}.

To simplify notations, we replace the constants $K_1(p,q)$, $K_4(p,q)$ and $K_\ell(q)$ defined respectively in \eqref{Lp-Lq}, \eqref{est div 1} and \eqref{eq_lv} by $K_1(p,q) \nu^{1/p - 1/q}$, $K_4(p,q) \nu^{-1/2+1/p-1/q}$, and $K_\ell(q) \nu^{-1/2- 1/q}$, so that the viscosity parameter will not appear in our computations.

\subsection{$\mc{L}^p$ decay estimates for $\mathcal L^2$ initial data}\label{sect31}
We recall that we transferred our system in the body frame applying the change of variable \eqref{translation}.
So, the equations  (\ref{NS1})-(\ref{Solideci})  became \eqref{NS11}-\eqref{Solideci1}. Our first proposition reads:

\begin{proposition}\label{prop kato} Let   $V_{0}\in \mc{L}^2$. There exists $\lambda_{0}>0$ such that, if 
\begin{equation}
	\label{Smallness-v0}
	\|  V_{0} \|_{\mc{L}^2}   \leq \lambda_{0},
\end{equation}
then, the unique global weak solution $V$ of \eqref{NS11}-\eqref{Solideci1} satisfies the following: for all $p \in [2,\infty),$ 
there exists constants  $ H(p,\lambda_0)$ and $H_\ell(\lambda_0)$ such that
\begin{equation*}
	\sup_{t >0} t^{\frac12-\frac1p}  \| V(t) \|_{\mc{L}^p} \leq H(p, \lambda_{0}), \qquad \hbox{ and } \qquad \sup_{t >0} t^{\frac12} |\ell_V(t)| \leq H_\ell(\lambda_{0}),
\end{equation*}
\end{proposition}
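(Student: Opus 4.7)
The plan is to carry out the Kato-type fixed-point scheme adapted to the fluid-solid semigroup $S(t)$ of Theorem \ref{theo Stokes}. Using $\div(v-\ell)=0$, the convective term can be written as $((v-\ell)\cdot\nabla)v=\div\bigl((v-\ell)\otimes v\bigr)$; extending this tensor by zero on $B_{0}$, the system \eqref{NS11}--\eqref{Solideci1} becomes the Duhamel integral equation
\begin{equation*}
V(t) = S(t) V_{0} - \mathcal{B}(V,V)(t),\qquad \mathcal{B}(V,W)(t):=\int_{0}^{t} S(t-s)\,\mathbb{P}\,\div\bigl((v-\ell_{V})\otimes w\bigr)(s)\,{\rm d}s.
\end{equation*}
Since the integrand inside the divergence vanishes on $B_{0}$, both the divergence-type bound of Corollary \ref{cor Stokes} and the added-mass bound of Corollary \ref{est_lv} are applicable.

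I would then run the iteration in the Banach space $E$ of functions $V\in \mathcal{C}([0,\infty);\mc{L}^{2})$ for which
\begin{equation*}
\|V\|_{E} := \sup_{t>0}\|V(t)\|_{\mc{L}^{2}} + \sup_{t>0} t^{3/8}\|V(t)\|_{\mc{L}^{8}} + \sup_{t>0} t^{1/2}|\ell_{V}(t)|
\end{equation*}
is finite. From Theorem \ref{theo Stokes} with $q=2$ and $p\in\{2,8,\infty\}$ (using $|\ell_{V}|\leq \|V\|_{\mc{L}^{\infty}}$ for the third term since $V(0)=\ell_{V}$ inside $B_{0}$), one gets $\|S(\cdot)V_{0}\|_{E}\leq C_{0}\|V_{0}\|_{\mc{L}^{2}}$. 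The core of the argument is then the bilinear estimate
\begin{equation*}
\|\mathcal{B}(V,W)\|_{E} \leq C_{1}\,\|V\|_{E}\|W\|_{E},
\end{equation*}
after which a Banach fixed-point argument (or, equivalently, the successive approximations $Y_{n+1}:=S(\cdot)V_{0}-\mathcal{B}(Y_{n},Y_{n})$) produces a unique $V\in E$ as soon as $4C_{0}C_{1}\|V_{0}\|_{\mc{L}^{2}}<1$, giving $\lambda_{0}:=(4 C_{0}C_{1})^{-1}$; the uniqueness result of \cite{Takahashi&Tucsnak04} identifies this fixed point with the weak solution.

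The bilinear estimate is obtained piece by piece via H\"older's inequality together with the interpolation $\|V(s)\|_{\mc{L}^{4}}\leq\|V(s)\|_{\mc{L}^{2}}^{1/3}\|V(s)\|_{\mc{L}^{8}}^{2/3}\leq C s^{-1/4}\|V\|_{E}$. For the $\mc{L}^{2}$ component I would apply \eqref{est div 1} with $q=2$, splitting $\|(v-\ell_{V})\otimes w\|_{L^{2}(\mc F_{0})}\leq\|v\|_{L^{4}}\|w\|_{L^{4}}+|\ell_{V}|\|w\|_{L^{2}}$, producing an integrand $\leq C(t-s)^{-1/2}s^{-1/2}\|V\|_{E}\|W\|_{E}$ whose time integral is a finite beta integral bounded uniformly in $t$. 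For the $\mc{L}^{8}$ component I would apply \eqref{est div 1} with $q=4$, splitting $\|(v-\ell_{V})\otimes w\|_{L^{4}(\mc F_{0})}\leq\|v\|_{L^{8}}\|w\|_{L^{8}}+|\ell_{V}|\|w\|_{L^{4}}$, yielding an integrand $\leq C(t-s)^{-5/8}s^{-3/4}\|V\|_{E}\|W\|_{E}$ whose integral is $\leq Ct^{-3/8}\|V\|_{E}\|W\|_{E}$.

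The hardest step, and the reason Corollary \ref{est_lv} is needed in the first place, is the control of $\ell_{\mathcal{B}(V,W)}$: the natural route through $\|S(t)\mathbb{P}\div F\|_{\mc{L}^{\infty}}$ is blocked since \eqref{est div 1} only covers $p<\infty$. The added-mass identity of Corollary \ref{est_lv} with $q=4$ bypasses this loss and yields $|\ell_{\mathcal{B}(V,W)}(t)|\leq C\int_{0}^{t}(t-s)^{-3/4} s^{-3/4}\,{\rm d}s\,\|V\|_{E}\|W\|_{E}\leq C t^{-1/2}\|V\|_{E}\|W\|_{E}$, matching precisely the scaling of $E$. Once the fixed point has been produced, extending \eqref{H(p,q)} to every $p\in[2,\infty)$ follows from a bootstrap argument: the $\mc{L}^{8}$ bound just obtained is fed back into \eqref{est div 1} to reach any intermediate $\mc{L}^{p}$ with $p\in(8,\infty)$, and iterating the argument along a finite sequence of exponents exhausts $[2,\infty)$, in the spirit of Carpio's bootstrap evoked in the introduction.
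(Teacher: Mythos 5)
Your scheme is the one the paper uses: the same Duhamel/Kato iteration, the same three-component norm $\sup_t\|V(t)\|_{\mc{L}^2}+\sup_t t^{3/8}\|V(t)\|_{\mc{L}^8}+\sup_t t^{1/2}|\ell_V(t)|$, the same exponents in the bilinear estimate (Corollary \ref{cor Stokes} with $(p,q)=(2,2)$ and $(8,4)$, Corollary \ref{est_lv} with $q=4$, and the interpolation $\|\cdot\|_{\mc{L}^4}\le\|\cdot\|_{\mc{L}^2}^{1/3}\|\cdot\|_{\mc{L}^8}^{2/3}$), and the same final passage to general $p$ via \eqref{est div 1} with $q=4$. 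All the beta-integral computations you indicate are correct, and your identification of the added-mass corollary as the substitute for the unavailable $p=\infty$ case of \eqref{est div 1} is exactly the point of the paper's Step 2. Two small remarks: no iteration along a chain of exponents is needed for $p\in(8,\infty)$, since a single application of \eqref{est div 1} with $q=4$ already gives a convergent integral $(t-s)^{-3/4+1/p}s^{-3/4}$ for every finite $p$; and for the $\ell$-component of the source term you should order the tensor so that its normal trace vanishes on $\partial B_0$ (this is what legitimates writing the nonlinearity as $\mathbb{P}\div$ of a tensor vanishing on $B_0$, and it uses that $B_0$ is a disk).

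The one genuine gap is the last sentence of your third paragraph: ``the uniqueness result of \cite{Takahashi&Tucsnak04} identifies this fixed point with the weak solution.'' That uniqueness holds in the class $L^\infty(0,T;\mc{L}^2)\cap L^2(0,T;H^1(\R^2))$, whereas your fixed point lives a priori only in $E$, which carries no gradient information; so the identification is not automatic and the proposition, which is a statement about \emph{the} weak solution, is not yet proved. The paper spends two full steps on this: first, for $V_0\in H^1(\R^2)$ it propagates a uniform bound on $\|\nabla y_n\|_{L^\infty(0,T;L^2(\mc{F}_0))}$ through the iteration (using \eqref{est grad 1}) and exploits analyticity of $S(t)$ on $\mc{L}^2$ to check that the iterates actually take values in $\mathcal{D}(A)$, so that the limit lies in $L^2(0,T;H^1)$ and coincides with the weak solution; second, it shows the map $V_0\mapsto V$ produced by the fixed point is Lipschitz from the small ball of $\mc{L}^2$ into $L^\infty(0,\infty;\mc{L}^2)$ and invokes the continuity of the weak-solution map to extend the identification from $H^1$ data to all $\mc{L}^2$ data by density. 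You need some version of this argument (or another proof of weak--mild uniqueness in $E$) to close the proof.
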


\begin{proof} We split the proof of Proposition \ref{prop kato} into six steps.

\medskip

{\bf Step 1: integral formulation.} Following \cite{Takahashi&Tucsnak04}, we rewrite the Navier-Stokes equations \eqref{NS11}-\eqref{Solideci1} in the following abstract form:
\[
	\partial_{t}  V + A V = \mathbb{P} F
\]
where 
$$
	F (V)= \left\{
	\begin{array}{rcll}
		(\ell_V- V)  \cdot \nabla V  & \text{ on $\mathcal{F}_{0}$}\,\\
		 0 &\text{ on $B_{0}$},
	 \end{array}\right.
$$ 
$ \mathbb{P}$ denotes the continuous projector from $L^p$ to $\mc{L}^p$, and $\ell_{V}$ is defined for $V\in \mc{L}^p$ by \eqref{l-omega}. Then, Duhamel formula gives the following integral formulation of the above equations:
\begin{equation}\label{step 1}
	 V(t) = S(t)  V_{0} + \int_{0}^t S(t-s) \mathbb{P}F(V(s))\, {\rm d}s.
\end{equation}

T. Kato suggests to construct a solution by successive approximations: let the sequence $(y_n)_{\in\mathbb N}$  be defined by
\begin{equation}
	\label{KatoIteration}
\left\{
\begin{array}{rcll}
 	Y_0 &:=& S(t) V_0\,, & \\ 
	 Y_n &:=& S(t)  V_{0} + \mathcal{K} Y_{n-1} \,, & \forall \, n \in \mathbb N \,, 
\end{array} 
 \right.
 \quad \hbox{ where } \quad \mathcal{K} Y (t)= \int_{0}^t S(t-s) \mathbb{P} F(Y)(s)\, {\rm d}s.
 \end{equation}
Our aim is to prove that this sequence satisfies 
uniformly estimates of Proposition \ref{prop kato} and converges for small initial data. To simplify notations, in the following we set $\ell_{Y_n} = \ell_n$.

\medskip 

Concerning the nonlinear term, we note that, $\mathbb P F(Y)$ is well-defined as soon as $Y \in \mathcal L^p$ for $p > 2$ satisfies $\nabla Y \in L^2(\mathcal F_0).$ Indeed, we can then split $F(Y)|_{\mc{F}_0} = -Y \cdot \nabla Y + \ell_Y \cdot \nabla Y,$ the first term being in $L^q(\mathcal F_0)$ (where $q = 2p/(2+p)$) and the second one in $L^2(\mathcal F_0).$  We have then :
$$
\mathbb P F(Y) = -\mathbb P_q [1_{\mathcal F_0} Y \cdot \nabla Y ] + \mathbb P_2[1_{\mathcal F_0}  \ell_Y \cdot \nabla Y].
$$
Furthermore, we remark that, if $Y \in \mathcal L^{p_0}$ (with $p_0 \in [1,\infty)$) satisfies $y \in H^1(\mathcal F_0)$ then:
$$
F(Y) = \div \tilde{F}(Y) \text{ where } 
\tilde{F}(Y)= \left\{
\begin{array}{rcll}
	(\ell_Y-Y)  \otimes  Y   & \text{ on $\mathcal{F}_{0}$}\,,\\
	 0 &\text{ on $B_{0}\,.$}
 \end{array}\right.
$$
This property is satisfied since $\tilde{F}(Y)n$ vanishes on $\partial B_0$ as $B_0$ is a disk.
The operator $\mathcal{K}$ can then be defined indifferently as:
$$
	 \int_{0}^t S(t-s) [\mathbb{P} F(Y)(s)]\, {\rm d}s  \quad \text{ or }  \quad  \int_{0}^t [S(t-s) \mathbb{P} \div ] \tilde{F}(Y)(s) \, {\rm d}s \,,
$$
where $S(t-s) \mathbb{P} \div$ is defined by duality.
In order to get uniform estimates on the functions $Y_n$ and their limit, we work with the second form (Step 2 to 5). In Step 6, we apply the first form to prove that our construction coincides with the unique global weak solution constructed
in  \cite{Takahashi&Tucsnak04}. 

\bigskip

{\bf Step 2: estimates of $t^{\frac38} \|Y_{n}\|_{\mc{L}^8},\  \|Y_{n}\|_{\mc{L}^2},\ t^{\frac 12} |\ell_{n}(t) |$.} The goal of this step is to show the following Lemma:

\begin{lemma}\label{lem Kato1}
There exists a constant $\lambda_0>0$ such that for all $V_0\in \mc{L}^2$ satisfying \eqref{Smallness-v0} there exists $\mu_0>0$ such that:
 \[
	 \sup_{t>0} \{t^{\frac 38} \|Y_{n}(t)\|_{\mc{L}^8}\} \leq \mu_0 ,\quad  \sup_{t>0} \{ \|Y_n(t)\|_{\mc{L}^2}\} \leq \mu_0 \,, \quad \sup_{t>0} {t^{\frac 12}|\ell_{n}(t)|} \leq \mu_0.
 \]
Besides, $\mu_0$ can be chosen arbitrary small, independent of $V_0$, up to restrict the size  of $\lambda_0.$
\end{lemma}

\begin{proof}
We are going to find by induction a sequence $G_n$ such that for all $n$,
\begin{eqnarray}
	\sup_{t >0} \{ t^{\frac12-\frac18}  \| Y_{n}(t) \|_{\mc{L}^8}\}  = \sup_{t>0} t^{\frac38}  \| Y_{n}(t) \|_{\mc{L}^8}  &  \leq & G_{n} \label{Gn},
	\\
	\sup_{t >0} \{ t^{\frac12-\frac12}  \| Y_{n}(t) \|_{\mc{L}^2} \} = \sup_{t>0}  \| Y_{n}(t) \|_{\mc{L}^2}  &  \leq& G_{n} \label{Gn'}.
	\\
	\sup_{t >0} \{ t^{\frac12}  | \ell_{n}(t) | \}  &  \leq& G_{n} \label{Gn''}.
\end{eqnarray}
It is clear from \eqref{Lp-Lq} that \eqref{Gn}-\eqref{Gn''} is verified for $Y_{0}$, where 
\begin{equation}\label{gn}
G_{0}= \max\{K_{1}(8,2),K_{1}(2,2),K_{1}(\infty,2)\} \| V_{0}\|_{\mc{L}^2}.
\end{equation}
 In the sequel, we denote by $C_{0}$ the following positive constants:
\begin{equation*}
C_{0} :=  \max\Bigl(K_{4}(8,4), K_{4}(2,2),K_{\ell}(4)\Bigl)  \int_{0}^1(1-\tau)^{-\frac 58}\tau^{-\frac 34}\, {\rm d}\tau,
\end{equation*}
where $K_{4}$ is the constant in \eqref{est div 1} and $K_{\ell}$ in \eqref{eq_lv}.

Next, we assume that the properties are true for the rank $n$, and we show it for rank $n+1$: using \eqref{est div 1} with $p=8$, $q=4$, we get
\[
t^{\frac38}  \| Y_{n+1} (t)\|_{\mc{L}^8} \leq  G_{0} + t^{\frac38} K_{4}(8,4)  \int_{0}^t (t-s)^{-\frac{5}{8}} \| Y_{n}(s)  \|_{\mc{L}^{8}}^2\, {\rm d}s +  t^{\frac38} K_{4}(8,4)  \int_{0}^t (t-s)^{-\frac{5}{8}} | \ell_{n}(s) | \|  Y_{n}(s) \|_{\mc{L}^4}\, {\rm d}s.
\]
By interpolation, we have:
\begin{equation} \label{eq_interp}
\|Y_n\|_{\mc{L}^4} \leq  \|Y_n\|^{1/3}_{\mc{L}^2}\|Y_n\|^{2/3}_{\mc{L}^8}\,.
\end{equation}
So, we use that:
\[
\| Y_{n}(s) \|_{\mc{L}^8}^2\leq (s^{-\frac38} G_{n})^2, \qquad  \| Y_{n}(s) \|_{\mc{L}^4}\leq s^{-\frac14} G_{n}, \qquad |\ell_n(s)| \leq s^{-\frac12} G_{n}\,,
\]
to get
\begin{eqnarray*}
t^{\frac38}  \| Y_{n+1}(t) \|_{\mc{L}^{8}} 
&\leq & G_{0} + t^{\frac38} K_{4}(8,4) \int_{0}^t (t-s)^{-\frac{5}{8}} s^{-\frac34} |G_{n}|^2  \, {\rm d}s +  t^{\frac38} K_{4}(8,4) \int_{0}^t (t-s)^{-\frac{5}{8}} s^{-\frac 34} |G_{n}|^2 \, {\rm d}s\\
&\leq & G_{0} + 2 |G_n|^2  K_{4}(8,4) \int_{0}^1 (1-\tau)^{-\frac{5}{8}} \tau^{-\frac34} \, {\rm d}\tau\\[4pt]
&\leq& G_{0} + 2 C_{0} |G_{n}|^2\,.
\end{eqnarray*}
Writing the same computation and using \eqref{est div 1} with $p=q=2$ gives
$$
\| Y_{n+1}(t) \|_{\mc{L}^2} 
\leq  G_{0} +  K_{4}(2,2) \int_{0}^t (t-s)^{-\frac12} \|Y_{n}(s) \|^2_{\mc{L}^4} \, {\rm d}s +   K_{4}(2,2) \int_{0}^t (t-s)^{-\frac12} | \ell_{n} (s) | \| Y_{n}(s) \|_{\mc{L}^2}\, {\rm d}s \\
$$
Thus, we obtain 
\begin{eqnarray*}
  \| Y_{n+1}(t)  \|_{\mc{L}^{2}} 
&\leq & G_{0} + 2 K_{4}(2,2)\int_{0}^t (t-s)^{-\frac12} s^{-\frac12} |G_n|^2 \, {\rm d}s \\
&\leq & G_{0} + 2 C_{0} |G_n|^2.
\end{eqnarray*}
Finally, we apply Corollary \ref{est_lv} with $q=4,$ which yields:
\begin{eqnarray*}
t^{\frac 12} |\ell_{n+1}(t)| 
&\leq & G_{0} + t^{\frac12} K_{\ell}(4) \int_{0}^t (t-s)^{-\frac 34} \| Y_{n}(s) \|^2_{\mathcal{L}^8} \, {\rm d}s +  t^{\frac12} K_{\ell}(4)\int_{0}^t (t-s)^{-\frac 34} | \ell_{n} (s) | \| Y_{n}(s) \|_{\mc{L}^4}\, {\rm d}s \\
&\leq & G_{0} + 2 C_{0}  |G_{n}|^2 \, .
\end{eqnarray*}
Hence, we can take
$$
G_{n+1} =  G_{0} + 2C_{0}  |G_{n}|^2,\\
$$
in \eqref{Gn}--\eqref{Gn''} with $G_0$ given by \eqref{gn}.
Choosing $\lambda_{0}$ such that $G_{0} \leq 1/(8C_{0})$, we easily get by induction that for all $n \in \N$
\begin{equation}\label{gn est}
	G_{n} \leq \frac{1-(1-8C_{0}G_{0})^{\frac12}}{4C_{0}}=:\mu_{0}.
\end{equation}
Therefore, $(G_{n})$ is bounded by $\mu_{0}$ which implies that \eqref{Gn}-\eqref{Gn''} are uniform estimates. This ends the proof of Lemma \ref{lem Kato1}. According to \eqref{gn}--\eqref{gn est}, $\mu_0$ can be chosen arbitrarily small by taking $\lambda_0>0$ small enough.
\end{proof}

\medskip

{\bf Step 3: convergence of $Y_{n}$.} The goal of this step is to show that the sequence $Y_n$ constructed in the previous step strongly converges in $L^\infty(0,\infty; \mc{L}^2) \cap L^{\infty}_{loc}(0,\infty;\mathcal{L}^8),$ endowed with the norm:
$$
\|\cdot \|_{L^\infty(0, \infty; \mc{L}^2)} + \|t^{3/8}\cdot\|_{L^\infty(0, \infty; \mc{L}^8)} + \|t^{\frac 12} \ell_{\cdot}\|_{L^\infty(0, \infty)}\,, 
$$ 
and that the limit $V$ solves the integral formulation \eqref{step 1} of the Navier-Stokes equations \eqref{NS11}--\eqref{Solideci1}.

The main idea here comes from \cite{KatoFujita62}: let us define 
\begin{multline*}
W_{n+1}(t)  := Y_{n+1}(t)-Y_{n}(t)
	\\ 
	=  \int_{0}^t  S(t-s) \mathbb{P}\div 1_{\mathcal{F}_{0}} \Bigl( (\ell_{n}-Y_n)(s)\otimes (Y_{n}-Y_{n-1})(s) + (\ell_n - \ell_{n-1} + Y_{n-1}- Y_{n})(s)\otimes Y_{n-1}(s)\Bigl) \, {\rm d}s.
\end{multline*}
Again, we construct a sequence $a_n$ such that for all $n$, 
$$
a_{n} \geq  \max \left\{ \sup_{t>0} t^{\frac38}\| W_{n}(t) \|_{\mc{L}^8},  \sup_{t>0} \| W_{n}(t) \|_{\mc{L}^2}, \,  \sup_{t>0} t^{\frac 12}|\ell_{W_n}(t)|\right\}.
$$ 
Indeed, we have:
\begin{eqnarray*}
	t^{\frac38}\| W_{n+1}(t) \|_{\mc{L}^8}&\leq& K_{4}(8,4) t^{\frac38} \Bigl(\int_{0}^t (t-s)^{-\frac58} (\|Y_{n}(s)\|_{\mc{L}^8}+\|Y_{n-1}(s)\|_{\mc{L}^8}) \|W_{n}(s)\|_{\mc{L}^8} \, {\rm d}s\\
&&+  \int_{0}^t (t-s)^{-\frac 58} (|\ell_{n}(s) | \|W_{n}(s)\|_{\mc{L}^4} + |\ell_{n}(s) -\ell_{n-1}(s)| \|Y_{n-1}(s)\|_{\mc{L}^4})\, {\rm d}s\Bigr)\\
&\leq& 4 K_{4}(8,4) \int_{0}^t (t-s)^{-\frac58} s^{-\frac34} \mu_0 a_{n}\, {\rm d}s \\
& \leq & 4 C_{0}\mu_0a_{n} .
\end{eqnarray*}
Here and in what follows, we always estimate $L^4$-norms by interpolating the $L^2$-norm and $L^8$-norm (see \eqref{eq_interp}).
In the same manner, we have
\begin{eqnarray}
	\label{Est-wn-L2}
	\| W_{n+1}(t) \|_{\mc{L}^2}&\leq& K_{4}(2,2) \int_{0}^t (t-s)^{-\frac12} (\|Y_{n}(s)\|_{\mc{L}^4}+\|Y_{n-1}(s)\|_{\mc{L}^4}) \|W_{n}(s) \|_{\mc{L}^4}\, {\rm d}s\\
&&+ K_{4}(2,2) \int_{0}^t (t-s)^{-\frac12} (|\ell_{n}(s) | \|W_{n}(s)\|_{\mc{L}^2} + |\ell_{n} (s) -\ell_{n-1}(s)| \|Y_{n-1}(s)\|_{\mc{L}^2})\, {\rm d}s\,,
	\notag
\end{eqnarray} 
which implies:
\begin{eqnarray}	
\| W_{n+1}(t) \|_{\mc{L}^2} &\leq& 4 K_{4}(2,2) \int_{0}^t (t-s)^{-\frac12} s^{-\frac12}\mu_0a_{n}\, {\rm d}s\notag \\
& \leq & 4 C_{0}\mu_0  a_{n}\,.
\end{eqnarray}
Finally, we have:
\begin{eqnarray}
	\label{Est-wn-L3}
t^{\frac 12}| \ell_{n+1}(t) - \ell_n(t) |&\leq& K_{\ell}(4) t^{\frac 12}   \Bigl(
\int_{0}^t (t-s)^{-\frac 34} (\|Y_{n}(s)\|_{\mc{L}^8}+\|Y_{n-1}(s)\|_{\mc{L}^8}) \|W_{n}(s) \|_{\mc{L}^8}\, {\rm d}s\\
&+&  \int_{0}^t (t-s)^{-\frac 34} (|\ell_{n}(s) | \|W_{n}(s)\|_{\mc{L}^4} + |\ell_{n} (s) -\ell_{n-1}(s)| \|Y_{n-1}(s)\|_{\mc{L}^4})\, {\rm d}s\Bigr)
	\notag
	\\
&\leq& 4 K_{\ell}(4) t^{\frac 12} \int_{0}^t (t-s)^{-\frac 34} s^{-\frac 34}\mu_0 a_{n}\, {\rm d}s\notag \\
&\leq & 4 C_{0}\mu_0  a_{n} \notag.
\end{eqnarray}
Therefore, we can take $a_{n}=(4C_{0}\mu_0)^{n-1}a_{1}$ where $a_{1}$ can be easily estimated thanks to Lemma \ref{lem Kato1}. According to Lemma \ref{lem Kato1} again, one can choose $\lambda_0>0$ such that $ \mu_0 <1/(4C_0)$. With this choice, $\sum (\|W_{n}\|_{L^\infty(0, \infty; \mc{L}^2)} + \|t^{1/8}W_{n}\|_{L^\infty(0, \infty; \mc{L}^8)} + \|t^{1/2}\ell_{W_n}(t)\|_{L^{\infty}(0,\infty)})$ converges uniformly and there exists a function $V \in L^\infty(0, \infty; \mc{L}^2) \cap L^{\infty}_{loc}(0,\infty;\mc{L}^8)$ such that 
\[
	Y_{n} \to  V \text{ strongly in } L^\infty(0,\infty; \mc{L}^2) \cap L^{\infty}_{loc}(0,\infty;\mc{L}^8)\,, \quad \ell_{n} \to \ell_V \text{ in }L^{\infty}_{loc}(0,\infty)\,.
\]
By construction $ V$ satisfies the decay estimates of Lemma \ref{lem Kato1}:
 \begin{equation}
 	\label{Lp-Lq-V-NavierStokes-0}
	\sup_{t >0}\{ t^{\frac38} \|V(t)\|_{\mc{L}^8}\} \leq \mu_0,\quad \ \sup_{t>0} \{ \|V(t)\|_{\mc{L}^2}\} \leq \mu_0 ,\quad  \sup_{t>0} \{ t^{\frac12} |\ell_{V}(t)|\} \leq \mu_0.
 \end{equation}

The last point to check is that $ V$ indeed is a solution of the integral equation \eqref{step 1}, \emph{i.e.}  we have to check that $\mathcal{K}Y_{n} \to \mathcal{K} V$. This computation is exactly the previous one:
\[
	\mathcal{K} V(t)-\mathcal{K} Y_{n}(t) =  \int_{0}^t S(t-s) \mathbb{P}\div 1_{\mathcal{F}_{0}} \Bigl( (\ell_V -  V )\otimes ( V-Y_{n}) + (\ell_V - \ell_n + Y_n -V )\otimes Y_{n}\Bigl)(s) \, {\rm d}s.
\]
Doing as in \eqref{Est-wn-L2} and using that $\sup_{t>0} t^{\frac38}\|  V-Y_{n}(t) \|_{\mc{L}^8},$ $ \sup_{t>0} \| V(t)-Y_{n}(t) \|_{\mc{L}^2}$ and  $\sup_{t>0} \{ t^{\frac12} |\ell_{V}(t) - \ell_n(t)|\}$ tend to zero as $n\to \infty$, one easily shows that $\mathcal{K} V-\mathcal{K}Y_{n}$ converges to $0$ in $L^\infty(0,\infty; \mc{L}^2)$.
This shows that the limit $V$ of the sequence $Y_n$ solves the integral formulation \eqref{step 1} of the Navier-Stokes equations \eqref{NS11}--\eqref{Solideci1}.

\bigskip

{\bf Step 4: The limit $V$ is the unique weak solution of \eqref{NS11}--\eqref{Solideci1} when $V_0 \in H^1(\R^2)$.} In the previous steps, we have constructed a solution to the integral formulation \eqref{step 1} of the Navier-Stokes equations \eqref{NS11}--\eqref{Solideci1}  verifying the $L^p-L^q$ decay estimates \eqref{Lp-Lq-V-NavierStokes-0}. The last point that we have to check is  that this solution $ V$ is the unique solution from the well-posedness theory of \cite{Takahashi&Tucsnak04}.
In  \cite{Takahashi&Tucsnak04}, uniqueness is obtained in the framework $ V\in L^\infty(0,T;\mc{L}^2)\cap L^2(0,T;H^1(\R^2))$. 

Of course, our solution satisfies by construction $ V\in L^\infty(0,T;\mc{L}^2)$, and thus we only have to check that $V \in L^2(0,T; H^1(\R^2))$.

We focus on the case of initial data $V_0 \in H^1(\R^2)$ (\emph{i.e.}, $V_0 \in \mathcal{D}(A^{1/2}),$ see \eqref{domainStokes}). In that case, we prove that the solution $V$ constructed above is the unique solution in $L^\infty(0,T;\mc{L}^2)\cap L^2(0,T;H^1(\R^2))$. The main issue is to show that the sequence $\|\nabla y_{n}\|_{L^\infty(0,T; {L}^2(\mathcal F_0))}$ is uniformly bounded in $n$ for any arbitrary $T>0$ fixed. For $T$ fixed, it is proved in \cite[Cor. 4.3]{Takahashi&Tucsnak04} that $S(t) V_{0}$ belongs to $\mathcal{C}([0,T] ; H^1(\mathbb R^2))$ when $ V_{0}\in H^1(\R^2)$, which implies that there exists $J_{0}>0$ such that
\[
	  \| \nabla y_{0} \|_{L^\infty(0,T; {L}^2(\mathcal F_0))}
	   \leq J_{0}.
\]
Next, we construct by induction a sequence $J_{n}$ such that for all $n\in \N$
\[
	  \| \nabla y_{n} \|_{L^\infty(0,T; {L}^2(\mathcal F_0))} \leq J_{n}.
\]
Using \eqref{est grad 1} with $p=2,\, q=8/5$ and $p=q=2$, for all $t \in [0,T]$
\begin{eqnarray*}
	 \| \nabla y_{n+1} (t) \|_{L^2(\mathcal F_0)} 
	&\leq & J_{0} +  C_{8/5} K_{2}(2,8/5) \int_{0}^t (t-s)^{-\frac12+\frac12-\frac58} \| |y_{n}(s)| |\nabla y_{n}(s)| \|_{L^{8/5}(\mathcal F_0)}\, {\rm d}s
		\\
	& &+   K_{2}(2,2) \int_{0}^t (t-s)^{-\frac12} | \ell_{n}(s) | \| \nabla y_{n}(s) \|_{L^2(\mathcal F_0)}\, {\rm d}s 
		\\
	&\leq & J_{0} + C_{8/5}K_{2}(2,8/5) \int_{0}^t (t-s)^{-\frac58} \| Y_{n}(s) \|_{\mathcal{L}^{8}} \| \nabla y_{n}(s) \|_{L^{2}(\mathcal F_0)} \, {\rm d}s
		\\
	& & +   K_{2}(2,2) \int_{0}^t (t-s)^{-\frac12} | \ell_{n}(s) | \| \nabla y_{n}(s) \|_{L^2(\mathcal F_0)} \, {\rm d}s
		\\
	&\leq & J_{0} + C_{8/5} K_{2}(2,8/5)\int_{0}^t (t-s)^{-\frac58} s^{-\frac38} \mu_0 J_{n}  \, {\rm d}s 
	\\
	& &
	+ K_{2}(2,2) \int_{0}^t (t-s)^{-\frac12} s^{-\frac12} \mu_0 J_{n} \, {\rm d}s\\
	&\leq & J_{0} + \tilde C_0 \mu_0  J_{n}
	 =: J_{n+1},
\end{eqnarray*}
where $C_{8/5} := \|\mathbb P_{8/5}\|_{\mathscr{L}_c(L^{8/5}(\mathbb R^2) \to \mathcal L^{8/5})}$ and $\tilde C_0:=(C_{8/5}K_{2}(2,8/5) + K_{2}(2,2))\int_{0}^1 (1-\tau)^{-\frac58} \tau^{-\frac12}  \, {\rm d}\tau$. Taking $\lambda_0>0$ small enough so that $ \tilde C_0 \mu_0  \leq 1/2$,
there holds:
\[
J_{n} = J_{0} \sum_{k=0}^n \Bigl(\tilde C_0 \mu_0 \Bigl)^k \leq J_{0}\frac1{1-\tilde C_{0}\mu_0} \leq 2 J_0.
\]
Hence we have, for all $n \in \N$, 
\begin{equation*}
	  \| \nabla y_{n} \|_{L^\infty(0,T; {L}^2(\mathcal F_0))}  \leq 2J_{0},
\end{equation*}
which implies that $\nabla  v$ verifies the same estimate. Inside $B(0,1)$, we have
\begin{equation*}
	 |\nabla Y_n|=| \omega_{Y_n} | = \Bigl| \int_{B(0,1)} Y_n \cdot x^\perp \, {\rm d}x\Bigl|\leq C\| Y_n \|_{\mc{L}^2}
\end{equation*}
which is uniformly bounded in time and $n$. 

Note that this is not enough to conclude that $Y_n \in L^\infty([0,T]; H^1(\R^2))$ for all $n$ and one should be careful that the boundary conditions are compatible on $\partial B_0$. In order to do that, for all $n \in \N$, we introduce, for all $\epsilon \in (0,1)$ and $t>0$,
\[
	Y_{n+1}^\varepsilon(t) = S(t) V_0 + \int_0^{t(1- \varepsilon)} S(t-s) \mathbb{P} F(Y_{n}(s)) \, {\rm d}s.
\]
Of course, arguing as above, $Y_{n+1}^\varepsilon$ satisfy exactly the same estimate as $Y_{n+1}$, uniformly with respect to $\varepsilon >0$ and $n\in \N$:
\begin{equation}
	\label{Bound-y-n-eps}
	   \| \nabla Y_{n+1}^\varepsilon \|_{L^\infty(0,T; {L}^2(\R^2))} = \frac{\pi}{2} \| \omega_{Y_{n+1}^\varepsilon} \|_{L^\infty(0,T)}  + \| \nabla y_{n+1}^\varepsilon \|_{L^\infty(0,T; L^2(B_0))} \leq C.
\end{equation}
But, since the semigroup $S(t)$ is analytic on $\mc{L}^2$, for $t>0$, $S(t) V_0 \in \mathcal{D}(A)$ (see \eqref{domainStokes}) and 
\begin{eqnarray*}
	\int_0^{t(1- \varepsilon)} S(t-s) \mathbb{P} F(Y_{n}(s)) \, {\rm d}s 
	& = & S(t \varepsilon) \int_0^{t(1- \varepsilon)} S(t(1- \varepsilon)-s) \mathbb{P} F(Y_{n}(s)) \, {\rm d}s 
	\\
	& = & S(t \varepsilon) \left(Y_{n+1}(t(1- \varepsilon)) - S(t(1- \varepsilon))V_0  \right)
	\\
	& =& S(t \varepsilon) Y_{n+1}(t(1- \varepsilon)) - S(t) V_0.
\end{eqnarray*}
Since for all $t >0$, $Y_{n+1}(t) \in \mc{L}^2$, this implies that for all $t>0$, $Y_{n+1}^\varepsilon (t)$ belongs to $\mathcal{D}(A)$ for all $t>0$. 

 Besides, as $\varepsilon \to 0$, $Y_{n+1}^\varepsilon$ converges to $Y_{n+1}$ in $L^\infty_{loc} ([0, \infty); \mc{L}^2)$ since
\begin{eqnarray*}
	\lefteqn{\|Y_{n+1}^\varepsilon(t) - Y_{n+1}(t)\|_{\mc{L}^2} }
	\\
	& \leq & \int_{t(1- \varepsilon)}^t (K_1(2,4/3) (t-s)^{-1/4} \|y_n(s) \|_{\mc{L}^4} \|\nabla y_n(s)\|_{L^2(\mathcal{F}_0)}  + K_1(2,2) |\ell_n(s)| \nabla y_n(s)\|_{L^2(\mathcal{F}_0)} ) \, {\rm d}s
	\\
	& \leq  & t^{1/2} \left( K_1(2, 4/3) \mu_0 J_0  \int_{1-\varepsilon}^1  (1-\tau)^{-1/4} \tau^{-1/4} \, {\rm d}\tau + K_1(2,2) \mu_0 J_0 \int_{1-\varepsilon}^1 \tau^{-1/2} \, {\rm d}\tau\right).
\end{eqnarray*}
Hence $Y_{n+1}$ is the strong limit in $L^\infty((0,T); \mc{L}^2)$ of the sequence of functions $Y_{n+1}^\varepsilon$ satisfying \eqref{Bound-y-n-eps} and the fact that for all $\varepsilon>0$ and $t >0$, $Y_{n+1}^\varepsilon(t) \in \mathcal{D}(A)$. Therefore, $Y_{n+1}$ belongs to $L^2([0,T]; H^1(\R^2))$.

Besides, since the bound in \eqref{Bound-y-n-eps} is uniform in $\epsilon >0$ and $n \in \N$, $V$ also belongs to $L^2((0,T);H^1(\R^2))$.
According to \cite{Takahashi&Tucsnak04}, when the initial data $V_0$ belongs to $H^1(\R^2)$, the solution $V$ constructed in the above steps is the unique weak solution of \eqref{NS11}--\eqref{Solideci1}.

\medskip

{\bf Step 5: Sensitivity of $V$ to the initial data.} So far, given $V_0 \in  \mc{L}^2$ satisfying the smallness condition \eqref{Smallness-v0}, we have constructed a solution $V$ of the integral equation \eqref{step 1}. In this step, we show that the map $V_0 \mapsto V$ is continuous from the ball of $\mc{L}^2$ with radius $\lambda_0$  to $L^\infty((0, \infty); \mc{L}^2)$. 

Let us consider $V_0^a$ and $V_0^b$ two elements of  $\mc{L}^2$ satisfying the smallness condition \eqref{Smallness-v0}, and $Y_n^a$ and $Y_n^b$ the corresponding sequences in \eqref{KatoIteration}. We set $Z_n = Y_n^a - Y_n^b$, which satisfies by construction
$$
	Z_{n+1}(t) = S(t) (V_0^a - V_0^b) + \mc{K} Y_n^a(t) - \mc{K} Y_n^b(t)= Z_0(t) + \mc{K} Y_n^a(t) - \mc{K} Y_n^b(t).
$$
Similarly as in Step 3, we are going to construct a sequence $b_n$ such that for all $n$,
$$
	b_n \geq \max\left\{\sup_{t >0}\{  t^{3/8} \|Z_n(t)\|_{\mc{L}^{8}} \}\, , \  \sup_{t >0}\{ \| Z_n(t) \|_{\mc{L}^2}\}\,,\ \sup_{t >0} \{|t^{\frac 12} \ell_{Z_n}(t) |\} \right\}.
$$
Of course, by Theorem \ref{theo Stokes}, one can take $b_0$ proportional to $\| V_0^a - V_0^b \|_{\mc{L}^2}$. Since
\begin{align*}
	\mathcal{K} Y_n^a(t)-\mathcal{K} Y_{n}^b(t) 
	& = 
	 \int_{0}^t S(t-s) \mathbb{P}\div 1_{\mathcal{F}_{0}} \left( (\ell_n^a - Y_n^a)(s)\otimes Y_n^a(s) -  (\ell_n^b - Y_n^b)(s) \otimes Y_n^b(s)   \right) \, {\rm d}s
	\\
	& =
	 \int_{0}^t S(t-s) \mathbb{P}\div 1_{\mathcal{F}_{0}} \left( 
	 (Y_n^b - Y_n^a)(s) \otimes Y_n^b(s) + Y_n^a(s) \otimes (Y_n^b (s) - Y_n^a(s))
	 \right.
	 \\
	 & \qquad \qquad \qquad +\left.  
	 (\ell_n^a - \ell_n^b)(s)\otimes Y_n^a(s) + \ell_n^b(s) \otimes (Y_n^a - Y_n^b)(s)  
	 \right) \, {\rm d}s,
\end{align*}
arguing as in Step 3,
\begin{eqnarray*}
	t^{\frac38}\| \mathcal{K} Y_n^a(t)-\mathcal{K} Y_{n}^b(t) \|_{\mc{L}^8}
	& \leq & 
	K_{4}(8,4) t^{\frac38} \Bigl( \int_{0}^t (t-s)^{-\frac58} (\| Y_{n}^a(s)\|_{\mc{L}^8}+\|Y_{n}^b(s)\|_{\mc{L}^8}) \|Z_{n}(s)\|_{\mc{L}^8} \, {\rm d}s
	\\
	&&
	+ \int_{0}^t (t-s)^{-\frac58} (|\ell_{n}^b(s) | \|Z_{n}(s)\|_{\mc{L}^4} + |\ell_{n}^a(s) -\ell_{n}^b(s)| \|Y_{n}^a(s)\|_{\mc{L}^4})\, {\rm d}s \Bigr)
	\\
	&\leq& 
	4 K_{4}(8,4) t^{\frac38}\int_{0}^t (t-s)^{-\frac58} s^{-\frac34}\mu_0  b_{n}\, {\rm d}s \\
	&\leq & 4C_{0}\mu_{0}  b_{n}\, ,
\end{eqnarray*}
where $\mu_{0}$ is the constant in Lemma \ref{lem Kato1}. And similarly as in \eqref{Est-wn-L2},
\begin{eqnarray*}
	\| \mathcal{K} Y_n^a(t)-\mathcal{K}Y_{n}^b(t) \|_{\mc{L}^2}
&\leq& K_{4}(2,2) \int_{0}^t (t-s)^{-\frac12} (\|Y^a_{n}(s)\|_{\mc{L}^4}+\|Y^b_{n}(s)\|_{\mc{L}^4}) \|Z_{n}(s) \|_{\mc{L}^4}\, {\rm d}s\\
&&+ K_{4}(2,2) \int_{0}^t (t-s)^{-\frac12} (|\ell^b_{n}(s) | \|Z_{n}(s)\|_{\mc{L}^2} + |\ell^a_{n} (s) -\ell^b_{n}(s)| \|Y^a_{n}(s)\|_{\mc{L}^2})\, {\rm d}s\,,
	\notag\\
	&\leq& 4 K_{4}(2,2) \int_{0}^t (t-s)^{-\frac12} s^{-\frac12}\mu_0 b_{n}\, {\rm d}s\notag \\
	&\leq &  4C_0 \mu_0 b_n.
\end{eqnarray*} 
Finally, we prove as in \eqref{Est-wn-L3} that:
\begin{eqnarray*}
t^{\frac 12}| \ell^a_{n}(t) - \ell^b_n(t) |&\leq& K_{\ell}(4) t^{\frac 12}   \Bigl(
\int_{0}^t (t-s)^{-\frac 34} (\|Y^a_{n}(s)\|_{\mc{L}^8}+\|Y^b_{n}(s)\|_{\mc{L}^8}) \|Z_{n}(s) \|_{\mc{L}^8}\, {\rm d}s\\
&+&  \int_{0}^t (t-s)^{-\frac 34} (|\ell^b_{n}(s) | \|Z_{n}(s)\|_{\mc{L}^4} + |\ell^a_{n} (s) -\ell^b_{n}(s)| \|Y^a_{n}(s)\|_{\mc{L}^4})\, {\rm d}s\Bigr)
	\notag
	\\
&\leq& 4 K_{\ell}(4) t^{\frac 12} \int_{0}^t (t-s)^{-\frac 34} s^{-\frac 34}\mu_0 b_{n}\, {\rm d}s\notag \\
&\leq & 4 C_{0}\mu_0 b_{n} \notag.
\end{eqnarray*}
We can then chose $b_{n+1} = b_0 + 4C_0 \mu_0 b_n$ and thus (recall that $4 C_0 \mu_0 <1$ for our choice of $\lambda_0$ since Step 3)
$$
	\forall n \in \N, \quad b_n \leq b_0 \left( \frac{1}{ 1 - 4 C_0 \mu_0 } \right).
$$
In particular, passing to the limit $n \to \infty$, we obtain
$$
	\sup_{t >0} \| V^a(t) - V^b(t)\|_{\mc{L}^2} \leq  C \| V_0^a - V_0^b \|_{\mc{L}^2}.
$$

Thus, our above construction yields a map $V_0 \mapsto V$ continuous on the ball of $\mc{L}^2$ of radius $\lambda_0$ to $L^\infty((0,\infty); \mc{L}^2)$, which coincides with the map $V_0 \mapsto V_w$ for initial data in $H^1(\R^2)$, where $V_w$ denotes the weak solution of \eqref{NS11}--\eqref{Solideci1}. Since both maps are continuous (see ``The existence part" in the proof of Proposition 2.5 in \cite[Section 6]{Takahashi&Tucsnak04} for the continuity of the map $V_0 \mapsto V_w$ from $\mc{L}^2$ to $L^\infty((0,\infty); \mc{L}^2)$), they coincide on the ball of $\mc{L}^2$ of radius $\lambda_0$.
This implies that the solution $V$ constructed in Step 3, as the limit of the sequence $Y_n,$ actually is the unique weak global solution of \eqref{NS11}--\eqref{Solideci1}.

\bigskip

{\bf Step 6: Estimates on the $\mc{L}^p$ norm of $V$ for  $2 \leq p<\infty$.}
The goal of this step is to show the following Lemma:
\begin{lemma}\label{lem Kato2}
Let $\lambda_{0}$ the constant of Lemma \ref{lem Kato1}. For all $p \in [2, \infty),$  there exists a constant $H(p, \lambda_0)$ such that for all $V_0\in \mc{L}^2$ satisfying \eqref{Smallness-v0}, the solution $V$ of \eqref{NS11}--\eqref{Solideci1} satisfies:
\begin{equation}\label{step 4}
	\sup_{t >0}\{ t^{\frac12 - \frac1p} \|V(t)\|_{\mc{L}^p}\} \leq H(p,\lambda_0).
 \end{equation}
\end{lemma}

\begin{proof}
For  $p \leq 8$ we obtain \eqref{step 4} by interpolation of the estimates of {Lemma} \ref{lem Kato1}.
Assume now $p \in [ 8, \infty)$:
\begin{eqnarray*}
t^{\frac12-\frac{1}{p}}  \| V(t) \|_{\mc{L}^{p}} 
&\leq & K_{1}(p,2) \|  V_{0}\|_{\mc{L}^2} 
+ t^{\frac12-\frac1p} K_{4}(p,4) \int_{0}^t (t-s)^{-\frac{3}{4}+\frac{1}{p}} \| V(s) \|_{\mc{L}^8}^2\, {\rm d}s \\
&&+  t^{\frac1{2}-\frac1p} K_{4}(p,4) \int_{0}^t (t-s)^{-\frac{3}{4}+\frac1p} | \ell_V (s)| \|V(s) \|_{\mc{L}^4} \, {\rm d}s \\
&\leq & K_{1}(p,2) \|  V_{0}\|_{\mc{L}^2} 
+ t^{\frac12-\frac{1}{p}} K_{4}(p,4) \int_{0}^t (t-s)^{-\frac{3}{4}+\frac{1}{p}} s^{-\frac{3}{4}} \left( \mu_0^2 + \mu_0^2 \right)     \, {\rm d}s \\
&\leq & 
K_{1}(p,2) \lambda_0  
+ 2 K_{4}(p,4) \mu_0^2  \int_{0}^1 (1-\tau)^{-\frac{3}{4}+\frac{1}{p}} \tau^{-\frac3{4}} \, {\rm d}\tau ,
\end{eqnarray*}
which gives the desired estimates \eqref{step 4} and concludes the proof of Lemma \ref{lem Kato2}.
\end{proof}

The proof of Proposition \ref{prop kato} is then completed.
\end{proof}

\begin{remark}[Remark on the smallness condition]\label{rem small condition}
The smallness condition on $\|  V_{0}\|_{\mc{L}^2}$ is not surprising, and such an assumption appears in a lot of articles when global well-posedness is required (see e.g. \cite{Kato84}). In dimension 2, several works (\cite{Schonbek,Wiegner,KajikiyaMiyakawa} in the full plane and \cite{BorchersMiyakawa} in fixed exterior domains) show that the $L^2$-norm tends to zero when $t\to 0$ for initial data in $L^2$. Of course, this allows in such situations to get a global result for any initial data in $L^2$ by proving only a local result for initial data having small $L^2$-norm. 
Unfortunately, concerning the case of a moving disk in a 2D viscous fluid, despite the energy estimate satisfied by the solutions of \eqref{NS1}--\eqref{Solideci} which immediately guarantees the global decay of the $L^2$-norm of the solution, it is still not clear that the $L^2$-norm of all solutions with initial data in $L^2$ go to zero as $t \to \infty$. This appears to be a challenging question.
\end{remark}

\subsection{The case of an initial data $\mc{L}^q$ for $q \in (1,2)$}

The goal of this section is to prove Theorem \ref{Thm-q-dans(1,2)}.

\begin{proof}[Proof of Theorem \ref{Thm-q-dans(1,2)}]
	The proof is based on the construction done in Proposition \ref{prop kato}. 
	
	{\bf Step 1. Decay estimates for $p= 2$ and $p = 4$.} Let us consider again the sequence $Y_n$ constructed in \eqref{KatoIteration}, for which we already know the decay estimates 
	of Lemma \ref{lem Kato1} and, by interpolation,
	\begin{equation}
		\label{DecayL4-L2}
		\sup_{t >0} \{ t^{1/4} \| Y_n(t) \|_{\mc{L}^4} \} \leq \mu_0.
	\end{equation}
	We then prove the following lemma:
	
	\begin{lemma}\label{Lem-Kato3}
		There exists $\lambda_0(q)$ small enough such that for any $V_0 \in \mc{L}^q\cap \mc{L}^2$ satisfying the smallness condition \eqref{Smallness-v0} with $\lambda_0 \leq \lambda_0(q)$, there exist constants $H(2,q,V_0), H(4,q,V_0)$ for which the sequence $Y_n$ constructed in \eqref{KatoIteration} satisfies
		\begin{equation}
			\sup_{t >0} \{ t^{1/q - 1/2} \| Y_n(t) \|_{\mc{L}^2} \} \leq H(2,q,V_0),
			\qquad
			\sup_{t >0} \{ t^{1/q - 1/4} \| Y_n(t) \|_{\mc{L}^4} \} \leq  H(4,q, V_0).
		\end{equation}
		Consequently, we have
		\begin{equation}
			\label{Decay-v-2-4}
			\sup_{t >0} \{ t^{1/q - 1/2} \|V(t) \|_{\mc{L}^2} \} \leq H(2,q,V_0),
			\qquad
			\sup_{t >0} \{ t^{1/q - 1/4} \| V(t) \|_{\mc{L}^4} \} \leq  H(4,q, V_0).
		\end{equation}
	\end{lemma}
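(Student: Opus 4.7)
The plan is to establish the decay estimates directly on the sequence $(Y_n)$ of Kato's iteration \eqref{KatoIteration} by induction, then pass to the limit using the convergence $Y_n \to V$ proved in Step~3 of Proposition~\ref{prop kato}. Concretely, I aim to show the existence of sequences $G_2^n, G_4^n \geq 0$, bounded uniformly in $n$, such that
\begin{equation*}
  \|Y_n(t)\|_{\mc{L}^2} \leq G_2^n\, t^{\frac12 - \frac1q}, \qquad  \|Y_n(t)\|_{\mc{L}^4} \leq G_4^n\, t^{\frac14 - \frac1q}, \qquad \forall\, t>0.
\end{equation*}
The initialization $n=0$ is immediate from Theorem \ref{theo Stokes} applied to $Y_0 = S(t)V_0$, which yields the base values $G_2^0 = K_1(2,q)\|V_0\|_{\mc{L}^q}$ and $G_4^0 = K_1(4,q)\|V_0\|_{\mc{L}^q}$.

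For the inductive step, I would write Duhamel's formula in divergence form,
$Y_{n+1}(t) = S(t)V_0 + \int_0^t S(t-s)\mathbb P\, \div \tilde F(Y_n)(s)\, {\rm d}s$
with $\tilde F(Y_n) = (\ell_n - Y_n)\otimes Y_n\, \mathbf 1_{\mc{F}_0}$, and apply Corollary \ref{cor Stokes} \eqref{est div 1} with source space $L^2$, giving the two ingredients
\[
\|S(t-s)\mathbb P\, \div F\|_{\mc{L}^2} \leq K_4(2,2)\,(t-s)^{-\frac12}\|F\|_{L^2}, \qquad
\|S(t-s)\mathbb P\, \div F\|_{\mc{L}^4} \leq K_4(4,2)\,(t-s)^{-\frac34}\|F\|_{L^2}.
\]
The crucial step is a \emph{cross-multiplication} estimate of the nonlinearity that blends the uniform bounds from Lemma \ref{lem Kato1} with the inductive hypothesis: for every $s>0$,
\begin{align*}
|\ell_n(s)|\,\|Y_n(s)\|_{\mc{L}^2} &\leq \bigl(\mu_0 s^{-\frac12}\bigr) \bigl(G_2^n s^{\frac12 - \frac1q}\bigr) = \mu_0 G_2^n\, s^{-\frac1q}, \\
\|Y_n(s)\|_{\mc{L}^4}^2 &\leq \bigl(\mu_0 s^{-\frac14}\bigr)\bigl(G_4^n s^{\frac14-\frac1q}\bigr) = \mu_0 G_4^n\, s^{-\frac1q},
\end{align*}
which gives $\|\tilde F(Y_n)(s)\|_{L^2} \leq \mu_0(G_2^n + G_4^n) s^{-1/q}$. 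Since $q\in(1,2]$ guarantees that $1-\frac{1}{q}>0$, the two Beta-function identities
\[
\int_0^t (t-s)^{-\frac12} s^{-\frac1q}\, {\rm d}s = B\bigl(\tfrac12,\,1-\tfrac1q\bigr)\,t^{\frac12 - \frac1q}, \qquad
\int_0^t (t-s)^{-\frac34} s^{-\frac1q}\, {\rm d}s = B\bigl(\tfrac14,\,1-\tfrac1q\bigr)\,t^{\frac14 - \frac1q},
\]
are finite and reproduce exactly the desired time-decay rates on both right-hand sides.

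Combining these ingredients, the induction closes into a \emph{linear} recursion $G^{n+1} \leq C_0(q)\|V_0\|_{\mc{L}^q} + C_1(q)\,\mu_0\, G^n$ for $G^n := G_2^n + G_4^n$, which is contractive as soon as $C_1(q)\mu_0 < 1$. It then suffices to choose $\lambda_0(q)$ small enough so that the associated $\mu_0$ given by Lemma \ref{lem Kato1} (which can be made arbitrarily small by shrinking $\lambda_0$) satisfies $C_1(q)\mu_0 \leq 1/2$; then $G^n \leq 2C_0(q)\|V_0\|_{\mc{L}^q}$ uniformly in $n$, and \eqref{Decay-v-2-4} follows by passing to the limit $n \to \infty$ via the strong convergence $Y_n \to V$ in $L^\infty((0,\infty);\mc{L}^2) \cap L^\infty_{\rm loc}((0,\infty);\mc{L}^8)$ of Step~3, interpolated down to $\mc{L}^4$. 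The main obstacle I anticipate lies in the degeneracy as $q \to 1^+$: both Beta constants $B(\tfrac12, 1-\tfrac1q)$ and $B(\tfrac14, 1-\tfrac1q)$ diverge, so $C_1(q) \to \infty$ and the threshold $\lambda_0(q)$ must tend to $0$, in agreement with Theorem \ref{Thm-q-dans(1,2)}. Note also that had I used the inductive bounds twice (rather than borrowing the $\mu_0$ factor from Lemma \ref{lem Kato1}), I would have landed on a quadratic recursion $G^{n+1} \lesssim \|V_0\|_{\mc{L}^q} + (G^n)^2$, which would have required a smallness condition on $\|V_0\|_{\mc{L}^q}$ rather than on $\|V_0\|_{\mc{L}^2}$ only; the cross-multiplication trick is precisely what lets the smallness condition \eqref{Smallness-Intro} be formulated in $\mc{L}^2$.
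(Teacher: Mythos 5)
Your proposal is correct and follows essentially the same route as the paper: an induction on the Kato iterates using Corollary \ref{cor Stokes} with source space $L^2$, the cross-multiplication of the uniform $\mu_0$-bounds from Lemma \ref{lem Kato1} with the inductive $\mc{L}^q$-weighted bounds to produce the linear (rather than quadratic) recursion, and the choice of $\lambda_0(q)$ so that $\widehat C_0(q)\mu_0\leq 1/2$. Your remarks on the degeneracy as $q\to 1^+$ and on why the smallness condition lives in $\mc{L}^2$ match the paper's Step 4 exactly.
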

	
	\begin{proof}
		We are looking for a sequence $H_n$ such that for all $n$, 
		$$
			\sup_{t >0} \{ t^{1/q - 1/2} \| Y_n(t) \|_{\mc{L}^2} \} \leq H_n, \qquad  \sup_{t >0} \{ t^{1/q - 1/4} \| Y_n(t) \|_{\mc{L}^4} \} \leq  H_n.
		$$
		Of course, Theorem \ref{theo Stokes} implies that $H_0$ can be taken as $H_0 = (K_1(2,q) + K_1(4,q)) \|V_0\|_{\mathcal L^q}$.
		
		For $n \in \mathbb{N}$, using Theorem \ref{theo Stokes} and Corollary \ref{cor Stokes},
		$$
			t^{1/q-1/2} \|Y_{n+1}(t) \|_{\mc{L}^2} \leq K_1(2,q) \| V_0\|_{\mc{L}^q} + t^{1/q- 1/2}  \int_0^t K_4(2,2) (t-s)^{-1/2} \left(\| Y_n(s) \|_{\mc{L}^4}^2 + |\ell_n(s)| \| Y_n(s)\|_{\mc{L}^2} \right)  \, {\rm d}s.
		$$
		Using the decay estimates of Lemma \ref{lem Kato1} and \eqref{DecayL4-L2}, 
		\begin{eqnarray*}
			\| Y_n(s) \|_{\mc{L}^4}^2 
			& \leq &
			(\mu_0 s^{-1/4}) (s^{1/4-1/q} H_n) = \mu_0 H_n s^{-1/q},
		\\
			 |\ell_n(s) | \| Y_n(s)\|_{\mc{L}^2} 
			& \leq &
			( \mu_0 s^{-1/2}) (s^{1/2- 1/q} H_n) = \mu_0 H_n s^{-1/q},
		\end{eqnarray*}
		we immediately deduce that
		$$
			\sup_{t>0} \{ t^{1/q-1/2} \|Y_{n+1}(t) \|_{\mc{L}^2}\} 
			\leq 
			K_1(2,q) \|V_0\|_{\mc L^q} + 2 K_4(2,2) \left(\int_0^1 (1-\tau)^{-1/2} \tau^{-1/q}\, {\rm d}\tau\right) \mu_0 H_n.
		$$
		Similar computations yield
		$$
			\sup_{t>0} \{ t^{1/q-1/4} \|Y_{n+1}(t) \|_{\mc{L}^4}\} 
			\leq 
			K_1(4,q) \|V_0\|_{\mc L^q} + 2 K_4(4,2) \left(\int_0^1 (1-\tau)^{-3/4} \tau^{-1/q} \, {\rm d}\tau\right)\mu_0 H_n.
		$$
		One can thus take
		$$
			H_{n+1} := H_0 + \widehat C_0(q) \mu_0 H_n,
		$$
		with
		$$
		\widehat C_0(q):=2 \left(K_4(2,2) \int_0^1 (1-\tau)^{-1/2} \tau^{-1/q} \, {\rm d}\tau+ K_4(4,2)\int_0^1 (1-\tau)^{-3/4} \tau^{-1/q} \, {\rm d}\tau\right)
		$$
		
		Choosing $\lambda_0(q)>0$ such that the corresponding $\mu_0$ given by Lemma \ref{lem Kato1} satisfies
		\begin{equation}
		\label{Condition-on-mu}
		\widehat C_0(q) \mu_0 \leq 1/2,
		\end{equation}
		we thus immediately obtain that for all $n$, $H_{n+1} \leq H_0 + H_n/2$, yielding $H_n \leq 2H_0$ for all $n\in \mathbb{N}$.
	\end{proof}

	{\bf Step 2. The solution $V$ satisfies the decay estimates \eqref{H(p,q)}.} The proof of this result follows the proof of Lemma \ref{lem Kato2}. For $p \in [2,4]$, \eqref{H(p,q)} can be deduced by interpolation with \eqref{Decay-v-2-4}. For $p \in [4, \infty)$, we write
	\begin{eqnarray*}
		t^{\frac1q-\frac{1}{p}}  \| V(t) \|_{\mc{L}^{p}} 
		&\leq & K_{1}(p,q) \|  V_{0}\|_{\mc{L}^q} 
		+ t^{\frac1q-\frac1p} K_{4}(p,2) \int_{0}^t (t-s)^{-1+\frac{1}{p}} \| V(s) \|_{\mc{L}^4}^2\, {\rm d}s 
		\\
		&&
		+  t^{\frac1{q}-\frac1p} K_{4}(p,2) \int_{0}^t (t-s)^{-1+\frac1p} | \ell_V (s)| \|V(s) \|_{\mc{L}^2} \, {\rm d}s 
		\\
		&\leq & K_{1}(p,q) \|  V_{0}\|_{\mc{L}^q} 	
		+ t^{\frac1q-\frac{1}{p}} K_{4}(p,2) \int_{0}^t (t-s)^{-1+\frac{1}{p}} s^{-\frac{1}{q}}   \, {\rm d}s  \left( H(4,q,V_0) + H(2,q,V_0) \right)\mu_0 
		\\
		&\leq & 
		K_{1}(p,q) \|  V_{0}\|_{\mc{L}^q}  
		+  K_{4}(p,2) \left( H(4,q,V_0) + H(2,q,V_0) \right)  \mu_0  \int_{0}^1 (1-\tau)^{-1+\frac{1}{p}} \tau^{-\frac1{q}} \, {\rm d}\tau.
	\end{eqnarray*}
	
	{\bf Step 3. The decay estimate on $\ell_{V}(t)$.} The proof of \eqref{C-ell-q} is very similar to the above one and is based on Corollary \ref{est_lv}: for $p >2$ such that $1/p  - 1/q > -1/2$,
		\begin{eqnarray*}
			t^{1/q} |\ell_V(t)|
			& \leq & 
			K_1(\infty, q) \| V_0\|_{\mc{L}^q} + t^{1/q} \int_0^t K_\ell(p) (t-s)^{-1/2-1/p} \left(\|V(s) \|_{\mc{L}^{2p}}^2 + |\ell_V(s)| \| V(s)\|_{\mc{L}^{p}} \right)  \, {\rm d}s
			\\
			& \leq & 
			K_1(\infty, q) \| V_0\|_{\mc{L}^q} \\
			&&+ K_\ell(p) (H(2p,q, V_0)H(2p,2,V_0)+ H(p,q,V_0)  \mu_0) \int_0^1 (1-\tau)^{-1/2-1/p} \tau^{1/p-1/2 -1/q} \, {\rm d}\tau. 
		\end{eqnarray*}

	{\bf Step 4. On the map $q \mapsto \lambda_0(q)$.} We remark that, by construction $q \mapsto \lambda_0(q)$ is an increasing function. Indeed, condition \eqref{Condition-on-mu} indicates that our proof of Theorem \ref{Thm-q-dans(1,2)} requires the result of Lemma \ref{lem Kato1} with $\mu_0 = \mu_0(q)>0$, where $\mu_0(q)$ is an increasing function of $q \in (1,2]$. Since the explicit formula \eqref{gn} and \eqref{gn est} indicates that $\lambda_0 \mapsto \mu_0$ is a continuous increasing function, the map $q \mapsto \lambda_0(q)$ is an increasing function of $q \in (1,2]$. We also note here that $\lambda_0(q)\to 0$ when $q\to 1$, since $\widehat C_0(q) \to \infty$. This concludes the proof of Theorem \ref{Thm-q-dans(1,2)}.
\end{proof}

\subsection{Proximity with the linearized semi-group}

In this last subsection, we compare the asymptotic structure of solutions to the Navier Stokes and Stokes
equations and prove Theorem \ref{theo kato}.

\begin{proof}[Proof of Theorem \ref{theo kato}]
Let $V_0$ satisfy the assumptions of our proposition. 

As $\tilde q \mapsto \lambda_0(\tilde q)$ is an increasing function, we note that $\|V_0\|_{\mathcal L^2} \leq \lambda_0(\tilde q)$
for all $\tilde q \in [q,2]$ so that $V(t)$ satisfies the decay estimates of \eqref{H(p,q)}-\eqref{C-ell-q} for arbitrary $\tilde q \in [q,2]$ and $p\in[2,\infty).$

\medskip

According to estimate \eqref{est div 1} with $p\in [2, \infty)$ and $q = 2$, for all $t>0$,
 \begin{eqnarray}
\|V(t) - S(t)V_0\|_{\mathcal L^p} &\leq&  \int_0^{t} \|S(t-s) \mathbb P \div ( (V(s) - \ell_V(s)) \otimes V(s) ) \|_{\mc{L}^p} \, {\rm d}s \notag\\ 
& \leq & K_4(p,2) \int_0^t (t-s)^{-1+1/p} \left( \|V(s)\|_{\mathcal L^4}^2 + |\ell_V(s)|\|V(s)\|_{\mc{L}^2}\right)   \, {\rm d}s\,. \label{Est-Diff-Carpio1a}
\end{eqnarray}

But, using \eqref{H(p,q)} with $p = 4$ and $\tilde q \in [ q, 2]$ and with $p = 2$ and $\tilde q$, we get:
$$
	\sup_{s >0} \{ s^{1/\tilde q - 1/4} \|V(s) \|_{\mc{L}^{4}} \} \leq H(4, \tilde q, V_0), \quad \sup_{s >0} \{ s^{1/ \tilde q - 1/2} \|V(s) \|_{\mc{L}^{2}} \} \leq H(2,\tilde q,V_0).
$$
and using  \eqref{C-ell-q}, we obtain:
$$
	\sup_{s >0} \{ s^{1/\tilde q} |\ell_V(s)| \} \leq H_{\ell}(\tilde q,V_0).
$$ 
Hence, for all $s >0$ and $\tilde q \in [q,2]$, we have:
\begin{equation}
	\label{Pre-est-NonlinearTerm}
	\|V(s)\|_{\mathcal L^4}^2 + |\ell_V(s)|\|V(s)\|_{\mc{L}^2} \leq H(4,\tilde q,V_0)^2 s^{1/2 - 2/\tilde q} + H(2,\tilde q, V_0) H_{\ell}(\tilde q, V_0) s^{1/2 - 2/\tilde q}.
\end{equation}

{\bf The case $q \in (4/3,2]$: proof of \eqref{eq_firstorder-c}.} In that case, combining \eqref{Est-Diff-Carpio1a} and \eqref{Pre-est-NonlinearTerm}, taking $\tilde q = q$, we immediately obtain:
$$
	\sup_{t >0} t^{-1/p - 1/2+ 2/q} \|V(t) - S(t) V_0\|_{\mathcal L^p} 
	\leq C(p,q,V_0)
	\int_0^1 (1- \tau)^{-1+1/p} \tau^{1/2 -2/q} \, {\rm d}\tau.
$$
where we used the fact that $1/2 - 2/q > -1$ for $q >4/3$.

{\bf The case $q \in (1, 4/3)$: proof of \eqref{eq_firstorder-a}.} Here, we write 
\begin{multline*}
  \int_0^t (t-s)^{-1+1/p} \left( \|V(s)\|_{\mathcal L^4}^2 + |\ell_V(s)|\|V(s)\|_{L^2(\mathcal F_0 )}\right)  \, {\rm d}s 
  \\
  = 
 \underbrace{\int_0^{t/2} (t-s)^{-1+1/p} \left( \|V(s)\|_{\mathcal L^4}^2 + |\ell_V(s)|\|V(s)\|_{\mc{L}^2}\right)   \, {\rm d}s}_{I_1(t)}
  + 
 \underbrace{ \int_{t/2}^{t} (t-s)^{-1+1/p} \left( \|V(s)\|_{\mathcal L^4}^2 + |\ell_V(s)|\|V(s)\|_{ \mc{L}^2}\right)   \, {\rm d}s }_{I_2(t)}.
\end{multline*}
Using \eqref{Pre-est-NonlinearTerm} with $\tilde q = 2$ for $s \in (0,1)$ and $\tilde q = q$ for $s \in (1,t/2)$ (recall $t>2$), and using $t - s \geq t/2$ for $s \leq t/2$,
$$
	I_1(t) \leq 		C(p,q,V_0)  t^{-1+1/p} 
	\left(
		 \int_0^1 s^{-1/2} \, {\rm d}s
		+ 
		 \int_1^{t/2} s^{1/2 - 2/q} \, {\rm d}s
	\right)
	\leq C(p,q,V_0) t^{-1+1/p}, 
$$
where we used that $1/2- 2/q < -1$ so that $\int_1^\infty s^{1/2- 2/q} \, {\rm d}s < \infty$.

Using \eqref{Pre-est-NonlinearTerm} with $\tilde q = 4/3$ for $s \in (t/2, t)$, we obtain
$$
	I_2(t) \leq C(p,q,V_0)\int_{t/2}^t (t-s)^{-1+1/p} s^{-1} \, {\rm d}s  = C(p,q,V_0) t^{-1+1/p}\int_{1/2}^1 (1-\tau)^{-1+1/p} \tau^{-1} \, {\rm d}\tau.
$$ 

{\bf The case $q = 4/3$: proof of \eqref{eq_firstorder-b}.} This case follows similarly as the previous one, except that estimating $I_1$ yields
$$
	I_1(t) \leq C(p,q,V_0) t^{-1+1/p} 
	\left(
		 \int_0^1 s^{-1/2} \, {\rm d}s
		+ 
		 \int_1^{t/2} s^{-1} \, {\rm d}s
	\right)
	\leq C(p,q,V_0) t^{-1+1/p} (1+ \log(t)). 
$$

\medskip

We now concentrate on the estimate \eqref{eq_firstorder2-a}--\eqref{eq_firstorder2-c} on $\ell_V(t)- \ell_{S(t)V_0}$. In order to do that, again, we split the integral in two parts: 
\begin{eqnarray*}
|\ell_V(t)- \ell_{S(t)v_0}| &\leq&   K_{\ell}(2) \int_0^{t/2} (t-s)^{-1} \Bigl( \|V(s)\|_{\mathcal L^4}^2 +  |\ell_V(s)|\|V(s)\|_{\mc{L}^2} \Bigl)  \, {\rm d}s \\ 
		&&+K_{\ell}(p) \int_{t/2}^t (t-s)^{-(\frac 12+\frac1p)} \Bigl( \|V(s)\|_{\mathcal L^{2p}}^2 +  |\ell_V(s)|\|V(s)\|_{\mc{L}^p} \Bigl)  \, {\rm d}s=:J_1(t)+J_2(t)\,.
\end{eqnarray*}
where $K_{\ell}$ is the constant of Corollary \ref{est_lv} and $p \in (2,\infty).$ The estimate of $J_1$ can be done as previously by using \eqref{Pre-est-NonlinearTerm}:
 $$
	J_1(t) \leq 
			\left\{ 
				\begin{array}{ll}
			C(p,q, V_0) t^{1/2 - 2/q} \quad & \text{if } q \in (4/3,2],
			\\
			C(p,q,V_0) t^{-1} (1+\log(t)) \quad & \text {if }  q = 4/3,
			\\
			C(p,q,V_0) t^{-1} \quad & \text {if }  q \in (1, 4/3)
				\end{array}
			\right.
 $$ 
For $J_2,$ remark that similarly as in \eqref{Pre-est-NonlinearTerm} we can obtain for all $\tilde q \in [q,2]$, $s>0$,
$$
	 \|V(s)\|_{\mathcal L^{2p}}^2 +  |\ell_V(s)|\|V(s)\|_{\mc{L}^p} \leq  \left( H(2p,\tilde q,V_0))^2 s^{1/p - 2/\tilde q}    + H(p,\tilde q, V_0) H_{\ell}(\tilde q ,V_0) s^{1/p - 2/\tilde q}\right)\,,
$$ 
so that:
$$
	J_2(t) \leq  C(p,\tilde q,V_0)  t^{1/2 - 2 /\tilde q}. 
$$
This ends the proof by choosing $\tilde q = q$ for $q \in (4/3,2]$ and $\tilde q = 4/3$ if $q \in (1,4/3]$.
\end{proof}

\newpage

\section{Further comments}\label{sect comments}

We list below several comments.

\bigskip

\paragraph{\em Concerning optimality of Theorem \ref{theo Stokes}} When considering the decay estimates of Theorem \ref{theo Stokes}, it is natural to ask if the results are sharp, in particular regarding the decay of the gradient estimates when $p>2$ and $t >1$, since all other decay estimates correspond to the classical ones for the heat semigroup on $\mathbb{R}^2$. However, in our proof, the decay estimate \eqref{est grad 2} differs from the one corresponding to the heat semigroup on $\mathbb{R}^2$ for all the modes. Each time, this slower decay rate for $t>1$ and $p>2$ arises due the presence of the boundary. Let us point out that P. Maremonti and V.~A. Solonnikov prove in \cite{MS97} that, when considering the Stokes equations in an exterior domain of $\mathbb{R}^3$ with homogeneous Dirichlet boundary conditions, estimate \eqref{est grad 2R}, which is the counterpart of \eqref{est grad 2}, is sharp. It is then likely that estimates of Theorem \ref{theo Stokes} are sharp as well.

\bigskip

\paragraph{\em Straightforward extensions of theorems \ref{theo kato} and \ref{theo Stokes}}$\phantom{123}$\\
\indent $\bullet$ Using the density of $\mc{L}^1 \cap  \mc{L}^2$ in $\mc{L}^q \cap \mc{L}^2$ and the decay estimates of Theorem \ref{theo Stokes}, one easily get, for all $q \in (1, 2]$, for all $V_0 \in \mc{L}^q \cap \mc{L}^2$ satisfying $\| V_0 \|_{\mc{L}^2} \leq \lambda_0(5/4)$, the unique associated solution $V(t)$ to \eqref{NS11}--\eqref{Solideci1}
satisfies:
\begin{equation}
	\label{LimSup}
	\lim_{t \to \infty} t^{1/q-1/p} \| V(t) \|_{\mc{L}^p} = 0.
\end{equation}
Indeed, for $V_0 \in \mc{L}^q \cap \mc{L}^2$ satisfying $\| V_0 \|_{\mc{L}^2} \leq \lambda_0(5/4)$, by Theorem \ref{theo kato}, $t^{1/q-1/p} \| V(t) -S(t)V_0 \|_{\mc{L}^p}$ goes to zero as $t \to \infty$ (recalling that $\lambda_0(q)>\lambda_0(5/4)$ if $q>5/4$, see the end of Introduction). We then take $\varepsilon >0$ and $\tilde V_0 \in \mc{L}^1 \cap  \mc{L}^2$ satisfying $\| V_0 - \tilde V_0\|_{\mc{L}^q} \leq \varepsilon$. According to Theorem \ref{theo Stokes}, $t^{1/q -1/p} \|S(t) V_0 - S(t) \tilde V_0\|_{\mc{L}^p} \leq K_1(p,q) \nu^{1/p-1/q}\epsilon$. But Theorem \ref{theo Stokes} also implies $\lim_{t \to \infty} t^{1/q-1/p} \| S(t) \tilde V_0\| = 0$ since $\tilde V_0$ belongs to $\mc{L}^{\tilde q}$ for some $\tilde q \in (1,q)$. Hence 
$$\limsup_{t \to \infty} t^{1/q-1/p} \| V(t) \| \leq C \varepsilon.$$ 
Since $\varepsilon$ was arbitrary, this implies \eqref{LimSup}.

$\bullet$ The proofs of Theorems \ref{Thm-q-dans(1,2)}--\ref{theo kato} are only based on the $L^p-L^q$ estimates for the Stokes problem given in Theorem \ref{theo Stokes}. As such estimates are already known in the case of a fixed exterior domain (see \cite{DS99a,DS99,MS97}), we claim that Theorem \ref{theo kato} holds true also in this case. Hence, the computations herein extend the results in \cite{GallayMaekawa,Iftimieetcie} to the case of finite energy initial data.

$\bullet$ In order to obtain the decay estimates of Theorem \ref{theo Stokes}, our approach is strongly based on the fact that the rigid body is an homogenous disk. Indeed, in polar coordinates we decompose in Fourier series. A case which can be easily treated by our analysis is when {\it the disk is non-homogenous, and the center of mass corresponds to the center of the disk} (e.g. for a density $\rho$ with radial symmetry). In this case, the equations \eqref{NS11}-\eqref{Solideci1} and \eqref{S1}-\eqref{S-Solideci} are the same, where:
\[
m = \int_{B_0} \rho(x) \, {\rm d}x; \quad \mathcal{J}=\int_{B_0} \rho(x) |x|^2 \, {\rm d}x.
\]
To our knowledge, the case of a more general shape or more general density is completely open. A similar problem, also open, would be to derive decay estimates in the case of two rigid disks.

\bigskip

\paragraph{\em Open problems} $\phantom{123}$\newline
\indent $\bullet$  Despite Theorem \ref{theo kato}, a complete description of the first term in the asymptotic behavior as $t \to \infty$ of the solutions $V$ of \eqref{NS11}--\eqref{Solideci1} is still missing.  Indeed, Theorems \ref{thm_AsymptoticStokes} and \ref{theo kato} cannot be combined since Theorem \ref{thm_AsymptoticStokes} requires the initial data to be $\mc{L}^1$ and in that case, Theorem \ref{theo kato} only yields that the $\mc{L}^p$-norm of the difference between the solution of the complete non-linear system \eqref{NS11}--\eqref{Solideci1} and the linear one, given by $S(t) V_0$, decays as $C t^{1/p-1}$, which is precisely the order of magnitude of the $\mc{L}^p$-norm of the solution of the linear Stokes equation when $V_0\in \mc{L}^1$.
At this level, let us emphasize that one of the main conceptual difficulties of this problem is that the invariant seems to be the $\mc{L}^1$-norm of the solution of \eqref{NS11}-\eqref{Solideci1}, despite the fact that the linear semigroup does not seem to be well-posed in $\mc{L}^1$. To justify this statement, we emphasize that the asymptotic given by Theorem \ref{thm_AsymptoticStokes} does not belong to $\mathcal{L}^1$. 
Showing that the non-linear term decreases faster than $t^{-1+1/p}$ is the major issue which prevents us from extracting the asymptotic first order. 

If we look carefully at the proof of Theorem \ref{theo kato}, we note that the difficulty comes from the fact that we do not manage to prove that $\| S(t) \mathbb P \div \, F \|_{\mathcal L^p}$ decays  faster for $F \in \mc{L}^q$ with $q<2$ (see \eqref{est div 2}). 
In particular, in the case of the Navier-Stokes equations in $\mathbb{R}^2$, using heat kernel estimates, which are better than estimates \eqref{est div 2} when $q<2$ and $t >1$, A. Carpio in \cite{Carpio} shows that the non-linear term is smaller than the Stokes solution for large time. But as we have noted above, the restriction in \eqref{est div 2} seems to be unavoidable. 

Nevertheless, other methods relying on the use of suitable scaling invariance and similarity variables  have been used for providing leading order terms in  \cite{Carpio,MunnierZuazua,GallayWayne}. To keep the unity of this paper we postpone these  approaches to a future work. 

$\bullet$ Another open problem is to remove the smallness condition in Theorem \ref{theo kato}, as it is done for Navier-Stokes in the full plane \cite{Schonbek,Wiegner} and in fixed exterior domains \cite{BorchersMiyakawa}. Indeed, such result would be expected in view of the energy dissipation law \eqref{formalenergy} which indicates the decay of the $\mc{L}^2$-norm of the solutions of \eqref{NS1}--\eqref{Solideci}.

\bigskip

\noindent
{\bf Acknowledgements.} The first author is partially supported by the Agence Nationale de la Recherche (ANR, France), Project CISIFS number NT09-437023 The two last authors are partially supported by the Project ``Instabilities in Hydrodynamics'' financed by Paris city hall (program ``Emergences'') and the Fondation Sciences Math\'ematiques de Paris. The second author is partially supported by the Agence Nationale de la Recherche, Project RUGO,  grant ANR-08-JCJC0104. The third author is partially supported by the Agence Nationale de la Recherche, Project MathOc\'ean, grant ANR-08-BLAN-0301-01.

\appendix 

\section{Proof of Proposition \ref{prop_calculsysteme}}
\label{App_calculsysteme}
Assume $V_0 \in \mathcal{L}^2 \cap \mathcal{C}^{\infty}_c(\mathbb R^2)$ Hille-Yosida's theorem 
implies there exists a unique solution $V \in \mathcal{C}([0,\infty);\mathcal{L}^2)$ to   \eqref{S1}--\eqref{S-Solideci}. 
 Furthermore, the unknowns  $(\ell_V,\omega_V)$  and the pressure $p$ that are constructed starting from $V$
 have, with $V,$ the following regularity (see \cite[Corollary 4.3]{Takahashi&Tucsnak04}):
\begin{eqnarray*}
v \in  \mathcal{C}([0,\infty); [H^1(\mathcal{F}_0)]^2) \cap {L}^2((0,\infty); [H^2(\mathcal{F}_0)]^2) \,,
& & 
\nabla p \in L^2((0,\infty) ; L^2(\mathcal F_0))\,,  \label{eq_regsol}\\[4pt]
 \ell_V \in H^1(0,\infty) \,, & & 
\omega_V \in H^1(0,\infty)\,. \label{eq_regsol2}
\end{eqnarray*}
We note also that further smoothing properties of the semigroup (see \cite[Theorem 7.7]{Brezis})
imply 
\begin{equation*}
	(v,p) \in [\mathcal{C}^{\infty}((0,\infty) \times \overline{\mathcal{F}}_0)]^3, \quad \nabla p \in \mathcal{C}((0,\infty);L^2(\mathcal F_0)) \,.
\end{equation*} 

Consequently, we introduce the decomposition $(W,\Phi,\Psi,V_R)$ of $V$ in spherical harmonics and a corresponding decomposition 
of the pressure $p$:
\begin{eqnarray}
	p_1(t,r) &=&  \dfrac{1}{\pi} \int_{0}^{2\pi} p(t,r,\theta) \cos \theta\, \text{d$\theta$}\,, \label{1-mode-p}
	\\
	q_1(t,r) &=&  \dfrac{1}{\pi} \int_{0}^{2\pi} p(t,r,\theta) \sin \theta\, \text{d$\theta$}\,, \label{1-mode-q}
	\\[6pt]
	p_R(t,r) &=& p(t,r,\theta) - p_1(t,r) \cos \theta - q_1(t,r) \sin \theta\,. \label{R-modes-p}
\end{eqnarray}
Note that, like $v_R,$ the remainder term $p_R$ satisfies:
\begin{equation} \label{eq_pR}
\int_0^{2\pi} p_R(t,r,\theta) \cos(\theta) \, \text{d$\theta$} = \int_0^{2\pi} p_R(t,r,\theta) \sin(\theta)\, \text{d$\theta$} = 0.
\end{equation}
Applying the continuity of the spherical-harmonic decomposition together with the continuity of $V$ yields:
\begin{eqnarray*}
W  \in \mathcal{C}([0,\infty) ; L^2((0,\infty),r{\rm d}r))\,&& (\partial_r\Psi,\Psi/r) \in \mathcal{C}([0,\infty) ; L^2((0,\infty),r{\rm d}r))\, \\
V_R \in  \mathcal{C}([0,\infty) ; L^2_{\sigma}(\mathcal{F}_0)) \, && (\partial_r\Phi,\Phi/r) \in \mathcal{C}([0,\infty) ; L^2((0,\infty),r{\rm d}r)),
\end{eqnarray*}
together with 
$$
\partial_r p_1 \in \mathcal{C}((0,\infty); L^2((1,\infty),r{\rm d}r))\,, \quad \partial_r q_1 \in \mathcal{C}((0,\infty); L^2((1,\infty),r{\rm d}r)).
$$
Referring to the formulas \eqref{eq_w0}-\eqref{eq_psi1} and \eqref{1-mode-p}-\eqref{1-mode-q} we also obtain at once the smoothness of $(W,V_R,\Phi,\Psi),$ of $(p_1,q_1)$ and of $(V_R,p_R).$ It now remains to compute the systems satisfied by these unknowns.

\medskip

We recall that the spherical-harmonic  decomposition of  $V$ reads $V = V_r e_r + V_{\theta} e_{\theta}$
\begin{equation}\label{expu}
V_r =   \dfrac{\Psi}{r} \sin \theta - \dfrac{\Phi}{r} \cos \theta + V_{R} \cdot e_r \,, 
\quad 
V_{\theta} = W\min(1,r) + \partial_r \Psi \cos \theta + \partial_r \Phi \sin \theta  + V_{R} \cdot e_{\theta}
\end{equation}
and the velocity-field on the disk is given as follows in radial coordinates: 
$$
(\ell_V + \omega_V x^\perp)_r =   \ell_{V,1} \cos (\theta )+ \ell_{V,2} \sin(\theta)  \,, 
\quad 
(\ell_V + \omega_V x^\perp)_{\theta} =   \omega r  - \ell_{V,1}\sin( \theta) + \ell_{V,2} \cos(\theta).
$$	
Identifying $V$ and the velocity-field of the disk on $\partial B_0$ ( \emph{i.e.} for $r=1$), we obtain the following boundary conditions:
$$
\begin{array}{rcllcrcll}
	w(t,1) & =  &\omega(t), &\forall \, t   \geq 0, & & &  & 
	\\
	\psi(t,1) & =  &  \ell_{V,2}(t)\,, && \partial_r \psi (t,1)& =& \ell_{V,2}(t)\,,   &  \forall \, t \geq 0,
	\\
	\varphi_1(t,1) &=&  - \ell_{V,1}(t)\,,& & \partial_r \varphi_1(t,1)&= &-\ell_{V,1}(t),  & \forall \,  t \geq 0\,.
	\\
	v_R(t,x) & =&  0 \,,& \forall \,  x \in \partial B_0 \,, & \forall \, t \geq 0. & &
\end{array}
$$

In the fluid domain, we remark that, introducing $\chi$ such that $\partial_r \chi = w$ and $\chi(0) = 0$, the spherical-harmonic decomposition reads:
$$
v = \nabla^{\perp} \left( \chi + \psi \cos(\theta) + \phi \sin(\theta) \right) + v_R 
$$
so that:
$$
\partial_t v - \nu \Delta v = \nabla^{\perp}  [\partial_t - \nu \Delta] \left(  \chi + \psi \cos(\theta) + \phi \sin(\theta)  \right)  + \partial_t v_R - \nu \Delta v_R\,,
$$
where, in polar coordinates:
$$
\Delta \psi(t,r,\theta) = \dfrac{1}{r}\partial_{r} [ r \partial_r \psi](t,r,\theta) + \dfrac{ \partial_{\theta\theta}\psi(t,r,\theta)}{r^2}\,.  
$$
We also recall that, the gradient operator reads: 
\begin{equation*}
	\nabla q =  \partial_r q e_r + \dfrac{\partial_{\theta} q}{r} e_{\theta}\,.
\end{equation*}
Finally, we remark that orthogonality conditions such as \eqref{eq_vR} or \eqref{eq_pR} transmit to time and space derivative.
Hence, replacing $\psi$ and $p$ by their values in the two last formulas, identifying then the different frequencies: constant, $\cos \theta$, $\sin \theta,$ and remainders,  we get:
\begin{eqnarray*}
\partial_t w - \nu \left(  \partial_{rr} w +  \dfrac{\partial_r w}{r} - \dfrac{w}{r^2} \right)= 0\,, && \text{ for } (t,r ) \in (0, \infty) \times (1, \infty);
\end{eqnarray*}
\begin{eqnarray*}
\partial_t \psi- \nu \left(  \partial_{rr} \psi +  \dfrac{\partial_r \psi}{r} - \dfrac{\psi}{r^2} \right)  = -r \partial_r q_1\,,  && \text{ for } (t,r ) \in (0, \infty) \times (1, \infty);
\\
\partial_t \partial_r \psi - \nu \partial_r \left(  \partial_{rr} \psi +  \dfrac{\partial_r \psi}{r} - \dfrac{\psi}{r^2} \right)=-\frac{q_1}{r}\,,  && \text{ for } (t,r ) \in (0, \infty) \times (1, \infty);
\end{eqnarray*}
\begin{eqnarray}
\partial_t \phi-  \nu \left(  \partial_{rr} \phi +  \dfrac{\partial_r \phi}{r} - \dfrac{\phi}{r^2} \right) =  r \partial_r p_1\,,  && \text{ for } (t,r ) \in (0, \infty) \times (1, \infty);\\
\partial_t \partial_r \phi -  \nu \partial_r  \left(  \partial_{rr} \phi +  \dfrac{\partial_r \phi}{r} - \dfrac{\phi}{r^2} \right) = \frac{p_1}{r}\,,  && \text{ for } (t,r ) \in (0, \infty) \times (1, \infty);
\end{eqnarray}
\begin{eqnarray}
\partial_t v_R -  \nu \Delta v_R + \nabla p_R =0\,,  && t\geq 0,\ x\in \mathcal F_0\,.
\end{eqnarray}
To end up the proof of {Proposition \ref{prop_calculsysteme}}, we write now \eqref{S-Solide1}-\eqref{S-Solide2}.
First we recall that, on $B_0$ there holds $n = - e_r$ (the normal $n$ is here computed outward the fluid domain) and $r=1$, 
so that:
$$
\begin{array}{rcl}
-2D(v) n  & = &  \partial_r v + \nabla v_r  -  [\nabla e_r]^{\top} v  \,, \\[4 pt]
	     &=&  ( 2  \partial_r v_r ) e_r + (\partial_r v_{\theta} + \partial_{\theta} v_r-  v_{\theta} )e_{\theta}  \,, \\[4 pt]
-\Sigma n  & = &  (-p +2 \nu  \partial_r v_r  ) e_r +\nu (\partial_r v_{\theta} + \partial_{\theta} v_r-  v_{\theta} )e_{\theta} .
\end{array}		   
$$
Hence, computing for instance $\ell_{V,2}' = \ell_V' \cdot e_2$  we have:
\begin{eqnarray*}
m \ell_{V,2}' &=&  \int_{0}^{2\pi} ( 2 \nu  \partial_r v_r - p ) \sin \theta \, \text{d$\theta$} + \nu \int_{0}^{2\pi}   (\partial_r v_{\theta} + \partial_{\theta} v_r-  v_{\theta} )\cos \theta \, \text{d$\theta$}\,,\\
		&=& \int_{0}^{2\pi} (2  \nu \partial_r v_r + \nu v_r -  p ) \sin \theta \, \text{d$\theta$} + \nu \int_{0}^{2\pi}   (\partial_r v_{\theta} + \partial_{\theta} v_r )\cos \theta \, \text{d$\theta$}\,.
\end{eqnarray*} 
In these last integrals, we then compute $\partial_r v_r$ and $\partial_r v_{\theta}$ with respect to $w,$ $\psi$ and $\phi,$ and $v_R$ thanks to \eqref{expu}. Recalling the orthogonality conditions  \eqref{eq_vR} and \eqref{eq_pR}, we get:
\begin{eqnarray*}
m  \ell_{V,2}' &=& \pi \left[  \nu \left( 2  \partial_r \left[\frac{\psi}{r} \right] +  \partial_{rr} \psi + \psi - \partial_r \psi\right) - q_1 \right]\,, \\
		&=& \pi \left[ \nu \left( \partial_{rr} \psi +  \dfrac{\partial_r \psi}{r} - \dfrac{\psi}{r^2} \right)- q_1\right]\,. 
\end{eqnarray*} 
The computations of $\mathcal{J} \omega_V'$ and $m \ell_{V,1}'$ are similar.

\end{document}